\definecolor{Green}{rgb}{0.13, 0.65, 0.3}
\DeclareMathOperator*{\argmin}{argmin} % no space, limits underneath in displays
\DeclareMathOperator*{\argmax}{argmax} % no space, limits underneath in displays
\newcommand{\norm}[1]{\left\|#1\right\|}
\newcommand{\hatV}{\widehat{V}}
\newtheorem{theorem}{Theorem}
\newtheorem{assumption}{Assumption}
\newtheorem{lemma}[theorem]{Lemma}
\newtheorem{corollary}[theorem]{Corollary}
\newcommand{\nonl}{\renewcommand{\nl}{\let\nl}}
\newcommand{\savehyperref}[2]{\texorpdfstring{\hyperref[#1]{#2}}{#2}}
\title{Last-Iterate Convergent Policy Gradient \\[0.05cm]
Primal-Dual Methods for Constrained MDPs} 
\definecolor{mycyan}{RGB}{0,204,204}
\newcommand{\KL}{\text{KL}}
\newcommand{\DefinedAs}[0]{\mathrel{\mathop:}=}
\DeclareMathOperator*{\minimize}{minimize}
\DeclareMathOperator*{\maximize}{maximize}
\DeclareMathOperator*{\subject}{subject~to}
\begin{document}

% The \author macro works with any number of authors. There are two commands
% used to separate the names and addresses of multiple authors: \And and \AND.
%
% Using \And between authors leaves it to LaTeX to determine where to break the
% lines. Using \AND forces a line break at that point. So, if LaTeX puts 3 of 4
% authors names on the first line, and the last on the second line, try using
% \AND instead of \And before the third author name.

\author{%
  Dongsheng~Ding\thanks{Alphabetical order.} \\[0.1cm]
  % Department of Electrical and Systems Engineering\\
  University of Pennsylvania\\
  \texttt{dongshed@seas.upenn.edu} \\
  \And
  Chen-Yu~Wei{\color{blue}\footnotemark[1]}\\[0.1cm]
  % Department of Computer Science \\
   University of Virginia\\
  \texttt{chenyu.wei@virginia.edu} \\
  \AND
  Kaiqing~Zhang{\color{blue}\footnotemark[1]}\\[0.1cm]
  % Department of Electrical and Computer Engineering\\
  University of Maryland, College Park \\
  \texttt{kaiqing@umd.edu} \\
  \And
  Alejandro~Ribeiro \\[0.1cm]
  University of Pennsylvania\\
  \texttt{aribeiro@seas.upenn.edu} \\
  % \And
  % Coauthor \\
  % Affiliation \\
  % Address \\
  % \texttt{email} \\
}

% \author{
% 	Dongsheng~Ding\footnote{Alphabetical order.}\qquad 
% 	Chen-Yu~Wei\footnotemark[\value{footnote}]\qquad 
% 	Kaiqing~Zhang\footnotemark[\value{footnote}]\qquad 
% 	Alejandro~Ribeiro
% 	\blfootnote{D.\ Ding, and A.\ Ribeiro are with the University of Pennsylvania, Philadelphia, PA 19104.
% 		Chen-Yu~Wei is with Massachusetts Institute of Technology, Cambridge, MA 02139.
% 		Kaiqing~Zhang is with the University of Maryland, College Park, MD 20742.
% 		{E-mails: dongshed@seas.upenn.edu, chenyuw@mit.edu, kaiqing@umd.edu, aribeiro@seas.upenn.edu}
% 	}
% }

\maketitle 

\begin{abstract}
  We study the problem of computing an optimal policy of an infinite-horizon discounted constrained Markov decision process (constrained MDP). Despite the popularity of Lagrangian-based policy search methods used in practice, the oscillation of policy iterates in these methods has not been fully understood, bringing out issues such as violation of constraints and sensitivity to hyper-parameters. To fill this gap, we employ the Lagrangian method to cast a constrained MDP into a constrained saddle-point problem in which max/min players correspond to primal/dual variables, respectively, and develop two single-time-scale policy-based primal-dual algorithms with non-asymptotic convergence of their policy iterates to an optimal constrained policy. Specifically, we first propose a regularized policy gradient primal-dual (RPG-PD) method that updates the policy using an entropy-regularized policy gradient, and the dual variable via a quadratic-regularized gradient ascent, simultaneously. We prove that the policy primal-dual iterates of RPG-PD converge to a regularized saddle point with a sublinear rate, while the policy iterates converge sublinearly to an optimal constrained policy. We further instantiate RPG-PD in large state or action spaces by including function approximation in policy parametrization, and establish similar sublinear last-iterate policy convergence. Second, we propose an optimistic policy gradient primal-dual (OPG-PD) method that employs the optimistic gradient method to update primal/dual variables, simultaneously. We prove that the policy primal-dual iterates of OPG-PD converge to a saddle point that contains an optimal constrained policy, with a linear rate. To the best of our knowledge, this work appears to be the first non-asymptotic policy last-iterate convergence result for single-time-scale algorithms in constrained MDPs. We further validate the merits and the effectiveness of our methods in computational experiments. 
\end{abstract}

\section{Introduction}

% \kz{before i forget: maybe when citing reload paper, we can say ``While preparing our work, we noticed the contemporaneous work of \cite{moskovitz2023reload}, which has empirically validated the use of optimistic methods and the importance of last-iterate convergence in solving constrained MDPs.'' so that we do not look exactly like their ``follow up'' 
% and thus ``weaken'' the novelty of our idea.}\kz{if we do so, lets make sure we do it consistently, for every place \cite{moskovitz2023reload} appears.} 

% \Ding{good point! we should do this.}

% \Ding{still a bit over-page, but it seems we can shorten some remarks to save space. I can do it later.}

Constrained Markov decision process  (Constrained MDP) is the classical model for constrained dynamic systems in the early stochastic control literature (e.g.,~\citep{beutler1985optimal,ross1989randomized,piunovskii1994control,feinberg1996constrained,altman1999constrained}) and the recent constrained reinforcement learning (RL) literature (e.g.,~\citep{borkar2005actor,abe2010optimizing,achiam2017constrained,tessler2019reward,dulac2021challenges,mandhane2022muzero}). 
% The main reason for this is: they are 
It is applicable to many constrained control problems by integrating other system specifications in constraints, and admits a natural extension of constrained optimization and Lagrangian in policy space. Lagrangian-based policy search methods, 
especially policy-based primal-dual methods that work simultaneously with primal/dual variables, lie at the heart of recent successes of constrained MDPs, e.g., navigation~\citep{paternain2022safe}, autonomous driving~\citep{he2022robust,gu2023safe},
robotics~\citep{calian2021balancing},
and finance~\citep{chow2017risk}; see~\citep{garcia2015comprehensive,de2021constrained,brunke2022safe,gu2022review} for more examples.       

Despite the popularity of policy-based primal-dual algorithms, classical asymptotic convergence assumes that primal-dual updates are in two-time-scale\footnote[1]{One update has relatively very large/small (fast/slow) stepsize than the other.}type~\citep{abad2002self,borkar2005actor,bhatnagar2012online,chow2017risk,tessler2019reward} (and/or work in two nested loops\footnote[2]{We view an algorithm with two nested gradient-based loops as a two-time-scale algorithm.}), and considerable global non-asymptotic convergence guarantee is measured via an average of past objective/constraint functions~\citep{ding2020natural,liu2021policy,jain2022towards,zeng2022finite} or a mixture of past policies~\citep{chen2021primal,li2021faster}. These results are unfavorable in constrained dynamic systems, especially safety-critical ones, due to three reasons: (i) Average and mixture performance of non-asymptotic convergence conceals oscillating (or even overshooting) objective/constraint functions of immediate policy iterates~\citep{stooke2020responsive,calvo2021state}, and oscillation-incurred constraint violation impedes a policy iterate being optimal; (ii) Asymptotic convergence is not instructive, because arbitrarily slow convergence, and oscillation and overshoot in any finite time can happen; (iii) Two-time-scale algorithms including algorithms with nested loops are sensitive to hyper-parameters and are therefore typically difficult to tune~\citep{chow2017risk,tessler2019reward}. Thus, we ask the following question in constrained MDPs:

\vspace{-1ex}
\begin{center}
	Can the \emph{policy iterates} of a \emph{single-time-scale} policy-based primal-dual algorithm \\[0.4ex]converge to an optimal constrained policy with \emph{non-asymptotic rate}?
\end{center}
\vspace{-1ex}

By ``single-time-scale'', we refer to the classical methods~\citep{arrow1958studies,korpelevich1976extragradient,nedic2009subgradient} that iterate primal/dual variables concurrently (with the same constant stepsize). Only partial answers to this question are provided in recent studies~\citep{ying2022dual,gladin2022algorithm,zheng2022constrained} since they either do not  work in the single-time-scale scheme or they do not have non-asymptotic convergence guarantees. In this work, we provide an affirmative answer in two methodologies.
First, we initiate the design and
analysis of single-time-scale policy-based primal-dual algorithms via regularization, while previous works~\citep{liu2021policy,li2021faster,ying2022dual} rely on two-time-scale schemes. Second, inspired by convex minimax optimization~\citep{hsieh2019convergence,weilinear2020,cai2022tight}, we propose a new optimistic policy gradient for a single-time-scale policy-based primal-dual algorithm that solves a class of non-convex minimax problem. 
While preparing our work, we noticed a contemporaneous work~\citep{moskovitz2023reload}, which has empirically validated the effectiveness of other optimistic methods in constrained MDPs, has further inspired the pursuit of our contributions, as outlined in detail below. 
% \Ding{We already stressed significance of last-iterate policy convergence, right? We were not actually inspired by them. I feel they just accelerated our progress. If we mention `contemporaneous', a clear technical comparison should do the respect for them.}

\textbf{Contributions}. 
To compute an optimal policy of an infinite-horizon discounted constrained MDP, we employ the Lagrangian method to cast it into a constrained saddle-point problem in which max/min players correspond to primal/dual variables, propose two single-time-scale policy-based primal-dual algorithms, and prove global non-asymptotic convergence of their policy iterates.

\vspace{-1ex}
\begin{itemize}[leftmargin=*]
	\item \emph{Nearly dimension-free sublinear
		last-iterate policy convergence.} 
	% to an optimal constrained policy}. 
	We propose a regularized policy gradient primal-dual (RPG-PD) method that updates the policy using an entropy-regularized policy gradient, and the dual using a quadratic-regularized gradient ascent,  simultaneously. We prove that the policy primal-dual iterates of RPG-PD converge to a regularized saddle point with a sublinear rate, and the policy iterates converge to an optimal constrained policy sublinearly.
	% , and there is no constraint violation of the last policy iterate once a conservative constraint is used.
	
	\item \emph{Sublinear 
		last-iterate policy convergence with function approximation.} We  generalize RPG-PD for constrained MDPs with large state/action spaces by including function approximation in policy parametrization. We prove that the policy primal-dual iterates of an inexact RPG-PD converge to a regularized saddle point with a sublinear rate, but up to a function approximation error, and the policy iterates converge sublinearly to an optimal constrained policy when the   error is small.
	% \kz{what do we mean by ``relatively small''? also, wouldnt the point we converge to ``subject to some function approximation error''?} \Ding{We assume the function approximation error is in a similar order of the target error tolerance. This is stated in our corollary.}\kz{I know. I meant in the statement of our theorem, we have additional function approximation error. it might be more common to say it that way, without constraining if the FA error is small or not. (and if it is small, we will get approximately optimal  solution, and people know it.)}. 
	% \Ding{It seems we don't have usual `up to function approximation error' results.}\kz{I meant Theorem \ref{thm: linear convergence of RPG linear function approximation}, which might be more understandable? definitely the corollary is correct also.}
	
	\item \emph{Problem-dependent linear last-iterate policy convergence.} 
	% to an optimal constrained policy}. 
	We propose an optimistic policy gradient primal-dual (OPG-PD) method that employs the optimistic gradient method to update the primal/dual, simultaneously. We prove that the policy primal-dual iterates of OPG-PD converge to a saddle point that contains an optimal constrained policy with a problem-dependent linear rate. 
\end{itemize}
\vspace{-1ex}

While last-iterate convergence is of importance in its own right, by adding proper conservatism in the constraint, both methods can ensure \emph{no} constraint violation for the \emph{last policy iterate}, which perhaps is best for safety-critical tasks~\citep{dulac2021challenges,gu2022review}. As far as we know, this work shows the first non-asymptotic and policy last-iterate convergence for single-time-scale algorithms in the constrained MDP literature. We further exhibit the merits and the effectiveness of our methods in   experiments.

% \Ding{last-iterate + non-asymptotic + single-loop}

\textbf{Technical comparisons with prior art}. 
% Compared with the analysis of value-average or policy-mixture performance~\citep{ding2020natural,liu2021policy,jain2022towards,chen2021primal,li2021faster,vaswani2022near,bai2022achieving}, 
Although global asymptotic last-iterate convergence has been established for single-time-scale algorithms very recently~\citep{zheng2022constrained,moskovitz2023reload}, and value-average or policy-mixture non-asymptotic convergence have been established for other algorithms~\citep{ding2020natural,liu2021policy,jain2022towards,chen2021primal,li2021faster,vaswani2022near,bai2022achieving,zeng2022finite}, these studies did not investigate global \emph{non-asymptotic} and \emph{last-iterate} 
convergence for \emph{single-time-scale} algorithms. 
% We fill this gap by establishing global non-asymptotic last-iterate convergence of two new single-time-scale policy-based primal-dual algorithms. 
Our results not only strengthen these prior guarantees, but also set up a new framework for analyzing policy-based primal-dual algorithms via the distance of primal-dual iterates to a saddle point that contains an optimal constrained policy.
% mirror-descent style or natural policy gradient (NPG)-based primal-dual algorithms for constrained MDPs,  by diminishing the distance of primal-dual iterates to a saddle point that yields an optimal constrained policy. 
Our RPG-PD and OPG-PD keep the simplicity of single-time-scale primal-dual methods and output a nearly-optimal policy in the last iterate, which is more convenient than the history-average policies~\citep{li2021faster,liu2021policy} or the policies from  subroutines~\citep{ying2022dual,gladin2022algorithm}. Compared with the policy-based methods~\citep{moskovitz2023reload}, our OPG-PD is a projected policy gradient method that enjoys policy last-iterate convergence with linear rate. 
% It is also useful to compare our results with those of last-iterate convergence in minimax optimization. 
Compared with the constrained saddle-point problems~\citep{weilinear2020,cai2022tight}, our minimax optimization that results from constrained MDP is \emph{non-convex}. Hence, our OPG-PD extends the last-iterate convergence guarantee from convex minimax optimization to a class of non-convex ones, while preserving a linear rate.   
% Despite non-convexity, we establish linear last-iterate convergence of OPG-PD to a saddle point, in contrast to sublinear results for non-convex minimax optimization~\citep{cai2022accelerated,tran2023extragradient}. 
Compared with the analysis in the two-player zero-sum Markov game~\citep{wei2021last,song2023can}, there is no reduction from constrained MDPs to per-state bilinear games. Please see   more details  in Appendix~\ref{app: other related works}.

\section{Preliminaries}
\label{sec:preliminaries}

We consider an infinite-horizon discounted constrained Markov decision process~\citep{piunovskii1994control,altman1999constrained,achiam2017constrained} --
$\text{CMDP}\,(\,S, A, P, r, u, b, \gamma, \rho\,)$ --
where $S$ and $A$ are state/action spaces, $P$ is a transition kernel that specifies the transition probability $P(s'\,\vert\,s,a)$ from state $s$ to next state $s'$ under action $a\in A$, $r$, $u$ : $S\times A\to [0,1]$ are reward/utility functions, $b$ is a constraint threshold, $\gamma\in[0,1)$ is a discount factor, and $\rho$ is an initial state distribution. 
% The constrained MDP augmenting the standard MDP with an extra utility function by enforcing other specifications, e.g.,
A stationary stochastic policy $\pi$ : $S\to \Delta(A)$  determines a probability distribution over the action space $A$ based on current state, i.e., $a_t\sim \pi(\cdot\,\vert\,s_t)$ at time $t$, where $\Delta(A)$ is a probability simplex over $A$. Let $\Pi$ be the set of all possible stochastic policies. A policy $\pi\in\Pi$, together with the initial state distribution $\rho$, induces a distribution over trajectories $\tau=\{(s_t, a_t, r_t, u_t)\}_{t\,=\,0}^\infty$, where $s_0\sim\rho$, $a_t\sim\pi(\cdot\,\vert\,s_t)$, $r_t = r(s_t,a_t)$, $u_t = u(s_t,a_t)$ and $s_{t+1}\sim P(\cdot\,\vert\,s_t,a_t)$ for all $t\geq0$.   

Given a policy $\pi$, the value functions $V_r^\pi$, $V_u^\pi$ : $S\to\mathbb{R}$ associated with the reward function $r$ or the utility function $u$ are given by the expected sums of discounted rewards or utilities under policy $\pi$:
\[
V_r^\pi(s) 
\;\; \DefinedAs \;\;
\mathbb{E}\left[\, \sum_{t\,=\,0}^\infty \gamma^t r(s_t, a_t)\,\vert\,s_0 = s \,\right]
\; 
\text{ and } 
\; 
V_u^\pi(s) 
\;\; \DefinedAs \;\;
\mathbb{E}\left[\, \sum_{t\,=\,0}^\infty \gamma^t u(s_t, a_t)\,\vert\, s_0 = s \,\right]
\]
where the expectation $\mathbb{E}$ is over the randomness of the trajectory $\tau$ induced by $\pi$. Their expected values under $\rho$ are $V_r^\pi(\rho) \DefinedAs \mathbb{E}_{s\,\sim\,\rho}[\,V_r^\pi(s)\,]$ and $V_u^\pi(\rho) \DefinedAs \mathbb{E}_{s\,\sim\,\rho}[\,V_u^\pi(s)\,]$. It is useful to introduce the discounted state visitation distribution,
$
d_{s_0}^\pi(s)
= 
(1-\gamma )
\sum_{t\,=\,0}^{\infty} \gamma^t \text{Pr}( s_t = s\,\vert\, \pi, s_0)
$
which adds up discounted probabilities of visiting $s$ in the execution of $\pi$ starting from $s_0$. Denote $d_\rho^\pi(s) \DefinedAs \mathbb{E}_{s_0\,\sim\,\rho} [\,d_{s_0}^\pi(s)\,]$ and thus $d_\rho^\pi(s) \geq (1-\gamma) \rho(s)$ for any $\rho$ and $s$. Furthermore, for the reward function $r$, we introduce the state-action value function $Q_r^\pi$: $S\times A\to\mathbb{R}$ when the agent begins with a state-action pair $(s,a)$ and follows a policy $\pi$, and the associated advantage function $A_r^\pi$: $S\times A\to\mathbb{R}$,
\[
Q_r^\pi(s,a) 
\;\; \DefinedAs \;\;
\mathbb{E}\left[\, \sum_{t\,=\,0}^\infty \gamma^t r(s_t, a_t)\,\vert\, s_0 = s, a_0 = a \,\right]
\;
\text{ and }
\;
A_r^\pi(s,a) \;\;\DefinedAs\; \; Q_r^\pi(s,a) - V_r^\pi(s).
% Q_u^\pi(s,a) 
% \; \DefinedAs \;
% \mathbb{E}\left[\, \sum_{t\,=\,0}^\infty \gamma^t u(s_t, a_t)\,\vert\, s_0 = s, a_0 = a \,\right]
\]
Similarly, we define $Q_u^\pi$ : $S\times A\to\mathbb{R}$ and $A_u^\pi$ : $S\times A\to\mathbb{R}$ for the utility function $u$.
%     \[
%     A_r^\pi(s,a) \;\DefinedAs\; Q_r^\pi(s,a) - V_r^\pi(s)
%     \; \text{ and } \;
%     A_u^\pi(s,a) \;\DefinedAs\; Q_u^\pi(s,a) - V_u^\pi(s).
% \]

In this work, we aim to find a policy solution $\pi^\star$ of
a constrained policy optimization problem,
\begin{equation}\label{eq:CMDP}
% \begin{array}{rl}
\displaystyle\maximize_{\pi\,\in\,\Pi} \;\; V_r^\pi(\rho) \;
\;\;\;\;
% \\[0.2cm]
\;\subject \;\; V_u^\pi(\rho) \; \geq \; b
% \end{array}
\end{equation}
where the objective is the reward value function $V_r^\pi(\rho)$  and the constraint requires that the utility value function $V_u^\pi(\rho)$ is above a given threshold $b$. For notational simplicity we assume a single constraint, but our algorithms are readily generalizable to the problems with multiple constraints, as well as our subsequent last-iterate convergence theory. Since $V_r^\pi(\rho)$ and  $V_u^\pi(\rho) \in [\,0, 1/(1-\gamma)\,]$, we assume $b \in (\,0,1/(1-\gamma)\,]$ to avoid trivial cases. Let $g$: $S\times A\to [-1,1]$ be $g \DefinedAs u - (1-\gamma)b$. We, equivalently, translate the constraint $V_u^\pi(\rho) \geq b$ into $V_g^\pi(\rho) \geq 0$ that is our focal constraint. The optimal constrained policy $\pi^\star$ 
depends on the initial state distribution $\rho$; see Appendix~\ref{app:lack_uniform_optimal_policy}.

By the method of Lagrange multipliers~\citep{bertsekas2014constrained}, we dualize the constraint in~\eqref{eq:CMDP} and present a standard Lagrangian $L(\pi,\lambda) \DefinedAs V_r^\pi(\rho) + \lambda V_g^\pi(\rho) $, where $\pi\in\Pi$ is the primal variable and $\lambda\in[0,\infty]$ is the dual variable or the Lagrangian multiplier. Introduction of the Lagrangian $L(\pi,\lambda)$ interprets Problem~\eqref{eq:CMDP} as a max-min problem: $
\maximize_{\pi \,\in\, \Pi} 
\minimize_{\lambda \,\in\, [0,\infty]}
L(\pi,\lambda)$, and  thus we can view the Lagrangian $L(\pi,\lambda)$ as a value function with a composite function $r+\lambda g$,  
\begin{equation}\label{eq:Lagrangian}
\maximize_{\pi \,\in\, \Pi} \; \minimize_{\lambda \,\in\, [0,\infty]} \;\;
V^{\pi}_{r+\lambda g}(\rho).
\end{equation}
However, it's defective to view Problem~\eqref{eq:Lagrangian} as a standard MDP problem by fixing a dual variable $\lambda$, even the optimal one; also see~\citep{tessler2019reward,szepesvari2020,zahavy2021reward,calvo2021state}. 
This is often referred to as the \emph{scalarization fallacy}~\citep{szepesvari2020}; see Appendix~\ref{app:scalarization_fallacy} for the detail. 
% 
% A simple scenario is that, for a constrained MDP problem in which the optimal policy has to be stochastic, any optimal deterministic policy for the composite-reward MDP fails to be optimal; see Appendix~\ref{app:scalarization_fallacy} for details. 
% 
From the perspective of game theory, we instead view $V_{r+\lambda g}^{\pi}(\rho)$: $\Pi\times[\,0,\infty\,]\to\mathbb{R}$ as a payoff function for a two-player zero-sum game in which max-player is the policy $\pi\in\Pi$ and min-player is the dual variable $\lambda\in[\,0,\infty\,]$, and study its saddle points.
To proceed, we assume feasibility for Problem~\eqref{eq:CMDP} throughout our analysis. 
% \kz{shall we also emphasize that this game is ``nonconvex-concave''? we can cite these policy gradient papers, saying that $V$, as a function of $\pi$, is nonconvex.}

\begin{assumption}[Feasibility]\label{as:feasibility}
	There exists a policy $\bar{\pi} \in \Pi$ and $\xi>0$ such that $V_{g}^{\bar{\pi}}(\rho) \geq \xi$.
\end{assumption}

Feasibility mirrors the  Slater condition in the duality analysis of constrained optimization~\citep{bertsekas2014constrained}. It can be verified by solving an unconstrained MDP problem with respect to $V_g^\pi(\rho)$.

A saddle point $(\pi',\lambda')$ satisfies
$V_{r+\lambda' g}^{\pi}(\rho) \leq
V_{r+\lambda' g}^{\pi'}(\rho) \leq V_{r+\lambda g}^{\pi'}(\rho)$ for all $\pi\in\Pi$, $\lambda\in[0,\infty]$, or equivalently, $\pi'$ is the max-min point, i.e., $\pi' \in\argmax_{\pi\,\in\,\Pi} V_{r+\lambda' g}^{\pi}(\rho)$ and $\lambda'$ is the min-max point, i.e., $\lambda' \in\argmin_{\lambda\,\in\,[0,\infty]} V_{r+\lambda g}^{\pi'}(\rho)$.
To view Problem~\eqref{eq:Lagrangian} as a saddle-point problem, we denote
$V_P^\pi(\rho) \DefinedAs\inf_{\lambda\,
	\in\,[0,\infty]} 
V_{r+\lambda g}^{\pi}(\rho)$ as the primal function which takes $V_r^\pi(\rho)$ when $ V_g^\pi(\rho) \geq 0$ and $-\infty$ otherwise, and $V_D^{\lambda}(\rho)
\DefinedAs
\max_{\pi\,\in\,\Pi} 
V_{r+\lambda g}^{\pi}(\rho)$ as the dual function. Let an optimal dual variable
be $\lambda^\star \in \argmin_{\lambda\,\in\,[0,\infty]}  V_D^\lambda(\rho)$. For Problem~\eqref{eq:CMDP} under Assumption~\ref{as:feasibility}, strong duality holds in policy space~\cite[Theorem~3]{paternain2022safe}
and optimal dual variables are bounded~\citep[Lemma~3]{ding2022convergence}.

\begin{lemma}[Strong duality/Saddle point existence and boundedness]\label{lem:strong_duality}
	Let Assumption~\ref{as:feasibility} hold. Then, 
	(i) strong duality holds for Problem~\eqref{eq:CMDP}, i.e., $V_P^{\pi^\star}(\rho) = V_D^{\lambda^\star}(\rho)$; (ii) optimal dual variables are bounded, i.e., $\lambda^\star \in [ \, 0, (V_r^{\pi^\star} - V_r^{\bar\pi})/\xi \, ]$.
	% , where we use shorthand $V_P^\star(\rho)$, $ V_D^\star(\rho)$, $V_r^{\star}$ for $V_P^{\pi^\star}(\rho)$, $ V_D^{\pi^\star}(\rho)$, $V_r^{\pi^\star}$, respectively.
\end{lemma}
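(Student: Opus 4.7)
Both parts follow the classical Slater-condition argument from convex duality, adapted to the CMDP setting via the occupation-measure reformulation.

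\textbf{Part (i): Strong duality.} The primal problem \eqref{eq:CMDP} is non-convex in $\pi$, so one cannot appeal to convex duality directly in policy space. The plan is to reformulate the problem in terms of the discounted state-action occupation measure $\mu^\pi(s,a) \DefinedAs d_\rho^\pi(s)\,\pi(a\,\vert\,s)/(1-\gamma)$, whose feasible set $\calM$ (the Bellman-flow polytope) is convex and polyhedral. Under this change of variables, both $V_r^\pi(\rho) = \inner{\mu^\pi, r}$ and $V_g^\pi(\rho) = \inner{\mu^\pi, g}$ become linear functionals, so \eqref{eq:CMDP} is equivalent to the linear program $\max_{\mu \in \calM}\inner{\mu, r}$ subject to $\inner{\mu, g} \geq 0$. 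Feasibility (Assumption~\ref{as:feasibility}) yields a strictly interior point $\mu^{\bar\pi}$ with $\inner{\mu^{\bar\pi}, g} \geq \xi > 0$, i.e. Slater's condition holds. Strong duality for this convex program is standard, and because the bijection $\pi \leftrightarrow \mu^\pi$ preserves Lagrangian values (for every fixed $\lambda$, $V_{r+\lambda g}^\pi(\rho) = \inner{\mu^\pi, r+\lambda g}$), the zero-duality-gap property transfers back to policy space, giving $V_P^{\pi^\star}(\rho) = V_D^{\lambda^\star}(\rho)$. Alternatively one can just invoke \cite[Theorem~3]{paternain2022safe} which carries out this reduction.

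\textbf{Part (ii): Boundedness of $\lambda^\star$.} The lower bound $\lambda^\star \geq 0$ is by definition. For the upper bound, I plug the Slater policy $\bar\pi$ into the dual function and use strong duality:
\begin{equation*}
V_D^{\lambda^\star}(\rho) \;=\; \max_{\pi\in\Pi} V_{r+\lambda^\star g}^\pi(\rho) \;\geq\; V_r^{\bar\pi}(\rho) + \lambda^\star V_g^{\bar\pi}(\rho) \;\geq\; V_r^{\bar\pi}(\rho) + \lambda^\star \xi.
\end{equation*}
Combining with Part (i), $V_D^{\lambda^\star}(\rho) = V_P^{\pi^\star}(\rho) = V_r^{\pi^\star}(\rho)$, which rearranges to $\lambda^\star \leq (V_r^{\pi^\star}(\rho) - V_r^{\bar\pi}(\rho))/\xi$ as claimed.

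\textbf{Main obstacle.} Part (ii) is a two-line manipulation once Part (i) is in hand. The only non-trivial step is Part (i): verifying that the nonconvex maximization over $\Pi$ can be lifted to a convex program via occupation measures, and that the lift preserves both the objective and the Lagrangian value for every $\lambda \geq 0$. This lifting is by now textbook for discounted CMDPs (cf.~\citep{altman1999constrained}), and the cited result of Paternain et al. can be used as a black box, so the proof is short.
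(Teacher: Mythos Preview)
Your proposal is correct and matches the paper's treatment: the paper does not give its own proof of this lemma but simply cites \cite[Theorem~3]{paternain2022safe} for part~(i) and \cite[Lemma~3]{ding2022convergence} for part~(ii), which is exactly the black-box route you mention. Your explicit occupation-measure lift for (i) and the two-line Slater argument for (ii) are the standard arguments behind those cited results, so there is nothing to add.
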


Let the set of max-min points be $\Pi^\star \DefinedAs \argmax_{\pi\,\in\,\Pi} \min_{\lambda\,\in\,[0,\infty]} V_{r+\lambda g}^\pi(\rho)$ and the set of min-max points be $\Lambda^\star \DefinedAs \argmin_{\lambda\,\in\,[0,\infty]} \max_{\pi\,\in\,\Pi} V_{r+\lambda g}^\pi(\rho)$. From Lemma~\ref{lem:strong_duality}~(ii), $\Lambda^\star$ is contained in a bounded interval $\Lambda \DefinedAs [ \, 0, 1/((1-\gamma)\xi) \, ]$. Lemma~\ref{lem:strong_duality}~(i) shows that any pair  $(\pi^\star,\lambda^\star)\in\Pi^\star\times\Lambda^\star$ solves the following constrained saddle-point problem,
\begin{equation}\label{eq:saddle_point}
\maximize_{\pi \,\in\, \Pi} \; \minimize_{\lambda \,\in\, \Lambda} \;\;
V^{\pi}_{r+\lambda g}(\rho)
\;\; = \;\;
\minimize_{\lambda \,\in\, \Lambda} \; 
\maximize_{\pi \,\in\, \Pi} \;\;
V^{\pi}_{r+\lambda g}(\rho).
\end{equation}
Any saddle points associated with the set $\Lambda^\star$ are captured by
Problem~\eqref{eq:saddle_point} due to the invariance of saddle points, and searching for any pair $(\pi^\star,\lambda^\star)\in\Pi^\star\times\Lambda^\star$ is sufficient by the  interchangeability of saddle points; see Lemmas~\ref{lem:saddle_point_invariance}-\ref{lem:saddle_point_interchangeability} in Appendix~\ref{app:sadde_point_property} for the properties of saddle points. Thus, we view the policy (primal) as max-player and the dual as min-player in a zero-sum game.
% over a compact domain. 

Three structural properties from constrained MDPs distinguish Problem~\eqref{eq:saddle_point} from recent last-iterate convergence for learning in zero-sum games    (e.g.,~\citep{hsieh2019convergence,weilinear2020,wei2021last,cai2022tight,song2023can}): (i) Two players are \emph{asymmetric}. One plays a stochastic policy that affects the transition dynamics and the other selects an action in a continuous interval that only changes the payoff; (ii) Problem~\eqref{eq:saddle_point} is a \emph{non-convex} game, because of the non-concavity of the payoff $V_{r+\lambda g}^{\pi}(\rho)$ in policy $\pi$ (e.g.,~\cite[Lemma~1]{agarwal2021theory}); (iii) A saddle-point policy for Problem~\eqref{eq:saddle_point} cannot  be \emph{uniformly} max-min optimal, i.e., being optimal \emph{across all states}, since an optimal policy often depends on the initial state distribution $\rho$ in a constrained MDP; see Appendix~\ref{app:lack_uniform_optimal_policy}. Hence, known last-iterate results in zero-sum convex games or symmetric Markov games that admit uniformly optimal policies can't be applied and new techniques are required to address this non-standard saddle-point problem, which warrants our contributions in this work.

\textbf{Warm-up: Indirect policy search in occupancy-measure space}. Finding a saddle point of a non-convex game is hard in 
general~\citep{daskalakis2021complexity}.
Nevertheless, Problem~\eqref{eq:CMDP} can be rewritten as a linear program regarding the occupancy measure~\citep{altman1999constrained}, which permits \emph{indirectly} searching for a saddle point of a bilinear Lagrangian~\citep{jin2020efficiently,bai2022achieving,zheng2022constrained}. These asymptotic or average-iterate convergence results can be easily strengthened by applying last-iterate convergence results for bilinear games (e.g.,~\citep{weilinear2020,cai2022tight}) to be non-asymptotic and last-iterate. By doing so, we state an optimistic primal-dual (OPD) method~\eqref{eq.OGDA} in Appendix~\ref{app:CMDP_occupancy_measure}. Compared with a contemporaneous work~\citep{moskovitz2023reload}, OPD is free of projection to an occupancy measure set, and enjoys strengthened linear convergence. 

OPD is an \emph{indirect} policy search method that iterates using occupancy measure-based gradients, not policy-based gradients. It is crucial to develop \emph{direct} policy search methods that are widely-used in RL, which is our focus. We propose two such methods in Section~\ref{sec:regularized method} and Section~\ref{sec:optimistic method}, respectively.

\section{Policy Last-Iterate Convergence: Regularized Method}
\label{sec:regularized method}

Towards achieving policy last-iterate convergence, a practical strategy is using regularization~\citep{cen2022fast} 
to ``convexify''  Problem~\eqref{eq:saddle_point}. 
We present a regularized method -- Regularized Policy Gradient Primal-Dual (RPG-PD) -- that converges to a saddle point that yields an optimal constrained policy.

\subsection{Regularized policy gradient primal-dual method}

We introduce a regularized Lagrangian  $
L_\tau(\pi,\lambda) 
\DefinedAs
V_{r+\lambda g}^\pi (\rho) + \tau (\mathcal{H}(\pi) +\frac{1}{2}\lambda^2)
$
by adding a regularization term $\mathcal{H}(\pi) +\frac{1}{2}\lambda^2$ onto the original Lagrangian $V_{r+\lambda g}^\pi(\rho)$, where $\tau$ is a regularization parameter, and
$
\mathcal{H}(\pi) 
\DefinedAs 
\mathbb{E} 
[ \,
\sum_{t\,=\,0}^\infty -\gamma^t \log \pi(a_t\,\vert\,s_t) 
\, ]
$ is an entropy-like regularization term~\citep{cen2022fast}. 
We now introduce a regularized constrained saddle-point problem,
\begin{equation}\label{eq:regularized saddle point}
\maximize_{\pi\,\in\,\Pi} \;
\minimize_{\lambda\,\in\, \Lambda} \;\; L_\tau(\pi,\lambda)
\;\; = \;\;
\minimize_{\lambda\,\in\, \Lambda} \; \maximize_{\pi\,\in\,\Pi} \;\; L_\tau(\pi,\lambda).
\end{equation}
Problem~\eqref{eq:regularized saddle point} is well-defined, since there exists a saddle point for $L_\tau(\pi,\lambda)$ over $\Pi\times\Lambda$ and it is unique; see Appendix~\ref{app:regularized saddle point} for proof. A saddle point $(\pi_\tau^\star,\lambda_\tau^\star)$, i.e., $\pi_\tau^\star = \argmax_{\pi \,\in\,\Pi}\min_{\lambda\,\in\,\Lambda} L_\tau(\pi, \lambda)$ and $\lambda_\tau^\star = \argmin_{\lambda\,\in\,\Lambda} \max_{\pi\,\in\,\Pi} 
L_\tau(\pi, \lambda)$,  satisfies a sandwich-like property,
\begin{equation}\label{eq:regularized saddle point approximation}
V_{r+\lambda_\tau^\star g}^\pi(\rho) - \tau  \mathcal{H}(\pi_\tau^\star)
\; \leq \;
V_{r+\lambda_\tau^\star g}^{\pi_\tau^\star}(\rho)
\; \leq \;
V_{r+\lambda g}^{\pi_\tau^\star}(\rho) +
\frac{\tau}{2}
\lambda^2
\; \text{ for all } (\pi,\lambda) \in\Pi\times\Lambda
\end{equation}
that states that $(\pi_\tau^\star,\lambda_\tau^\star)$ is a saddle point of the original Lagrangian $V_{r+\lambda g}^{\pi}(\rho)$, up to two $\tau$-terms. We thus propose a regularized policy gradient primal-dual (RPG-PD) method by maintaining a sequence for policy and dual variables each: $\{\pi_t\}_{t\,\geq\,0}$ for the policy-player, and $\{\lambda_t\}_{t\, \geq\,0}$ for the dual-player,
\begin{subequations}
	\label{eq: regularized primal-dual update}
	\begin{equation}    
	\displaystyle
	\pi_{t+1}(\cdot\,|\,s) 
	\; \; = \;\;
	\argmax_{\pi(\cdot\,|\,s) \,\in\,{\hat\Delta(A)}}
	\left\{ 
	\sum_a \pi(a\,|\,s) Q_{ r + \lambda_t g + \tau \psi_t }^{\pi_t}(s,a)  
	\,-\, 
	\frac{1}{\eta} \, \KL(\pi(\cdot\,|\,s), \pi_t(\cdot\,|\,s)) 
	\right\}    
	\label{eq: regularized pi update}
	\end{equation}
	\begin{equation}
	\displaystyle
	\!\!\!\! \!\!\!\! \!\!\!\! \!\!\!\! \!\!\!\! \!\!\!\! \!\!\!\! \!\!\!\! \!\!\!\! \!\!\!\! \!\!\!\! \!\!\!\! \!\!\!\! \!\!
	\lambda_{t+1} 
	\;\; = \;\;
	\argmin_{\lambda \,\in\, \Lambda}
	\left\{ 
	\lambda \Big(V^{\pi_t}_g(\rho) + \tau\lambda_t \Big) 
	\, + \,
	\frac{1}{2\eta} \, (\lambda-\lambda_t)^2 
	\right\},
	\label{eq: regularized lambda update}
	\end{equation}
\end{subequations}
where the gradient direction $Q_{ r + \lambda_t g + \tau \psi_t }^{\pi_t}(s,a)$ is the state-action value function under a composite function $r+\lambda_t g+\tau\psi_t$ in which $\psi_t(s,a) \DefinedAs - \log \pi_t(a\,|\,s)$, $\KL(p,p') \DefinedAs \sum_a p_a\log\frac{p_a}{p_a'}$ is the Kullback–Leibler (KL) divergence, $\hat\Delta(A) \DefinedAs \{ \pi(\cdot\,\vert\,s) \in \Delta(A)\,\vert\, \pi(a\,\vert\,s) \geq \frac{\epsilon_0}{|A|}, a\in A\}$ is a restricted probability simplex set with parameter $\epsilon_0 \in (0,1)$, $\eta$ is the stepsize, and $(\pi_0(\cdot\,\vert\,s), \lambda_0)\in\hat{\Delta}(A) \times \Lambda$ is an initial point. Projecting the policy iterate to the simplex set 
$\hat\Delta(A)$ ensures the boundedness of the gradient. Primal update~\eqref{eq: regularized pi update} works as the classical mirror descent with KL divergence~\citep{bubeck2015convex} with a projection onto the set $\hat{\Delta}(A)$. Dual update~\eqref{eq: regularized lambda update} performs typical projected gradient descent. Hence, RPG-PD is a single-time-scale method. 
RPG-PD simplifies the two-time-scale method~\citep{ying2022dual} to be single-time-scale and generalize the single-time-scale methods~\citep{ding2020natural,ding2022policy} with regularization. 

\subsection{Policy last-iterate convergence}

In Theorem~\ref{thm: linear convergence of RPG}, we show that the primal-dual iterates of RPG-PD converge in the last iterate; see Appendix~\ref{app: linear convergence of RPG} for proof. We characterize the convergence via a distance metric $\Phi_t = \KL_t(\rho) + \frac{1}{2}(\lambda_\tau^\star - \lambda_t)^2$, where $\KL_t(\rho) \DefinedAs (1/(1-\gamma))\sum_s d_\rho^{\pi_\tau^\star}(s)\KL_t(s)$ and $\KL_t(s) \DefinedAs \KL(\pi_\tau^\star(\cdot\,|\,s), \pi_t(\cdot\,|\,s))$.

\begin{theorem}[Linear convergence of RPG-PD]
	\label{thm: linear convergence of RPG}
	Let Assumption~\ref{as:feasibility} hold.
	If we set the stepsize $\eta\leq 1/C_{\tau,\xi,\epsilon_0}$, 
	then the primal-dual iterates of RPG-PD~\eqref{eq: regularized primal-dual update} satisfy  
	\[
	\begin{array}{rcl}
	\Phi_{t+1} 
	&\leq & \displaystyle
	{\rm e}^{-\eta \tau t}\, \Phi_1 
	\, + \,
	\frac{\eta}{\tau}\, 
	\max\left(\,(C_{\tau,\xi,\epsilon_0})^2, \,(C_{\tau,\xi}')^2\,\right)
	\end{array}
	\] 
	where $C_{\tau,\xi,\epsilon_0} \DefinedAs (1+{1}/({(1-\gamma)\xi})+\tau \log|A|)/(1-\gamma) - \tau \log (\epsilon_0/|A|)$, $ C_{\tau,\xi}'\DefinedAs(1+{\tau}/{\xi})/(1-\gamma)$. 
\end{theorem}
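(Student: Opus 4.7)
The plan is to prove a single-step contraction
\[
\Phi_{t+1} \;\leq\; (1-\eta\tau)\,\Phi_t \;+\; \eta^2\,\max\!\big((C_{\tau,\xi,\epsilon_0})^2,\,(C_{\tau,\xi}')^2\big),
\]
and then unroll it, using $(1-\eta\tau)^t \leq {\rm e}^{-\eta\tau t}$ and $\sum_{k=0}^{t-1}(1-\eta\tau)^k \leq 1/(\eta\tau)$ to obtain the stated bound. The key is that $L_\tau$ is $\tau$-strongly convex in $\lambda$ (via $\tfrac{\tau}{2}\lambda^2$) and, via entropy regularization, enjoys a ``soft'' $\tau$-strong concavity in $\pi$ measured through $\mathrm{KL}(\pi_\tau^\star(\cdot|s),\pi(\cdot|s))$. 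Throughout, the stepsize condition $\eta\leq 1/C_{\tau,\xi,\epsilon_0}$ guarantees that the KL-regularized primal step is well-defined on $\hat\Delta(A)$ and that the usual mirror-descent descent lemma applies; the restriction to $\hat\Delta(A)$ also bounds $\psi_t(s,a)\leq \log(|A|/\epsilon_0)$, so $\|Q_{r+\lambda_t g+\tau\psi_t}^{\pi_t}\|_\infty \leq C_{\tau,\xi,\epsilon_0}$, and the boundedness of $\Lambda$ via Lemma~\ref{lem:strong_duality} yields $|V_g^{\pi_t}(\rho)+\tau\lambda_t|\leq C_{\tau,\xi}'$.

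For the \emph{dual half}, I would exploit that $\lambda\mapsto L_\tau(\pi_t,\lambda)$ is $\tau$-strongly convex with gradient $V_g^{\pi_t}(\rho)+\tau\lambda_t$ bounded by $C_{\tau,\xi}'$; the standard projected-gradient analysis with strong convexity on the closed interval $\Lambda$ gives
\[
\tfrac{1}{2}(\lambda_{t+1}-\lambda_\tau^\star)^2 \;\leq\; \tfrac{1-\eta\tau}{2}(\lambda_t-\lambda_\tau^\star)^2 \;-\; \eta\big(L_\tau(\pi_t,\lambda_t)-L_\tau(\pi_t,\lambda_\tau^\star)\big) \;+\; \tfrac{\eta^2}{2}(C_{\tau,\xi}')^2.
\]

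For the \emph{primal half}, I would start from the per-state three-point identity of KL-mirror descent applied to~\eqref{eq: regularized pi update} and take expectation against $d_\rho^{\pi_\tau^\star}/(1-\gamma)$:
\[
\mathrm{KL}_{t+1}(\rho) \;\leq\; \mathrm{KL}_t(\rho) \;+\; \tfrac{\eta}{1-\gamma}\,\mathbb{E}_{s\sim d_\rho^{\pi_\tau^\star}}\!\big\langle Q_{r+\lambda_t g+\tau\psi_t}^{\pi_t}(s,\cdot),\, \pi_\tau^\star(\cdot|s)-\pi_{t+1}(\cdot|s)\big\rangle \;+\; \tfrac{\eta^2}{2}(C_{\tau,\xi,\epsilon_0})^2 .
\]
Then I would apply the soft performance-difference lemma to the regularized value $L_\tau(\pi,\lambda_t)=V_{r+\lambda_t g}^\pi(\rho)+\tau\mathcal{H}(\pi)$, which expresses the inner product as $(1-\gamma)\big(L_\tau(\pi_\tau^\star,\lambda_t)-L_\tau(\pi_t,\lambda_t)\big)$ plus a residual $-\tau\,\mathrm{KL}(\pi_\tau^\star(\cdot|s),\pi_t(\cdot|s))$ coming from the mismatch between $\psi_t=-\log\pi_t$ and $-\log\pi_\tau^\star$. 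This residual averaged against $d_\rho^{\pi_\tau^\star}$ is exactly $-\tau\,\mathrm{KL}_t(\rho)$, which delivers the geometric factor:
\[
\mathrm{KL}_{t+1}(\rho) \;\leq\; (1-\eta\tau)\,\mathrm{KL}_t(\rho) \;+\; \eta\big(L_\tau(\pi_t,\lambda_t)-L_\tau(\pi_\tau^\star,\lambda_t)\big) \;+\; \tfrac{\eta^2}{2}(C_{\tau,\xi,\epsilon_0})^2.
\]
Adding the two halves, the Lagrangian cross terms collapse to $L_\tau(\pi_t,\lambda_\tau^\star)-L_\tau(\pi_\tau^\star,\lambda_t)$, which is non-positive by the saddle-point inequality for $(\pi_\tau^\star,\lambda_\tau^\star)$ and can be dropped, yielding the one-step recursion above.

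The main obstacle is the primal strong-concavity argument, since $V_{r+\lambda g}^\pi(\rho)$ is non-concave in $\pi$, so standard convex-analytic reasoning does not apply. The remedy, following the entropy-regularized NPG analysis, is to read the regularization through the composite advantage $Q_{r+\lambda_t g+\tau\psi_t}^{\pi_t}$: evaluating $-\tau\log\pi_t$ rather than $-\tau\log\pi_\tau^\star$ inside the performance difference lemma is exactly what produces the $\tau\,\mathrm{KL}(\pi_\tau^\star,\pi_t)$ term needed for the geometric contraction, with the remaining discrepancy absorbed into the $\eta^2$ error. A secondary technical point is ensuring that the restriction to $\hat\Delta(A)$ (with parameter $\epsilon_0$) neither destroys the three-point identity nor changes the KL recursion, which follows because $\pi_\tau^\star\in\hat\Delta(A)$ as well (obtained via a separate argument showing the regularized optimum lies in the interior of the simplex) so the projection is non-expansive in KL against the comparator $\pi_\tau^\star$.
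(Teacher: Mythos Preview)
Your plan is essentially the paper's proof. Both arguments (i) reduce to a one-step recursion $\Phi_{t+1}\le(1-\eta\tau)\Phi_t+\eta^2\max((C_{\tau,\xi,\epsilon_0})^2,(C_{\tau,\xi}')^2)$ and then unroll; (ii) handle the dual half via projected gradient with $\tau$-strong convexity; (iii) handle the primal half by writing $L_\tau(\pi_\tau^\star,\lambda_t)-L_\tau(\pi_t,\lambda_t)$ through the performance-difference lemma for the composite reward $r+\lambda_t g+\tau\psi_t$, which produces exactly the extra $-\tau\,\KL_t(\rho)$ needed for contraction; (iv) drop the cross term $L_\tau(\pi_t,\lambda_\tau^\star)-L_\tau(\pi_\tau^\star,\lambda_t)\le 0$ by the saddle-point property. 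The only presentational difference is that the paper bounds the two Lagrangian gaps (i) and (ii) first and then rearranges to a $\Phi$-recursion, while you write separate recursions for $\KL_t(\rho)$ and $\tfrac12(\lambda_t-\lambda_\tau^\star)^2$ and add them; these are equivalent.

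Two small slips to fix. First, your displayed three-point inequality has the sign of the inner product wrong: from the standard Bregman three-point lemma one gets $\KL_{t+1}(s)\le\KL_t(s)-\eta\langle Q_{r+\lambda_tg+\tau\psi_t}^{\pi_t}(s,\cdot),\pi_\tau^\star(\cdot|s)-\pi_{t+1}(\cdot|s)\rangle-\KL(\pi_{t+1}(\cdot|s),\pi_t(\cdot|s))$, not ``$+$''. Second, the performance-difference lemma naturally gives the inner product against $\pi_\tau^\star-\pi_t$, not $\pi_\tau^\star-\pi_{t+1}$; the paper sidesteps this by invoking the version of the mirror-descent lemma (their Lemma~\ref{lem:online_mirror_descent}) that directly bounds $\langle \pi_t-\pi_\tau^\star,g\rangle$ rather than $\langle \pi_{t+1}-\pi_\tau^\star,g\rangle$. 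If you stick with the three-point form you need an extra Young-plus-Pinsker step to pass from $\pi_{t+1}$ to $\pi_t$, which is routine but should be stated. Your final primal inequality is correct, so these are cosmetic.

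Your remark about needing $\pi_\tau^\star\in\hat\Delta(A)$ for the mirror-descent lemma to apply with comparator $\pi_\tau^\star$ is well taken; the paper uses this implicitly without justification.
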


% Proof of Theorem~\ref{thm: linear convergence of RPG} is provided in Appendix~\ref{app: linear convergence of RPG}.
Theorem~\ref{thm: linear convergence of RPG} states that the primal-dual iterates of RPG-PD converge to a neighborhood of $(\pi_\tau^\star, \lambda_\tau^\star)$ in a linear rate. The size of neighborhood scales with ${\eta}/{\tau}+\eta\tau (1+\log^2\epsilon_0)$ and the convergence rate is $\eta\tau$. Even if $\epsilon_0$ is very small, the $\log\epsilon_0$-term is almost a constant. If we take $\eta = \min (\epsilon \tau, 1/C_{\tau,\xi,\epsilon_0})$ and $\epsilon_0=\epsilon$, then after $ O({1}/{\epsilon})$ iterations the RPG-PD's primal-dual iterate $(\pi_t, \lambda_t)$ is $\epsilon$-close to 
$(\pi_\tau^\star,\lambda_\tau^\star)$, i.e., $\Phi_{t} = O(\epsilon)$ for any $t\geq (1/(\epsilon \tau^2)) \log({1}/{\epsilon})$. For small $\tau$, we can translate the policy convergence for the value functions in Corollary~\ref{cor: linear convergence of RPG};
% , we show
% that RPG-PD outputs a nearly-optimal policy for Problem~\eqref{eq:CMDP} in the last-iterate way; 
see Appendix~\ref{app: linear convergence of RPG nearly optimal policy} for proof.

\begin{corollary}[Nearly-optimal constrained policy]
	\label{cor: linear convergence of RPG}
	Let Assumption~\ref{as:feasibility} hold. 
	For small $\epsilon>0$, if we take $\eta= \Theta(\epsilon^{4})$, $\tau = \Theta(\epsilon^2)$, and $\epsilon_0=\epsilon$,  
	then the policy iterates of RPG-PD~\eqref{eq: regularized primal-dual update} satisfy 
	\[
	V_r^{\pi^\star}(\rho) - V_r^{\pi_t}(\rho) 
	\; \leq \; 
	\epsilon
	\; \text{ and } \;
	- V_g^{\pi_t}(\rho) 
	\; \leq \; 
	\epsilon \;\;
	\text{ for any }
	t = \Omega\left( \, \frac{1}{\epsilon^6} \log^2\frac{1}{\epsilon} \, \right)
	\]
	where $\Omega(\cdot)$ only has some problem-dependent constant.
\end{corollary}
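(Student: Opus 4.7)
\textbf{Proof proposal for Corollary~\ref{cor: linear convergence of RPG}.} The plan is to decompose each of $V_r^{\pi^\star}(\rho)-V_r^{\pi_t}(\rho)$ and $-V_g^{\pi_t}(\rho)$ into two pieces: (i) a \emph{regularization bias}, capturing how much the regularized saddle $(\pi_\tau^\star,\lambda_\tau^\star)$ differs from $(\pi^\star,\lambda^\star)$, and (ii) a \emph{last-iterate error}, capturing how close $\pi_t$ is to $\pi_\tau^\star$. Theorem~\ref{thm: linear convergence of RPG} directly controls (ii) through $\Phi_t$, and the saddle-point sandwich~\eqref{eq:regularized saddle point approximation} controls (i). The remaining task is to calibrate $\tau$, $\eta$, and $\epsilon_0$ so that both pieces are simultaneously $O(\epsilon)$.

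For the regularization bias, I would first build a comparator $\hat\pi^\star\in\hat\Delta(A)$ by mixing $\pi^\star$ with the uniform distribution on $A$ at level $\epsilon_0$; a standard perturbation bound via the performance-difference lemma then loses only $O(\epsilon_0/(1-\gamma)^2)$ in $V_r$ and $V_g$, so that $V_g^{\hat\pi^\star}\geq -O(\epsilon_0/(1-\gamma)^2)$. Plugging $\pi=\hat\pi^\star$ into the left inequality of~\eqref{eq:regularized saddle point approximation} and using $\lambda_\tau^\star\in\Lambda$ along with $\mathcal{H}(\pi_\tau^\star)\leq \log|A|/(1-\gamma)$ yields $V_r^{\pi^\star}-V_r^{\pi_\tau^\star}=O\!\left(\tau\log|A|/(1-\gamma)+\epsilon_0/((1-\gamma)^3\xi)\right)$. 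For the dual side, the $\lambda$-minimality of $L_\tau(\pi_\tau^\star,\cdot)$ over $\Lambda$ gives the KKT relation $V_g^{\pi_\tau^\star}+\tau\lambda_\tau^\star\geq 0$ if $\lambda_\tau^\star=0$ and $V_g^{\pi_\tau^\star}+\tau\lambda_\tau^\star=0$ in the interior of $\Lambda$, so $-V_g^{\pi_\tau^\star}\leq \tau\lambda_\tau^\star=O(\tau/((1-\gamma)\xi))$, provided $\lambda_\tau^\star$ does not hit the upper endpoint of $\Lambda$, which holds for small $\tau$ by Lemma~\ref{lem:strong_duality}.

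For the last-iterate error, the performance-difference lemma plus H\"older's inequality yield $V_r^{\pi_\tau^\star}-V_r^{\pi_t}\leq \frac{1}{(1-\gamma)^2}\mathbb{E}_{s\sim d_\rho^{\pi_\tau^\star}}\|\pi_\tau^\star(\cdot|s)-\pi_t(\cdot|s)\|_1$, and an identical bound for $V_g$. Jensen's inequality followed by Pinsker's inequality then gives $\mathbb{E}_{s\sim d_\rho^{\pi_\tau^\star}}\|\pi_\tau^\star(\cdot|s)-\pi_t(\cdot|s)\|_1\leq \sqrt{2\,\mathbb{E}_{s\sim d_\rho^{\pi_\tau^\star}}\KL_t(s)}=\sqrt{2(1-\gamma)\KL_t(\rho)}\leq\sqrt{2(1-\gamma)\Phi_t}$, where the equality uses the definition of $\KL_t(\rho)$ and the last inequality uses $\KL_t(\rho)\leq\Phi_t$. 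Hence both gaps are $O(\sqrt{\Phi_t/(1-\gamma)^3})$.

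Finally, substituting $\tau=\Theta(\epsilon^2)$, $\eta=\Theta(\epsilon^4)$, and $\epsilon_0=\epsilon$ into Theorem~\ref{thm: linear convergence of RPG}, the noise floor is $(\eta/\tau)\cdot\max(C_{\tau,\xi,\epsilon_0}^2,(C_{\tau,\xi}')^2)=O(\epsilon^2\log^2(1/\epsilon))$, so choosing $t=\Omega((1/\epsilon^6)\log^2(1/\epsilon))$ also drives the exponential term $e^{-\eta\tau t}\Phi_1$ below this floor; consequently $\sqrt{\Phi_t}=\widetilde O(\epsilon)$. Combining with the regularization bias $O(\tau+\epsilon_0)=O(\epsilon)$ and folding the $\log$ factors into the problem-dependent constant $\Omega(\cdot)$ delivers the announced $\epsilon$-bounds on both the suboptimality and the constraint violation. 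The main obstacle is the coupling between the simplex-truncation parameter $\epsilon_0$ and the entropy regularizer: since $C_{\tau,\xi,\epsilon_0}$ grows like $\log(1/\epsilon_0)$ through the $-\tau\log(\epsilon_0/|A|)$ term, one has to balance $\epsilon_0$ against $\eta/\tau$ so that $(\eta/\tau)\,C_{\tau,\xi,\epsilon_0}^2$ stays at $\widetilde O(\epsilon^2)$, and this is precisely the source of the extra $\log^2(1/\epsilon)$ factor in the iteration count.
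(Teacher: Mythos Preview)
Your decomposition into regularization bias plus last-iterate error is exactly the paper's approach, and your handling of the last-iterate piece (performance difference lemma, then Pinsker and Jensen to pass from $\|\cdot\|_1$ to $\sqrt{\KL_t(\rho)}$) matches the paper line for line. Two points deserve correction.

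The genuine gap is in the constraint-violation bias. Your KKT argument covers $\lambda_\tau^\star=0$ and $\lambda_\tau^\star$ interior, but you dismiss the boundary case $\lambda_\tau^\star=\lambda_{\max}$ by invoking Lemma~\ref{lem:strong_duality}. That lemma bounds the \emph{unregularized} optimal dual $\lambda^\star$, not the regularized one $\lambda_\tau^\star$; passing from the former to the latter would require a continuity-in-$\tau$ argument that you have not supplied. The paper does not rely on $\lambda_\tau^\star<\lambda_{\max}$: when $\lambda_\tau^\star=\lambda_{\max}$ it combines the left inequality of~\eqref{eq:regularized saddle point approximation} at $\pi=\pi^\star$ with the unregularized saddle inequality $V_r^{\pi_\tau^\star}-V_r^{\pi^\star}\leq \lambda^\star(V_g^{\pi^\star}-V_g^{\pi_\tau^\star})$ to obtain $-(\lambda_{\max}-\lambda^\star)V_g^{\pi_\tau^\star}\leq \tau\mathcal{H}(\pi_\tau^\star)$, and then uses that one can always pick $\lambda^\star<\lambda_{\max}$ (an argument about $\lambda^\star$, not $\lambda_\tau^\star$) to conclude $-V_g^{\pi_\tau^\star}=O(\tau)$.

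Two smaller remarks. The detour through a truncated comparator $\hat\pi^\star$ is unnecessary: the sandwich~\eqref{eq:regularized saddle point approximation} holds for all $\pi\in\Pi$, so the paper plugs $\pi=\pi^\star$ directly and gets $V_r^{\pi^\star}-V_r^{\pi_\tau^\star}\leq \tau\mathcal{H}(\pi_\tau^\star)$ in one line (using $\lambda_\tau^\star V_g^{\pi_\tau^\star}\leq 0$ from the right inequality at $\lambda=0$, together with $V_g^{\pi^\star}\geq 0$). And your diagnosis of the $\log^2(1/\epsilon)$ factor is off: the $-\tau\log(\epsilon_0/|A|)$ term in $C_{\tau,\xi,\epsilon_0}$ carries a prefactor $\tau=\Theta(\epsilon^2)$, so with $\epsilon_0=\epsilon$ it is $O(\epsilon^2\log(1/\epsilon))=o(1)$ and $C_{\tau,\xi,\epsilon_0}=O(1)$, not $O(\log(1/\epsilon))$.
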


Corollary~\ref{cor: linear convergence of RPG} states that the last policy iterate of RPG-PD is an $\epsilon$-optimal policy for Problem~\eqref{eq:CMDP} after $\Omega({1}/{\epsilon^6})$ iterations. Compared with the single-time-scale methods~\citep{ding2020natural,ding2022policy}, RPG-PD improves the convergence from average-value (or regret-type) to \emph{last policy iterate}. Not just being theoretically stronger, the last-iterate convergence is more appealing since it captures the stability of trajectories of an algorithm~\citep{stooke2020responsive,moskovitz2023reload}.  
Compared with the two-time-scale methods~\citep{li2021faster,liu2021policy,ying2022dual,gladin2022algorithm}, RPG-PD is free of nested loops, and uniform ergodicity and exploratory initial state distribution. We notice that the dual methods~\citep{li2021faster,liu2021policy} yield history-average policies and the dual methods~\citep{ying2022dual,gladin2022algorithm} return policies from a subroutine. In contrast, RPG-PD outputs a nearly-optimal policy in the last iterate, the first-of-its-kind in the constrained MDP
literature, albeit the rate is worse than the average  ones~\citep{ding2020natural,ding2022policy}.  

To get zero constraint violation, i.e., $V_g^{\pi_t}(\rho)\geq  0$ at some $t$, it is straightforward to employ a conservative constraint $V_{g'}^\pi(\rho)\geq 0$ with $g'\DefinedAs g-(1-\gamma) \delta$ for some $\delta>0$. When $\epsilon$ is small enough, there always exists  some $\delta$ such that the policy iterates of RPG-PD~\eqref{eq: regularized primal-dual update} satisfy $V_r^{\pi^\star}(\rho) - V_r^{\pi_t}(\rho) 
\leq  
\epsilon$ and $V_g^{\pi_t}(\rho) \geq 0$ for large $t$; see Appendix~\ref{app: linear convergence of RPG zero violation} for proof. Our zero constraint violation ensures the last policy iterate of RPG-PD to satisfy the constraint, which is not the zero average constraint violation in the episodic setting~\citep{liu2021learning,wei2022triple}. 
% that make the average of constraint violation]s to be zero in expectation or probability. 
Compared with the zero constraint violation of a policy induced by an average of past occupancy measures~\citep{bai2022achieving}, RPG-PD's zero constraint violation directly settles the policy iterates down, which appears to be the first policy-based zero constraint violation.
% in the infinite-horizon setting.

Last but not least, the iteration complexity of RPG-PD is nearly-free of the MDP dimension, except for an $\log|A|$-term, which inherits the  dimension-free property of the NPG methods~\citep{agarwal2021theory,cen2021fast,ding2020natural}. Hence, it is ready to view RPG-PD as a variant of NPG methods and generalize RPG-PD to constrained MDPs with large state spaces in the function approximation setting.

\subsection{Linear function approximation case}

To deal with large state spaces, we use a parametrized policy $\pi_\theta$ with $\theta \in \mathbb{R}^d$ for RPG-PD~\eqref{eq: regularized primal-dual update} without restricting $\Delta(A)$, where $d$ is much smaller than the size of state/action spaces. To introduce function approximation, we begin with a tabular softmax policy
$\pi_\theta(a\,\vert\,s) 
= 
\frac{ {\rm exp}({\theta_{s,a}}) }{ \sum_{a'}{\rm exp}({\theta_{s,a'})}}
\text{ for all } (s,a) \in S\times A$ and $\theta \in\mathbb{R}^{|S||A|}$.  
% Let $V_\tau^{\pi_\theta}(\rho) \DefinedAs V_{r+\lambda g}^{\pi_\theta}(\rho ) +\tau\mathcal{H}({\pi_\theta})$. 
Connecting NPG to mirror descent~\citep{agarwal2021theory,bhandari2021linear}, we express RPG-PD~\eqref{eq: regularized primal-dual update} as a NPG method with the following update; see Appendix~\ref{app:reduction of RPG-PD} for proof,  
\begin{subequations}\label{eq:regularized primal-dual update NPG}
	\begin{equation}
	\label{eq:regularized primal update NPG}
	% \begin{array}{rcl}
	\theta_{t+1} \;\; = \;\; \theta_t \, + \, \eta \,  (1-\gamma)F_\rho(\theta_t)^\dagger \cdot\nabla_\theta L_{\tau}(\pi_{\theta_t}, \lambda_t)
	% \end{array}
	\end{equation}
	\begin{equation}
	\label{eq:regularized dual update NPG}
	% \begin{array}{rcl}
	\displaystyle
	\!\!\!\! \!\!\!\!
	\!\!\!\! \!\!\!\!
	\!\!\!\! \!\!\!\!
	\lambda_{t+1} 
	\;\; = \;\;
	\mathcal{P}_{\Lambda}\left(\, (1-\eta \tau) \lambda_t- 
	\eta V^{\pi_{\theta_t}}_g(\rho) \,\right)
	% \end{array}
	\end{equation}
\end{subequations}
where $F_\rho(\theta_t)^\dagger \cdot\nabla_\theta L_{\tau}(\pi_{\theta_t}, \lambda_t) 
% = F_\rho(\theta_t)^\dagger\cdot \nabla_\theta V_{\tau}^{\pi_\theta}(\rho)
$ is a NPG direction, and
$F_\rho(\theta)$ is the Fisher information matrix for a policy $\pi_\theta$, i.e.,
$
\displaystyle
F_\rho(\theta) 
\DefinedAs 
\mathbb{E}_{s\,\sim\, d_\rho^{\pi_\theta}}\mathbb{E}_{a\,\sim\,\pi_\theta(\cdot\,\vert\,s)} 
[\,
\nabla_\theta \log \pi_\theta(a\,\vert\,s)
(
\nabla_\theta \log \pi_\theta(a\,\vert\,s)
)^\top
\,]$. 
A useful property of~\eqref{eq:regularized primal update NPG} is that NPG can be related to a linear regression. 
For any policy $\pi_\theta$ and a state-action value function $Q^{\pi_\theta}$, the associated compatible function approximation error is
$E_Q(w, \theta,\nu) 
\DefinedAs \mathbb{E}_{(s,a)\,\sim\,\nu}
[\, 
(w^\top \nabla \log\pi_\theta (a\,\vert\,s) - Q^{\pi_\theta}(s,a))^2
\,]$,
where $\nu(s,a) = d_\rho^{\pi_\theta}(s)\pi_\theta(a\,\vert\,s)$ is a state-action distribution.
It is known that~\eqref{eq:regularized primal update NPG} is equivalent to $\theta_{t+1} = 
\theta_t  + \eta w_t^\star$, where $w_t^\star\in
\argmin_{w\, \in \,\mathbb{R}^d}
E_Q(w, \theta_t, {\nu_t})$ in which $Q^{\pi_{\theta_t}}(s,a) = Q_{r+\lambda_t g+\tau \psi_t}^{\pi_{\theta_t}}(s,a)$ and $\nu_t(s,a) = d_\rho^{\pi_{\theta_t}}(s)\pi_{\theta_t}(a\,\vert\,s)$ (e.g.,~\cite[Lemma~1]{yuan2022linear}). In practice, only an approximate minimizer $w_t^\star$ is available if a sample-based algorithm is used, e.g., $w_t \approx \argmin_{\norm{w}\,\leq\,W} E_Q (w, \theta_t, {\nu_t})$, where $W>0$.
% \begin{equation}\label{eq:approximate NPG}
%     w_t 
%     \; \approx \; \argmin_{\norm{w}\,\leq\,W} E_Q (w, \theta_t, {\nu_t}),
%     \;
%     \text{ where $W>0$ is some norm constraint.}
%  \end{equation}

A useful generalization of the softmax policy to large state spaces is the log-linear policy based on linear function approximation. Let $\phi_{s,a}\in\mathbb{R}^d$ be a feature map with $\norm{\phi_{s,a}}\leq 1$ for each state-action pair $(s,a)$.  A log-linear policy $\pi_\theta$: $S\to\Delta(A)$ is parametrized by a parameter $\theta\in\mathbb{R}^d$,
\[
\displaystyle
\pi_\theta(a\,\vert\,s) 
\;\; = \; \;
\frac{ {\rm exp}(\,{\phi_{s,a}^\top \, \theta}\,) }{ \sum_{a'}{\rm exp}(\,{\phi_{s,a'}^\top\, \theta\,)}}
\; \text{ for all } (s,a) \in S\times A
\]
which takes the tabular softmax policy as a special case, i.e., $\phi_{s,a}$ is an indicator function. 
We notice that $\nabla_\theta \log \pi_\theta(a\,\vert\,s) = 
\phi_{s,a} - \mathbb{E}_{a'\,\sim\,\pi_\theta(\cdot\,\vert\,s)} \left[\,\phi_{s,a'}\,\right]$. Since the log-linear policy is invariant to any action-independent term, it is convenient to replace $\nabla_\theta \log\pi_\theta(a\,\vert\,s)$ by $\phi_{s,a}$ and we introduce a simplified compatible function approximation error,
$
\mathcal{E}_Q(w, \theta,\nu) 
\DefinedAs \mathbb{E}_{(s,a)\,\sim\,\nu}
[\, 
(\phi_{s,a}^\top w - Q^{\pi_\theta}(s,a))^2
\,].
$
Thus, we can take $w_t^\star\in
\argmin_{w\, \in \,\mathbb{R}^d}
\mathcal{E}_Q(w, \theta_t, {\nu_t})$ in which $Q^{\pi_{\theta_t}}(s,a) = Q_{r+\lambda_t g+\tau \psi_t}^{\pi_{\theta_t}}(s,a)$ and $\nu_t(s,a) = d_\rho^{\pi_{\theta_t}}(s)\,\pi_{\theta_t}(a\,\vert\,s)$ to update $\theta_{t+1} = 
\theta_t  + \eta w_t^\star$. Using the log-linear policy class, we replace the primal gradient direction of RPG-PD~\eqref{eq: regularized primal-dual update} by its linear function approximation $\phi_{s,a}^\top w_t^\star$,
\begin{equation}\label{eq:regularized primal-dual update MD}
\begin{array}{rcl}
\displaystyle
\pi_{\theta_{t+1}}(\cdot\,|\,s) 
& = & \displaystyle
\argmax_{\pi(\cdot\,|\,s) \,\in\,\hat\Delta(A) } \left\{ 
\sum_a \pi(a\,|\,s) \phi_{s,a}^\top w_t^\star  \, - \, \frac{1}{\eta} \, \KL(\pi(\cdot\,|\,s), \pi_{\theta_t}(\cdot\,|\,s)) \right\}
%    \\[0.2cm]
%    \displaystyle
%    \lambda_{t+1} 
%    & = & \displaystyle
%    \argmin_{\lambda \,\in\, \Lambda}\left\{ 
% \lambda \Big(V^{\pi_t}_g(\rho) + \tau\lambda_t \Big) + \frac{1}{2\eta} \, (\lambda-\lambda_t)^2 \right\}.  
\end{array}
\end{equation}
which, together with Dual update~\eqref{eq: regularized lambda update}, leads to a general version of RPG-PD. The set $\hat\Delta(A)$ ensures bounded true gradient direction $Q^{\pi_{\theta_t}}_{r+\lambda_t g+\tau\psi_t}(s,a)$. When there is no function approximation error,~\eqref{eq:regularized primal-dual update MD} reduces to Primal update~\eqref{eq: regularized pi update}. 
In practice, we can only compute $w_t^\star$ approximately via 
\[
w_t 
\; 
\approx 
\;
\argmin_{\norm{w}\,\leq\,W} \;
\mathcal{E}_Q (w, \theta_t, d_{t,\nu})
\]
which leads to an inexact RPG-PD: Primal update~\eqref{eq:regularized primal-dual update MD} in which $w_t^\star$ is replaced by $w_t$ and Dual update~\eqref{eq: regularized lambda update}, where $d_{t,\nu} \DefinedAs (1-\gamma)\mathbb{E}_{(s_0,a_0)\,\sim\,\nu} 
\sum_{t\,=\,0}^{\infty} \gamma^t \text{Pr}( s_t = s, a_t = a\,\vert\, \pi_{\theta_t}, s_0,a_0)$ is a state-action distribution starting from any distribution $\nu$. Noticeably, $d_{t,\nu}$ is more general than $\nu_t$.  
To control the function approximation error, 
we divide $\mathcal{E}_Q(w_t, \theta_t, d_{t,\nu})$ into a statistical error  $\mathcal{E}_Q(w_t, \theta_t, d_{t,\nu})
- 
\mathcal{E}_Q(w_t^\star, \theta_t, d_{t,\nu})$ that is similar to the excess risk in supervised learning, and an approximation error $\mathcal{E}_Q(w_t^\star, \theta_t, d_{t,\nu})$ that captures how well a linear function $(w_t^\star)^\top \phi_{s,a}$ approximates the true value function under $d_{t,\nu}$.
% \[
% \mathcal{E}_Q(w_t, \theta_t, \nu_t)
% \; = \;
% \underbrace{
% \mathcal{E}_Q(w_t, \theta_t, \nu_t)
%  - 
% \mathcal{E}_Q(w_t^\star, \theta_t, \nu_t)
% }_{\displaystyle \text{statistical error}}
% \, + \,
% \underbrace{\mathcal{E}_Q(w_t^\star, \theta_t, \nu_t)
% }_{\displaystyle \text{approximation error}}
% \]
If the on-policy distribution $d_{t,\nu}$ in $\mathcal{E}_Q(w_t^\star, \theta_t, d_{t,\nu})$ is replaced by $\nu^\star(s,a) = d_\rho^{\pi_\tau^\star}(s) \, \text{Unif}_A(a)$, we define a transfer error $\mathcal{E}_Q(w_t^\star,\theta_t, \nu^\star)$. Let the covariance matrix of $\phi_{s,a}$ in any state-action distribution $\nu$ be 
$\Sigma_{\nu} 
\DefinedAs 
\mathbb{E}_{(s,a) \, \sim \, \nu}
[\,
\phi_{s,a} \phi_{s,a}^\top
\,]$, and the relative condition number between $\Sigma_{\nu}$ and $\Sigma_{\nu^\star}$ be 
$\kappa_\nu \DefinedAs 
\max_{w\,\in\, \mathbb{R}^d} \frac{w^\top \Sigma_{\nu^\star} w}{w^\top \Sigma_{\nu} w}$.

We make an assumption on the statistical error, the transfer error, and the relative condition number.

\begin{assumption}\label{as:bounded errors}
	(i) There exist $\epsilon_{\normalfont\text{stat}}$, $\epsilon_{\normalfont\text{bias}}>0$ such that 
	$
	\mathbb{E}[\,
	\mathcal{E}_Q(w_t, \theta_t, d_{t,\nu})
	- 
	\mathcal{E}_Q(w_t^\star, \theta_t, d_{t,\nu})
	\,]
	\leq 
	\epsilon_{\normalfont\text{stat}}
	$
	and
	$
	\mathbb{E}
	[\,
	\mathcal{E}_Q(w_t^\star,\theta_t, \nu^\star)
	\,]
	\leq 
	\epsilon_{\normalfont\text{bias}}
	$; (ii) The relative condition number is finite, i.e.,
	$\kappa_\nu <\infty$.
\end{assumption}

We assess the convergence of inexact RPG-PD via the distance metric $\mathbb{E}\left[\,\Phi_t\,\right] \DefinedAs \mathbb{E}\left[\,\KL_t(\rho)\,\right] + \frac{1}{2}\mathbb{E}\left[\,(\lambda_\tau^\star - \lambda_t)^2\,\right]$, where the expectation $\mathbb{E}$ is over the randomness of computing $w_t$ via a sample-based algorithm. We state the convergence in Theorem~\ref{thm: linear convergence of RPG linear function approximation} and delay its proof to Appendix~\ref{app:linear convergence of RPG linear function approximation}.

\begin{theorem}[Linear convergence of inexact RPG-PD]
	\label{thm: linear convergence of RPG linear function approximation}
	Let Assumptions~\ref{as:feasibility}--\ref{as:bounded errors} hold. 
	If we take the stepsize $\eta\leq 1/C_W$, 
	then  
	the primal-dual iterates of inexact RPG-PD
	satisfy   
	\[
	\begin{array}{rcl}
	\mathbb{E}[\,\Phi_{t+1}\,] 
	&\leq& \displaystyle
	{\rm e}^{-\eta \tau t} \mathbb{E}[\,\Phi_1\,] 
	\, + \,
	\frac{\eta}{\tau}\max \left(\,(C_W)^2, (C_{\tau,\xi}')^2 \,\right)
	\, + \,
	\frac{2}{\tau} \left(\sqrt{|A| \epsilon_{\normalfont \text{bias} }} + \sqrt{|A|  \kappa_\nu
		\epsilon_{\normalfont \text{stat} }}\right)
	\end{array}
	\] 
	where $C_W \DefinedAs 2W/(1-\gamma)$ and $ C_{\tau,\xi}'\DefinedAs(1+{\tau}/{\xi}) /(1-\gamma)$. 
	% Moreover, if we take $\eta= \tau^2$ and $\tau=\epsilon$, then  $\Phi_{t+1}\leq O(\epsilon)$ for any $t\geq \frac{1}{\epsilon^3} \ln\frac{1}{\epsilon}$.
\end{theorem}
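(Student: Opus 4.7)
\extraproof{Theorem~\ref{thm: linear convergence of RPG linear function approximation}}
The plan is to mimic the one-step Lyapunov descent argument used for the exact RPG-PD (\prettyref{thm: linear convergence of RPG}), but to explicitly carry along the extra terms generated when the NPG direction $Q_{r+\lambda_t g+\tau\psi_t}^{\pi_{\theta_t}}$ is replaced by its linear proxy $\phi_{s,a}^\top w_t$. Writing $Q_t\DefinedAs Q^{\pi_{\theta_t}}_{r+\lambda_t g+\tau\psi_t}$ and $\widehat{Q}_t(s,a)\DefinedAs \phi_{s,a}^\top w_t$, we split the primal step into an ``exact'' piece that uses $Q_t$ and a ``perturbation'' piece driven by $Q_t-\widehat{Q}_t$; because the update is a mirror descent step on $\hat\Delta(A)$ with KL regularizer, the three-point identity gives, for each state $s$,
\[
\KL(\pi_\tau^\star(\cdot|s),\pi_{\theta_{t+1}}(\cdot|s))\le \KL(\pi_\tau^\star(\cdot|s),\pi_{\theta_t}(\cdot|s))+\eta\bigl\langle \widehat{Q}_t(s,\cdot),\pi_{\theta_t}(\cdot|s)-\pi_\tau^\star(\cdot|s)\bigr\rangle+\tfrac{\eta^2}{2}\|\widehat{Q}_t(s,\cdot)\|_\infty^2.
\]
Since $\|w_t\|\le W$ and $\|\phi_{s,a}\|\le 1$, we have $|\widehat{Q}_t(s,a)|\le W$, which is where the constant $C_W=2W/(1-\gamma)$ enters in place of $C_{\tau,\xi,\epsilon_0}$. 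Adding the exact-gradient term and subtracting it again, the middle inner product becomes $\bigl\langle Q_t(s,\cdot),\pi_{\theta_t}-\pi_\tau^\star\bigr\rangle+\bigl\langle \widehat{Q}_t-Q_t,\pi_{\theta_t}-\pi_\tau^\star\bigr\rangle$; the first summand is exactly what appeared in the tabular proof and, after weighting by $d_\rho^{\pi_\tau^\star}/(1-\gamma)$ and invoking the performance-difference lemma together with $\tau$-strong concavity of $L_\tau$ in the policy direction, contributes the contraction factor $(1-\eta\tau)$ on the KL part of $\Phi_t$.

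The dual analysis is unchanged from the exact case: update~\eqref{eq: regularized dual update NPG} is a projected gradient step on the $\tau$-strongly convex quadratic $L_\tau(\pi_{\theta_t},\cdot)$, and by the standard ``strongly convex + smooth'' inequality, $(\lambda_\tau^\star-\lambda_{t+1})^2\le(1-\eta\tau)(\lambda_\tau^\star-\lambda_t)^2+\eta^2(C_{\tau,\xi}')^2$ after using $|V_g^{\pi_{\theta_t}}(\rho)+\tau\lambda_t|\le (1+\tau/\xi)/(1-\gamma)=C_{\tau,\xi}'$. Summing the primal and dual inequalities yields
\[
\Phi_{t+1}\le(1-\eta\tau)\Phi_t+\eta^2\max\bigl((C_W)^2,(C_{\tau,\xi}')^2\bigr)+\eta\, \mathcal{R}_t,
\]
where $\mathcal{R}_t\DefinedAs \tfrac{1}{1-\gamma}\sum_s d_\rho^{\pi_\tau^\star}(s)\bigl\langle \widehat{Q}_t(s,\cdot)-Q_t(s,\cdot),\pi_\tau^\star(\cdot|s)-\pi_{\theta_t}(\cdot|s)\bigr\rangle$ is the approximation residual.

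The main work, and the principal obstacle, is to control $\mathbb{E}[\mathcal{R}_t]$ by $\sqrt{|A|\epsilon_{\text{bias}}}+\sqrt{|A|\kappa_\nu\epsilon_{\text{stat}}}$ uniformly in $t$. Fixing $s$ and applying Cauchy--Schwarz, $|\langle \widehat{Q}_t-Q_t,\pi_\tau^\star-\pi_{\theta_t}\rangle|\le \sqrt{|A|}\bigl(\sum_a (\widehat{Q}_t(s,a)-Q_t(s,a))^2\bigr)^{1/2}$; averaging over $s\sim d_\rho^{\pi_\tau^\star}$ and using $\nu^\star(s,a)=d_\rho^{\pi_\tau^\star}(s)\text{Unif}_A(a)$ turns the outer average into an $L^2(\nu^\star)$ norm of $\widehat{Q}_t-Q_t$, times $\sqrt{|A|}$. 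Splitting $\widehat{Q}_t-Q_t=(\phi^\top w_t-\phi^\top w_t^\star)+(\phi^\top w_t^\star-Q_t)$, the second summand is bounded in $L^2(\nu^\star)$ by $\sqrt{\mathcal{E}_Q(w_t^\star,\theta_t,\nu^\star)}\le \sqrt{\epsilon_{\text{bias}}}$ in expectation (by Assumption~\ref{as:bounded errors}~(i) and Jensen). For the first summand, the change-of-measure inequality $\mathbb{E}_{\nu^\star}[(\phi^\top(w_t-w_t^\star))^2]\le \kappa_\nu\, \mathbb{E}_{d_{t,\nu}}[(\phi^\top(w_t-w_t^\star))^2]\le \kappa_\nu(\mathcal{E}_Q(w_t,\theta_t,d_{t,\nu})-\mathcal{E}_Q(w_t^\star,\theta_t,d_{t,\nu}))$ holds by the relative-condition-number definition and the fact that $w_t^\star$ minimizes $\mathcal{E}_Q(\cdot,\theta_t,d_{t,\nu})$; taking expectations and using Assumption~\ref{as:bounded errors} gives the $\sqrt{\kappa_\nu\epsilon_{\text{stat}}}$ bound. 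Combining, $\mathbb{E}[\mathcal{R}_t]\le 2\bigl(\sqrt{|A|\epsilon_{\text{bias}}}+\sqrt{|A|\kappa_\nu\epsilon_{\text{stat}}}\bigr)/(1-\gamma)$, though after absorbing the $1/(1-\gamma)$ into the other constants we obtain the stated coefficient.

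Substituting back and unrolling the recursion $\mathbb{E}[\Phi_{t+1}]\le(1-\eta\tau)\mathbb{E}[\Phi_t]+\eta^2 M+\eta E$ with $M=\max((C_W)^2,(C_{\tau,\xi}')^2)$ and $E$ the residual bound above, the geometric sum $\sum_{k\ge 0}(1-\eta\tau)^k\le 1/(\eta\tau)$ together with $(1-\eta\tau)^t\le e^{-\eta\tau t}$ (valid since $\eta\tau\le 1$ under $\eta\le 1/C_W$) yields the advertised bound. The principal difficulty is precisely the transfer step: the perturbation is naturally measured on the on-policy distribution $d_{t,\nu}$, while the potential $\Phi_t$ weights states by $d_\rho^{\pi_\tau^\star}$, and bridging the two is what forces the appearance of both $\kappa_\nu$ and the uniform-action factor $|A|$.
\endextraproof
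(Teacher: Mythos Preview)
Your proposal is correct and follows essentially the same route as the paper: both arguments reuse the one-step Lyapunov descent from \pref{thm: linear convergence of RPG} and isolate a cross term measuring the discrepancy between $Q_t$ and $\phi^\top w_t$, then bound that cross term by $\sqrt{|A|\epsilon_{\text{bias}}}+\sqrt{|A|\kappa_\nu\epsilon_{\text{stat}}}$ via Cauchy--Schwarz, the definition of $\nu^\star$, and the relative condition number, before unrolling the contraction.

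Two minor remarks. First, the mechanism by which the ``exact'' inner product $\langle Q_t,\pi_{\theta_t}-\pi_\tau^\star\rangle$ produces the $-\eta\tau\KL_t(\rho)$ contraction is not really ``$\tau$-strong concavity of $L_\tau$ in the policy direction'': it is the specific algebraic identity (via performance difference on both $V_{r+\lambda_t g}$ and the entropy term $V_{\psi_t}$) that $\tfrac{1}{1-\gamma}\sum_s d_\rho^{\pi_\tau^\star}(s)\langle Q_t(s,\cdot),\pi_\tau^\star-\pi_{\theta_t}\rangle-\tau\KL_t(\rho)=L_\tau(\pi_\tau^\star,\lambda_t)-L_\tau(\pi_{\theta_t},\lambda_t)$, whose nonnegativity then combines with the dual part to close the recursion; since you point to the tabular proof this is fine, but the phrase is a bit misleading. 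Second, the paper decomposes the cross term into four pieces (splitting both by $\pi_\tau^\star$ vs.\ $\pi_{\theta_t}$ and by $w_t$ vs.\ $w_t^\star$), whereas you do a two-way split $Q_t-\widehat Q_t=(\phi^\top w_t^\star-Q_t)+\phi^\top(w_t-w_t^\star)$ and bound the $\nu^\star$-norm directly; the two are equivalent up to constants, and your version is slightly more compact.
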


Theorem~\ref{thm: linear convergence of RPG linear function approximation} states that the primal-dual iterates of inexact RPG-PD converge to a neighborhood of $(\pi_\tau^\star, \lambda_\tau^\star)$ in a linear rate. The convergence rate is $\eta\tau$ and the size of neighborhood scales with a sum of an ${\eta}/{\tau}$-term and an ${1}/{\tau}$-term that amplifies the effect of function approximation  $(\epsilon_{\normalfont \text{stat} }, \epsilon_{\normalfont \text{bias} })$. We note that, Theorem~\ref{thm: linear convergence of RPG linear function approximation} does not require the strong duality in the parametrized policy class, and the function approximation error includes the duality gap caused by the inexpensiveness of function class and the policy representation error caused by the restricted policy set $\hat\Delta(A)$.    
When there is no function approximation error, Theorem~\ref{thm: linear convergence of RPG linear function approximation} has a similar result as Theorem~\ref{thm: linear convergence of RPG}. 
It is important to control $(\epsilon_{\normalfont \text{stat}},\epsilon_{\normalfont \text{bias}})$ to be small: (i) Application of stochastic gradient methods to the linear regression leads to $\epsilon_{\normalfont \text{stat}} = O({1}/{\sqrt{K}})$ or $O({1}/{K})$, where $K$ is the number of gradient steps, and thus, it is easy to control $\epsilon_{\normalfont \text{stat}}$; (ii) When $\epsilon_0$ is very small, the parametrized policy iterate can be contained in $\hat\Delta(A)$, and thus $\epsilon_{\normalfont \text{bias}}$ becomes zero in some cases, e.g., tabular softmax case~\citep{agarwal2021theory} or low-rank MDPs~\citep{yang2019sample,jin2020provably} with $d\geq |A|$; it can be made very small if the function class is rich, e.g., wide neural networks~\citep{wang2020neural}. When the errors are small, it is ready to establish Corollary~\ref{cor: linear convergence of RPG linear function approximation}; see Appendix~\ref{app: linear convergence of RPG nearly optimal policy linear function approximation} for proof.

\begin{corollary}[Nearly-optimal constrained policy]
	\label{cor: linear convergence of RPG linear function approximation}
	Let Assumptions~\ref{as:feasibility}--\ref{as:bounded errors} hold and
	$\epsilon_{\normalfont \text{stat}}$, $\epsilon_{\normalfont \text{bias}}= O(\epsilon^8)$ for small $\epsilon$, $\epsilon_0>0$. If we take the stepsize $\eta= \Theta(\epsilon^{4})$ and $\tau =\Theta(\epsilon^2)$, 
	then the policy iterates of inexact RPG-PD satisfy 
	\[
	\mathbb{E}\left[\,V_r^{\pi^\star}(\rho) - V_r^{\pi_{\theta_t}}(\rho) \,\right]
	\; \leq \; 
	\epsilon
	\; \text{ and } \;
	\mathbb{E}\left[\, - V_g^{\pi_{\theta_t}}(\rho) \,\right]
	\; \leq \; 
	\epsilon \;\;
	\text{ for any }
	t
	= 
	\Omega \left(\,\frac{1}{\epsilon^6} \log^2\frac{1}{\epsilon} \,\right)
	\]
	where $\Omega(\cdot)$ only has some problem-dependent constant.
\end{corollary}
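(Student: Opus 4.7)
The plan is to chain three estimates: first invoke \prettyref{thm: linear convergence of RPG linear function approximation} to control $\mathbb{E}[\Phi_t]$, then translate the resulting KL bound into value-function differences via the performance difference lemma and Pinsker's inequality, and finally bound the regularization bias $V_r^{\pi^\star}(\rho)-V_r^{\pi_\tau^\star}(\rho)$ via the sandwich characterization~\eqref{eq:regularized saddle point approximation} of the regularized saddle point $(\pi_\tau^\star,\lambda_\tau^\star)$.

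\textbf{Parameter tuning.} With $\eta=\Theta(\epsilon^4)$, $\tau=\Theta(\epsilon^2)$, and $\epsilon_{\text{stat}},\epsilon_{\text{bias}}=O(\epsilon^8)$, each of the three terms on the right of Theorem~\ref{thm: linear convergence of RPG linear function approximation} is $O(\epsilon^2)$: the constants $C_W$ and $C_{\tau,\xi}'$ are uniformly bounded in $\epsilon$, so $(\eta/\tau)\max(C_W^2,(C_{\tau,\xi}')^2)=O(\epsilon^2)$; the function-approximation term $(2/\tau)(\sqrt{|A|\epsilon_{\text{bias}}}+\sqrt{|A|\kappa_\nu\epsilon_{\text{stat}}})=O(\epsilon^{-2}\cdot\epsilon^{4})=O(\epsilon^2)$; and the exponential $\mathrm{e}^{-\eta\tau t}\mathbb{E}[\Phi_1]$ drops below $\epsilon^2$ as soon as $\eta\tau t=\Omega(\log(1/\epsilon))$, i.e., $t=\Omega((1/\epsilon^6)\log(1/\epsilon))$, which is absorbed into the stated $\log^2(1/\epsilon)$ factor. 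Hence $\mathbb{E}[\Phi_t]=O(\epsilon^2)$, and in particular $\mathbb{E}[\KL_t(\rho)]=O(\epsilon^2)$ and $\mathbb{E}[(\lambda_\tau^\star-\lambda_t)^2]=O(\epsilon^2)$.

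\textbf{From KL to value gap.} For any value function bounded by $1/(1-\gamma)$ (in particular $V_r^\pi$ and $V_g^\pi$), the performance difference lemma gives $V^{\pi_\tau^\star}(\rho)-V^{\pi_{\theta_t}}(\rho)\leq \frac{1}{(1-\gamma)^2}\,\mathbb{E}_{s\sim d_\rho^{\pi_\tau^\star}}\bigl[\,\|\pi_\tau^\star(\cdot\,|\,s)-\pi_{\theta_t}(\cdot\,|\,s)\|_1\,\bigr]$. Applying Pinsker's inequality together with Jensen's inequality twice (once under the expectation over $d_\rho^{\pi_\tau^\star}$, using the definition $\KL_t(\rho)=\frac{1}{1-\gamma}\sum_s d_\rho^{\pi_\tau^\star}(s)\KL_t(s)$, and once to pass the outer sampling expectation through the square root) yields $\mathbb{E}[V^{\pi_\tau^\star}(\rho)-V^{\pi_{\theta_t}}(\rho)]=O(\sqrt{\mathbb{E}[\KL_t(\rho)]})=O(\epsilon)$, and the analogous bound with $r$ replaced by $g$.

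\textbf{Regularization bias and final obstacle.} Plugging $\pi=\pi^\star$ into~\eqref{eq:regularized saddle point approximation} and using the feasibility $V_g^{\pi^\star}(\rho)\geq 0$ together with $\lambda_\tau^\star\geq 0$ gives $V_r^{\pi^\star}(\rho)\leq V_{r+\lambda_\tau^\star g}^{\pi_\tau^\star}(\rho)+\tau\mathcal{H}(\pi_\tau^\star)$; combined with the dual-side stationarity of $\lambda_\tau^\star$ at the saddle point (which forces $V_g^{\pi_\tau^\star}(\rho)\geq -\tau\lambda_\tau^\star$), this gives $V_r^{\pi^\star}(\rho)-V_r^{\pi_\tau^\star}(\rho)=O(\tau\log|A|)=O(\epsilon^2\log|A|)$ and $-V_g^{\pi_\tau^\star}(\rho)\leq \tau\lambda_\tau^\star=O(\tau)$. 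Adding these $O(\epsilon^2)$ regularization-bias terms to the $O(\epsilon)$ error from the preceding step yields both $\mathbb{E}[V_r^{\pi^\star}(\rho)-V_r^{\pi_{\theta_t}}(\rho)]\leq\epsilon$ and $\mathbb{E}[-V_g^{\pi_{\theta_t}}(\rho)]\leq\epsilon$. I expect the main technical subtlety to be in this last step, namely the careful handling of dual stationarity at a possibly boundary $\lambda_\tau^\star\in\Lambda$, which requires reconciling the saddle-point characterization with the box constraint; everything else reduces to Pinsker/Jensen routines layered on top of the already-proved Theorem~\ref{thm: linear convergence of RPG linear function approximation}.
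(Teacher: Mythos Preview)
Your proposal is essentially the paper's proof: apply Theorem~\ref{thm: linear convergence of RPG linear function approximation} with the stated parameters to get $\mathbb{E}[\Phi_t]=O(\epsilon^2)$, convert KL to value differences via performance difference plus Pinsker/Jensen, and handle the regularization bias through the sandwich inequality~\eqref{eq:regularized saddle point approximation}. The only cosmetic difference is that the paper first derives the result with intermediate scaling ($\tau=\Theta(\epsilon)$, $\eta=\Theta(\epsilon^2)$, giving $O(\sqrt{\epsilon})$ accuracy) and then substitutes $\epsilon\to\epsilon^2$ at the end, whereas you plug in the final parameters directly.

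You correctly flag the subtle point. For the reward bound the paper simply takes $\lambda=0$ in~\eqref{eq:regularized saddle point approximation}, which gives $\lambda_\tau^\star V_g^{\pi_\tau^\star}(\rho)\le 0$ unconditionally (cleaner than routing through stationarity). For the constraint bound $-V_g^{\pi_\tau^\star}(\rho)=O(\tau)$, your claim ``dual stationarity forces $V_g^{\pi_\tau^\star}(\rho)\ge -\tau\lambda_\tau^\star$'' fails precisely when $\lambda_\tau^\star=\lambda_{\max}$, since then the KKT condition gives the reverse inequality. The paper resolves this via a three-case analysis on whether the unconstrained minimizer $-V_g^{\pi_\tau^\star}(\rho)/\tau$ lies in $[0,\lambda_{\max}]$; in the troublesome right-boundary case it invokes the \emph{unregularized} saddle point $(\pi^\star,\lambda^\star)$, combines~\eqref{eq:regularized saddle point approximation} at $\pi=\pi^\star$ with the saddle-point inequality $V_r^{\pi_\tau^\star}(\rho)-V_r^{\pi^\star}(\rho)\le\lambda^\star(V_g^{\pi^\star}(\rho)-V_g^{\pi_\tau^\star}(\rho))$, and uses that one can always take $\lambda^\star<\lambda_{\max}$ to conclude $-V_g^{\pi_\tau^\star}(\rho)\le\tau\mathcal{H}(\pi_\tau^\star)/(\lambda_{\max}-\lambda^\star)=O(\tau)$.
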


Corollary~\ref{cor: linear convergence of RPG linear function approximation} states that the iteration complexity in Corollary~\ref{cor: linear convergence of RPG} holds in the function approximation case. When $\epsilon$ is small enough, we can design a conservative constraint such that the policy iterates of inexact RPG-PD satisfy $V_r^{\pi^\star}(\rho) - V_r^{\pi_{\theta_t}}(\rho) 
\leq  
\epsilon$ and $V_g^{\pi_{\theta_t}}(\rho) \geq 0$ for large $t$; see Appendix~\ref{app: inexact RPG zero violation} for proof. Compared with the zero average constraint violation~\citep{bai2023achieving}, this appears to be the first policy-based zero constraint violation result in the function approximation setting. Moreover, we extend inexact RPG-PD to be a sample-based algorithm and provide its sample complexity in Appendix~\ref{app:sample-based RPG-PD}.

\section{Policy Last-Iterate Convergence: Optimistic Method }
\label{sec:optimistic method}

Having established sublinear policy last-iterate convergence via regularization, we turn to the optimistic gradient method~\citep{hsieh2019convergence} for a faster rate. We propose an optimistic method -- Optimistic Policy Gradient Primal-Dual (OPG-PD) -- that converges an optimal constrained policy at a linear rate. 

\subsection{Optimistic policy gradient primal-dual method}

We propose an optimistic policy gradient primal-dual (OPG-PD) method by maintaining two sequences for policy and dual variables each: $\{\pi_t\}_{t\,\geq\,1}$ and $\{\hat\pi_t\}_{t\,\geq\,1}$ for the policy-player, and $\{\lambda_t\}_{t\, \geq\,1}$ and $\{\hat\lambda_t\}_{t\, \geq\,1}$ for the dual-player,
\begin{subequations}
	\label{eq: optimistic method}
	\begin{equation} 
	\label{eq: optimistic pi update}
	\begin{array}{rcl}  
	\pi_{t}(\cdot\,|\,s) 
	&  =\;\; \!\!\!\! & 
	\displaystyle
	\argmax_{\pi(\cdot\,|\,s) \,\in\,\Delta(A) }\left\{ 
	\sum_a \pi(a\,|\,s) Q_{ r + \lambda_{t-1} g  }^{\pi_{t-1}}(s,a) 
	\,-\, 
	\frac{1}{2\eta}\, \norm{\pi(\cdot\,|\,s)- \hat\pi_t(\cdot\,|\,s)}^2 \right\}    
	\\[0.2cm]
	\hat{\pi}_{t+1}(\cdot\,|\,s) 
	&  =\;\; \!\!\!\! & 
	\displaystyle
	\argmax_{\pi(\cdot\,|\,s) \,\in\,\Delta(A) }\left\{ 
	\sum_a \pi(a\,|\,s) Q_{ r + \lambda_{t} g  }^{\pi_{t}}(s,a) \,-\, 
	\frac{1}{2\eta} \, \norm{\pi(\cdot\,|\,s)- \hat\pi_t(\cdot\,|\,s)}^2 \right\} 
	\end{array}
	\end{equation}
	\vspace{-2ex}
	\begin{equation} 
	\label{eq: optimistic lambda update}
	\begin{array}{rcl}
	\lambda_{t} 
	& = & 
	\displaystyle
	\argmin_{\lambda \,\in\, \Lambda}\left\{ 
	\lambda\, V^{\pi_{t-1}}_g(\rho)  \,+\, 
	\frac{1}{2\eta} \, (\lambda-\hat\lambda_t)^2 \right\}   
	\\[0.2cm]
	\hat\lambda_{t+1} 
	& = & 
	\displaystyle
	\argmin_{\lambda \,\in\, \Lambda}\left\{ 
	\lambda\, V^{\pi_{t}}_g(\rho) \,+\, 
	\frac{1}{2\eta} \, (\lambda-\hat\lambda_t)^2 \right\}
	\end{array}
	\end{equation}
\end{subequations}
where $\eta$ is the stepsize and $(\hat\pi_0, \hat\lambda_0) = (\pi_0, \lambda_0) \in \Pi \times \Lambda$ is the initial point. 
OPG-PD concurrently works with two primal iterates and two dual iterates, and each two are updated consecutively to stabilize the algorithm dynamics. The ``optimistic'' in optimization, e.g.,~\citep{rakhlin2013online} views $(\hat{\pi}_{t+1},\hat{\lambda}_{t+1})$-update as a real policy gradient step and $(\pi_{t},\lambda_t)$-update as a prediction step that generates an intermediate iterate $(\pi_{t},\lambda_{t})$. Not policy gradient at 
$(\hat{\pi}_t,\hat{\lambda}_t)$, the real step uses a policy gradient at $(\pi_t,\lambda_t)$ from prediction, exhibiting the optimism towards the prediction. Specifically, Primal update~\eqref{eq: optimistic pi update} works as the projected $Q$-ascent~\citep{bhandari2021linear,xiao2022convergence}, an application of the classical mirror descent with Euclidean distance~\citep{bubeck2015convex}, where the projection onto a probability simplex can be solved efficiently~\citep{condat2016fast}. Dual update~\eqref{eq: optimistic lambda update} performs standard projected gradient descent. We note that OPG-PD is different from the one-step multiplicative weights update in the policy-based ReLOAD~\citep{moskovitz2023reload}.

When there is no MDP transition dynamics, i.e., constrained bandit~\citep{moskovitz2023reload}, last-iterate convergence of OPG-PD to a saddle point is known in the minimax optimization~\citep{daskalakis2019last,lei2021last,weilinear2020,cai2022tight}, because Problem~\eqref{eq:saddle_point} reduces to a bilinear zero-sum game in this case. However, it is prohibitive to apply such bilinear game results to the Lagrangian $V_{r+\lambda g}^\pi(s)$ in every state $s$, as has been done for zero-sum Markov games~\citep{wei2021last,song2023can}. The main reason for this is that
% the optimal constrained policies depend on the initial state distribution $\rho$, and thus 
there may not exist an optimal constrained policy that is uniformly optimal across all states; see Appendix~\ref{app:scalarization_fallacy}. 

% Hence, application of OPG-PD to general asymmetric games, e.g., constrained Markov games would be independent of interest. 

% Hence, we can't view OPG-PD as performing optimistic gradient ascent descent update in every state $s \in S$. 

\subsection{Policy last-iterate convergence}

We define the distribution mismatch coefficient over $\rho$ as $\kappa_\rho \DefinedAs \sup_{\pi\,\in\,\Pi} \norm{{d_\rho^\pi}/{\rho}}_\infty$, which is the maximum distribution mismatch of policy $\pi$ relative to $\rho$, where  $d_\rho^\pi/\rho$ is divided per state. Hence, $\norm{{d_\rho^\pi}/{d_\rho^{\pi^\star}}}_\infty\leq {\kappa_\rho}/{(1-\gamma)}$ for any policy $\pi\in \Pi$ and $\kappa_\rho \leq {1}/{\rho_{\min}}$ where $\rho_{\min} \DefinedAs \min_s \rho(s)$. The projection operator $\mathcal{P}_X$ on a closed convex set $X$ defines $\mathcal{P}_X (x) \DefinedAs \argmin_{x'\,\in\,X} \norm{x'-x}$. 
% , and $\eta>0$ is the stepsize. Let the distance of a point $z\in Z$ to the set $Z^\star$ be $\text{\normalfont dist}^2(z, {Z}^\star) \DefinedAs \norm{ z - \mathcal{P}_{Z^\star}(z)}^2$. 

We state the policy last-iterate convergence of OPG-PD~\eqref{eq: optimistic method} in Theorem~\ref{thm:linear convergence OPG-PD}.
% We sketch  see Appendix~\ref{app:linear convergence OPG-PD} for proof.

% From the definition of $\kappa_\rho$, $\kappa_\rho \leq {1}/{\rho_{\min}}$ where $\rho_{\min} \DefinedAs \min_s \rho(s)$. We next assume $\rho_{\min}>0$.

\begin{theorem}[Linear convergence of OPG-PD]
	\label{thm:linear convergence OPG-PD}
	Let Assumption~\ref{as:feasibility} hold. Assume the optimal state visitation distribution be unique, i.e., $d_\rho^\pi = d_\rho^{\pi^\star}$ for any $\pi \in \Pi^\star$, and $\rho_{\min}>0$.
	If we set the stepsize $\eta \;\leq\; \min\left( {1}/(4\sqrt{\iota}), {(1-\gamma)^3}/(4|A|), {(1-\gamma)^3}/(2\kappa_\rho)\right)$, where $\iota>0$ is defined in Appendix~\ref{app:optimistic method preliminaries}, 
	then the primal-dual iterates of OPG-PD~\eqref{eq: optimistic method} satisfy 
	\[
	\frac{1}{2(1-\gamma)}
	\sum_{s} d_\rho^{\pi^\star}(s)
	\norm{\mathcal{P}_{\Pi^\star}(\hat\pi_{t}(\cdot\,|\,s))- \hat\pi_{t}(\cdot\,|\,s)}^2
	\, + \,
	\frac{1}{2} ( \mathcal{P}_{\Lambda^\star}(\hat\lambda_{t}) - \hat\lambda_{t} )^2 
	\;\; \leq \;\;
	\left( \frac{1}{1+C}\right)^t
	\]
	where $C \DefinedAs \min ({7(1-\gamma)}/{8}, {7\eta^2 (1-\gamma)^2(C_{\rho,\xi})^2 \rho_{\min}}/({6\kappa_{\rho,\gamma}})^2
	)$ in which $C_{\rho,\xi}$ and $\kappa_{\rho,\gamma}$ are given by  ${C}_{\rho,\xi} \DefinedAs  {c\rho_{\min}}/({2\sqrt{|S||A|}}) / (1+ {1}/((1-\gamma)\xi))$, $\kappa_{\rho,\gamma} \DefinedAs \max({\kappa_\rho}/(1-\gamma),1)$, and $c>0$ is a problem-dependent constant from Lemma~\ref{lem:bilinear_game_SP_MS}.
\end{theorem}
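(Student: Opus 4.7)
The plan is to adapt the linear last-iterate analysis for optimistic gradient on bilinear zero-sum games~\citep{weilinear2020,cai2022tight} to the non-convex policy-space problem~\eqref{eq:saddle_point}, using the uniqueness assumption on $d_\rho^{\pi^\star}$ to reduce the weighted policy-space distance to a genuine bilinear-game distance. First I would establish Lipschitz bounds on the joint gradient map $(\pi,\lambda)\mapsto(Q_{r+\lambda g}^\pi,\,-V_g^\pi(\rho))$ in the Euclidean metric; together with the boundedness of $\Lambda$, these produce the problem-specific constant $\iota$ and the $|A|/(1-\gamma)^3$ factor that drive the stepsize restriction. A standard Popov/optimistic one-step analysis applied to the paired updates~\eqref{eq: optimistic pi update}--\eqref{eq: optimistic lambda update} then yields a Lyapunov descent of the form
\[
\Psi_{t+1} \;\le\; \Psi_t \,-\, c_1\,\bigl\|(\hat\pi_{t+1},\hat\lambda_{t+1})-(\hat\pi_t,\hat\lambda_t)\bigr\|_{\rho^\star}^{2} \,-\, c_2\,\bigl\|(\pi_t,\lambda_t)-(\hat\pi_t,\hat\lambda_t)\bigr\|_{\rho^\star}^{2},
\]
where $\Psi_t$ is the $d_\rho^{\pi^\star}$-weighted squared distance from $(\hat\pi_t,\hat\lambda_t)$ to $\Pi^\star\times\Lambda^\star$ appearing on the left-hand side of the theorem.

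To close the loop into a contraction, I would upper bound $\Psi_{t+1}$ by the first squared displacement via a metric subregularity argument in occupancy-measure space. Under the bijection $q(s,a)=d_\rho^{\pi}(s)\pi(a|s)/(1-\gamma)$ the Lagrangian becomes the genuinely bilinear form $\langle r+\lambda g,\,q\rangle$, and \ref{lem:bilinear_game_SP_MS} furnishes a problem-dependent constant $c>0$ and an error bound $\dist((q,\lambda),\,Q^\star\times\Lambda^\star)\le \tfrac{1}{c}\,\|\mathrm{GradMap}(q,\lambda)\|$. The uniqueness assumption $d_\rho^\pi=d_\rho^{\pi^\star}$ for all $\pi\in\Pi^\star$ is used precisely here: it guarantees that every $q\in Q^\star$ has the same state marginal, so the $d_\rho^{\pi^\star}$-weighted policy distance in $\Psi_t$ coincides with the squared distance from $q_t$ to $Q^\star$, up to a factor $1/(1-\gamma)$. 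Converting the occupancy-measure gradient mapping back to a policy/dual quantity introduces the factors $\rho_{\min}\le d_\rho^{\pi^\star}(s)/(1-\gamma)$ and $\kappa_\rho/(1-\gamma)$ from the distribution-mismatch coefficient, producing exactly the constants $C_{\rho,\xi}=c\rho_{\min}/\bigl(2\sqrt{|S||A|}(1+1/((1-\gamma)\xi))\bigr)$ and $\kappa_{\rho,\gamma}=\max(\kappa_\rho/(1-\gamma),1)$ of the statement. Combining the descent with the error bound and choosing $\eta$ small enough to absorb the smoothness remainders into the negative squared-displacement terms yields $\Psi_{t+1}\le (1+C)^{-1}\Psi_t$.

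The main obstacle is the non-concavity of $V_{r+\lambda g}^\pi(\rho)$ in $\pi$, which blocks direct use of bilinear-OGDA theory and also of the per-state bilinear-game reduction used for symmetric zero-sum Markov games~\citep{wei2021last,song2023can}. The remedy is to measure progress only in the $d_\rho^{\pi^\star}$-weighted norm and to leverage the uniqueness of $d_\rho^{\pi^\star}$ to lift that norm to occupancy-measure space \emph{without any linearization loss}; the role of $\rho_{\min}>0$ is to guarantee that this weighted norm dominates a non-degenerate pointwise norm on every state visited by any candidate iterate. A secondary difficulty is the asymmetry of the two players (simplex versus interval), which forces player-specific smoothness and projection bounds and motivates the use of the Euclidean projection in~\eqref{eq: optimistic pi update} rather than KL as in RPG-PD. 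The explicit two-branch expression for $C$ then arises from matching the two candidate rates coming from the two negative squared-displacement terms in the descent inequality against the subregularity bound, with the $7(1-\gamma)/8$ branch reflecting the raw one-step contraction and the $\eta^2$ branch reflecting how much the error bound can be cashed into contraction per step.
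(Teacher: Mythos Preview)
Your high-level strategy matches the paper's: an optimistic-gradient one-step descent combined with a metric-subregularity closure routed through the bilinear occupancy-measure game of \pref{lem:bilinear_game_SP_MS}. Two concrete pieces of your plan would not go through as written, however.

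First, the Lyapunov $\Psi_t$ you propose --- the bare $d_\rho^{\pi^\star}$-weighted squared distance to $\Pi^\star\times\Lambda^\star$ --- is not what the paper descends on. The paper's potential $\Theta_t$ augments this distance with a momentum correction $\tfrac{1}{16(1-\gamma)}\sum_s d_\rho^{\pi^\star}(s)\|\hat\pi_t-\pi_{t-1}\|^2+\tfrac{1}{16}(\hat\lambda_t-\lambda_{t-1})^2$. This is not cosmetic: after linearizing term~(i) of the primal--dual gap via the performance difference lemma and applying the three-point identity of \pref{lem:one_step_optimality}, the stale-gradient cross term $\langle\hat\pi_{t+1}-\pi_t,\,Q_{r+\lambda_tg}^{\pi_t}-Q_{r+\lambda_{t-1}g}^{\pi_{t-1}}\rangle$ is bounded (via \pref{lem:policy_bound}) by a multiple of $\|\pi_t-\hat\pi_t\|^2+\|\hat\pi_t-\pi_{t-1}\|^2$ and the analogous $\lambda$ terms, and the $t{-}1$ piece can only be telescoped by carrying it inside the Lyapunov. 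Your ``standard Popov analysis'' line hides this, and without it the descent inequality does not close. Relatedly, the non-concavity in $\pi$ is not handled by ``lifting to occupancy-measure space without linearization loss''; it is handled at the very first step by the performance difference lemma, which rewrites $V_{r+\lambda_t g}^{\pi^\star}-V_{r+\lambda_t g}^{\pi_t}$ as a $d_\rho^{\pi^\star}$-weighted inner product that is \emph{linear} in $\pi^\star-\pi_t$. This is the actual reason the $d_\rho^{\pi^\star}$-weighted norm is the right metric, and you should name it.

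Second, you mislocate the uniqueness assumption. It is \emph{not} used to identify the weighted policy distance with $\|q-q^\star\|$ (that conversion, in \pref{lem:duality_gap_sup}, uses only $\rho_{\min}$ and a crude $\sqrt{|S||A|}$ bound). It is used in \pref{lem:non-increasing}: when one substitutes $\pi^\star=\mathcal{P}_{\Pi^\star}(\hat\pi_t)$ into the descent inequality and then invokes non-expansiveness to pass to $\mathcal{P}_{\Pi^\star}(\hat\pi_{t+1})$, the weight $d_\rho^{\pi^\star}(s)$ must not change under this replacement, or $\Theta_{t+1}$ would not be the same functional as $\Theta_t$. Finally, the closure step (\pref{lem:stability_gap} plus \pref{lem:duality_gap_sup}) does not go through a gradient-mapping norm directly; it lower-bounds the squared displacements $\zeta_t$ by $\eta^2/\kappa_{\rho,\gamma}^2$ times the square of the sup-of-duality-gap ratio at $(\hat\pi_{t+1},\hat\lambda_{t+1})$, and then lower-bounds that sup by $C_{\rho,\xi}$ times the distance. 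The $\kappa_{\rho,\gamma}$ factor enters because the performance difference lemma forces the test-policy measure $d_\rho^{\pi}$, which must then be changed back to $d_\rho^{\pi^\star}$ via the mismatch coefficient.
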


Theorem~\ref{thm:linear convergence OPG-PD} states that the primal-dual iterates of OPG-PD converge to $\Pi^\star\times\Lambda^\star$ in a linear rate, or putting it differently,~\eqref{eq: optimistic method} is contracting to a set of optimal primal/dual variables. The rate is governed by a problem-dependent constant. Proof of Theorem~\ref{thm:linear convergence OPG-PD} is provided in Appendix~\ref{app:optimistic method}. A key to our analysis is to bridge the per-state policy gradient update and the policy improvement for $V_{r+\lambda g}^{\pi}(\rho)$ that is non-convex in policy $\pi$, which departs from the convex last-iterate analysis~\citep{weilinear2020,cai2022tight}. In addition, we address two technical difficulties. First, the lack  of uniformly optimal policies prevents learning an optimal policy from per-state bilinear games in zero-sum Markov games~\citep{wei2021last,song2023can}. Instead, we characterize the proximity of primal-dual iterates to a saddle point supported by an optimal state visitation distribution~$d_\rho^{\pi^\star}$. Second, Problem~\eqref{eq:saddle_point} is an asymmetric game since one plays a stochastic policy over a finite set of discrete actions and controls the transition dynamics, but the other selects an action in a continuous interval. Thus, our dual-player analysis handles the long-term effect of the policy-player, which did not appear in the symmetric game~\citep{wei2021last,song2023can}. 

% Hence, our distance analysis of policy primal-dual iterates require non-trivial efforts. 

% We summarize the key steps. We measure the proximity of the primal-dual iterates of OPG-PD~\eqref{eq: optimistic method} to an optimal primal-dual pair $(\pi^\star,\lambda^\star) \in \Pi^\star\times\Lambda^\star$ by two distance-like metrics $\Theta_t$ and $\zeta_t$; see their definitions in Appendix~\ref{app:optimistic method preliminaries}. We first establish $\Theta_{t+1} \leq 
% \Theta_t - \frac{7}{16}  \zeta_t$ in Lemma~\ref{lem:non-increasing} which shows that $\Theta_t$ is always non-increasing in $t$ due to the non-negativity of $\zeta_t$. Then, we prove that $\zeta_t$ is lower bounded by $\Theta_{t+1}$ up to some problem-dependent constant using Lemma~\ref{lem:stability_gap} and Lemma~\ref{lem:duality_gap_sup}.

% \kz{lets comment on the uniqueness of $d^{\pi^*}_\rho$!}

% \kz{I think more importantly, we might need to point out the difference/key technical challenges, compared to the convex-concave/bilinear case, e.g., \cite{weilinear2020,cai2022tight}. we should use \paragraph{Technical challenges compared to \cite{weilinear2020,cai2022tight}.}} 

A direct corollary of   Theorem~\ref{thm:linear convergence OPG-PD} is stated below; see Appendix~\ref{app: linear convergence of OPG} for the proof.

\begin{corollary}[Nearly-optimal constrained policy]
	\label{cor: linear convergence of OPG}
	Let Assumption~\ref{as:feasibility} hold and the optimal state visitation distribution  be unique, i.e., $d_\rho^\pi = d_\rho^{\pi^\star}$ for any $\pi \in \Pi^\star$, and $\rho_{\min}>0$. If we use the stepsize $\eta$ from Theorem~\ref{thm:linear convergence OPG-PD}, 
	then the policy iterates of OPG-PD~\eqref{eq: optimistic method} satisfy 
	\[
	V_r^{\pi^\star}(\rho) - V_r^{\hat\pi_{t}}(\rho)
	\; \leq \; 
	\epsilon
	\; \text{ and } \;
	- V_g^{\hat\pi_{t}}(\rho) 
	\; \leq \; 
	\epsilon \;\;
	\text{ for any }
	t= \Omega\left(\,\log^2\frac{1}{\epsilon}\,\right)
	\]
	% \kz{or $t\geq \Omega\big(\log^2\frac{1}{\epsilon}\big)$ to be safe? we should have some problem-dependent  constants (even if we use $O(\epsilon)$). lets also check the previous Corollaries.}
	where $\Omega(\cdot)$ only has some problem-dependent constant.
\end{corollary}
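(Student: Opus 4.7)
The plan is to convert the geometric convergence of $(\hat\pi_t, \hat\lambda_t)$ towards $\Pi^\star \times \Lambda^\star$ provided by Theorem~\ref{thm:linear convergence OPG-PD} into value-function accuracy for $\hat\pi_t$. I would define $\tilde\pi_t(\cdot\,|\,s) \DefinedAs \mathcal{P}_{\Pi^\star}(\hat\pi_t(\cdot\,|\,s))$, the per-state Euclidean projection of $\hat\pi_t$ onto the set of optimal constrained policies. Since $\tilde\pi_t \in \Pi^\star$, it is itself optimal, so $V_r^{\tilde\pi_t}(\rho) = V_r^{\pi^\star}(\rho)$ and $V_g^{\tilde\pi_t}(\rho) \geq 0$. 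It then suffices to control $|V_f^{\tilde\pi_t}(\rho) - V_f^{\hat\pi_t}(\rho)|$ for $f\in\{r,g\}$, because $V_r^{\pi^\star}(\rho) - V_r^{\hat\pi_t}(\rho) = V_r^{\tilde\pi_t}(\rho) - V_r^{\hat\pi_t}(\rho)$ and $-V_g^{\hat\pi_t}(\rho) \leq V_g^{\tilde\pi_t}(\rho) - V_g^{\hat\pi_t}(\rho)$.

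Next I would apply the performance difference lemma to either signal $f\in\{r,g\}$,
\[
V_f^{\tilde\pi_t}(\rho) - V_f^{\hat\pi_t}(\rho)
\;=\;
\frac{1}{1-\gamma} \sum_s d_\rho^{\tilde\pi_t}(s) \sum_a \bigl(\tilde\pi_t(a\,|\,s) - \hat\pi_t(a\,|\,s)\bigr) Q_f^{\hat\pi_t}(s,a).
\]
Since $r,g\in[-1,1]$, $|Q_f^{\hat\pi_t}(s,a)| \leq 1/(1-\gamma)$. Applying Cauchy--Schwarz to the inner sum over actions (paying a $\sqrt{|A|}$ factor) and then Jensen's inequality to the outer expectation over states,
\[
|V_f^{\tilde\pi_t}(\rho) - V_f^{\hat\pi_t}(\rho)|
\;\leq\;
\frac{\sqrt{|A|}}{(1-\gamma)^2}\sqrt{\sum_s d_\rho^{\tilde\pi_t}(s)\,\norm{\tilde\pi_t(\cdot\,|\,s) - \hat\pi_t(\cdot\,|\,s)}^2}.
\]
I would then invoke the uniqueness hypothesis $d_\rho^{\tilde\pi_t} = d_\rho^{\pi^\star}$ (available because $\tilde\pi_t \in \Pi^\star$) to replace $d_\rho^{\tilde\pi_t}$ by $d_\rho^{\pi^\star}$. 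The quantity under the square root is then exactly $2(1-\gamma)$ times the primal component of the distance metric in Theorem~\ref{thm:linear convergence OPG-PD}, and hence upper bounded by $2(1-\gamma)\,(1/(1+C))^t$.

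Combining these estimates yields $|V_f^{\pi^\star}(\rho) - V_f^{\hat\pi_t}(\rho)| \leq (\sqrt{2|A|}/(1-\gamma)^{3/2})\,(1/(1+C))^{t/2}$ for $f\in\{r,g\}$. Requiring the right-hand side to be at most $\epsilon$ gives $t \geq (2/\log(1+C))\log(\sqrt{2|A|}/((1-\gamma)^{3/2}\epsilon))$, i.e., $t = \Omega(\log(1/\epsilon))$ up to problem-dependent constants, which is absorbed by the stated $\Omega(\log^2(1/\epsilon))$ bound.

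The main obstacle is the distributional alignment in the third step: the performance difference lemma forces the state distribution to be $d_\rho^{\tilde\pi_t}$, while Theorem~\ref{thm:linear convergence OPG-PD} only measures the primal distance under $d_\rho^{\pi^\star}$. The stringent hypothesis that the optimal state visitation distribution is unique is precisely what bridges these two; without it, one would need to introduce a distribution mismatch factor (with $\kappa_\rho$-type dependence). A secondary care is that $\tilde\pi_t$ must be a bona fide stochastic policy so that $Q_f^{\hat\pi_t}$ and $d_\rho^{\tilde\pi_t}$ are well defined, which holds since $\Pi^\star$ is a non-empty subset of the product of simplices and per-state Euclidean projection onto a (non-empty, closed) convex subset of a simplex returns an element of the simplex.
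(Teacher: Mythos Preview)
Your proposal is correct and follows essentially the same approach as the paper: project $\hat\pi_t$ onto $\Pi^\star$, use optimality/feasibility of the projected policy, apply the performance difference lemma with Cauchy--Schwarz (and Jensen) to bound the value gap by the $d_\rho^{\pi^\star}$-weighted policy distance, and invoke the uniqueness of the optimal visitation distribution to match measures. The paper additionally cites the interchangeability of saddle points (Lemma~\ref{lem:saddle_point_interchangeability}) to justify that the projected pair is a saddle point, but your direct appeal to $\tilde\pi_t\in\Pi^\star$ serves the same purpose; like you, the paper also obtains $t=\Omega(\log(1/\epsilon))$, which is subsumed by the stated $\Omega(\log^2(1/\epsilon))$.
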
 

Corollary~\ref{cor: linear convergence of OPG} states that the last policy iterate of OPG-PD is an $\epsilon$-optimal policy for Problem~\eqref{eq:CMDP} after an almost constant number of iterations, which improves the sublinear rate in Corollary~\ref{cor: linear convergence of RPG}. OPG-PD also improves the average-value convergence of the single-time-scale methods~\citep{ding2020natural,ding2022policy} and the two-time-scale methods~\citep{li2021faster,liu2021policy,ying2022dual,gladin2022algorithm},
and matches the last-iterate convergence rate of the two-time-scale methods~\citep{ying2022dual,gladin2022algorithm}. We stress that our last-iterate convergence indicates the stability of whole primal-dual iterates, which is not the last policy iterate from a subroutine~\citep{ying2022dual,gladin2022algorithm}. Again, when $\epsilon$ is  small, we can design a conservative constraint such that the policy iterates of OPG-PD satisfy  $V_r^{\pi^\star}(\rho) - V_r^{\hat\pi_t}(\rho) 
\leq  
\epsilon$ and $V_g^{\hat\pi_t}(\rho) \geq 0$ for large $t$; see Appendix~\ref{app: OPG zero violation} for the proof. 

\section{Computational Experiment}
\label{sec:experiments}

We validate the effectiveness of RPG-PD~\eqref{eq: regularized primal-dual update} and OPG-PD~\eqref{eq: optimistic method} by comparing them with typical primal-dual methods in Figure~\ref{fig:convergence performance of primal-dual methods}.
% We observe that RPG-PD ($\small\textbf{\color{blue}-- --}$) and OPG-PD ($\small\textbf{\color{red}---}$) outperform PID Lagrangian~\citep{stooke2020responsive} ($\small\textbf{$\cdot$$\cdot$$\cdot$$\cdot$}$) and NPG-PD~\citep{ding2020natural} ($\small\textbf{\color{cyan}--$\cdot$--}$) in the following aspects.
% A few observations are in order. RPG-PD and OPG-PD suppress the oscillation of NPG-PD that causes its last-iterate policy violating the constraint.
A few observations are in order. The initial oscillation of RPG-PD ($\small\textbf{\color{blue}-- --}$) is damped, and OPG-PD ($\small\textbf{\color{red}---}$) is almost free of oscillation as PID Lagrangian ($\small\textbf{$\cdot$$\cdot$$\cdot$$\cdot$}$). However, oscillation of NPG-PD ($\small\textbf{\color{cyan}--$\cdot$--}$) causes its last-iterate policy violating the constraint $V_g^\pi(\rho)\geq0$. OPG-PD ($\small\textbf{\color{red}---}$) reaches the maximum reward value in four methods and RPG-PD ($\small\textbf{\color{blue}-- --}$) converges to a slightly smaller value due to regularization, while both meet the constraint at the end. However, PID Lagrangian ($\small\textbf{$\cdot$$\cdot$$\cdot$$\cdot$}$) is highly sub-optimal. Hence, our methods OPG-PD and RPG-PD can overcome oscillation and approach a nearly-optimal constrained policy in the last-iterate fashion. 

\vspace{-2.2ex}
\begin{figure}[ht]
	\begin{center}
		\begin{tabular}{cccc}
			% & (a) & & (b)
			% \\[-0.2cm]
			{\rotatebox{90}{\;\;\;\; \;\;\; Reward value}} 
			\!\!\!\! \!\!\!\! \!\!\!\!
			& {\includegraphics[scale=0.35]{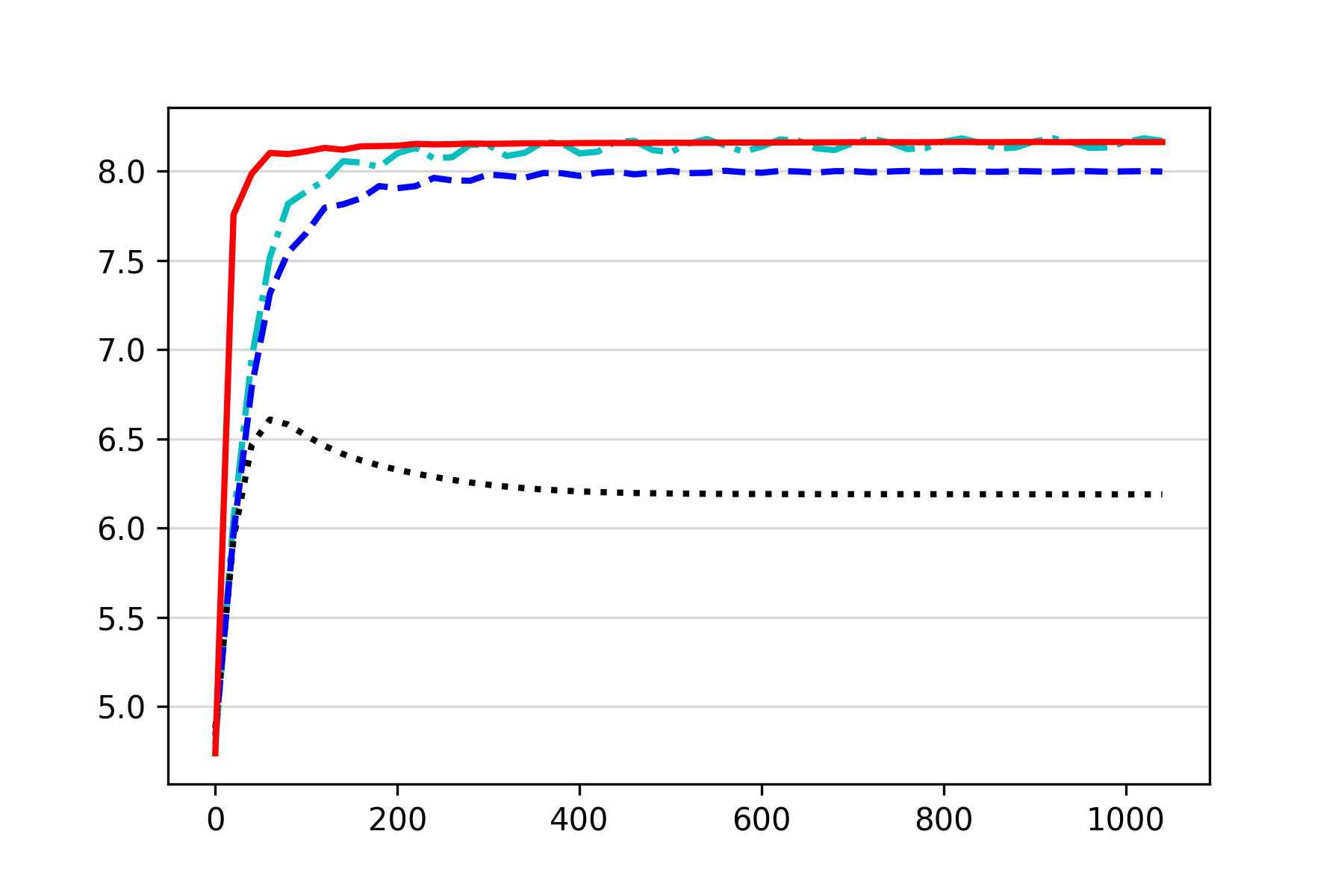}}
			&
			{\rotatebox{90}{ \;\;\;\; \;\;\; Utility value}} 
			\!\!\!\! \!\!\!\! \!\!\!\!
			& {\includegraphics[scale=0.35]{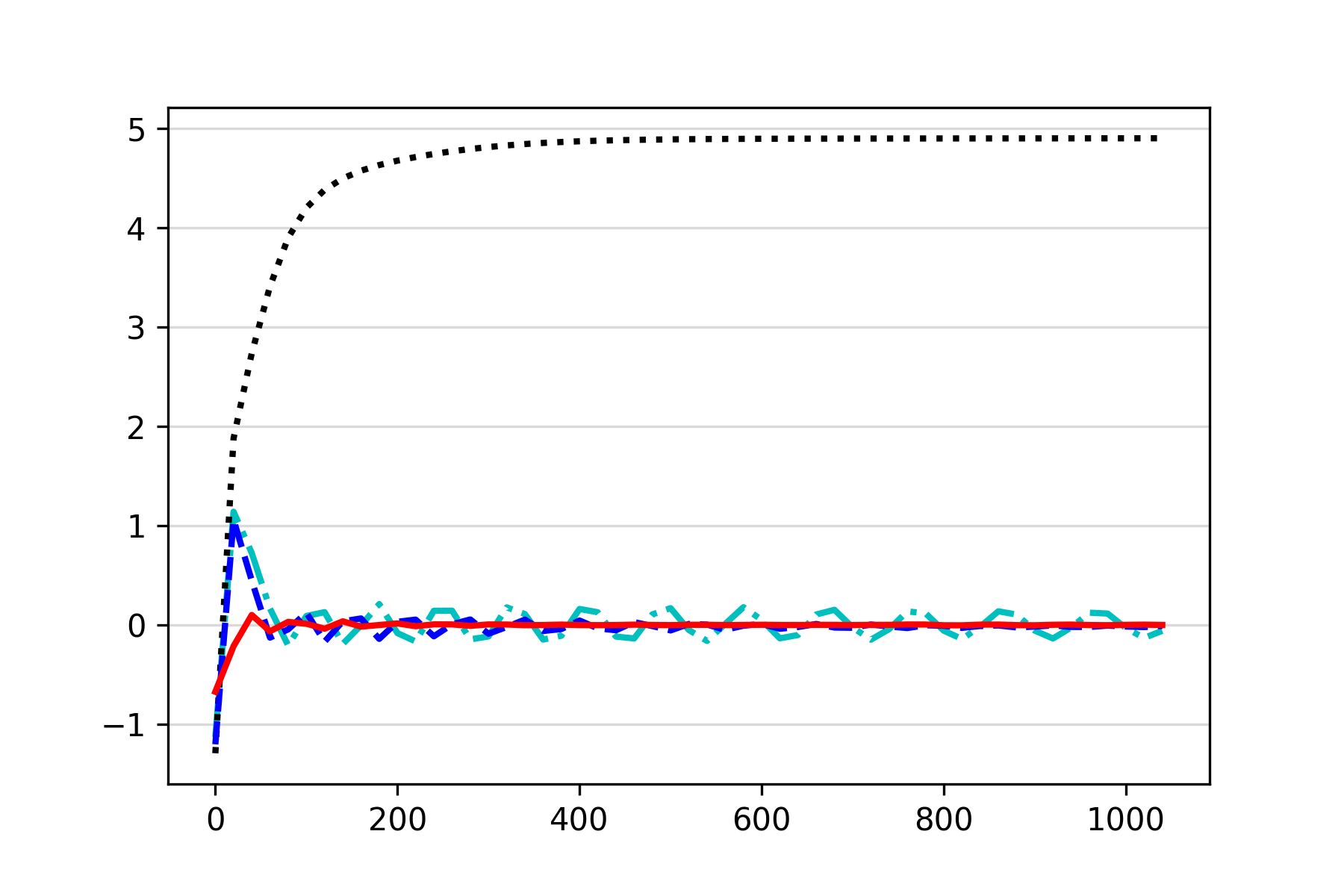}}
			\\
			& {\centering \;\;\; Iteration} & & {\centering \;\;\;\; Iteration}
		\end{tabular}
	\end{center}
	\vspace*{-1.5ex}
	\caption{ Convergence performance of RPG-PD, OPG-PD, and primal-dual methods. Learning curves of our RPG-PD ($\small\textbf{\color{blue}-- --}$) and OPG-PD ($\small\textbf{\color{red}---}$), and NPG-PD~\protect\citep{ding2020natural} ($\small\textbf{\color{cyan}--$\cdot$--}$) and PID
		Lagrangian~\protect\citep{stooke2020responsive} ($\small\textbf{$\cdot$$\cdot$$\cdot$$\cdot$}$) methods. The horizontal axes mean the policy iterations $\{\pi_t\}_{t\,\geq\,0}$ that are generated by each method and the vertical axes mean the value functions of the policy iterates $\{\pi_t\}_{t\,\geq\,0}$: reward value $V_r^{\pi_t}(\rho)$ (Left) and utility value $V_g^{\pi_t}(\rho)$ (Right). In this experiment, we use the same stepsize $\eta = 0.1$ for all methods, the regularization parameter $\tau = 0.08$ for RPG-PD, and the uniform initial distribution $\rho$.}
	\label{fig:convergence performance of primal-dual methods}
	\end{figure}
	\vspace{-1ex}
	
	We showcase the linear convergence of OPG-PD~\eqref{eq: optimistic method} with three constant stepsizes in Figure~\ref{fig:sensitivity performance of primal-dual methods OPG-PD distance}. Three policy optimality gaps decrease linearly in the logarithmic scale plot, which verifies the linear last-iterate convergence of OPG-PD's policy iterates in Theorem~\ref{thm:linear convergence OPG-PD}. Noticeably, there is no oscillation behavior in OPG-PD's policy iterates, which perhaps is best for learning constraints~\citep{stooke2020responsive,dulac2021challenges}. We also see that a large stepsize $\eta = 0.2$ improves the convergence, which is reflected by our rate. 
	
	\vspace{-2ex}
	\begin{figure}[ht]
		\begin{center}
			\begin{tabular}{cc}
				% & (a) & & (b)
				% \\[-0.2cm]
				{\rotatebox{90}{\;\; Policy optimality gap}} 
				\!\!\!\! \!\!\!\! \!\!
				& {\includegraphics[scale=0.35]{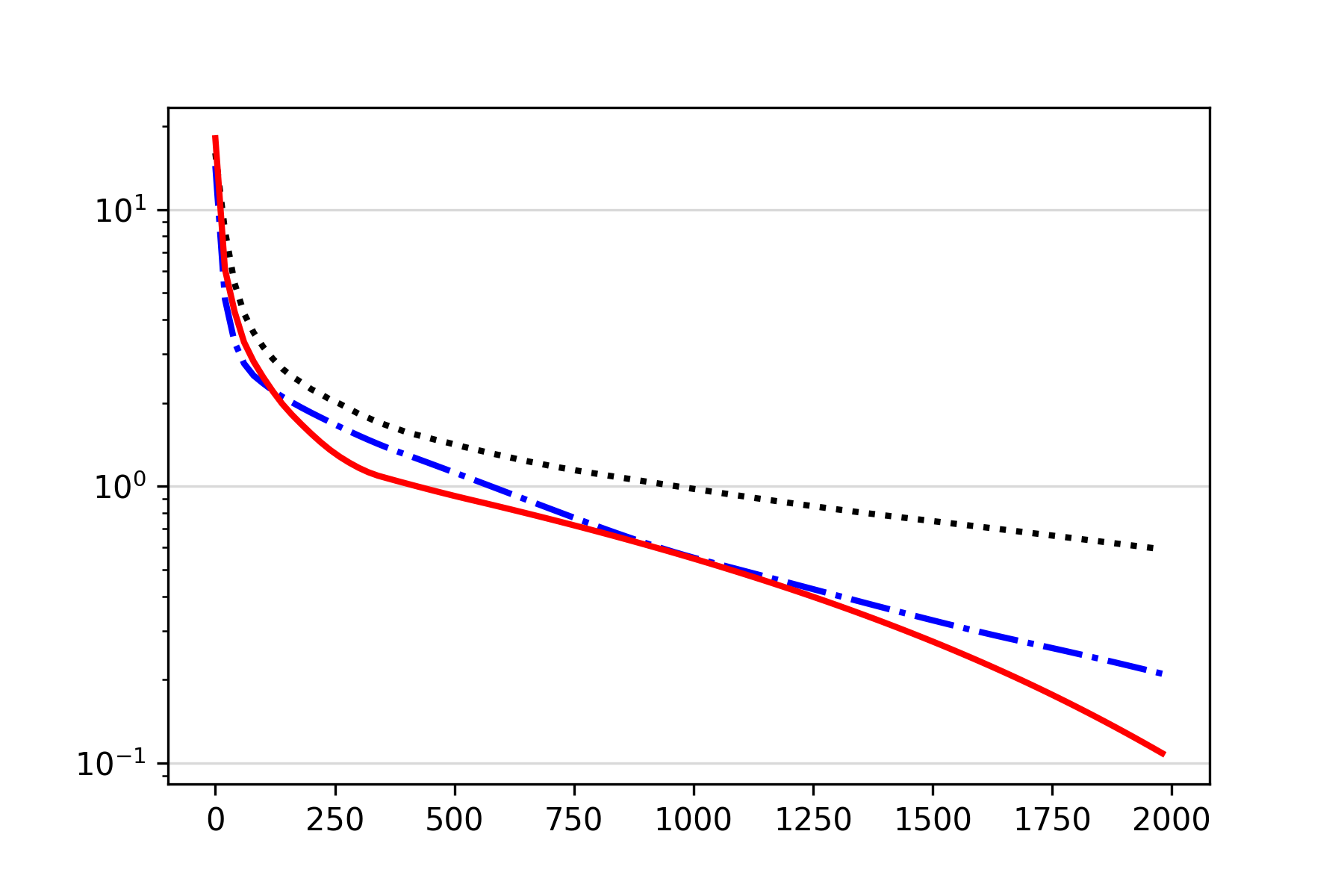}}
				\\
				& {\centering \;\;\; Iteration} 
			\end{tabular}
		\end{center}
		\vspace*{-1ex}
		\caption{Convergence performance of OPG-PD with stepsize $\eta$: ($\eta=0.05$, $\small\textbf{$\cdot$$\cdot$$\cdot$$\cdot$}$), ($\eta=0.1$, $\small\textbf{\color{blue}--$\cdot$--}$), ($\eta=0.2$, $\small\textbf{\color{red}---}$). The horizontal axis represents the policy iterations $\{\pi_t\}_{t\,\geq\,0}$ that are generated by OPG-PD and the vertical axis means the policy optimality gap that measures the distance of the policy iterates $\{\pi_t\}_{t\,\geq\,0}$ to an optimal policy $\pi^\star$: $\sum_s \norm{\pi_t(\cdot\,\vert\,s) - \pi^\star(\cdot\,\vert\,s)}^2$. In this experiment, we take the  initial distribution $\rho$ to be a uniform one.}
		\label{fig:sensitivity performance of primal-dual methods OPG-PD distance}
		\end{figure}
		\vspace{-1.2ex}
		
		Please see Appendix~\ref{app:computational experiments} for more details of this experiment, more baselines, and sensitivity analysis.

		% This improves PID Lagrangian that has a much , which is sub-optimal. Hence, we have verified that both OPG-PD and RPG-PD can approach {\color{blue}a nearly-optimal constrained policy} in the last-iterate fashion.

\vspace{-1.2ex}
\section{Concluding Remarks}
\label{sec:conclusions}

We have presented two single-time-scale policy-based primal-dual methods for finding an optimal policy of a constrained MDP, with global non-asymptotic and last-iterate policy convergence guarantees. Our first regularized method enjoys a nearly dimension-free sublinear rate, while our second optimistic method possesses a linear rate that is problem-dependent. Our work stimulates a number of compelling future directions: (i) Our problem setting circumvents the exploration difficulty, which leaves online exploration open; (ii) Our convergence rates are not as sharp as solving convex-concave minimax optimization problems, regarding the order or instance-related constant; (iii) Last-iterate convergence is under-examined in constrained MDPs with other constraints, and unexplored for other gradient methods. 

% The answer to this question has been alluded by recent last-iterate result for zero-sum games~\citep{cai2023uncoupled}.
% In view of the approximation of high-probability feasible region~\citep{paternain2022safe}, we can apply our results to high probability constraints.

% Other future directions include proving sharper convergence rates
% , incorporating general function approximation, and studying our methods for multi-agent constrained MDPs. 

\newpage

\begin{ack}
	
	D.~Ding and A.~Ribeiro were supported by THEORINET Simons-NSF MoDL, and DCIST CRA. K.~Zhang acknowledges the support from Simons-Berkeley Research Fellowship and Northrop
	Grumman – Maryland Seed Grant Program. 
	
\end{ack}

\bibliography{main}
\bibliographystyle{unsrt} % 

\clearpage
\appendix

\newpage

\onecolumn

~\\
\centerline{{\fontsize{14}{14}\selectfont \textbf{Supplementary Materials for }}}

\vspace{6pt}
\centerline{\fontsize{13.5}{13.5}\selectfont \textbf{
	``Last-Iterate Convergent Policy Gradient Primal-Dual Methods}}

\vspace{6pt}
\centerline{\fontsize{13.5}{13.5}\selectfont \textbf{
	for Constrained MDPs''}}

\vspace{10pt}

\appendix

\section{More Comparisons and Additional Related Works}
\label{app: other related works}

In this section, we discuss more comparison details and other related works. 

\renewcommand{\arraystretch}{2.0}

\begin{table}[ht]
	\begin{center}
		\begin{tabular}{ c|c|c|c } 
			\hline
			\textbf{Iterate Type} &
			\textbf{Method} & \textbf{Single-time-scale} & \textbf{Complexity} 
			\\ 
			\hline
			\hline
			\multirow{3}{*}
			{
				\parbox{1.5cm}{\centering Occupancy measure}
			}
			& Saddle flow dynamics~\citep{zheng2022constrained}
			& Yes &
			asymptotic
			\\ 
			\cline{2-4}
			& ReLOAD~\citep{moskovitz2023reload}
			& Yes &
			asymptotic 
			\\
			% \cline{2-4}
			\hhline{~---}
			& {\cellcolor{red!5}} \text{OPD}~\eqref{eq.OGDA}
			&  {\cellcolor{red!5}}\text{Yes} & {\cellcolor{red!5}}
			$\Omega\left(\,\log^2\frac{1}{\epsilon}\,\right)$ 
			% &
			% --- 
			\\
			\hline
			\hline
			\multirow{5}{*}{Policy} &
			Dual descent~\citep{ying2022dual} & No   & $\Omega\left(\,\log^2\frac{1}{\epsilon}\,\right)$ 
			% &
			% ---
			\\ 
			\cline{2-4}
			& Cutting-plane~\citep{gladin2022algorithm}
			& No &
			$\Omega\left(\,\log^3\frac{1}{\epsilon}\,\right)$ 
			% &
			% ---
			\\ 
			\cline{2-4}
			& Policy-based ReLOAD~\citep{moskovitz2023reload} & \text{Yes}   &  --- 
			\\ 
			\hhline{~---}
			& {\cellcolor{red!5} \text{RPG-PD}~\eqref{eq: regularized primal-dual update}} & {\cellcolor{red!5} \text{Yes}} & {\cellcolor{red!5} $\Omega\left(\,\frac{1}{\epsilon^6}\log^2\frac{1}{\epsilon}\,\right)$ }
			% &
			% Yes
			\\ 
			\hhline{~---}
			& {\cellcolor{red!5} \text{OPG-PD}~\eqref{eq: optimistic method}} & {\cellcolor{red!5} \text{Yes}}   & {\cellcolor{red!5} $\Omega\left(\,\log^2\frac{1}{\epsilon}\,\right)$ }
			% &
			% Yes
			\\ 
			\hline
		\end{tabular}
	\end{center}
	\caption{Iteration complexities of our methods and representative algorithms for solving a constrained MDP problem: $\maximize_{\pi}  V_r^\pi(\rho) \,
		\subject \, V_g^\pi(\rho) \geq 0$ with reward/utility functions $r\in[0,1]$, $g\in[-1,1]$. The iteration complexity is the number of gradient-based updates for an algorithm to output the last policy-iterate $\pi_t$ that satisfies $V_r^\star (\rho) - V_r^{\pi_t} (\rho) \leq \epsilon$ and $- V_g^{\pi_t} (\rho) \leq \epsilon$.
		% , using the exact evaluation of policy gradients. 
	}
	\label{table:comparison_tabular}
\end{table}

\textbf{Last-iterate and value-average (or policy-mixture) performance in constrained MDPs}. There has been a flurry of research activities in studying convergence behaviors of direct policy search or policy gradient-based algorithms for constrained MDPs in the infinite-horizon discounted setting. There are two main streams: (i) Lagrangian-based policy search and (ii) Approximate constrained policy search. 

\begin{itemize}
	\item[(i)] \textbf{Lagrangian-based policy search}. In the Lagrangian-based framework, last-iterate performance has been established as asymptotic convergence for several policy-based primal-dual algorithms, e.g., naive policy gradient-based primal-dual method~\citep{abad2002self} and 
	actor-critic variants of policy gradient primal-dual methods~\citep{borkar2005actor,bhatnagar2012online,chow2017risk,tessler2019reward}. These studies rely on modeling primal-dual updates in two separate time scales and/or two nested loops as continuous-time gradient flow dynamics, and their asymptotic convergence is often restricted to some stationary points. We notice that recent global asymptotic convergence results~\citep{zheng2022constrained,moskovitz2023reload} are in terms of occupancy measure iterates instead of instantaneous policy iterates, as highlighted in Table~\ref{table:comparison_tabular}. In contrast, our first method: OPD strengthens the  asymptotic last-iterate convergence to non-asymptotic and last-iterate convergence with a linear rate. 
	
	To provide efficiency and optimality performances, it is crucial to develop algorithms with global non-asymptotic convergence guarantees. Several recent policy-based primal-dual algorithms have been proved to converge with non-asymptotic convergence rates, e.g., policy gradient primal-dual method~\citep{ding2022convergenceACC}, natural policy gradient-based or policy mirror-descent style primal-dual methods~\citep{ding2020natural,chen2021primal,ding2022policy}, accelerated natural policy gradient-based primal-dual methods~\citep{liu2021policy,jain2022towards,li2021faster}, actor-critic version of natural policy gradient-based primal-dual method~\citep{zeng2022finite}, and anchor-changing natural policy gradient-based primal-dual method~\citep{zhou2022anchorchanging}.
	These studies characterize non-asymptotic convergence to an optimal constrained policy regarding the average of value functions, except for~\citep{chen2021primal} in which the convergence is for a mixture of past policies and~\citep{li2021faster} in which the convergence is for a policy induced by a history-weighted occupancy measure. Similar non-asymptotic convergence can also be found in an occupancy measure-based primal-dual method~\citep{bai2022achieving}, where the convergence is in terms of the average of occupancy measures. Besides, sublinear non-asymptotic convergence can be found in generative model-based methods~\citep{hasanzadezonuzy2021model,vaswani2022near}, regarding a mixture of past policies. In contrast, our two policy-based methods: RPG-PD and OPG-PD strengthen the sublinear non-asymptotic convergence of average value functions or a mixture of past policies to sublinear and linear non-asymptotic convergence of policy iterates. In particular, we exploit the regularization technique~\citep{cen2022fast} and the optimistic gradient method~\citep{popov1980modification,hsieh2019convergence} to augment typical policy-based primal-dual methods with novel identification of decreasing distances of policy iterates to an optimal constrained policy, which allows for stronger convergence.   
	
	Instead of working with policy primal and dual variables both, Lagrangian-based dual method formulates a constrained MDP as a convex dual problem that enables classical dual ascent method~\citep{paternain2022safe}. Despite guaranteed convergence in dual space from convex optimization, it is challenging to compute an optimal constrained policy in primal policy space even if we use an optimal dual variable~\citep{tessler2019reward,zahavy2021reward,calvo2021state}. Recently, regularization~\citep{cen2022fast} has been used in dual ascent methods~\citep{ying2022dual,gladin2022algorithm} in which policy last-iterates of natural policy gradient-based subroutines can be nearly-optimal. These dual-based algorithms~\citep{ying2022dual,gladin2022algorithm} work with two nested loops and their non-asymptotic last-iterate convergence relies on tuning loop parameters optimally. In contrast, our two policy-based methods: RPG-PD and OPG-PD remove the double loop requirement as listed in Table~\ref{table:comparison_tabular}, which permits outputting the last-iterate policy as a nearly-optimal constrained policy. In Table~\ref{table:comparison_tabular}, we see that our RPG-PD method has a worse rate while OPG-PD achieves a linear rate which is similar as the dual-based methods~\citep{ying2022dual,gladin2022algorithm}. Importantly, our non-asymptotic convergence characterizes the stability of primal-dual iterates generated by the algorithms, which is theoretically stronger and more appealing in practice.
	
	\item[(ii)] \textbf{Approximate constrained policy search}. Approximation of constrained MDPs with surrogate functions has been shown to be effective, e.g., constrained policy optimization~\citep{achiam2017constrained}, successive convex relaxation~\citep{yu2019convergent}, projection-based constrained policy optimization~\citep{yang2020projection}, first-order constrained optimization~\citep{zhang2020first}, and conservative policy update~\citep{yang2022cup}. These studies have shown impressive empirical performance by iteratively solving an approximated constrained optimization problem and such performance is characterized in the worst-case policy improvement, except for~\citep{yu2019convergent} in which local asymptotic convergence is established, which leaves non-asymptotic convergence of policy iterates open. A related approach is the primal method~\citep{xu2021crpo} that treats a constrained MDP as an unconstrained one and corrects policy iterates whenever constraint violation happens. Non-asymptotic convergence of primal method has been established in terms of mixture of past policies. Another approximation method is the interior-point policy optimization~\citep{liu2020ipo} that solves an unconstrained MDP by adding a logarithm barrier function into the objective function, while convergence of this method is unknown. In contrast, we have supported our methods with non-asymptotic last-iterate convergence. Since Lagrangian-based methods are typically used to solve an approximated constrained optimization problem~\citep{achiam2017constrained,yang2020projection,zhang2020first,yang2022cup}, our methodologies can be applied to these methods for better convergence, which we leave as future work.
	
\end{itemize}

For constrained MDPs in the finite-horizon episodic setting and the total or average-reward settings, there is a rich line of works that have developed learning algorithms with non-asymptotic convergence guarantees in terms of the average of value functions~\citep{efroni2020exploration,ding2021provably,zheng2020constrained,qiu2020upper,brantley2020constrained,yu2021provably,simao2021alwayssafe,liu2021learning,kalagarla2021sample,miryoosefi2022simple,wei2022provably,chen2022learning,wei2022triple,bura2022dope,ghosh2022provably,singh2022learning,wei2023provably,germano2023best,kalagarla2023safe}, except for~\citep{guin2022policy} in which global asymptotic convergence of a policy gradient method has been established in the finite-horizon constrained MDP setting. Although being not directly comparable, their non-asymptotic and last-iterate convergence are not established yet, to the best of our knowledge.  

% Compared with the asymptotic performance~\citep{zheng2022constrained,moskovitz2023reload}, and the value-average or policy-mixture performance~\citep{ding2020natural,liu2021policy,jain2022towards,chen2021primal,li2021faster,vaswani2022near,bai2022achieving}, our methods use either the regularization technique or the optimistic gradient method to stabilize primal-dual iterates by diminishing the distance of policy primal-dual iterates to a saddle point that yields an optimal constrained policy. Not only our last-iterate convergence strengthens prior average or mixture performance guarantees, but also our analysis sets a new framework to analyze policy mirror-descent style or natural policy gradient (NPG) based primal-dual algorithms for constrained MDPs. Compared with~\citep{moskovitz2023reload}, OPG-PD is a projected policy gradient method and we establish linear last-iterate convergence. It also useful to compare our results with the last-iterate minimax optimization literature. Compared with~\citep{weilinear2020,cai2022tight}, our minimax optimization that results from constrained MDP is non-convex and asymmetric. Despite non-convexity, we establish linear last-iterate convergence of OPG-PD to a saddle point, in contrast to sublinear results for non-convex minimax optimization~\citep{cai2022accelerated,tran2023extragradient}. Compared with~\citep{wei2021last,song2023can}, there is no reduction from Markov games to bilinear games per-state for constrained MDPs. 
% More comparisons can be found in Appendix~\ref{app: other related works}.

\textbf{Lagrangian-based policy gradient methods in constrained MDPs}. Our methods are closely pertinent to Lagrangian-based policy gradient methods for solving constrained MDPs in the infinite-horizon discounted setting, e.g., policy gradient-based primal-dual methods~\citep{abad2002self,borkar2005actor,bhatnagar2012online,chow2017risk,tessler2019reward,ding2022convergenceACC}, natural policy gradient-based or mirror-descent style primal-dual methods~\citep{ding2020natural,chen2021primal,li2021faster,liu2021policy,ding2022policy,jain2022towards,zeng2022finite}, dual descent methods~\citep{paternain2019constrained,ying2022dual,gladin2022algorithm,paternain2022safe}. Regarding algorithm implementation, primal-dual methods~\citep{abad2002self,borkar2005actor,bhatnagar2012online,chow2017risk,tessler2019reward,ding2020natural,chen2021primal,li2021faster,liu2021policy,ding2022policy,jain2022towards,zeng2022finite,ding2022convergenceACC} work with primal-dual iterates simultaneously in a single loop, which is similar to the classical gradient-based primal-dual methods in constrained optimization~\citep{arrow1958studies,korpelevich1976extragradient,nedic2009subgradient}, while diminishing stepsizes in different speeds is often required in many of them~\citep{abad2002self,borkar2005actor,chow2017risk,tessler2019reward}; dual descent methods~\citep{paternain2019constrained,ying2022dual,gladin2022algorithm,paternain2022safe} intermittently operate the dual iterate only after a sufficient number of primal iterations, which often adds difficulty of tuning hyper-parameters of nested loops in practice. It is worth mentioning that, it is convenient to view such dual descent methods as primal-dual methods that update primal variable faster than iterating dual variable. With respect to convergence analysis, stochastic approximation has been widely used to establish asymptotic convergence of several primal-dual methods~\citep{borkar2005actor,bhatnagar2012online,chow2017risk,tessler2019reward} by analyzing the stability of limiting gradient flow dynamics, while recent methods~\citep{li2021faster,liu2021policy,ding2022policy,jain2022towards,zeng2022finite,ding2022convergenceACC,ying2022dual,gladin2022algorithm} exploit the connection between policy gradient and mirror-descent in convex optimization to prove non-asymptotic convergence. However, non-asymptotic convergence of primal-dual methods only characterizes the average of value functions~\citep{ding2020natural,ding2022policy,jain2022towards,zeng2022finite,ding2022convergenceACC} or a mixture of past policies~\citep{chen2021primal,li2021faster,liu2021policy} because of the dual update that results from regulating constraint violation. For the dual descent methods~\citep{ying2022dual,gladin2022algorithm}, non-asymptotic convergence is characterized in terms of last-iterate policies that are computed by approximately solving unconstrained RL problems with fixed dual variables. Therefore, designing Lagrangian-based policy gradient methods that enjoy the single-loop simplicity and the non-asymptotic convergence of policy iterates is challenging, because of the oscillation and overshoot issues of updating primal-dual variables simultaneously~\citep{stooke2020responsive,moskovitz2023reload}. In this work, we have established non-asymptotic and last-iterate convergence of two single-loop Lagrangian-based policy gradient methods via the regularization and optimistic gradient techniques and our analysis builds on the mirror-descent analysis for policy gradient methods~\citep{agarwal2021theory,zhan2021policy,xiao2022convergence,lan2023policy} while focusing on the distance of policy iterates, which is stronger than the prior art. Compared with recent efforts~\citep{moskovitz2023reload,zheng2022constrained} as shown in Table~\ref{table:comparison_tabular}, our algorithms are simpler and our theoretical guarantees are stronger.  

\textbf{Gradient-based methods with last-iterate convergence for learning in games}.
Since the Lagrangian-based approach for constrained MDPs can be viewed as solving a constrained saddle-point problem, another line of related work is gradient-based methods for solving saddle-point (or minimax optimization) problems with last-iterate convergence. Last-iterate convergence of gradient-based methods has been established in several scenarios, e.g., linear rates of extragradient methods for strongly convex problems~\citep{tseng1995linear,malitsky2015projected}, asymptotic convergence of optimistic multiplicative weights updates for convex problems~\citep{daskalakis2019last,lei2021last}, linear rates of optimistic gradient methods for convex problems~\citep{wei2021last}, and lower bound-matching rates of extragradient and optimistic gradient methods for convex problems~\citep{cai2022tight}. These studies focus on convex-concave saddle-point problems except for \cite{abernethy2021last,cai2022accelerated,tran2023extragradient} in which non-asymptotic last-iterate convergence is achievable for saddle-point problems with special non-convexity structure. In contrast, our constrained saddle-point problem that result from constrained MDP is non-convex in policy primal variable, which prevents direct application of these last-iterate results. A slightly twisted exception is that our constrained saddle-point problem can be reformulated to be convex in occupancy measure instead of policy, which leads the second and third methods in Table~\ref{table:comparison_tabular}. To solve our Lagrangian-based constrained saddle-point problem in policy space, our first policy-based method: RPG-PD relaxes the non-convexity by adding regularization into the objective function and we provide sublinear last-iterate policy convergence guarantee using the distance analysis for policy primal-dual iterates. To improve the convergence rate, we further develop another policy-based method: OPG-PD that extends the optimistic gradient method~\citep{hsieh2019convergence} for a class of non-convex constrained saddle-point problems. This extension departs from previous extensions for zero-sum Markov games~\citep{wei2021last,song2023can}, because lacking of uniformly optimal policies prevents learning an optimal policy from per-state bilinear games. Instead, we provide a new distance analysis of policy primal-dual iterates of an optimistic gradient method for solving a new class of constrained non-convex saddle-point problems, with linear last-iterate policy convergence.

% We notice that our non-convex saddle-point problem that result from constrained MDP can be viewed as an asymmetric Markov game, which policy-player plays a stochastic policy that controls the transition dynamics while dual-player selects an action from a continuous interval.

\textbf{Non-asymptotic last-iterate (or non-ergodic) convergence in constrained optimization}.
Reduction of constrained optimization to saddle-point problems is a classical idea to solve constrained optimization problems by developing primal-dual algorithms, e.g., primal-dual interior-point methods~\citep{wright1997primal}, Uzawa and Arrow–Hurwicz algorithms~\citep{arrow1958studies,benzi2005numerical,arrow2013reduction}, and Lagrange multiplier methods~\citep{ito2008lagrange,bertsekas2014constrained}. Inspired by the Lagrange multiplier methods, many recent studies on constrained optimization have significantly advanced primal-dual algorithms with last-iterate convergence, e.g., accelerated augmented Lagrangian method~\citep{he2010acceleration}, accelerated universal primal-dual gradient method~\citep{yurtsever2015universal}, Douglas-Rachford alternating direction method~\citep{he2015non}, inexact augmented Lagrangian method~\citep{liu2019nonergodic,xu2021iteration}, alternating proximal augmented Lagrangian algorithm~\citep{tran2018non}, augmented Lagrangian-based decomposition method~\citep{tran2018augmented}, faster Lagrangian method~\citep{sabach2022faster}, and prediction-correction-based primal-dual method~\citep{zhang2023lagrangian}. However, all these studies build on augmented Lagrangian methods to solve the classical convex optimization problems with linear constraints. In comparison, we have studied a class of \emph{non-convex} constrained optimization problems that result from constrained MDPs using the standard Lagrange multiplier method. We notice that a Lagrangian-based two-player game has been used to study general non-convex constrained optimization problems with average performance analysis~\citep{cotter2019two}. Our two policy-based primal-dual methods with sublinear and linear last-iterate convergence appear to be the first global non-asymptotic and last-iterate convergence result in non-convex constrained optimization.

\section{Proofs in Section~\protect\ref{sec:preliminaries}}

In this section, we make some helpful observations and provide proofs of the claims in Section~\ref{sec:preliminaries}.

\subsection{Lack of uniformly optimal stationary policies in constrained MDPs}
\label{app:lack_uniform_optimal_policy}

In unconstrained MDPs, there always exists an optimal policy that is optimal simultaneously for all states (e.g., see~\citep{bellman1959functional} and~\citep[Chapter~6]{puterman2014markov}). In contrast, this is not true anymore for constrained MDPs. To see this, we adopt a counter-example from~\citep{szepesvari2020} and investigate it in the constrained MDP setting.  

\begin{figure}[h]
	\centering
	\begin{tikzpicture}
	% Add the states
	\node[state]             (s1) {$L$};
	\node[state, right=2.0cm of s1] (s2) {$R$};
	% \node[state, right=of s2] (s3) {$s_3$};
	% \node[state, above=of s1] (s4) {$s_4$};
	% \node[state, above=of s2] (s5) {$s_5$};
	
	% Connect the states with arrows
	\draw[every loop]
	(s1) edge[right, auto=right]  node {$(R, 1, -1)$} (s2) 
	% (s1) edge[right, auto=right]  node {$(1,-1)$} (s2)
	% (s2) edge[right, auto=right]  node {$(0,0)$} (s3)
	% (s3) edge[loop above]             node {$(0,0)$} (s3)
	% (s1) edge[above, auto=right] node {$(0,1)$} (s4)
	(s1) edge[loop above]             node {$(L, 0, 0)$} (s1)
	(s2) edge[loop above]             node {$(R/L, 1, 1)$} (s2);
	\end{tikzpicture}
	\caption{An example of a constrained MDP that has the objective function $V_r^{\pi}(\rho)$ and the constraint set $\{\pi \in\Pi\,\vert\,V_g^{\pi}(\rho)\geq 0\}$. The pair $(a, r,g)$ associated with a directed arrow represents $(\text{reward, utility})$ received when an action $a$ at a certain state is taken. } 
	\label{fig:non_uniform_constrained_MDP}
\end{figure}

We introduce a constrained MDP with two states: Left ($L$) and Right ($R$), in Figure~\ref{fig:non_uniform_constrained_MDP}. In each state, there are two actions $\{L,R\}$. The MDP transition dynamics is deterministic. In state $L$, if the agent chooses action $L$, then the next state is $L$ and the reward/utility $(0,0)$ is received; otherwise, action $R$ leads to next state $R$ and reward/utility $(1,-1)$. In state $R$, no matter which action the agent takes, the next state is $R$ and the reward/utility $(1,1)$ is received. 

Since the state $R$ is trivial, a stationary policy $\pi$ can be represented by the probability of taking action $L$ in state $L$ denoted by $p$. With a slight abuse of notation, we use notion $\rho$ to represent the probability of starting off from state $L$. Thus, we can compute the value functions as follows via the Bellman equations, i.e., $V(s) = \sum_{a}\pi(a\,\vert\,s) (r(s,a) + \gamma \sum_{s'} P(s'\,\vert\,s,a) V(s))$ for all $s$.
\[
\begin{array}{rcl}
V_r (L) & = & 
\displaystyle
\underbrace{p\times\left(0 + \gamma (V_r(L)\times 1 
	\, + \,
	V_r(R)\times 0)\right)}_{\displaystyle\text{\normalfont take action $L$}} 
\, + \,
\underbrace{(1-p) \left( 1+ \gamma (V_r(L) \times 0+V_r(R)\times 1 )\right)}_{\displaystyle\text{\normalfont take action $R$}}
\\[0.2cm]
V_r (R) & = & 
\displaystyle
\frac{1}{1-\gamma}
\end{array}
\]
and 
\[
\begin{array}{rcl}
V_g (L) & = & 
\displaystyle
\underbrace{p\times\left(0 + \gamma (V_g(L)\times 1 + V_g(R)\times 0)\right)}_{\displaystyle\text{\normalfont take action $L$}} + \underbrace{(1-p) \left( -1+ \gamma (V_g(L) \times 0+V_g(R)\times 1 )\right)}_{\displaystyle\text{\normalfont take action $R$}}
\\[0.2cm]
V_g (R) & = & 
\displaystyle
\frac{1}{1-\gamma}
\end{array}
\]
or, equivalently, 
\[
V_r(L) \;\; = \;\; \frac{1-p}{1-\gamma p}\times \frac{1}{1-\gamma},
\; \; \; \;
V_r(R) \;\; = \;\;  \frac{1}{1-\gamma}
\]
\[
\text{and  } \;\; \;
V_g(L) \;\; = \;\; \frac{1-p}{1-\gamma p} \times \frac{2\gamma - 1}{1-\gamma},
\; \; \; \;
V_g(R) \;\; = \;\;  \frac{1}{1-\gamma}.
\]

\begin{itemize}
	\item[(i)] It is easy to check a basic case: $\gamma = 0$~\citep{szepesvari2020}. We can compute the value functions as follows,
	\[
	V_r^{p} (\rho) 
	\;\;=\; \;
	(1-p)
	\rho
	\, + \,
	(1-\rho)
	\]
	\[
	V_g^{p} (\rho) 
	\;\; =\; \;
	-(1-p)\rho
	\, + \,
	(1-\rho).
	\]
	Feasibility of the policy $p$ requires that $V_g^p(\rho) \geq 0$, i.e.,
	\begin{equation}\label{eq:constrained MDP fallacy}
	\displaystyle
	p
	\; \geq \;
	\frac{2\rho - 1}{\rho}
	\;
	\text{ for any } 
	\rho \in \left[\frac{1}{2}, 1\right].
	\end{equation}
	Hence, the maximum $V_r^p(\rho)$ within the feasible region can be reached at the optimal policy,
	\[
	p^\star \;\; =  \;\; \frac{2\rho - 1}{\rho}.
	\]
	Therefore, the optimal policy $p^\star$ depends on the initial state distribution $\rho$. Moreover, except that $\rho = 1$ or $\frac{1}{2}$, the optimal policy $p^\star$ is a stochastic policy and is unique. 
	
	\item[(ii)] A slightly more general case is given by $\gamma = \frac{1}{4}$. Thus, we can compute the value functions as follows,
	\[
	V_r^{p} (\rho) 
	\;\; =\; \;
	\frac{4}{3}\times\frac{1-p}{1-p/4}
	\times\rho
	\, + \,\frac{4}{3}\times
	(1-\rho)
	\]
	\[
	V_g^{p} (\rho) 
	\;\;=\;\;
	-\frac{2}{3} \times\frac{1-p}{1-p/4}\times\rho
	\, + \,
	\frac{4}{3}\times
	(1-\rho).
	\]
	Feasibility of the policy $p$ requires that $V_g^p(\rho) \geq 0$, i.e.,
	\[
	p  \;\; \geq \;\; 2 \times \frac{3\rho - 2 }{3 \rho -1}
	\;
	\text{ for any } 
	\rho \in \left(\frac{1}{3}, 1\right].
	\]
	In particular, if we take $\rho = \frac{7}{9}$, then $p \geq \frac{1}{2}$. In this case, the maximization of $V_r^p(\rho)$ yields an optimal policy $p^\star = \frac{1}{2}$, which is a uniform policy and is unique. 
	
\end{itemize}

% To illustrate the constraint, we ignore the utility first. We fix the initial state $L$.
% It is easy to see that the optimal policy $\pi^\star$ in state $L$ is to take an action $R$. Thus, $V_g^{\pi^\star}(L) = 1$. Meanwhile, $V_g^{\pi^\star}(L) = -1 < 0 $. Introduction of the constraint $V_g^\pi(L)>0$ excludes this optimal $\pi^\star$ for the unconstrained case.
% To satisfy the constraint, the agent has to take another action $L$ in state $L$, which forms an optimal policy for this constrained MDP.

\subsection{Scalarization fallacy in constrained MDPs}
\label{app:scalarization_fallacy}

In constrained RL, scalarization is often used to reduce a constrained MDP problem to a standard unconstrained one, which might permit many unconstrained RL algorithms~\citep{achiam2017constrained,tessler2019reward}. Unfortunately, as pointed out in the literature (e.g.,~\cite[Part~4]{szepesvari2020} and \citep{calvo2021state}), such a reduction does not necessarily provide a solution to the original constrained MDP problem. It is easy to see this from the previous examples in Figure~\ref{fig:non_uniform_constrained_MDP}. In the basic case: $\gamma=0$, if we take $\rho = \frac{3}{4}$, then from~\eqref{eq:constrained MDP fallacy} the optimal policy is given by $p^\star = \frac{2}{3}$, which is a stochastic policy; we see a uniform optimal policy when $\gamma = \frac{1}{4}$. By shaping a composite function $r+\lambda g$ with some fixed $\lambda \in [0,\infty]$, the scalarization method aims to solve the following unconstrained MDP problem,
\begin{equation}\label{eq: unconstrained MDP}
\maximize_{\pi \,\in\, \Pi} \;\;
V^{\pi}_{r+\lambda g}(\rho).
\end{equation}
However, by the optimality of dynamic programming~\citep[Chapter~6]{puterman2014markov}, an optimal policy is given in a deterministic form which has been widely used in theory and practice. Therefore, solving the above scalarized version of a constrained MDP problem does not necessarily provide an optimal solution for the original constrained MDP problem. We also notice that this phenomenon is reported in recent empirical studies~\citep{tessler2019reward,calvo2021state} and a more formal statement~\cite[Lemma~1]{zahavy2021reward}. Hence, it can be infeasible for dual descent methods~\citep{paternain2022safe,calvo2021state} to find an optimal constrained policy, because solving Problem~\eqref{eq: unconstrained MDP} often yields a deterministic policy, which can be sub-optimal for a constrained MDP with a unique stochastic optimal policy, e.g., constrained MDP examples in Appendix~\ref{app:lack_uniform_optimal_policy}.

% \todo{a solution is our saddle point formulation.}

\subsection{Properties of saddle points}
\label{app:sadde_point_property}

First, we state the invariance property of saddle points~\citep{zhan2022offline} for our constrained saddle-point problem~\eqref{eq:saddle_point}. By the invariance property of saddle points, we can restrict the problem domain without changing the saddle-point property when the original saddle points are contained in the restricted domain. 
Let the set of max-min points be $\Pi^\star \DefinedAs \argmax_{\pi\,\in\,\Pi} \min_{\lambda\,\in\,[0,\infty]} V_{r+\lambda g}^\pi(\rho)$ and the set of min-max points be $\Lambda^\star \DefinedAs \argmin_{\lambda\,\in\,[0,\infty]} \max_{\pi\,\in\,\Pi} V_{r+\lambda g}^\pi(\rho)$. 

\begin{lemma}[Invariance of saddle points]\label{lem:saddle_point_invariance}
	Let $(\pi^\star,\lambda^\star) \in \Pi^\star\times\Lambda^\star$ be a saddle point of $V_{r+\lambda g}^\pi(\rho)$ over $\Pi\times [0,+\infty]$. For any subset $\Lambda' \subset [0,+\infty]$, if $(\pi^\star, \lambda^\star) \in \Pi\times \Lambda'$, then $(\pi^\star, \lambda^\star)$ is a saddle point of $V_{r+\lambda g}^\pi(\rho)$ over $\Pi\times \Lambda'$.
\end{lemma}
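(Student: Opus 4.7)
The plan is to verify directly that the two saddle-point inequalities defining $(\pi^\star,\lambda^\star)$ on the full domain $\Pi\times[0,+\infty]$ transfer verbatim to the restricted domain $\Pi\times\Lambda'$, with no extra work needed. The key observation is that the saddle-point property is a pair of universally quantified inequalities whose truth is preserved under shrinking the range of quantification.

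First I would unpack the hypothesis: since $(\pi^\star,\lambda^\star)$ is a saddle point of $V_{r+\lambda g}^\pi(\rho)$ over $\Pi\times[0,+\infty]$, we have
\[
V_{r+\lambda^\star g}^{\pi}(\rho) \;\leq\; V_{r+\lambda^\star g}^{\pi^\star}(\rho) \;\leq\; V_{r+\lambda g}^{\pi^\star}(\rho) \quad \text{for all } \pi\in\Pi,\ \lambda\in[0,+\infty].
\]
Next I would observe that the left inequality involves only $\pi\in\Pi$ with $\lambda^\star$ held fixed; it makes no reference to the dual domain, so it is unaffected by replacing $[0,+\infty]$ with $\Lambda'$. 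For the right inequality, I would note that it is a universal statement over $\lambda\in[0,+\infty]$; since $\Lambda'\subset[0,+\infty]$, the inequality in particular holds for every $\lambda\in\Lambda'$.

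Finally, combining these two statements with the hypothesis $(\pi^\star,\lambda^\star)\in\Pi\times\Lambda'$ yields exactly the saddle-point condition on the restricted domain, namely
\[
V_{r+\lambda^\star g}^{\pi}(\rho) \;\leq\; V_{r+\lambda^\star g}^{\pi^\star}(\rho) \;\leq\; V_{r+\lambda g}^{\pi^\star}(\rho) \quad \text{for all } \pi\in\Pi,\ \lambda\in\Lambda',
\]
which completes the argument. There is no real obstacle here: the lemma is essentially a tautology about universal quantification, and the only content is the hypothesis that the original saddle point still belongs to the shrunk domain, which is assumed. The reason the authors isolate it as a lemma is presumably to justify, in subsequent sections, replacing the unbounded dual interval $[0,+\infty]$ by the bounded interval $\Lambda=[0,1/((1-\gamma)\xi)]$ from Lemma~\ref{lem:strong_duality}(ii) without losing any optimal dual variables.
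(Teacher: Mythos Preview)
Your proposal is correct and follows essentially the same approach as the paper: both arguments simply observe that the two saddle-point inequalities over $\Pi\times[0,+\infty]$ restrict verbatim to $\Pi\times\Lambda'$ once $\lambda^\star\in\Lambda'$. The paper additionally phrases the dual side as $V_{r+\lambda^\star g}^{\pi^\star}(\rho) = \min_{\lambda\in\Lambda'} V_{r+\lambda g}^{\pi^\star}(\rho)$, but this is just a rewording of your restriction-of-quantifier step.
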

\begin{proof}
	From the saddle-point property of $(\pi^\star,\lambda^\star)$, we have 
	\[
	\pi^\star \; \in \; \argmax_{\pi\,\in\,\Pi} V_{r+\lambda^\star g}^\pi(\rho)
	\; \text{ and } \;
	\lambda^\star \; \in \; \argmin_{\lambda\,\in\,[0,+\infty]} V_{r+\lambda g}^{\pi^\star}(\rho)
	\]
	It is straightforward to see that 
	\begin{equation}\label{eq:saddle_invariance_L}
	V_{r+\lambda^\star g}^{\pi}(\rho)
	\; \leq \;
	V_{r+\lambda^\star g}^{\pi^\star}(\rho)
	\; \text{ for any } \pi \in \Pi.
	\end{equation}
	
	Since $\Lambda' \subset [0,+\infty]$ and $\lambda^\star \in \Lambda'$, $V_{r+\lambda^\star g}^{\pi^\star}(\rho) = \min_{\lambda\,\in\,\Lambda'} V_{r+\lambda g}^{\pi^\star}(\rho)$. Hence, 
	\begin{equation}\label{eq:saddle_invariance_R}
	V_{r+\lambda^\star g}^{\pi^\star}(\rho)
	\; \leq \;
	V_{r+\lambda g}^{\pi^\star}(\rho)
	\; \text{ for any } \lambda \in \Lambda'.
	\end{equation}
	Finally, combining  ~\eqref{eq:saddle_invariance_L} and~\eqref{eq:saddle_invariance_R} defines $(\pi^\star,\lambda^\star)$ as a saddle point of of $V_{r+\lambda g}^\pi(\rho)$ over $\Pi\times \Lambda'$.
\end{proof}

We next show the interchangeability of saddle points in two-player zero-sum games~\citep{nas1951non} for our \emph{non-convex} game.

\begin{lemma}[Interchangeability of saddle points]\label{lem:saddle_point_interchangeability}
	Let $(\pi^\star,\lambda^\star)$, $ (\bar\pi^\star,\bar\lambda^\star) \in \Pi^\star\times\Lambda^\star$ be two saddle points of $V_{r+\lambda g}^\pi(\rho)$ over $\Pi\times [0,+\infty]$. Then, both $(\pi^\star,\bar\lambda^\star)$ and $(\bar\pi^\star,\lambda^\star)$ are saddle points of $V_{r+\lambda g}^\pi(\rho)$ over $\Pi\times [0,+\infty]$.
\end{lemma}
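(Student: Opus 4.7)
The plan is to use the standard cycle argument for interchangeability of saddle points, which is purely a consequence of the saddle-point inequalities and does \emph{not} rely on any convexity of the payoff. So although our game is non-convex (as emphasized after~\eqref{eq:saddle_point}), the argument still goes through verbatim, which is the key observation.

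First, I would write down the four saddle-point inequalities we get from the assumption, two from each saddle point. Setting $\lambda=\bar\lambda^\star$ in the inequality for $(\pi^\star,\lambda^\star)$ gives $V_{r+\lambda^\star g}^{\pi^\star}(\rho)\le V_{r+\bar\lambda^\star g}^{\pi^\star}(\rho)$; setting $\pi=\pi^\star$ in the inequality for $(\bar\pi^\star,\bar\lambda^\star)$ gives $V_{r+\bar\lambda^\star g}^{\pi^\star}(\rho)\le V_{r+\bar\lambda^\star g}^{\bar\pi^\star}(\rho)$; setting $\lambda=\lambda^\star$ in the inequality for $(\bar\pi^\star,\bar\lambda^\star)$ gives $V_{r+\bar\lambda^\star g}^{\bar\pi^\star}(\rho)\le V_{r+\lambda^\star g}^{\bar\pi^\star}(\rho)$; and finally setting $\pi=\bar\pi^\star$ in the inequality for $(\pi^\star,\lambda^\star)$ gives $V_{r+\lambda^\star g}^{\bar\pi^\star}(\rho)\le V_{r+\lambda^\star g}^{\pi^\star}(\rho)$. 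Chained together, these four inequalities form a cycle that starts and ends at $V_{r+\lambda^\star g}^{\pi^\star}(\rho)$, so every inequality in the cycle must in fact be an equality. In particular, the payoff value at all four cross-pairs $(\pi^\star,\lambda^\star)$, $(\pi^\star,\bar\lambda^\star)$, $(\bar\pi^\star,\lambda^\star)$, $(\bar\pi^\star,\bar\lambda^\star)$ coincides; call this common value $V^\star$.

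Next, I would verify the saddle-point inequalities at the cross-pair $(\pi^\star,\bar\lambda^\star)$. For the max-player side, any $\pi\in\Pi$ satisfies $V_{r+\bar\lambda^\star g}^{\pi}(\rho)\le V_{r+\bar\lambda^\star g}^{\bar\pi^\star}(\rho)=V^\star=V_{r+\bar\lambda^\star g}^{\pi^\star}(\rho)$, where the first inequality uses that $\bar\pi^\star$ is a max-min point against $\bar\lambda^\star$, and the two equalities come from the cycle. For the min-player side, any $\lambda\in[0,\infty]$ satisfies $V_{r+\bar\lambda^\star g}^{\pi^\star}(\rho)=V^\star=V_{r+\lambda^\star g}^{\pi^\star}(\rho)\le V_{r+\lambda g}^{\pi^\star}(\rho)$, where the last inequality uses that $\lambda^\star$ is a min-max point against $\pi^\star$. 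This gives the saddle-point condition at $(\pi^\star,\bar\lambda^\star)$, and the argument for $(\bar\pi^\star,\lambda^\star)$ is symmetric.

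There is really no main obstacle here; the only thing to emphasize is that the cycle argument avoids invoking any convex-analytic structure (subdifferentials, Sion's theorem, strong duality, etc.), so the non-concavity of $V_{r+\lambda g}^{\pi}(\rho)$ in $\pi$ does not cause any issue. The same reasoning also justifies restricting the min-player's domain from $[0,\infty]$ to $\Lambda$ via Lemma~\ref{lem:saddle_point_invariance}, so that the statement is consistent with the saddle-point problem~\eqref{eq:saddle_point} over $\Pi\times\Lambda$ used throughout the paper.
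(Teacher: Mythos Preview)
Your proposal is correct and follows essentially the same cycle argument as the paper's proof: both chain the four saddle-point inequalities to conclude that the payoff values at all four pairs coincide, and then read off the saddle-point condition at the cross-pair from the original inequalities. Your presentation separates the cycle-equality step from the verification step a bit more explicitly, but the substance is identical.
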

\begin{proof}
	By the definition of saddle points $(\pi^\star,\lambda^\star)$ and $ (\bar\pi^\star,\bar\lambda^\star)$,
	\[
	V_{r+\lambda^\star g}^{\pi}(\rho) 
	\; \leq \;
	V_{r+\lambda^\star g}^{\pi^\star}(\rho) 
	\; \leq \;
	V_{r+\lambda g}^{\pi^\star}(\rho)
	\;
	\text{ for all }
	\pi\in\Pi
	\text{ and }
	\lambda\in[0,\infty]
	\]
	\[
	V_{r+\bar\lambda^\star g}^{\pi}(\rho) 
	\; \leq \;
	V_{r+\bar\lambda^\star g}^{\bar\pi^\star}(\rho) 
	\; \leq \;
	V_{r+\lambda g}^{\bar\pi^\star}(\rho)
	\;
	\text{ for all }
	\pi\in\Pi
	\text{ and }
	\lambda\in[0,\infty].
	\]
	Then,
	\[
	V_{r+\bar\lambda^\star g}^{\bar\pi^\star}(\rho) 
	\; \leq \;
	V_{r+\lambda^\star g}^{\bar\pi^\star}(\rho) 
	\; \leq \;
	V_{r+\lambda^\star g}^{\pi^\star}(\rho)
	\]
	\[
	V_{r+\lambda^\star g}^{\pi^\star}(\rho) 
	\; \leq \;
	V_{r+\bar\lambda^\star g}^{\pi^\star}(\rho) 
	\; \leq \;
	V_{r+\bar\lambda^\star g}^{\bar\pi^\star}(\rho).
	\]
	Therefore,
	\[
	V_{r+\bar\lambda^\star g}^{\pi}(\rho) 
	\, \leq \,
	V_{r+\bar\lambda^\star g}^{\bar\pi^\star}(\rho)
	\, \leq \,
	V_{r+\lambda^\star g}^{\pi^\star}(\rho) 
	\, \leq \,
	V_{r+\bar\lambda^\star g}^{\pi^\star}(\rho) 
	\, \leq \,
	V_{r+\bar\lambda^\star g}^{\bar\pi^\star}(\rho)
	\, \leq \,
	V_{r+\lambda^\star g}^{\pi^\star}(\rho)
	\, \leq \,
	V_{r+\lambda g}^{\pi^\star}(\rho)
	\]
	for all $\pi\in\Pi$ and $\lambda\in[0,\infty]$, which shows that $(\pi^\star,\bar\lambda^\star)$ is a saddle point of $V_{r+\lambda g}^\pi(\rho)$ over $\Pi\times [0,+\infty]$. 
	
	Similarly, we can prove it for $(\bar\pi^\star,\lambda^\star)$.
\end{proof}

\subsection{Constrained MDPs in occupancy-measure space}
\label{app:CMDP_occupancy_measure}

In the analytic approach~\citep{borkar1988convex,altman1999constrained}, the value functions in Problem~\eqref{eq:CMDP} are bilinear in the occupancy measure $
V_\diamond^{\pi}(\rho) = \left\langle \diamond, q\right\rangle
$ for $\diamond = r \text{ or } u$, where $q^\pi$: $S\times A\to\mathbb{R}$ is an (un-normalized) occupancy measure over the state-action space,
\begin{equation}\label{eq:occupancy}
q^\pi(s,a)\;=\;\sum_{t\,=\,0}^{\infty} \gamma^t \text{Pr}( s_t = s, a_t = a\,\vert\, \pi, s_0\sim\rho)
\end{equation}
which adds up discounted probabilities of visiting $(s,a)$ in the execution of $\pi$.  
Furthermore, we let the operator $P^\top$: $\mathbb{R}^{|S||A|}\to \mathbb{R}^{|S|}$ be $(P^\top q)(s) \DefinedAs \sum_{s', a'} P(s\,\vert\,s',a') q(s',a')$, and the operator $E^\top$: $\mathbb{R}^{|S||A|}\to\mathbb{R}^{|S|}$ be $(E^\top q)(s) \DefinedAs \sum_a q(s,a)$ or simply $q(s)$. With a slight abuse of notation, we denote $\rho = [\, \rho(s_1),\ldots,\rho(s_{|S|}) \,]^\top$. A valid occupancy measure $q^\pi \in \mathbb{R}^{|S||A|}$ satisfies the the Bellman flow equations,
\begin{equation}\label{eq:occupancy_set}
\mathcal{Q} 
\; \DefinedAs \;
\big\{\, q\,\in\,\mathbb{R}^{|S||A|}\,\vert\, E^\top q =\gamma P^\top q +\rho \text{ and } q\geq 0  \,\big\}.
\end{equation}
It is known that the Bellman flow constraint is  necessary and sufficient for any $q \in \mathbb{R}^{|S||A|}$ to be a valid occupancy measure (e.g., \cite[Lemma~1]{zhang2021cautious}). 

Let the concatenation of $r(s,a)$, $u(s,a)$ for all $(s,a)$ be $r$, $u\in\mathbb{R}^{|S||A|}$, respectively.
In the occupancy measure space, the goal of a constrained MDP is to find a solution $q^\star$ of
a linear program,
\begin{equation}\label{eq:CMDP_LP}
% \begin{array}{rl}
\displaystyle\maximize_{q\,\in\,\mathcal{Q}} \;\; \langle r,q\rangle\;
\;\;\;\;
\subject \;\;\; \langle u, q\rangle \; \geq \; b.
% \end{array}
\end{equation}
Denoting $g$: $S\times A\to [-1,1]$ as $g \DefinedAs u - (1-\gamma)b$, we simplify the constraint $\langle u, q\rangle \geq b$ as $\langle g, q\rangle \geq 0$. By the method of Lagrange multipliers, we dualize two constraints in~\eqref{eq:CMDP_LP} and introduce a standard Lagrangian,
\[
L(q, \lambda, \mu) 
\;\; \DefinedAs \;\;
\left\langle\, r + \lambda g , q \,\right\rangle 
\,+\,
\mu^\top 
\big(\, \rho - (E-\gamma P)^\top q \,\big)
\]
where $\pi\in\Pi$ is the primal variable, $\lambda\in[0,\infty]$ is the dual variable for the constraint $\langle g, q\rangle \geq 0$, and $\mu$ is the dual variable for the equality constraint in $\mathcal{Q}$. Since the strong duality holds for any  feasible linear program, the boundedness of $\lambda^\star$ in Lemma~\ref{lem:strong_duality} holds for $L(q, \lambda, \mu)$. Thus, any saddle point $(q^\star, \lambda^\star, \mu^\star)$ is also a max-min and min-max point, i.e., $q^\star$ is the occupancy measure associated with the optimal policy $\pi^\star$, and $(\lambda^\star, \mu^\star)\in \argmin_{\lambda\,\geq\, 0, \mu} \max_{q \, \geq \, 0}  L(q,\lambda,\mu)$. 
Boundedness of $q^\star$ is straightforward from~\eqref{eq:occupancy}, 
\[
q^\star 
\;\in \; 
Q 
\; \DefinedAs \; 
\left\{             q\in\mathbb{R}^{|S||A|}\,\big\vert\, 0\leq q(s,a) \leq \frac{1}{1-\gamma}, \forall (s,a) \in S\times A \right\}
\]
which allows us further restrict $q\in Q\subset\mathcal{Q}$. We next show boundedness of $(\lambda^\star,\mu^\star)$ in Lemma~\ref{lem:duality_boundedness}.
% that $\lambda^\star\in\Lambda$, where
% $\Lambda$ is stated at above~\eqref{eq:saddle_point} and 
% $M \DefinedAs \{ \mu\,\vert\, |\mu(s)|\leq \mu_{\max}, \forall s\in S \}$ where $\mu_{\max} \DefinedAs \frac{1-\gamma+1/\xi}{(1-\gamma)^2}$. Recovering a policy from an occupancy measure $q^\star$ is via $\pi^\star(a\,\vert\,s) = \frac{q^\star(s,a)}{\sum_{a\,\in\,A}q^\star(s,a)}$ for all $(s,a)\in S\times A$. 

\begin{lemma}[Boundedness]\label{lem:duality_boundedness}
	Let Assumption~\ref{as:feasibility} hold. Then, $\lambda^\star \in \Lambda$ and $\mu^\star \in M$, where
	$\Lambda$ is stated below~\eqref{eq:saddle_point} and 
	$M \DefinedAs \{ \mu\,\vert\, |\mu(s)|\leq \mu_{\max}, \forall s\in S \}$ where $\mu_{\max} \DefinedAs \frac{1-\gamma+1/\xi}{(1-\gamma)^2}$.
\end{lemma}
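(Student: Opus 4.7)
The lemma asks for two separate boundedness claims; I would tackle them one after the other, with the bound on $\lambda^\star$ essentially inherited from Lemma~\ref{lem:strong_duality} and the bound on $\mu^\star$ obtained by identifying $\mu^\star$ as a value function of a scalarized MDP.

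First I would dispatch $\lambda^\star$. Since the occupancy-measure LP~\eqref{eq:CMDP_LP} is equivalent to the policy problem~\eqref{eq:CMDP} (a valid occupancy measure $q$ is in one-to-one correspondence with a stationary policy), any Lagrange multiplier $\lambda^\star$ for the inequality constraint of $L(q,\lambda,\mu)$ is also an optimal multiplier for Problem~\eqref{eq:CMDP}. Lemma~\ref{lem:strong_duality}(ii) then gives $\lambda^\star \leq (V_r^{\pi^\star}(\rho)-V_r^{\bar\pi}(\rho))/\xi$. Using $r\in[0,1]$, we have $V_r^{\pi^\star}(\rho)\leq 1/(1-\gamma)$ and $V_r^{\bar\pi}(\rho)\geq 0$, so $\lambda^\star \leq 1/((1-\gamma)\xi)$. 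Combined with $\lambda^\star\geq 0$, this places $\lambda^\star\in \Lambda$.

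Next I would bound $\mu^\star$ by reading off the LP dual explicitly. Fixing $\lambda=\lambda^\star$, the maximization $\max_{q\geq 0} L(q,\lambda^\star,\mu)$ is finite if and only if $r+\lambda^\star g - (E-\gamma P)\mu \leq 0$ componentwise, in which case the maximum equals $\mu^\top\rho$. The dual problem therefore reduces to
\[
\min_\mu \; \mu^\top\rho \;\;\text{ subject to }\;\; \mu(s) \; \geq \; r(s,a)+\lambda^\star g(s,a) + \gamma\sum_{s'} P(s'\,|\,s,a)\,\mu(s'),\; \forall (s,a).
\]
This is the standard Bellman LP for the unconstrained MDP with composite reward $r+\lambda^\star g$, whose minimal feasible solution is precisely the optimal value function $V^{\pi^\star_{\lambda^\star}}_{r+\lambda^\star g}$ of that scalarized MDP, and in particular an optimal choice of $\mu^\star$. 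Since $r\in[0,1]$, $g\in[-1,1]$ and $\lambda^\star\geq 0$, we have $|r(s,a)+\lambda^\star g(s,a)|\leq 1+\lambda^\star$, so
\[
|\mu^\star(s)| \;\leq\; \frac{1+\lambda^\star}{1-\gamma} \;\leq\; \frac{1+1/((1-\gamma)\xi)}{1-\gamma} \;=\; \frac{1}{1-\gamma}+\frac{1}{(1-\gamma)^2\xi} \;=\; \mu_{\max},
\]
which gives $\mu^\star\in M$.

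The only subtlety I anticipate is that the dual LP can have a non-unique optimizer (e.g., when some states are unreachable under $\rho$, the corresponding coordinates of $\mu^\star$ are underdetermined). This is not a real obstacle: the value-function representative $V^{\pi^\star_{\lambda^\star}}_{r+\lambda^\star g}$ is always an optimal dual variable and automatically satisfies the stated bound, so selecting this canonical $\mu^\star$ is sufficient for the inclusion $\mu^\star\in M$.
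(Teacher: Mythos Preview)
Your proposal is correct but takes a different route from the paper. For $\lambda^\star$, you invoke Lemma~\ref{lem:strong_duality} directly, whereas the paper re-derives the same bound from the saddle-point inequality $L(\bar q,\lambda^\star,\mu^\star)\leq L(q^\star,\lambda^\star,\mu^\star)$ applied at the Slater point $\bar q$. For $\mu^\star$, the paper argues via complementary slackness: from $q^\star\in\argmax_{q\in Q}L(q,\lambda^\star,\mu^\star)$ it deduces $r+\lambda^\star g+(\gamma P-E)\mu^\star\leq 0$ with equality on the support of $q^\star$, and then lower-bounds $\|(\gamma\bar P-\bar E)\mu^\star\|_\infty\geq(1-\gamma)\|\mu^\star\|_\infty$ at those active pairs to obtain $(1-\gamma)\|\mu^\star\|_\infty\leq 1+\lambda^\star$. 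You instead recognize the $\mu$-minimization (at fixed $\lambda^\star$) as the standard Bellman LP dual and identify a canonical optimizer $\mu^\star=V^{\pi^\star_{\lambda^\star}}_{r+\lambda^\star g}$, whose sup-norm is trivially bounded by $(1+\lambda^\star)/(1-\gamma)$. Both routes land on the identical inequality $\|\mu^\star\|_\infty\leq(1+\lambda^\star)/(1-\gamma)$; your value-function argument is cleaner and makes the bound's origin transparent, while the paper's KKT argument nominally applies to any saddle-point $\mu^\star$ (though its step $\|(\gamma\bar P-\bar E)\mu^\star\|_\infty\geq(1-\gamma)\|\mu^\star\|_\infty$ tacitly assumes the state achieving $\|\mu^\star\|_\infty$ lies in the support of $q^\star$, a subtlety analogous to the non-uniqueness caveat you already flagged). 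Since the downstream use of the lemma only requires some saddle point to sit in $\Lambda\times M$, your ``canonical representative'' resolution is sufficient.
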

\begin{proof}
	From the saddle-point property of $(q^\star, \lambda^\star, \mu^\star)$, we have 
	\[
	\begin{array}{rcl}
	q^\star & \in & \displaystyle\argmax_{q\,\in\,Q} \; L(q, \lambda^\star, \mu^\star)
	\\[0.2cm]
	(\lambda^\star, \mu^\star) & \in & \displaystyle\argmin_{\lambda\,\geq\,0, \mu} \; L(q^\star, \lambda, \mu)
	\end{array}
	\]
	equivalently, for any $q\in Q$, $\partial_q L(q^\star,\lambda^\star,\mu^\star)^\top (q-q^\star) \leq 0$, and for any $\mu$ and $\lambda \geq 0$, $\partial_\mu L(q^\star,\lambda^\star,\mu^\star)^\top (\mu-\mu^\star)+ \partial_\lambda L(q^\star,\lambda^\star,\mu^\star) (\lambda-\lambda^\star) \geq 0$. Hence, for any $q\in Q$,
	\[
	\langle r+ (\gamma P - E)\mu^\star + \lambda^\star g, q-q^\star \rangle 
	\; \leq \;
	0.
	\]
	Arbitrary $q\in Q$ demands the inequality $r+ (\gamma P - E)\mu^\star + \lambda^\star g \leq 0$. By the definition of occupancy measure, we know that for any $s\in S$ there exists an action $a\in A$ such that $q^\star(s,a)>0$. Thus, the equality $r+ (\gamma P - E)\mu^\star + \lambda^\star g = 0$ must hold at such state-action pairs in which we represent the associated reward and transition by $(\bar r, \bar P)$.  Hence,
	\[
	\begin{array}{rcl}
	\norm{\bar r+\lambda^\star g}_\infty & = & \norm{(\gamma \bar P - \bar E)\mu^\star }_\infty
	%			\\[0.2cm]
	%			& \geq & \norm{(\gamma P - E)\mu^\star }_\infty
	\\
	& \geq & (1-\gamma ) \norm{\mu^\star }_\infty.
	\end{array}
	\]
	Thus, $(1-\gamma)\norm{\mu^\star }_\infty \leq 1 + \lambda^\star$. However, by Assumption~\ref{as:feasibility}, $L(\bar q, \lambda^\star,\mu^\star) \leq L(q^\star, \lambda^\star,\mu^\star)
	$. Hence,
	\[
	\left\langle r, q^\star - \bar q\right\rangle 
	\, + \, 
	\lambda^\star \left\langle g, q^\star - \bar q\right\rangle
	\; \geq \;
	0
	\]
	which, together with the feasibility of $\bar q$ and the optimality of $q^\star$, imply that $0 \leq \lambda^\star \xi \leq \left\langle r, q^\star - \bar q\right\rangle$. Hence, $0\leq\lambda^\star \leq \frac{1}{(1-\gamma)\xi}$, which further yields a bound on $\norm{\mu^\star}_\infty$.
\end{proof}

We now obtain a constrained saddle-point problem in terms of the $q$-based Lagrangian,
\begin{equation}\label{eq:saddle_point_occupancy}
\maximize_{q\,\in\,Q} \,\minimize_{\lambda\,\in\, \Lambda,\, \mu\,\in\,M}\;\; L(q,\lambda,\mu)
\;=\;
\minimize_{\lambda\,\in\, \Lambda,\, \mu\,\in\,M}\,\maximize_{q\,\in\,Q} \;\; L(q,\lambda,\mu)
\end{equation}
where we take bounded polytopes $Q$ and $\Lambda\times M$ such that they contain $q^\star$ and $(\lambda^\star,\mu^\star)$, respectively. 

For notational brevity, we introduce $z = (q,\lambda,\mu)$, $Z \DefinedAs Q\times \Lambda\times M$, and $Z^\star \DefinedAs Q^\star\times \Lambda^\star\times M^\star$ which contains all saddle points $z^\star \DefinedAs (q^\star,\lambda^\star,\mu^\star)$. Let the gradient of $L(q,\lambda,\mu)$ be  
\[
F(q,\lambda,\mu) \; \DefinedAs \; \left[
\begin{tabular}{c}
$-\,  \nabla_{q} L(q, \lambda, \mu)$
\\
$\nabla_{(\lambda,\mu)} L(q, \lambda, \mu)$
\end{tabular}
\right].
\]
Due to the bilinearity of $L(q,\lambda,\mu)$ over a compact domain, $L(q,\lambda,\mu)$ has a gradient Lipschitz constant $L_f$. Let $\mathcal{P}_X$ be the projection operator onto a set $X$, i.e., $\mathcal{P}_X (x) \DefinedAs \argmin_{x'\,\in\,X} \norm{x'-x}$.
Since the $q$-based Lagrangian $L(q,\lambda,\mu)$ is bilinear and the domains are polytopes, Problem~\eqref{eq:saddle_point_occupancy} satisfies the metric subregularity condition~\citep[Theorem~5]{weilinear2020}.

\begin{lemma}[Metric subregularity condition]\label{lem:metric_regularity}
	Let Assumption~\ref{as:feasibility} hold. Then, the gradient function $F(z)$ satisfies that for any $z\in{Z}/{Z}^\star$ with $z^\star = \mathcal{P}_{{Z}^\star}(z)$,
	\[
	\sup_{z'\,\in\,{Z}} \frac{F(z)^\top (z- z')}{\norm{z-z'}} 
	\; \geq  \; C \norm{z-z^\star}
	\]
	where $C>0$ is a problem-dependent constant.
\end{lemma}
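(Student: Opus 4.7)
The plan is to invoke the metric subregularity result for bilinear saddle-point problems on polyhedral domains from Theorem~5 of \citep{weilinear2020}, whose hypotheses this setting satisfies, and to sketch why. Three structural facts drive the proof: (i) $L(q,\lambda,\mu) = \langle r+\lambda g, q\rangle + \mu^\top(\rho - (E-\gamma P)^\top q)$ is bilinear, hence $F$ is an affine map $F(z) = Mz + c$; (ii) the domain $Z = Q\times\Lambda\times M$ is a bounded polyhedron since each factor is cut out by finitely many linear inequalities; and (iii) by \prettyref{as:feasibility} and \prettyref{lem:duality_boundedness}, $Z^\star$ is nonempty and contained in $Z$, so $z^\star = \mathcal{P}_{Z^\star}(z)$ is well-defined for every $z \in Z$.

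First I would characterize the solution set $Z^\star$ as a polyhedron. By convex-concave duality on the bilinear $L$, $z^\star \in Z^\star$ iff $z^\star \in Z$ and the variational inequality $F(z^\star)^\top(z^\star - z') \leq 0$ holds for every $z' \in Z$. Because $F$ is affine and $Z$ is polyhedral, this VI solution set coincides with the projection onto the $z$-coordinates of the solution set of a finite system of linear KKT equalities and inequalities in $(z,\nu)$, where $\nu$ collects dual multipliers for the facets of $Z$. Projections of polyhedra are polyhedra, so $Z^\star = \{z \in Z : A_\star z \leq b_\star\}$ for some matrix $A_\star$ and vector $b_\star$.

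Next I would relate the supremum gap to the residual of this polyhedral description. For fixed $z$, the map $z' \mapsto F(z)^\top(z-z')$ is linear, so $\sup_{z'\in Z} F(z)^\top(z-z')$ is attained at a vertex of $Z$ and is a piecewise-linear nonnegative function of $z$ that vanishes exactly on $Z^\star$. Dividing by $\|z-z'\|$ loses at most a factor proportional to the diameter of $Z$ and yields a lower bound on $\sup_{z'\in Z} F(z)^\top(z-z')/\|z-z'\|$ in terms of $\|(A_\star z - b_\star)_+\|$. Applying Hoffman's lemma then gives $\|z-z^\star\| = \dist(z,Z^\star) \leq C_H \|(A_\star z - b_\star)_+\|$ with Hoffman constant $C_H$ depending only on $A_\star$; chaining these inequalities produces the claim with constant $C > 0$ depending only on the problem data and on the polytopes $Q,\Lambda,M$.

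The main obstacle is the second step: producing an explicit polyhedral description of $Z^\star$ by projecting out the multipliers from the KKT system, and quantitatively tying the variational gap $\sup_{z'\in Z} F(z)^\top(z-z')/\|z-z'\|$ to the residual of those defining inequalities \emph{uniformly} over $Z\setminus Z^\star$. Bilinearity supplies the affine structure, polyhedrality of $Z$ supplies Hoffman, and \prettyref{lem:duality_boundedness} supplies the compactness needed for the diameter and Hoffman constant to combine into a single problem-dependent $C$.
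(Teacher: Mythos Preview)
Your proposal is correct and matches the paper's approach: the paper does not give a standalone proof but simply observes that $L(q,\lambda,\mu)$ is bilinear over the polytope $Z$ and cites \cite[Theorem~5]{weilinear2020} directly. Your sketch of the polyhedral characterization of $Z^\star$ and the Hoffman-lemma step goes further than the paper, essentially outlining the proof of the cited theorem itself; this is fine but more than the paper requires.
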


Hence, application of the optimistic gradient method~\citep{weilinear2020} to Problem~\eqref{eq:saddle_point_occupancy} yields an Optimistic Primal-Dual (OPD) algorithm that begins with two initial tuples of primal/dual variables $(q_0, \lambda_0, \mu_0)=(\hat q_1, \hat \lambda_1, \hat \mu_1) \in Z$, and performs two gradient steps for each primal/dual variable at time $t\geq 1$, 
\begin{equation}\label{eq.OGDA}
\begin{array}{rcl}
z_t 
& = &
\mathcal{P}_{Z}\left(\, \hat z_t  \,-\, \eta F(z_{t-1}) \,\right)
\\[0.2cm]
\hat z_{t+1} 
& = &
\mathcal{P}_{Z}\left(\, \hat z_t  \,-\, \eta F(z_{t}) \,\right)
\end{array}
\end{equation}
where $\eta$ is the stepsize. Let the squared distance of a point $z\in Z$ to the set $Z^\star$ be $\text{\normalfont dist}^2(z, {Z}^\star) \DefinedAs \norm{ z - \mathcal{P}_{Z^\star}(z)}^2$. It is straightforward to employ~\cite[Theorem~8]{weilinear2020} to claim last-iterate convergence guarantee of OPD~\eqref{eq.OGDA} below.

\begin{theorem}[Linear convergence of OPD]
	\label{thm:last-iterate_occupancy_measure}
	Let Assumption~\ref{as:feasibility} hold. If the stepsize $\eta$ in OPD~\eqref{eq.OGDA} satisfies $\eta < \frac{1}{8L_f}$, then the iterates $\{z_t\}_{t\,\geq\,0}$ converge to the set of saddle points ${Z}^\star$ linearly,
	\[
	\text{\normalfont dist}^2(z_t, {Z}^\star)   \; \leq \; C_1 \left( \frac{1}{1+C_2}\right)^t
	\]
	where $C_1 = 64\, \text{\normalfont dist}^2(\hat{z}_1, {Z}^\star)$ and $C_2 = \min( \frac{16\eta^2 C^2}{81},\frac{1}{2})$ for a problem-dependent constant $C>0$.
\end{theorem}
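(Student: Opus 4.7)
The plan is to invoke the general linear last-iterate convergence result for the optimistic gradient method on bilinear saddle-point problems over polytopes (\cite{weilinear2020}, Theorem~8), specializing to our Problem~\eqref{eq:saddle_point_occupancy}. The proof reduces to verifying that the three structural ingredients required by that framework hold in our setting: (a) $Z = Q \times \Lambda \times M$ is a bounded convex polytope containing some saddle point; (b) the gradient operator $F(z) = [-\nabla_q L;\; \nabla_{(\lambda,\mu)} L]$ is Lipschitz with constant $L_f$; and (c) the metric subregularity condition of Lemma~\ref{lem:metric_regularity} holds on $Z$ with some problem-dependent constant $C>0$.

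First I would establish (a) and (b). The compactness of $Q$ is immediate from the definition $0 \leq q(s,a) \leq 1/(1-\gamma)$, and Lemma~\ref{lem:duality_boundedness} places some optimal $(\lambda^\star,\mu^\star)$ inside $\Lambda\times M$; combined with Lemmas~\ref{lem:saddle_point_invariance}--\ref{lem:saddle_point_interchangeability} applied to the bilinear Lagrangian $L(q,\lambda,\mu)$, this ensures $Z^\star\cap Z \neq \emptyset$ and $Z^\star\subseteq Z$. Lipschitz continuity of $F$ (ingredient (b)) is a direct consequence of the bilinearity of $L$ on the compact polytope $Z$, so $F$ is affine and thus has a finite Lipschitz constant $L_f$. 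Ingredient (c) is supplied directly by Lemma~\ref{lem:metric_regularity}.

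Next I would carry out the contraction argument from the cited framework. Combining the optimistic update~\eqref{eq.OGDA}, the three-point inequality for projections, and the Lipschitzness of $F$ yields a one-step inequality of the form
\[
\norm{\hat z_{t+1} - z^\star}^2
\;\leq\;
\norm{\hat z_t - z^\star}^2
\,-\, (1-8\eta L_f)\norm{\hat z_{t+1} - z_t}^2
\,+\, \eta\,F(z_t)^\top(z^\star - z_t),
\]
for any $z^\star\in Z^\star$. The stepsize restriction $\eta < 1/(8L_f)$ makes the middle term non-positive, and the metric subregularity condition from Lemma~\ref{lem:metric_regularity} is then used to lower-bound $\eta\, F(z_t)^\top(z_t - z^\star)$ by a multiple of $\text{dist}^2(z_t, Z^\star)$. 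Choosing $z^\star = \mathcal{P}_{Z^\star}(\hat z_{t+1})$ and relating $\text{dist}(\hat z_{t+1}, Z^\star)$ to $\text{dist}(z_t, Z^\star)$ via the nonexpansiveness of the projection and a second application of Lipschitzness of $F$ produces the geometric recursion
\[
\text{\normalfont dist}^2(\hat z_{t+1}, Z^\star)
\;\leq\;
\frac{1}{1+\min(16\eta^2 C^2/81,\,1/2)}\,\text{\normalfont dist}^2(\hat z_t, Z^\star).
\]
Finally, a standard bound $\text{\normalfont dist}^2(z_t, Z^\star) \leq 64\,\text{\normalfont dist}^2(\hat z_t, Z^\star)$ (following from the optimistic update and Lipschitzness of $F$) converts the $\hat z$-recursion into the claimed bound on $\text{\normalfont dist}^2(z_t, Z^\star)$.

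I expect that the main obstacle is not any individual step but rather bookkeeping: precisely tracking the numerical constants $64$ and $16/81$, and checking that the three-point-inequality chain survives the presence of both an inequality constraint $\lambda\ge 0$ (cut off at the upper bound of $\Lambda$) and a free dual $\mu$ (cut off to $M$). These require only that $Z^\star \subseteq Z$ with strictly interior reachability of the saddle-point set, which is secured by Lemma~\ref{lem:duality_boundedness} and the saddle-point invariance lemma. Once these constants are verified, the conclusion of Theorem~\ref{thm:last-iterate_occupancy_measure} follows by iterating the contraction $t$ times from $\hat z_1$.
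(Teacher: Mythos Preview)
Your proposal is correct and follows essentially the same approach as the paper: the paper also treats Theorem~\ref{thm:last-iterate_occupancy_measure} as a direct corollary of \cite[Theorem~8]{weilinear2020}, after noting that the Lagrangian $L(q,\lambda,\mu)$ is bilinear over the polytope $Z=Q\times\Lambda\times M$ (giving the Lipschitz constant $L_f$) and invoking Lemma~\ref{lem:metric_regularity} for metric subregularity. In fact, you have supplied more detail than the paper, which simply states ``It is straightforward to employ~\cite[Theorem~8]{weilinear2020}'' without sketching the contraction argument.
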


Theorem~\ref{thm:last-iterate_occupancy_measure} shows that the primal-dual iterates of OPD converge to the saddle point set $Z^\star$ in linear rate. Compared with a contemporaneous work~\citep{moskovitz2023reload}, OPD is free of projection to a occupancy measure set, and enjoys non-asymptotic and last-iterate linear convergence. If the underlying policy is recovered via $\pi_t(a\,\vert\,s) = \frac{q_t(s,a)}{\sum_{a'}q_t(s,a')}$ for all $(s,a)$, 
these policy iterates $\pi_t$ associated with the occupancy measure iterates $q_t$ also converge to an optimal constrained policy $\pi^\star$. 

\begin{corollary}[Nearly-optimal constrained policy]
	\label{cor: linear convergence of OPD}
	Let Assumption~\ref{as:feasibility} hold. Assume $\rho_{\min} \DefinedAs \min_s \rho(s)>0$ and re-define $Q\DefinedAs\{             q\in\mathbb{R}^{|S||A|}\,\vert\, 0\leq q(s,a) \leq {1}/(1-\gamma),  q(s) \geq \rho_{\min}/(1-\gamma), \forall (s,a) \in S\times A \}$.
	If we set the stepsize $\eta$ in a similar way as in Theorem~\ref{thm:last-iterate_occupancy_measure}, 
	then the recovered policy iterates ${\{\pi_t\}}_{t\,\geq\,0}$ of OPD~\eqref{eq.OGDA} satisfy 
	\[
	V_r^{\pi^\star}(\rho) - V_r^{\pi_{t}}(\rho)
	\; \leq \; 
	\epsilon
	\; \text{ and } \;
	- V_g^{\pi_{t}}(\rho) 
	\; \leq \; 
	\epsilon \;\;
	\text{ for any }
	t = \Omega\left(\, \log^2\frac{1}{\epsilon} \,\right)
	\]
	where $\pi_t(a\,\vert\,s) = \frac{q_t(s,a)}{\sum_{a'}q_t(s,a')}$ for all $(s,a)$, and 
	$\Omega(\cdot)$ only has some problem-dependent constant.
\end{corollary}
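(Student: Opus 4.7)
The plan is to leverage \prettyref{thm:last-iterate_occupancy_measure} which gives linear last-iterate convergence of the OPD iterates $z_t = (q_t,\lambda_t,\mu_t)$ to the saddle-point set $Z^\star$, and then translate the convergence in occupancy-measure space to value-function convergence for the recovered policy $\pi_t(a\,|\,s) = q_t(s,a)/q_t(s)$.

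First, I would verify that replacing $Q$ with the version that includes the lower bound $q(s)\geq \rho_{\min}/(1-\gamma)$ is benign. By the Bellman flow equations, any $q\in\mathcal{Q}$ satisfies $q(s) = \gamma (P^\top q)(s) + \rho(s) \geq \rho(s) \geq \rho_{\min}$, so $q^\star$ still lies in the new $Q$ and the saddle-point structure of Problem~\eqref{eq:saddle_point_occupancy} is preserved. The benefit of adding this lower bound is that the denominator of $\pi_t$ is bounded away from zero, so the recovered policy is well-defined. Moreover, because $q_t\in Q\subset \mathcal{Q}$, $q_t$ satisfies the Bellman flow constraint and is therefore precisely the occupancy measure of the recovered policy $\pi_t$ (cf.\ Lemma~1 of \citep{zhang2021cautious}). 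Consequently $V_r^{\pi_t}(\rho) = \langle r, q_t\rangle$ and $V_g^{\pi_t}(\rho) = \langle g, q_t\rangle$.

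Second, I apply \prettyref{thm:last-iterate_occupancy_measure} to obtain $\text{\normalfont dist}^2(z_t, Z^\star) \leq C_1 (1/(1+C_2))^t$. Letting $z^\star = \mathcal{P}_{Z^\star}(z_t)$ with primal component $q^\star$, this implies $\|q_t - q^\star\|_2 \leq \sqrt{C_1}\,(1+C_2)^{-t/2}$. Since any projection $q^\star$ is a primal-optimal occupancy measure, $\langle r,q^\star\rangle = V_r^{\pi^\star}(\rho)$ and $\langle g,q^\star\rangle \geq 0$. Applying Cauchy--Schwarz together with $\|r\|_2,\|g\|_2 \leq \sqrt{|S||A|}$ gives
\[
V_r^{\pi^\star}(\rho) - V_r^{\pi_t}(\rho) \;=\; \langle r, q^\star - q_t\rangle \;\leq\; \sqrt{|S||A|}\,\|q_t-q^\star\|_2
\]
and, symmetrically, $-V_g^{\pi_t}(\rho) = \langle g, q^\star - q_t\rangle - \langle g, q^\star\rangle \leq \sqrt{|S||A|}\,\|q_t-q^\star\|_2$.

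Third, to guarantee the right-hand side is below $\epsilon$, one solves $\sqrt{C_1}(1+C_2)^{-t/2} \leq \epsilon/\sqrt{|S||A|}$, which yields $t = \Omega(\log(1/\epsilon))$ after absorbing the problem-dependent constants $C_1,C_2,|S|,|A|$ into $\Omega(\cdot)$; the stated $\log^2(1/\epsilon)$ is a conservative upper bound. The main (minor) obstacle is simply handling the set-valued nature of $Z^\star$ when strong duality is degenerate: the projection $q^\star = \mathcal{P}_{Z^\star}(z_t)$ may differ across $t$, but every such $q^\star$ is primal-optimal with $\langle r,q^\star\rangle = V_r^{\pi^\star}(\rho)$ and $\langle g,q^\star\rangle \geq 0$, so the argument goes through uniformly. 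Combining the two bounds then proves the corollary.
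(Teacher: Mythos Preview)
There is a genuine gap in your argument. You claim that $q_t \in Q \subset \mathcal{Q}$, but this containment is false: the set $Q$ in the corollary (and in the OPD setup more generally) is a relaxed box that does \emph{not} include the Bellman flow equality $E^\top q = \gamma P^\top q + \rho$. Indeed, the Lagrangian $L(q,\lambda,\mu)$ in Problem~\eqref{eq:saddle_point_occupancy} explicitly dualizes this equality via the multiplier $\mu$, precisely so that the primal domain can be taken to be the simple polytope $Q$ rather than the occupancy polytope $\mathcal{Q}$. Consequently the OPD iterate $q_t$ is in general \emph{not} a valid occupancy measure, the recovered policy $\pi_t$ need not have occupancy measure equal to $q_t$, and the identity $V_r^{\pi_t}(\rho) = \langle r, q_t\rangle$ on which your bound rests simply fails.

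The paper closes this gap differently. It first bounds the weighted policy discrepancy $\sum_s d_\rho^{\pi_t^\star}(s)\|\pi_t(\cdot|s)-\pi_t^\star(\cdot|s)\|$ by a constant times $\|q_t - \mathcal{P}_{Q^\star}(q_t)\|$; this is where the lower bound on $q_t(s)$ is actually used, to control the denominators in the map $q\mapsto \pi$. It then applies the performance difference lemma (\prettyref{lem:performance difference lemma}) to convert policy closeness into value closeness $V_r^{\pi_t^\star}(\rho)-V_r^{\pi_t}(\rho)$ and $V_g^{\pi_t^\star}(\rho)-V_g^{\pi_t}(\rho)$. This route never assumes $q_t$ is a valid occupancy measure; it only uses that the projection $q^{\pi_t^\star}=\mathcal{P}_{Q^\star}(q_t)$ is one. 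The resulting bound is $O(\sqrt{\epsilon})$ rather than $O(\epsilon)$, which is absorbed by replacing $\epsilon$ with $\epsilon^2$ at the end---hence the $\log^2(1/\epsilon)$ in the statement is not merely ``conservative'' as you suggest but is genuinely produced by this argument.
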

\begin{proof}
	The key to our proof is to connect the occupancy measure iterates $q_t$ with associated policy iterates $\pi_t$. 
	% It is useful to assume that the initial state distribution $\rho$ is exploratory, i.e., $\rho_{\min} >0$. Thus, $q^{\star}(s) \geq \rho_{\min}$ for all $s$. 
	It is straightforward that Theorem~\ref{thm:last-iterate_occupancy_measure} continues to hold, with a further restricted the domain $Q$. If we set the stepsize $\eta$ in a similar way as in Theorem~\ref{thm:last-iterate_occupancy_measure}, then for any $t = \Omega( \log\frac{1}{\epsilon}  )$, 
	\[
	\displaystyle
	\max \left\{\,
	\norm{\mathcal{P}_{Q^\star}(q_{t})- q_{t}}^2, \;
	\norm{\mathcal{P}_{\Lambda^\star}(\lambda_{t})- \lambda_{t}}^2,\;
	\norm{\mathcal{P}_{M^\star}(\mu_{t})- \mu_{t}}^2
	\,\right\}
	\;\; = \;\; 
	O( \epsilon ).
	\] 
	Let $\pi_t^\star$ be a policy that is associated with the occupancy measure $q^{\pi_t^\star} = \mathcal{P}_{Q^\star}(q_{t})$ and $\pi_t$ be a policy that is associated with the occupancy measure iterate $q_{t}$, i.e., $\pi_t(a\,\vert\,s) = \frac{q_t(s,a)}{\sum_{a'}q_t(s,a')}$ for all $(s,a)$.
	We denote $(\lambda_t^\star,\mu_t^\star) \DefinedAs (\mathcal{P}_{\Lambda^\star}(\lambda_{t}), \mathcal{P}_{M^\star}(\mu_{t}))$. Thus,
	\[
	\begin{array}{rcl}
	&& \!\!\!\!  \!\!\!\!  \!\!
	\displaystyle
	\sum_{s} d_\rho^{\pi_t^\star}(s)\norm{\pi_t(\cdot\,|\,s) - \pi_t^\star(\cdot\,|\,s)} 
	\\[0.2cm]
	& = & 
	\displaystyle
	\sum_{s} d_\rho^{\pi_t^\star}(s)
	\norm{\frac{q_t(s,\cdot)}{q_t(s)} - \frac{q^{\pi_t^\star}(s,\cdot)}{q^{\pi_t^\star}(s)}} 
	\\[0.2cm]
	& \leq &
	\displaystyle
	\sum_{s} d_\rho^{\pi_t^\star}(s)
	\frac{\norm{q_t(s,\cdot) - q^{\pi_t^\star}(s,\cdot)}q_t(s)}{q_t(s)q^{\pi_t^\star}(s)} 
	\,+ \,
	\sum_{s} d_\rho^{\pi_t^\star}(s)\,\frac{\norm{q_t(s,\cdot)} |q_t(s)  - q^{\pi_t^\star}(s)|}{q_t(s)q^{\pi_t^\star}(s)}
	\\[0.2cm]
	& \leq &
	\displaystyle
	\sum_{s} 
	\frac{\norm{q_t(s,\cdot) - q^{\pi_t^\star}(s,\cdot)}}{q^{\pi_t^\star}(s)} 
	\, +\, \sqrt{|A|} \sum_{s} \,\frac{|q_t(s)  - q^{\pi_t^\star}(s)|}{q_t(s)}
	\\[0.2cm]
	& \leq &
	\displaystyle \frac{\sqrt{|A|}}{\rho_{\min}}
	\sum_{s} \left(
	{\norm{q_t(s,\cdot) - q^{\pi_t^\star}(s,\cdot)}}
	+ {|q_t(s)  - q^{\pi_t^\star}(s)|}
	\right)
	\\[0.2cm]
	& \leq &
	\displaystyle \frac{\sqrt{|A|}}{\rho_{\min}} \left(
	\sqrt{|S|}\sqrt{ \sum_s\norm{q_t(s,\cdot) - q^{\pi_t^\star}(s,\cdot)}^2 }
	+\sqrt{|S||A|}\sqrt{ \sum_{s,a} |q_t(s,a) - q^{\pi_t^\star}(s,a)|^2 }\right)
	\\[0.2cm]
	& \leq &
	\displaystyle \frac{2\sqrt{|S|}|A|}{\rho_{\min}} \norm{q_t - q^{\pi_t^\star}}
	\\[0.2cm]
	& = &
	\displaystyle \frac{2\sqrt{|S|}|A|}{\rho_{\min}} \norm{q_t - \mathcal{P}_{\mathcal Q^\star}(q_t)}
	\end{array}
	\]
	where the first inequality is due to triangle inequality, 
	we use the fact: $(1-\gamma)q^{\pi_t^\star}(s) = d_\rho^{\pi_t^\star}(s)$, $d_\rho^{\pi_t^\star}(s)\leq 1$, and $\norm{q_t(s,\cdot)} \leq \frac{\sqrt{|A|}}{1-\gamma}$ in the second inequality, the third inequality is due to that $q^{\pi_t^\star}(s)$, $q_t(s) \geq \rho_{\min}$, and $\rho_{\min}>0$, we apply Cauchy-Schwarz inequality in the fourth inequality, and finally we combine two square root terms by relaxing the first one in the last inequality.
	
	% We next consider a primal-dual iterate $(\hat\pi_t,\hat\lambda_t)$ for some $t\geq O( \log\frac{1}{\epsilon})$. It is straightforward to check that
	% We notice that $d_\rho^{\pi_\tau^\star} (s) \geq (1-\gamma)\rho(s) \geq (1-\gamma )\rho_{\min}$. 
	% Thus,
	% \[
	%     \displaystyle
	%      \KL_t(\rho)
	%     \; = \;
	%     O(\epsilon)
	%     \;\text{ and }\;
	%     \frac{1}{2}(\lambda_\tau^\star - \lambda_t)^2
	%     \; = \;
	%     O(\epsilon).
	% \]
	
	First, we have 
	% \begin{equation}\label{eq:nearly_optimality_gap}
	%     V_r^{\pi^\star} (\rho)
	%     - V_r^{\pi_t} (\rho)
	%     \; = \;
	%     \underbrace{V_r^{\pi^\star} (\rho)
	%     -
	%      V_r^{\pi_\tau^\star} (\rho)}_{\displaystyle \text{(i)}}
	%      \, + \,
	%      \underbrace{V_r^{\pi_\tau^\star} (\rho)
	%     - V_r^{\pi_t} (\rho)}_{\displaystyle \text{(ii)}}.
	% \end{equation}
	% For the term (ii), because of $\KL_t(\rho)
	%  = 
	% O(\epsilon)$, 
	% we have
	\[
	\begin{array}{rcl}
	\displaystyle 
	V_r^{\pi_t^\star} (\rho)
	- V_r^{\pi_t} (\rho)
	& = & \displaystyle \frac{1}{1-\gamma}
	\sum_{s,a} d_\rho^{\pi_t^\star}(s)\left(\pi_t^\star(a\,|\,s) - \pi_t(a\,|\,s)\right)Q^{\pi_t}_{r}(s,a)
	\\[0.2cm]
	& \leq & \displaystyle
	\frac{1}{(1-\gamma)^2} \sum_{s} d_\rho^{\pi_t^\star} (s)
	\norm{\pi_t^\star(\cdot\,|\,s) - \pi_t(\cdot\,|\,s)}_1
	\\[0.2cm]
	& \leq & \displaystyle
	\frac{\sqrt{|A|}}{(1-\gamma)^2} \sum_{s} d_\rho^{\pi_t^\star} (s)
	\norm{\pi_t^\star(\cdot\,|\,s) - \pi_t(\cdot\,|\,s)}
	% \\[0.2cm]
	% & \leq & \displaystyle
	% \frac{\sqrt{|A|}}{1-\gamma} 
	% \sqrt{\sum_{s} d_\rho^{\pi_t^\star} (s) \norm{\pi_t^\star(\cdot\,|\,s) - \pi_t(\cdot\,|\,s)}^2 }
	% \\[0.2cm]
	% & = & \displaystyle
	% \frac{1}{1-\gamma} 
	% \sqrt{2\,\KL_t(\rho) }
	\end{array}
	\]
	where the equality is due to performance difference lemma in Lemma~\ref{lem:performance difference lemma}, we use Cauchy–Schwarz inequality in the first inequality, and the second inequality is due to $\norm{x}_1\leq \sqrt{d}\norm{x}_2$ for $x\in\mathbb{R}^d$, 
	which shows $V_r^{\pi_t^\star} (\rho) - V_r^{\pi_t} (\rho) \leq O(\sqrt{\epsilon})$. By the optimality of $\pi_t^\star$, $V_r^{\pi_t^\star}(\rho)  = V_r^{\pi^\star}(\rho)$. Therefore, $V_r^{\pi^\star} (\rho) - V_r^{\pi_t} (\rho) \leq O(\sqrt{\epsilon})$.
	
	Second, we have 
	\[
	- V_g^{\pi_t} (\rho)
	\;\; = \;\;
	\underbrace{
		-
		V_g^{\hat\pi_t^\star} (\rho)}_{\displaystyle \text{(i)}}
	\, + \,
	\underbrace{V_g^{\hat\pi_t^\star} (\rho)
		- V_g^{\pi_t} (\rho)}_{\displaystyle \text{(ii)}}.
	\]
	Similar to bounding $V_r^{\pi_t^\star} (\rho)
	- V_r^{\pi_t} (\rho)$, we can show that ${\displaystyle \text{(ii)}} \leq O(\sqrt{\epsilon})$. By the optimality of $\pi_t^\star$,  we have $V_g^{\pi_t^\star} (\rho)\geq0$. Therefore, $ - V_g^{\pi_t} (\rho) \leq O(\sqrt\epsilon)$.
	
	Finally, we replace the accuracy $\sqrt\epsilon$ by $\epsilon$ and combine all big O notation to conclude the proof.
\end{proof}

Corollary~\ref{cor: linear convergence of OPD} states that after an almost constant number of iterations a policy induced by the last occupancy measure iterate of OPD is an $\epsilon$-optimal constrained policy for Problem~\eqref{eq:CMDP}. 
We notice that recent last-iterate convergence result for convex-concave saddle-point problems~\citep{cai2022tight} is also applicable to Problem~\eqref{eq:saddle_point_occupancy}, which provides the optimal rate without problem-dependent constants. It is worth mentioning that direct application of such last-iterate convergence results in convex minimax optimization to constrained MDPs with general utilities~\citep{bai2022achievingconcave,ying2022policy} and convex MDPs~\citep{zhang2021cautious,zahavy2021reward} in  occupancy-measure space is also straightforward. We omit these exercises in this paper, and focus on the design and analysis of algorithms in policy space. 

\section{Proofs in Section~\protect\ref{sec:regularized method}}

In this section, we provide proofs of the claims in Section~\ref{sec:regularized method}.

\subsection{Existence and uniqueness of regularized saddle points}
\label{app:regularized saddle point}

\begin{lemma}[Existence and uniqueness]
	\label{lem: regularized saddle point}
	There exists a unique primal-dual pair $(\bar\pi,\bar\lambda)
	\in \Pi\times\Lambda$ such that 
	$L_\tau(\bar\pi, \lambda) \geq L_\tau(\bar\pi, \bar\lambda) \geq L_\tau(\pi, \bar\lambda)$ for any $\pi\in\Pi$ and $\lambda\in\Lambda$.  
\end{lemma}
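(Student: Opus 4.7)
The plan is to pass to the occupancy-measure reformulation of $L_\tau$, in which the Lagrangian becomes a strictly-concave–strongly-convex payoff on a compact convex product domain, and then invoke a standard minimax theorem plus a two-line interchangeability argument for uniqueness.

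First I would identify $\pi \leftrightarrow q^\pi$ via~\eqref{eq:occupancy} and~\eqref{eq:occupancy_set} and rewrite
\[
L_\tau(\pi,\lambda) \;=\; \tilde L_\tau(q^\pi,\lambda) \;\DefinedAs\; \langle r+\lambda g,\, q^\pi\rangle \;-\; \tau \sum_{s,a} q^\pi(s,a)\log\frac{q^\pi(s,a)}{\sum_{a'} q^\pi(s,a')} \;+\; \frac{\tau}{2}\lambda^2,
\]
using $V_{r+\lambda g}^\pi(\rho)=\langle r+\lambda g, q^\pi\rangle$ and $\pi(a\,|\,s) = q^\pi(s,a)/q^\pi(s)$ on reachable $s$ (with the convention $0\log 0 = 0$). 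The Bellman-flow polytope $\mathcal Q$ is compact convex and $\Lambda$ is a bounded interval, so $\mathcal Q\times\Lambda$ is compact convex and $\tilde L_\tau$ is continuous on it.

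Next I would establish the two structural properties that drive the whole argument: (i) for every fixed $\lambda$, $q\mapsto \tilde L_\tau(q,\lambda)$ is strictly concave on $\mathcal Q$; (ii) for every fixed $q$, $\lambda\mapsto \tilde L_\tau(q,\lambda)$ is $\tau$-strongly convex on $\Lambda$. Claim (ii) is immediate from the $\tfrac{\tau}{2}\lambda^2$ summand since the remaining $\lambda$-dependence is linear. Claim (i) reduces to strict concavity of the negative conditional-entropy term, which follows from the log-sum inequality applied to $(\alpha q_1(s,a), (1-\alpha) q_2(s,a))$ against $(\alpha q_1(s), (1-\alpha) q_2(s))$, exactly as in the commented derivation appearing in the excerpt; equality in log-sum would force $q_1(s,a)/q_1(s)=q_2(s,a)/q_2(s)$ for all $(s,a)$ with positive mass, i.e.\ identical induced policies, and combined with the bijection between policies and occupancy measures this yields $q_1=q_2$.

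With these properties plus continuity and compactness, Sion's minimax theorem gives
\[
\max_{q\in\mathcal Q}\min_{\lambda\in\Lambda} \tilde L_\tau(q,\lambda) \;=\; \min_{\lambda\in\Lambda}\max_{q\in\mathcal Q} \tilde L_\tau(q,\lambda),
\]
with the common value attained, producing a saddle point $(\bar q,\bar\lambda)$. Uniqueness follows from the interchangeability principle in Lemma~\ref{lem:saddle_point_interchangeability}: given two saddle points $(\bar q,\bar\lambda)$ and $(\bar q',\bar\lambda')$, the pair $(\bar q,\bar\lambda')$ is again a saddle point, so both $\bar q$ and $\bar q'$ maximize the strictly concave map $\tilde L_\tau(\cdot,\bar\lambda')$, forcing $\bar q=\bar q'$; symmetrically $\bar\lambda=\bar\lambda'$ by strong convexity in $\lambda$.

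Finally I would transport the conclusion back to policy space by setting $\bar\pi(\cdot\,|\,s) \DefinedAs \bar q(s,\cdot)/\bar q(s)$ and checking the saddle-point sandwich inequality on $\Pi\times\Lambda$ directly. The main obstacle I anticipate is the usual bookkeeping for states $s$ with $d_\rho^{\bar\pi}(s)=0$: on such states $L_\tau$ is insensitive to $\bar\pi(\cdot\,|\,s)$, so strictly speaking uniqueness of $\bar\pi$ holds only modulo the equivalence $\pi\sim\pi'$ iff $\pi=\pi'$ on reachable states. Under the standing convention that $\rho$ has full support, $d_\rho^\pi(s)\geq (1-\gamma)\rho(s)>0$ collapses this equivalence and uniqueness holds verbatim on all of $\Pi$.
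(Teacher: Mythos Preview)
Your proposal is correct and follows the same route as the paper: pass to occupancy measures, use the log-sum inequality for strict concavity in $q$, the quadratic term for strong convexity in $\lambda$, Sion's theorem for existence, and strict concavity/convexity for uniqueness. Your explicit interchangeability step (note that Lemma~\ref{lem:saddle_point_interchangeability} is stated for the unregularized Lagrangian, but its proof is payoff-agnostic and carries over verbatim) and your remark on unreachable states are in fact more careful than the paper, which simply asserts that strict concavity/convexity suffices without spelling out either point.
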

\begin{proof}
	We first re-write the regularized Lagrangian $L_\tau(\pi,\lambda)$ in terms of occupancy measure $q\in\mathcal{Q}$,
	\[
	L_\tau(\pi,\lambda)
	\;\; = \;\;
	\langle r+\lambda g, q\rangle 
	\, + \, 
	\tau\left(
	\mathcal{H}(\pi) + \frac{1}{2}\lambda^2
	\right)
	\]
	\[
	\mathcal{H}(\pi) 
	\;\; = \;\;
	-  \sum_{s,a} q(s,a) \log\frac{q(s,a)}{\sum_{a'} q(s,a')}
	\; \DefinedAs \; 
	\mathcal{H}(q).
	\] 
	We next use notation $L_\pi(q,\lambda)$ to represent $L_\tau(\pi,\lambda)$. We first check that $\mathcal{H}(q)$ is a concave function,    
	\[
	\begin{array}{rcl}
	&  & \!\!\!\!  \!\!\!\!  \!\!
	\mathcal{H}(\alpha q_1 + (1-\alpha) q_2) 
	\\[0.2cm]
	& = &
	\displaystyle
	-\sum_{s,a} \left(\alpha q_1(s,a) + (1-\alpha) q_2(s,a)\right) \log\frac{\alpha q_1(s,a) + (1-\alpha) q_2(s,a)}{\alpha\sum_{a'} q_1(s,a')+(1-\alpha)\sum_{a'} q_2(s,a')}
	\\[0.2cm]
	& \geq &
	\displaystyle
	-\sum_{s,a} \alpha q_1(s,a) \log\frac{\alpha q_1(s,a) }{\alpha\sum_{a'} q_1(s,a')}
	-\sum_{s,a} (1-\alpha) q_2(s,a) \log\frac{(1-\alpha) q_2(s,a) }{(1-\alpha)\sum_{a'} q_2(s,a')}
	\\[0.2cm]
	& = & \alpha\mathcal{H}( q_1) + (1-\alpha) \mathcal{H}( q_2) 
	\end{array}
	\]
	for any $q_1$, $q_2 \in \mathcal{Q}$ and $\alpha\in [\,0,1\,]$, where the inequality is because of the log sum inequality $\left(\sum_i a_i\right)\ln\frac{\sum_i a_i}{\sum_i b_i}\leq \sum_i a_i \ln\frac{a_i}{b_i}$ for non-negative $a_i$ and $b_i$, and the equality holds if and only if
	\[
	\displaystyle
	\frac{q_1(s,a)}{\sum_{a'} q_1(s,a')}
	\;\; = \;\;
	\frac{q_2(s,a)}{\sum_{a'} q_2(s,a')}
	\;\; \text{ for all }s,a.
	\]
	Therefore, $L_\tau(q,\lambda)$ is concave in $q\in\mathcal{Q}$ and strongly convex in $\lambda \in \Lambda$. We notice that $\mathcal{Q}$ is a polytope and $\Lambda$ is a bounded interval. By Sion's minimax theorem~\citep{sion1958general}, $L_\tau(q,\lambda)$ has a saddle point $(\bar q,\bar\lambda)\in\mathcal{Q}\times\Lambda$. From the one-to-one correspondence between policy and occupancy measure, $\bar q$ induces a policy $\bar\pi$ and $(\bar\pi,\bar\lambda)$ serves as a saddle point of $L_\tau(\pi,\lambda)$, which proves the existence of saddle points. 
	
	To show the uniqueness of $(\bar\pi,\bar\lambda)$ (or $(\bar q,\bar\lambda)$), it is sufficient to show that $L_\tau(q,\lambda)$ is strictly concave in $q$ and strictly convex in $\lambda$. The second argument is straightforward from the strong convexity of $L_\tau(q,\lambda)$ in $\lambda$. We next show the first argument that $L_\tau(q,\lambda)$ is strictly concave in $q$ for any $\lambda\in\Lambda$.
	Assume that there are two (different) policies $\pi_1$ and $\pi_2$ and the associated two occupancy measures are $q_1$ and $q_2$. From the concavity of $H(q)$, there is another occupancy measure $\alpha q_1+(1-\alpha) q_2$ that satisfies 
	\[
	\begin{array}{rcl}
	&& \!\!\!\!  \!\!\!\!  \!\!
	\displaystyle
	L_\tau( \alpha q_1+(1-\alpha) q_2, \lambda)
	\\[0.2cm]
	& = & \displaystyle
	\langle r+\lambda g, \alpha q_1+(1-\alpha) q_2\rangle
	+ \tau\left( \mathcal{H}(\alpha q_1+(1-\alpha) q_2) + \frac{1}{2}\lambda^2 \right)
	\\[0.2cm]
	& \geq &
	\displaystyle
	\alpha \left( \langle r+\lambda g, q_1\rangle
	+  \tau \mathcal{H}(q_1) + \frac{\tau}{2}\lambda^2\right)
	+(1-\alpha) \left(\langle r+\lambda g,  q_2\rangle
	+ \tau \mathcal{H}(  q_2) + \frac{\tau}{2}\lambda^2\right)
	\\[0.2cm]
	& = &
	\alpha L_\tau(  q_1, \lambda)+
	(1-\alpha) L_\tau(  q_2, \lambda)
	\end{array}
	\]
	where 
	the inequality $\geq$ has to be strict since $\pi_1$ and $\pi_2$ differentiate by at least one state-action pair $(\bar s,\bar a)$, 
	\[
	\displaystyle
	\pi_1(\bar a\,\vert\,\bar s)
	\; = \; 
	\frac{q_1(\bar s,\bar a)}{\sum_{a'} q_1(\bar s,a')}
	\;\; \not = \;\;
	\frac{q_2(\bar s,\bar a)}{\sum_{a'} q_2(\bar s,a')} 
	\; = \; \pi_2(\bar a\,\vert\,\bar s).
	% \;\text{ and } \;
	% \frac{q_1(s,a)}{\sum_{a'} q_1(s,a')}
	% \; = \;
	% \frac{q_2(s,a)}{\sum_{a'} q_2(s,a')}
	% \;\; \text{ for all }(s,a)\not = (\bar s,\bar a).
	\] 
	Hence, a new policy associated with $\alpha q_1+(1-\alpha)q_2$ can achieve a strictly higher value of the convex combination of two objective functions, unless $q_1 = q_2$ or $\alpha$ equals either $0$ or $1$. Therefore, $L_\tau(q,\lambda)$ is strictly concave in $q\in\mathcal{Q}$ for any $\lambda \in \Lambda$. 
\end{proof}

Let a saddle point of $L_\tau(\pi,\lambda)$ be $\pi_\tau^\star = \argmax_{\pi \,\in\,\Pi}\min_{\lambda\,\in\,\Lambda} L_\tau(\pi, \lambda)$ and $\lambda_\tau^\star = \argmin_{\lambda\,\in\,\Lambda} \max_{\pi\,\in\,\Pi} 
L_\tau(\pi, \lambda)$. Existence of saddle points in Lemma~\ref{lem: regularized saddle point} ensures that,
\[
L_\tau (\pi, \lambda_\tau^\star)
\; \leq \;
L_\tau (\pi_\tau^\star, \lambda_\tau^\star)
\; \leq \;
L_\tau (\pi_\tau^\star, \lambda)
\; \text{ for all } (\pi,\lambda) \in\Pi\times\Lambda
\]
in which the first inequality implies that for any $\pi\in\Pi$,
\[
\begin{array}{rcl}
\displaystyle V_{r+\lambda_\tau^\star g}^{\pi_\tau^\star} (\rho) + \tau \left(\mathcal{H}(\pi_\tau^\star) +\frac{1}{2}(\lambda_\tau^\star)^2\right)
& \geq & \displaystyle
V_{r+\lambda_\tau^\star g}^{\pi} (\rho) + \tau \left(\mathcal{H}(\pi) +\frac{1}{2}(\lambda_\tau^\star)^2\right)
\\[0.2cm]
& \geq & \displaystyle
V_{r+\lambda_\tau^\star g}^{\pi} (\rho) + \tau \frac{1}{2}(\lambda_\tau^\star)^2
\end{array}
\]
and the second inequality implies that for any $\lambda \in \Lambda$,
\[
\begin{array}{rcl}
\displaystyle V_{r+\lambda g}^{\pi_\tau^\star} (\rho) + \tau \left(\mathcal{H}(\pi_\tau^\star) +\frac{1}{2}\lambda^2\right)
& \geq & \displaystyle
V_{r+\lambda_\tau^\star g}^{\pi_\tau^\star} (\rho) + \tau \left(\mathcal{H}(\pi_\tau^\star) +\frac{1}{2}(\lambda_\tau^\star)^2\right)
\\[0.2cm] 
& \geq & \displaystyle
V_{r+\lambda_\tau^\star g}^{\pi_\tau^\star} (\rho) + \tau \mathcal{H}(\pi_\tau^\star)
\end{array}
\]  
Combination of two implied inequalities above leads to \eqref{eq:regularized saddle point approximation}, i.e., $(\pi_\tau^\star,\lambda_\tau^\star)$ is a saddle point of the original Lagrangian $V_{r+\lambda g}^{\pi}(\rho)$, up to two $\tau$-terms.

\subsection{Proof of Theorem~\ref{thm: linear convergence of RPG} }
\label{app: linear convergence of RPG}

\begin{proof}
	We begin with the standard decomposition of the primal-dual gap,
	\begin{equation}
	\label{eq:primal_dual_gap_regularized}
	L_\tau (\pi_\tau^\star, \lambda_t)  - L_\tau (\pi_t, \lambda_\tau^\star)
	\;\; = \;\;
	\underbrace{L_\tau (\pi_\tau^\star, \lambda_t)  - L_\tau (\pi_t, \lambda_t)}_{\displaystyle\text{(i)}}
	\, + \,
	\underbrace{L_\tau (\pi_t, \lambda_t)  - L_\tau (\pi_t, \lambda_\tau^\star)}_{\displaystyle\text{(ii)}}
	\end{equation}
	and we next deal with $\text{(i)}$ and $\text{(ii)}$, separately. We notice that 
	\[
	\mathcal{H}(\pi) 
	\;\; \DefinedAs  \;\;
	\mathbb{E} 
	\left[ 
	\sum_{t\,=\,0}^\infty -\gamma^t \log \pi(a_t\,\vert\,s_t) 
	\right]
	\; =  \;
	-  \frac{1}{1-\gamma}
	\sum_{s,\,a} d_\rho^\pi(s) \pi(a\,\vert\,s)\log \pi(a\,\vert\,s).
	\]
	
	We note that $Q_{r+\lambda_t g+\tau \psi_t}^{\pi_t}(s,a)$ is a sum of a soft $Q$ value function associated with a composite function $r+\lambda_t g-\tau \log\pi_t$, and $-\tau\log\pi_t$. Because of the boundedness of the soft $Q$ value function and the $\epsilon_0$-restriction on the policy domain $\hat{\Delta}(A)$,       
	$|Q_{r+\lambda_t g+\tau \psi_t}^{\pi_t}(s,a)| \leq \frac{1}{1-\gamma}(1+\frac{1}{(1-\gamma)\xi}+\tau \log|A|) - \tau \log \frac{\epsilon_0}{|A|}\DefinedAs C_{\tau,\xi,\epsilon_0}$. 
	For the term (i),
	\[
	\begin{array}{rcl}
	& & \!\!\!\!  \!\!\!\!  \!\!
	\displaystyle
	L_\tau(\pi_\tau^\star, \lambda_t) - L_\tau(\pi_t, \lambda_t) 
	\\[0.2cm]
	&=& \displaystyle
	V^{\pi_\tau^\star}_{r+\lambda_t g}(\rho) \,-\, V^{\pi_t}_{r+\lambda_t g}(\rho) 
	\\[0.2cm]
	&& \displaystyle
	-\, \frac{\tau}{1-\gamma}\sum_{s,a} d^{\pi_\tau^\star}_\rho(s) \pi_\tau^\star(a\,|\,s)\log\pi_\tau^\star(a\,|\,s) \, +\, \frac{\tau}{1-\gamma}\sum_{s,a} d^{\pi_t}_\rho(s) \pi_t(a\,|\,s)\log\pi_t(a\,|\,s) 
	\\[0.2cm]
	&=& \displaystyle
	V^{\pi_\tau^\star}_{r+\lambda_t g + \tau\psi_t}(\rho) \,-\, V^{\pi_t}_{r+\lambda_t g+ \tau\psi_t}(\rho) 
	\,-\, {\tau}V_{\psi_t}^{\pi_\tau^\star}(\rho) \,+\, \tau V_{\psi_t}^{\pi_t}(\rho)  
	\\[0.2cm]
	& & \displaystyle
	-\, \frac{\tau}{1-\gamma}\sum_{s,a} d^{\pi_\tau^\star}_\rho(s) \pi_\tau^\star(a\,|\,s)\log\pi_\tau^\star(a\,|\,s) \,+\, \frac{\tau}{1-\gamma}\sum_{s,a} d^{\pi_t}_\rho(s) \pi_t(a\,|\,s)\log\pi_t(a\,|\,s)  
	\\[0.2cm]
	%&= \sum_{s,a} d_\rho^{\pi^\star}(s)\left(\pi^\star(a\,|\,s) - \pi_t(a\,|\,s)\right)Q^{\pi_t}_{r+\lambda_t g + \epsilon \phi_t}(s,a)  \\ 
	%&\qquad - \epsilon\sum_{s,a} d_\rho^{\pi^\star}(s)\left(\pi^\star(a\,|\,s) - \pi_t(a\,|\,s)\right)Q^{\pi_t}_{\phi_t}(s,a)  \\ 
	%&\qquad - \epsilon\sum_{s,a} d^{\pi^\star}_\rho(s) \pi^\star(a\,|\,s)\ln\pi^\star(a\,|\,s) + \epsilon\sum_{s,a} d^{\pi_t}_\rho(s) \pi_t(a\,|\,s)\ln\pi_t(a\,|\,s)\\ 
	&= & \displaystyle \frac{1}{1-\gamma}
	\sum_{s,a} d_\rho^{\pi_\tau^\star}(s)\left(\pi_\tau^\star(a\,|\,s) - \pi_t(a\,|\,s)\right)Q^{\pi_t}_{r+\lambda_t g + \tau \psi_t}(s,a)
	\\[0.2cm] 
	& & \displaystyle
	+\, \frac{\tau}{1-\gamma}\sum_{s,a} d_\rho^{\pi_\tau^\star}(s)\pi_\tau^\star(a\,|\,s) \log \pi_t(a\,|\,s) \,-\, \frac{\tau}{1-\gamma} \sum_{s,a} d_\rho^{\pi_t}(s) \pi_t(a\,|\,s)\log \pi_t(a\,|\,s)  
	\\[0.2cm]
	& & \displaystyle
	-\, \frac{\tau}{1-\gamma}\sum_{s,a} d^{\pi_\tau^\star}_\rho(s) \pi_\tau^\star(a\,|\,s)\log\pi_\tau^\star(a\,|\,s) \,+\, \frac{\tau}{1-\gamma}\sum_{s,a} d^{\pi_t}_\rho(s) \pi_t(a\,|\,s)\log\pi_t(a\,|\,s) 
	\\[0.2cm] 
	&=& \displaystyle \frac{1}{1-\gamma}
	\sum_{s,a} d_\rho^{\pi_\tau^\star}(s)\left(\pi_\tau^\star(a\,|\,s) - \pi_t(a\,|\,s)\right)Q^{\pi_t}_{r+\lambda_t g + \tau \psi_t}(s,a) - \frac{\tau}{1-\gamma} \sum_s d^{\pi_\tau^\star}_\rho(s) \KL_t(s) 
	\\[0.2cm]
	&\leq& \displaystyle
	\sum_s d_\rho^{\pi_\tau^\star}(s) \left( \frac{\KL_t(s) - \KL_{t+1}(s)}{\eta(1-\gamma)}\right) \,+\, {\eta  (C_{\tau,\xi,\epsilon_0})^2 } \,-\, \frac{\tau}{1-\gamma} \sum_s d^{\pi_\tau^\star}_\rho(s) \KL_t(s) 
	\\[0.2cm]
	&=& \displaystyle 
	\sum_s d_\rho^{\pi_\tau^\star}(s) \left( \frac{(1-\eta\tau)\KL_t(s) - \KL_{t+1}(s)}{\eta(1-\gamma)}\right) \,+\, \eta (C_{\tau,\xi,\epsilon_0})^2 
	\\[0.2cm]
	&=&\displaystyle
	\frac{(1-\eta\tau)\KL_t(\rho) - \KL_{t+1}(\rho)}{\eta(1-\gamma)} \,+\, \eta (C_{\tau,\xi,\epsilon_0})^2.
	\end{array}
	\]
	where the first two equalities are because of the entropy regularization $\mathcal{H}(\pi)$, we apply the performance difference lemma in Lemma~\ref{lem:performance difference lemma} and $\psi_t(s,a)=-\log \pi_t(a\,|\,s)$ to the third equality, the first inequality is due to an application of Lemma~\ref{lem:online_mirror_descent} with $x^\star = \pi_\tau^\star(\cdot\,\vert\,s)$, $x = \pi_t(\cdot\,\vert\,s)$, $g = -Q_{r+\lambda_t g+\tau \psi_t}^{\pi_t}(s,a)/(1-\gamma)$, and $\eta \leq {1}/{C_{\tau,\xi,\epsilon_0}}$.
	
	%Rearranging gives
	%\begin{align*}
	%      \KL_{t+1}(\rho) &\leq (1-\eta\epsilon)\KL_t(\rho) + \eta (F(\pi_t,\lambda_t) - F(\pi^\star, \lambda_t)) + O(\eta^2) 
	%&= (1-\eta\epsilon)\KL_t(\rho) - \eta \left( V_{r+\lambda_t g}^{\pi^\star}(\rho) - V_{r+\lambda_t g}^{\pi_t}(\rho) \right) + O(\eta^2)  \tag{by performance difference lemma}
	%\end{align*}
	Similarly, for the term (ii),
	\[
	\begin{array}{rcl}
	& & \!\!\!\!  \!\!\!\!  \!\!
	\displaystyle
	L_\tau(\pi_t,\lambda_t) - L_\tau(\pi_t, \lambda_\tau^\star) 
	\\[0.2cm]
	&=& \displaystyle
	V^{\pi_t}_{r+\lambda_t g}(\rho) \,-\, V^{\pi_t}_{r+\lambda_\tau^\star g}(\rho) \,+\, \frac{1}{2}\tau (\lambda_t)^2 \,-\, \frac{1}{2}\tau (\lambda_\tau^{\star})^2
	\\[0.2cm]
	&=& \displaystyle
	(\lambda_t - \lambda_\tau^\star) V^{\pi_t}_g(\rho) \,+\, \frac{1}{2}\tau (\lambda_t)^2 \,-\, \frac{1}{2}\tau (\lambda_\tau^{\star})^2
	\\[0.2cm]
	&=& \displaystyle
	(\lambda_t - \lambda_\tau^\star) \Big(V^{\pi_t}_g(\rho) + \tau \lambda_t \Big) \,-\, \frac{1}{2}\tau (\lambda_t - \lambda_\tau^\star)^2 
	\\[0.2cm]
	&\leq& \displaystyle
	\frac{(\lambda_\tau^\star - \lambda_t)^2 - (\lambda_\tau^\star - \lambda_{t+1})^2}{2\eta} \,+\, {\frac{1 }{2} \eta (C_{\tau,\xi}')^2}  \,-\, \frac{1}{2}\tau (\lambda_t - \lambda_\tau^\star)^2
	\\[0.2cm]
	&=& \displaystyle
	\frac{(1-\eta\tau)(\lambda_\tau^\star - \lambda_t)^2 - (\lambda_\tau^\star - \lambda_{t+1})^2}{2\eta} \,+\, \frac{1 }{2} \eta (C_{\tau,\xi}')^2
	\end{array}
	\]
	where the inequality is due to the standard  descent lemma~\citep{zinkevich2003online} and 
	$V^{\pi_t}_g(\rho) + \tau\lambda_t\leq \frac{1}{1-\gamma}(1+\frac{\tau}{\xi}) \DefinedAs C_{\tau,\xi}'$.
	
	Using the definition $\Phi_t \DefinedAs \KL_t(\rho) + \frac{1}{2}(\lambda_\tau^\star - \lambda_t)^2$, we combine the two inequalities above to show that,
	\[
	\begin{array}{rcl}
	\Phi_{t+1} 
	& \leq &
	\displaystyle
	(1-\eta\tau) \Phi_t \,-\, \eta (L_\tau(\pi_\tau^\star, \lambda_t) - L_\tau(\pi_t, \lambda_\tau^\star)) \,+\, \eta^2 \max\left((C_{\tau,\xi,\epsilon_0})^2, (C_{\tau,\xi}')^2\right)
	\\[0.2cm]
	& \leq &
	(1-\eta \tau) \Phi_t \,+\, \eta^2 \max\left((C_{\tau,\xi,\epsilon_0})^2, (C_{\tau,\xi}')^2\right). 
	\end{array}
	\]
	where the second inequality is due to Lemma~\ref{lem: regularized saddle point}. If we expand the inequality above recursively, then, 
	\[
	\begin{array}{rcl}
	\Phi_{t+1} &\leq & 
	\displaystyle
	(1-\eta\tau) \Phi_t \,+\, \eta^2 \max\left((C_{\tau,\xi,\epsilon_0})^2, (C_{\tau,\xi}')^2\right)
	\\[0.2cm]
	&\leq& \displaystyle
	(1-\eta\tau)^2\Phi_{t-1} \,+\, (\eta^2 + \eta^2(1-\eta\tau)) \max\left( (C_{\tau,\xi,\epsilon_0})^2, (C_{\tau,\xi}')^2\right)
	\\[0.2cm]
	&\leq & \cdots
	\\[0.2cm]
	&\leq & \displaystyle
	(1-\eta\tau)^{t}\Phi_1 \,+\, \left(\eta^2 \left(1 + (1-\eta\tau) + (1-\eta\tau)^2 + \cdots  \right)\right) \max\left((C_{\tau,\xi,\epsilon_0})^2, (C_{\tau,\xi}')^2\right)
	\\[0.2cm]
	&\leq& \displaystyle
	(1-\eta\tau)^{t} \Phi_1 \,+\, \frac{\eta}{\tau}\max\left((C_{\tau,\xi,\epsilon_0})^2, (C_{\tau,\xi}')^2\right)
	\\[0.2cm]
	&\leq& \displaystyle
	{\rm e}^{-\eta \tau t} \Phi_1 \,+\, \frac{\eta}{\tau}\max\left((C_{\tau,\xi,\epsilon_0})^2, (C_{\tau,\xi}')^2\right) 
	\end{array}
	\] 
	which completes the proof.
\end{proof}

\subsection{Proof of Corollary~\protect\ref{cor: linear convergence of RPG} }
\label{app: linear convergence of RPG nearly optimal policy}

\begin{proof}
	According to Theorem~\ref{thm: linear convergence of RPG}, if we take $\tau= \Theta(\epsilon)$, $\eta = \Theta( \epsilon^{2})$, and $\epsilon_0=\epsilon$, then $\Phi_{t+1} = O(\epsilon)$ for any $t = \Omega( \frac{1}{\epsilon^3} \log\frac{1}{\epsilon})$, where $\Omega(\cdot)$ hides some problem-dependent constant. We next consider a primal-dual iterate $(\pi_t,\lambda_t)$ for some $t = \Omega ( \frac{1}{\epsilon^3} \log\frac{1}{\epsilon})$. It is straightforward to check that
	% We notice that $d_\rho^{\pi_\tau^\star} (s) \geq (1-\gamma)\rho(s) \geq (1-\gamma )\rho_{\min}$. 
	% Thus,
	\[
	\displaystyle
	\KL_t(\rho)
	\;\; = \;\;
	O(\epsilon)
	\;\text{ and }\;
	\frac{1}{2}(\lambda_\tau^\star - \lambda_t)^2
	\;\; = \;\;
	O(\epsilon).
	\]
	
	First, we have
	\begin{equation}\label{eq:nearly_optimality_gap}
	V_r^{\pi^\star} (\rho)
	- V_r^{\pi_t} (\rho)
	\;\; = \;\;
	\underbrace{V_r^{\pi^\star} (\rho)
		-
		V_r^{\pi_\tau^\star} (\rho)}_{\displaystyle \text{(i)}}
	\, + \,
	\underbrace{V_r^{\pi_\tau^\star} (\rho)
		- V_r^{\pi_t} (\rho)}_{\displaystyle \text{(ii)}}.
	\end{equation}
	For the term (ii), because   $\KL_t(\rho)
	= 
	O(\epsilon)$, we have
	\[
	\begin{array}{rcl}
	{\displaystyle \text{(ii)}}
	& = & \dfrac{1}{1-\gamma}\displaystyle
	\sum_{s,a} d_\rho^{\pi_\tau^\star}(s)\left(\pi_\tau^\star(a\,|\,s) - \pi_t(a\,|\,s)\right)Q^{\pi_t}_{r}(s,a)
	\\[0.2cm]
	& \leq & \displaystyle
	\frac{1}{(1-\gamma)^2} \sum_{s} d_\rho^{\pi_\tau^\star} (s)
	\norm{\pi_\tau^\star(\cdot\,|\,s) - \pi_t(\cdot\,|\,s)}_1
	\\[0.2cm]
	& \leq & \displaystyle
	\frac{1}{(1-\gamma)^2} \sum_{s} d_\rho^{\pi_\tau^\star} (s)
	\sqrt{2\, \KL_t(s) }
	\\[0.2cm]
	& \leq & \displaystyle
	\frac{1}{(1-\gamma)^2} 
	\sqrt{2\sum_{s} d_\rho^{\pi_\tau^\star} (s) \KL_t(s) }
	\\[0.2cm]
	& = & \displaystyle
	\frac{1}{(1-\gamma)^2} 
	\sqrt{2\,\KL_t(\rho) }
	\end{array}
	\]
	where we use Cauchy–Schwarz inequality in the first and third inequalities, and the second inequality is due to Pinsker's inequality, 
	which shows $V_r^{\pi_\tau^\star} (\rho) - V_r^{\pi_t} (\rho) \leq O(\sqrt{\epsilon})$. For the term (i), if we take $\pi = \pi^\star$ in~\eqref{eq:regularized saddle point approximation}, then,
	\[
	V_{r}^{\pi^\star}(\rho) - \tau  \mathcal{H}(\pi_\tau^\star)
	\;\; \leq \;\;
	V_{r}^{\pi_\tau^\star}(\rho)  
	\,+\,
	\lambda_\tau^\star \left(V_{g}^{\pi_\tau^\star}(\rho)- V_{g}^{\pi^\star}(\rho)\right).
	\]
	Meanwhile, if we take $\lambda = 0$ in~\eqref{eq:regularized saddle point approximation}, then $\lambda_\tau^\star V_{g}^{\pi_\tau^\star}(\rho) \leq 0$. We notice that the feasibility of $\pi^\star$ yields $V_g^{\pi^\star} (\rho)\geq 0$. Hence,
	\[
	{\displaystyle \text{(i)}}
	\;\; = \;\;
	V_{r}^{\pi^\star}(\rho) - 
	V_{r}^{\pi_\tau^\star}(\rho)
	\;\; \leq \;\;
	\tau  \mathcal{H}(\pi_\tau^\star).
	\]
	We now substitute the upper bounds of (i) and (ii) above into \eqref{eq:nearly_optimality_gap} to obtain $V_r^{\pi^\star}(\rho) - V_r^{\pi_t}(\rho) \leq O(\sqrt\epsilon)$, where we take $\tau = \Theta(\epsilon)$.
	
	Second, we have 
	\begin{equation}\label{eq:nearly_constraint_violation}
	- V_g^{\pi_t} (\rho)
	\;\; = \;\;
	\underbrace{
		-\,
		V_g^{\pi_\tau^\star} (\rho)}_{\displaystyle \text{(iii)}}
	\, + \,
	\underbrace{V_g^{\pi_\tau^\star} (\rho)
		- V_g^{\pi_t} (\rho)}_{\displaystyle \text{(iv)}}.
	\end{equation}
	Similar to bounding $V_r^{\pi_\tau^\star} (\rho)
	- V_r^{\pi_t} (\rho)$, we can show that ${\displaystyle \text{(iv)}} \leq O(\sqrt{\epsilon})$. 
	Let $\lambda_{\max} \DefinedAs  \frac{1}{(1-\gamma)\xi}$. For the term (iii), if we take $\lambda = \lambda_{\max}$ in~\eqref{eq:regularized saddle point approximation}, then,
	\[
	-(\lambda_{\max}-\lambda_\tau^\star) V_{ g}^{\pi_\tau^\star}(\rho)
	\;\; \leq \;\;
	\frac{\tau}{2}
	(\lambda_{\max})^2.
	\]
	By the definition $\lambda_\tau^{\star} \DefinedAs  \argmin_{\lambda\,\in\,\Lambda} \left\{\lambda V_g^{\pi_\tau^\star}(\rho) +  \frac{\tau}{2}\lambda^2\right\}$, there are three values for $\lambda_\tau^\star$ to  take: $ - \frac{V_g^{\pi_\tau^\star}(\rho)}{\tau}$, or $0$, or $\lambda_{\max}$ as follows  
	\begin{itemize}
		\item[(1)] When $0< - \frac{V_g^{\pi_\tau^\star}(\rho)}{\tau} < \lambda_{\max}$, $\lambda_{\tau}^\star = - \frac{V_g^{\pi_\tau^\star}(\rho)}{\tau}$ which shows that $\lambda_{\max} - \lambda_\tau^\star >0$; 
		
		\item[(2)] When $- \frac{V_g^{\pi_\tau^\star}(\rho)}{\tau} \leq 0$, $\lambda_\tau^{\star} = 0$ which shows that $\lambda_{\max} - \lambda_\tau^\star = \lambda_{\max}>0$; 
		
		\item[(3)] When $- \frac{V_g^{\pi_\tau^\star}(\rho)}{\tau} \geq \lambda_{\max}$, $\lambda_\tau^{\star} = \lambda_{\max}$. In this case, using~\eqref{eq:regularized saddle point approximation} with $\pi = \pi^\star$ leads to
		\begin{equation}
		\label{eq:saddle point extreme case}
		\lambda_{\max}
		( V_g^{\pi^\star}(\rho)  - V_g^{\pi_\tau^\star}(\rho) )
		\; \leq \;
		V_r^{\pi_\tau^\star}(\rho) - V_r^{\pi^\star}(\rho) + \tau\mathcal{H}(\pi_\tau^\star).
		\end{equation}
		Meanwhile, for any saddle point $(\pi^\star,\lambda^\star)\in\Pi^\star \times \Lambda^\star$,  
		$V_r^{\pi_\tau^\star}(\rho) - V_r^{\pi^\star}(\rho) 
		\leq \lambda^\star (V_g^{\pi^\star}(\rho) - V_g^{\pi_\tau^\star}(\rho) )$, which in conjunction with~\eqref{eq:saddle point extreme case} and $V_g^{\pi^\star}(\rho)\geq 0$ yields, 
		\[
		- (\lambda_{\max} - \lambda^\star)
		V_g^{\pi_\tau^\star}(\rho) 
		\;\; \leq \;\;
		\tau\mathcal{H}(\pi_\tau^\star).
		\]
		It is easy to see that we can always take $\lambda^\star <\lambda_{\max}$. In fact, except for $\lambda^\star = 0$, we know that  $\lambda^\star V_g^{\pi^\star}(\rho)\leq 0$ leads to  $V_g^{\pi^\star}(\rho) = 0$. By the definition $\lambda^\star \in \argmin_{\lambda\,\in\,\Lambda} V_{r+\lambda g}^{\pi^\star}(\rho)$,  any $\lambda^\star < \lambda_{\max}$ is a min-max point. Therefore,   
		\[
		{\displaystyle \text{(iii)}} 
		\;\; \leq \;\;
		\frac{\tau}{(\lambda_{\max} - \lambda^\star)}\mathcal{H}(\pi_\tau^\star).
		\]
	\end{itemize}
	By combining three cases above, we conclude that ${\displaystyle \text{(iii)}}   \leq O(\tau) = O(\epsilon)$. 
	
	We now substitute the upper bounds of (iii) and (iv) above into \eqref{eq:nearly_constraint_violation} to obtain $ - V_g^{\pi_t}(\rho) \leq O(\sqrt\epsilon)$.
	
	Finally, we replace the accuracy $\sqrt\epsilon$ by $\epsilon$ and combine all big O notation to conclude the proof.
\end{proof}

\subsection{Zero constraint violation of RPG-PD~\protect\eqref{eq: regularized primal-dual update}}
\label{app: linear convergence of RPG zero violation}

\begin{corollary}[Zero constraint violation]
	\label{cor: linear convergence of RPG zero violation}
	Let Assumption~\ref{as:feasibility} hold. 
	For small $\epsilon$, there exists $\delta>0$ such that if we instead use 
	the conservative constraint $V_{g'}^{\pi} (\rho) \geq 0$ for $g' = g - (1-\gamma)\delta$, and 
	take $\eta= \Theta(\epsilon^4)$, $\tau = \Theta(\epsilon^2)$, and $\epsilon_0=\epsilon$, 
	then the policy iterates of RPG-PD~\eqref{eq: regularized primal-dual update} satisfy 
	\[
	V_r^{\pi^\star}(\rho) - V_r^{\pi_t}(\rho) 
	\; \leq \; 
	\epsilon
	\; \text{ and } \;
	- V_g^{\pi_t}(\rho) 
	\; \leq \; 
	0\;\;
	\text{ for any }
	t = \Omega\left(\,\frac{1}{\epsilon^6} \log^2\frac{1}{\epsilon}\,\right)
	\]
	where $\Omega(\cdot)$ only has some problem-dependent constant.
\end{corollary}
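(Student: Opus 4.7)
\begin{extraproof}{Corollary~\ref{cor: linear convergence of RPG zero violation}}
The plan is to apply Corollary~\ref{cor: linear convergence of RPG} to a conservatively perturbed constrained MDP and then use a sensitivity argument to translate the near-feasibility guarantee into exact feasibility while retaining a small optimality gap against the \emph{original} $\pi^\star$.

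First I would fix $\delta \in (0,\xi/2]$ (to be chosen momentarily) and consider the perturbed problem obtained by replacing $V_g^\pi(\rho)\ge 0$ with $V_{g'}^\pi(\rho)\ge 0$, where $g' = g - (1-\gamma)\delta$. The feasibility witness $\bar\pi$ from Assumption~\ref{as:feasibility} satisfies $V_{g'}^{\bar\pi}(\rho) = V_g^{\bar\pi}(\rho) - \delta \ge \xi - \delta \ge \xi/2$, so Assumption~\ref{as:feasibility} holds for the perturbed problem with slack $\xi/2$, and in particular Lemma~\ref{lem:strong_duality} gives strong duality together with a bound on its optimal dual variable. Let $\pi^\star_{\delta}$ denote an optimal policy of the perturbed problem. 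Running RPG-PD~\eqref{eq: regularized primal-dual update} with target $g'$ and the stated schedule $\eta=\Theta(\epsilon^4)$, $\tau=\Theta(\epsilon^2)$, $\epsilon_0=\epsilon$, Corollary~\ref{cor: linear convergence of RPG} (applied at accuracy $\epsilon/c$ for a constant $c$ chosen below) yields that after $t=\Omega(\tfrac{1}{\epsilon^6}\log^2\tfrac{1}{\epsilon})$ iterations,
\begin{equation*}
V_r^{\pi^\star_{\delta}}(\rho) - V_r^{\pi_t}(\rho) \;\le\; \epsilon/c,
\qquad
- V_{g'}^{\pi_t}(\rho) \;\le\; \epsilon/c.
\end{equation*}

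Next I would translate the $g'$-feasibility into exact $g$-feasibility. Since $V_{g'}^{\pi_t}(\rho) = V_g^{\pi_t}(\rho) - \delta$, the second inequality rearranges to $-V_g^{\pi_t}(\rho) \le \epsilon/c - \delta$, so setting $\delta := \epsilon/c$ yields $V_g^{\pi_t}(\rho) \ge 0$ as required. The remaining task is to control the reward-value gap against the original optimum $\pi^\star$ by decomposing
\begin{equation*}
V_r^{\pi^\star}(\rho) - V_r^{\pi_t}(\rho) \;=\; \underbrace{\bigl(V_r^{\pi^\star}(\rho) - V_r^{\pi^\star_{\delta}}(\rho)\bigr)}_{\text{perturbation gap}} \;+\; \underbrace{\bigl(V_r^{\pi^\star_{\delta}}(\rho) - V_r^{\pi_t}(\rho)\bigr)}_{\le\; \epsilon/c}.
\end{equation*}
For the perturbation gap, I would use strong duality and the optimal dual variable $\lambda^\star$ of the \emph{original} problem. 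Since $\pi^\star$ is feasible for the original constraint, $V_g^{\pi^\star}(\rho)\ge 0$, so $V_{g'}^{\pi^\star}(\rho)\ge -\delta$. Strong duality for the perturbed problem combined with the Lagrangian inequality $V_r^{\pi}(\rho) + \lambda^\star V_{g'}^{\pi}(\rho) \le V_r^{\pi^\star_{\delta}}(\rho) + \lambda^\star\cdot 0$ (valid for any feasible-in-$g$ policy via the standard weak-duality step, using the original $\lambda^\star$ on $g = g'+(1-\gamma)\delta$) gives
\begin{equation*}
V_r^{\pi^\star}(\rho) - V_r^{\pi^\star_{\delta}}(\rho) \;\le\; -\lambda^\star V_{g'}^{\pi^\star}(\rho) \;\le\; \lambda^\star\,\delta \;\le\; \frac{1}{(1-\gamma)\xi}\cdot\delta,
\end{equation*}
where the last inequality invokes the uniform bound on optimal dual variables from Lemma~\ref{lem:strong_duality}(ii). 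Choosing the constant $c$ large enough so that $\lambda^\star\delta + \epsilon/c \le \epsilon$, the two pieces sum to at most $\epsilon$, as claimed.

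The main obstacle is the perturbation bound on $V_r^{\pi^\star}(\rho)-V_r^{\pi^\star_{\delta}}(\rho)$: it cannot be obtained from continuity of value functions in the policy alone, since $\pi^\star$ and $\pi^\star_{\delta}$ can differ globally, and instead crucially uses strong duality in policy space together with the fact that the dual variable of the original problem remains \emph{bounded} (Lemma~\ref{lem:strong_duality}). Once this $O(\delta)$ sensitivity estimate is in hand, the rest is just a matched choice $\delta = \Theta(\epsilon)$ that simultaneously ensures $\delta<\xi/2$ (so that feasibility transfers), $\delta \ge \epsilon/c$ (so that violation is wiped out), and $\lambda^\star\delta \le \epsilon/2$ (so that the reward suboptimality stays within $\epsilon$); all three can be met for sufficiently small $\epsilon$.
\end{extraproof}
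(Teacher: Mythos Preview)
Your overall strategy---tighten the constraint by $\delta$, apply Corollary~\ref{cor: linear convergence of RPG} to the tightened problem, then control the perturbation gap $V_r^{\pi^\star}(\rho)-V_r^{\pi_\delta^\star}(\rho)$---matches the paper. Where you differ is in how that gap is bounded: the paper appeals to an abstract continuity result for optimal solutions of perturbed linear programs (in the occupancy-measure formulation), which only yields that for every $\epsilon$ there is \emph{some} $\delta'>0$ making the gap $O(\epsilon)$; your duality route is more direct and gives an explicit $O(\delta)$ bound, which is cleaner and lets you take $\delta=\Theta(\epsilon)$.

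However, the specific Lagrangian inequality you write down is incorrect. You claim $V_r^{\pi}(\rho) + \lambda^\star V_{g'}^{\pi}(\rho) \le V_r^{\pi^\star_{\delta}}(\rho)$, but $\lambda^\star$ is the optimal dual of the \emph{original} problem and has no saddle-point property for the perturbed Lagrangian. Indeed, $\max_\pi\bigl[V_r^\pi(\rho) + \lambda^\star V_{g'}^\pi(\rho)\bigr] = V_r^{\pi^\star}(\rho) - \lambda^\star\delta$, and weak duality for the perturbed problem only gives $V_r^{\pi^\star}(\rho) - \lambda^\star\delta \ge V_r^{\pi_\delta^\star}(\rho)$, which is the wrong direction for your purpose. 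The fix is to use the \emph{perturbed} optimal dual $\lambda_\delta^\star$: the saddle-point property of $(\pi_\delta^\star,\lambda_\delta^\star)$, together with complementary slackness $\lambda_\delta^\star V_{g'}^{\pi_\delta^\star}(\rho)=0$, gives $V_r^{\pi^\star}(\rho) + \lambda_\delta^\star V_{g'}^{\pi^\star}(\rho) \le V_r^{\pi_\delta^\star}(\rho)$ and hence $V_r^{\pi^\star}(\rho)-V_r^{\pi_\delta^\star}(\rho) \le \lambda_\delta^\star\,\delta$. Since the perturbed problem has slack $\xi-\delta \ge \xi/2$, Lemma~\ref{lem:strong_duality}(ii) applied to it yields $\lambda_\delta^\star \le \frac{2}{(1-\gamma)\xi}$, and the rest of your argument goes through unchanged.
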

\begin{proof}
	We apply the conservatism to the translated constraint $V_g^{\pi}(\rho) \geq 0$ in Problem~\eqref{eq:CMDP}.
	Specifically, for any $\delta<\min(\xi,1)$, we let 
	$g' \DefinedAs g - (1-\gamma)\delta$ and define a conservative constraint,
	\[
	V_{g'}^{\pi} 
	\;\; \DefinedAs \;\; 
	V_{g}^{\pi} (\rho) - \delta
	\; \geq \; 0.
	\]
	It is straightforward to see that
	Assumption~\ref{as:feasibility} ensures that $V_{g'}^{\pi_t}(\rho)\geq 0$ is strictly feasible for a new slack variable $\xi' \DefinedAs \xi - \delta$. We now can apply RPG-PD~\eqref{eq: regularized primal-dual update} to a new regularized Lagrangian, 
	\[
	L_\tau'(\pi,\lambda) 
	\;\; \DefinedAs \;\;
	V_{r+\lambda g'}^\pi (\rho) \,+\, \tau \left(\,\mathcal{H}(\pi) +\frac{1}{2}\lambda^2\,\right)
	\]
	and Corollary~\ref{cor: linear convergence of RPG} holds if we replace $g$ in RPG-PD by $g'$. Thus,
	\[
	V_r^{\pi_\delta^\star}(\rho) - V_r^{\pi_t}(\rho) 
	\; \leq \; 
	\epsilon
	\; \text{ and } \;
	- V_{g'}^{\pi_t}(\rho) 
	\; \leq \; 
	\epsilon \;\;
	\text{ for any }
	t = \Omega\left(\, \frac{1}{\epsilon^6} \log^2\frac{1}{\epsilon} \,\right)
	\]
	where $\Omega(\cdot)$ hides some problem-dependent constant, and $\pi_\delta^\star$ is an optimal policy to the $\delta$-perturbed 
	constrained policy optimization problem,
	\begin{equation}\label{eq:CMDP perturbed}
	% \begin{array}{rl}
	\displaystyle\maximize_{\pi\,\in\,\Pi} \; V_r^\pi(\rho) \;
	\;\;\;\;
	% \\[0.2cm]
	\;\subject \; V_{g}^\pi(\rho) - \delta\; \geq \; 0.
	% \end{array}
	\end{equation}
	We notice that the above $\Omega(\cdot)$ has $\xi'$-dependence and we denote it by $ \Xi(\xi')$, where $\Xi$: $\mathbb{R}_+\to\mathbb{R}_+$ is a positive function. Thus, we select $\delta$ such that $\delta \geq \epsilon \Xi(\xi')$, which is always possible for small enough $\epsilon$, for instance, $\delta = \frac{\xi}{2}$ and $\xi' = \frac{\xi}{2}$.
	Hence, if we take $\delta  = \frac{\xi}{2}$ and such small $\epsilon$, then,
	\[
	- V_{g'}^{\pi_t}(\rho) 
	\;\; = \;\;
	-\, V_{g}^{\pi_t}(\rho) \,+\,  \delta
	\;\; \leq \; \;
	\epsilon \Xi(\xi') \;\;
	\text{ for any }
	t = \Omega\left( \frac{1}{\epsilon^6} \log^2\frac{1}{\epsilon}\right)
	\]
	which shows that $V_{g}^{\pi_t}(\rho) \geq 0$ for some large $t$.
	
	The rest is to show that $ V_r^{\pi^\star}(\rho) - V_r^{\pi_t}(\rho) \leq O(\epsilon)$. We notice that $\pi^\star$ is an optimal policy to Problem~\eqref{eq:CMDP perturbed} when $\delta = 0$. Let $q^\star$ and $q_\delta^\star$ be associated occupancy measures of policies $\pi^\star$ and $\pi_\delta^\star$. In the occupancy measure space, Problem~\eqref{eq:CMDP perturbed} becomes a linear program and it has a solution $q_\delta^\star$. Thus, we can view $q_\delta^\star$ as a $\delta$-perturbed solution of a convex optimization problem in which all functions are continuously  differentiable and the domain is convex and compact. It is known from~\cite[Theorem~3.1]{terazono2015continuity} that the optimal solution $q_\delta^\star$ is continuous in $\delta$, which implies that for any $\epsilon>0$, there exists $\delta'>0$  such that $|\langle r, q^\star \rangle - \langle r, q_\delta^\star \rangle | \leq O(\epsilon)$ for any $\delta < \delta'$. Thus, $|V_r^{\pi^\star}(\rho) - V_r^{\pi_\delta^\star}(\rho) | \leq O(\epsilon)$ for small enough $\epsilon$. Therefore, 
	\[
	V_r^{\pi^\star}(\rho) - V_r^{\pi_t}(\rho) 
	\;\; \leq \;\;
	V_r^{\pi_\delta^\star}(\rho) - V_r^{\pi_t}(\rho) 
	\,+\, |V_r^{\pi^\star}(\rho) -V_r^{\pi_\delta^\star}(\rho) |
	\; \;\leq \; \;
	O(\epsilon)
	\]
	for some large $t$. Collecting all conditions on $\delta$ leads to our final choice of $\delta = \min(\frac{\xi}{2}, 1, \delta')$. 
	Finally, we combine all big O notation to complete the proof.
\end{proof}

\subsection{Reduction of RPG-PD~\protect\eqref{eq: regularized primal-dual update}  as a  NPG variant~\protect\eqref{eq:regularized primal-dual update NPG}}
\label{app:reduction of RPG-PD}

We introduce some useful notation in the regularized MDP~\citep{cen2022fast}. 
Let $V_\tau^\pi(\rho) \DefinedAs V_{r+\lambda g}^\pi(\rho ) +\tau\mathcal{H}(\pi)$. We introduce the soft-$Q$ value function $Q_\tau^\pi$: $S\times A\to\mathbb{R}$ and $V_\tau^\pi$: $S\to\mathbb{R}$ via Bellman equations,
% $Q_\tau^\pi (s,a) \DefinedAs  \displaystyle
%      r(s,a)+\lambda g(s,a) + \gamma 
%      \mathbb{E}_{s'\,\sim\,P(\cdot\,\vert\,s,a)} \left[ V_\tau^\pi(s') \right]$ 
%      and 
%      $V_\tau^\pi(s) \DefinedAs \displaystyle\mathbb{E}_{a\,\sim\,\pi(\cdot,\vert\,s)} \left[  -\tau\log \pi(a\,\vert\,s) +Q_\tau^\pi(s,a) \right]$
\[
\begin{array}{rcl}
Q_\tau^\pi (s,a) & = &  \displaystyle
r(s,a)+\lambda g(s,a) + \gamma 
\mathbb{E}_{s'\,\sim\,P(\cdot\,\vert\,s,a)} \left[ \,V_\tau^\pi(s') \,\right]
\\
V_\tau^\pi(s) & = & \displaystyle\mathbb{E}_{a\ \sim\  \pi(\cdot\vert\,s)} \left[ \, -\tau\log \pi(a\,\vert\,s) +Q_\tau^\pi(s,a) \,\right].
\end{array} 
\] 
We also define $A_\tau^\pi (s,a) \DefinedAs Q_\tau^\pi (s,a) -\tau \log \pi(a\,\vert\,s)
-V_\tau^\pi(s)$. 
Hence,  
\[
\begin{array}{rcl}
Q_{r+\lambda g +\tau \psi}^\pi(s,a) 
& =  &
Q_\tau^\pi(s,a) -\tau \log \pi(a\,\vert\,s)
\\
A_{r+\lambda g +\tau \psi}^\pi(s,a) 
& = & 
Q_{r+\lambda g +\tau \psi}^\pi(s,a) - V_{r+\lambda g +\tau \psi}^\pi(s)
\end{array}
\]
where $\psi(s,a) \DefinedAs -\log\pi(a\,\vert\,s)$ for all $(s,a) \in S\times A$.

Setting $\epsilon_0 = 0$, 
it is easy to show that RPG-PD~\eqref{eq: regularized primal-dual update} is a case of~\eqref{eq:regularized primal-dual update NPG} in the tabular case by introducing the softmax policy that is widely used in policy optimization. 
A softmax policy $\pi_\theta$: $S\to\Delta(A)$ is parametrized by a parameter $\theta\in \mathbb{R}^{|S||A|}$ via a softmax function,
\[
\displaystyle
\pi_\theta(a\,\vert\,s) 
\;\; = \;\; 
\frac{ {\rm exp}({\theta_{s,a}}) }{ \sum_{a'}{\rm exp}({\theta_{s,a'})}}
\; \text{ for all } (s,a) \in S\times A.
\]
With a slight abuse of notation, we also use notation $\pi_\theta$ as a vector in $\mathbb{R}^{|S||A|}$. 
% Under the softmax parametrization, the regularized method~\eqref{eq: regularized primal-dual update} can be concisely written as a regularized natural policy primal-dual method in Lemma~\ref{lem:regularized natural policy primal-dual}. 

\begin{lemma}\label{lem:regularized natural policy primal-dual}
	Set $\epsilon_0 = 0$. Under the softmax policy parametrization, RPG-PD~\eqref{eq: regularized primal-dual update} is equivalent to~\eqref{eq:regularized primal-dual update NPG}.
\end{lemma}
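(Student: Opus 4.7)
The plan is to show that, when $\epsilon_0 = 0$ so the simplex constraint is just $\Delta(A)$, both sides of the claimed equivalence reduce to the same closed-form multiplicative/additive rule, the dual update being a one-line exercise and the primal update following from the standard ``NPG-as-mirror-descent'' identity for softmax.

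I would first dispatch the dual update. Computing the argmin in~\eqref{eq: regularized lambda update} by setting the derivative to zero gives $\lambda - \lambda_t + \eta(V_g^{\pi_t}(\rho) + \tau\lambda_t) = 0$, hence $\lambda_{t+1} = \mathcal{P}_{\Lambda}((1-\eta\tau)\lambda_t - \eta V_g^{\pi_t}(\rho))$, which is exactly~\eqref{eq:regularized dual update NPG}. Next, for the primal update of RPG-PD on $\Delta(A)$ I would write out the KKT conditions of the per-state strongly convex problem~\eqref{eq: regularized pi update}; the first-order condition $Q^{\pi_t}_{r+\lambda_t g + \tau\psi_t}(s,a) - \tfrac{1}{\eta}(1 + \log\pi(a|s) - \log\pi_t(a|s)) = \mu(s)$ yields the closed form
\[
\pi_{t+1}(a\,|\,s) \;\;\propto\;\; \pi_t(a\,|\,s)\,\exp\!\big(\eta\, Q^{\pi_t}_{r + \lambda_t g + \tau \psi_t}(s,a)\big),
\]
with the Lagrange multiplier $\mu(s)$ absorbed into the per-state normalizer.

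The main work is then to show that~\eqref{eq:regularized primal update NPG}, under the softmax parametrization, produces the same multiplicative update. I would invoke the identity (essentially \cite[Lemma~5.6]{agarwal2021theory} extended to regularized value functions, cf.\ \cite{cen2022fast}): for softmax $\pi_\theta$,
\[
(1-\gamma)\, F_\rho(\theta)^{\dagger}\, \nabla_\theta L_\tau(\pi_\theta, \lambda) \;\;=\;\; A^{\pi_\theta}_\tau \;+\; c_\theta,
\]
where $c_\theta$ is a state-dependent shift that lies in the null space of the softmax (constant-per-state vectors do not change $\pi_\theta$). Using $A^{\pi_\theta}_\tau(s,a) = Q^{\pi_\theta}_{r+\lambda g + \tau\psi_\theta}(s,a) - V^{\pi_\theta}_\tau(s)$ (where $V^{\pi_\theta}_\tau(s)$ is again a state-dependent shift), the softmax update $\theta_{t+1} = \theta_t + \eta(1-\gamma)F_\rho(\theta_t)^\dagger\nabla_\theta L_\tau(\pi_{\theta_t},\lambda_t)$ gives, upon exponentiating and normalizing,
\[
\pi_{\theta_{t+1}}(a\,|\,s) \;\;\propto\;\; \pi_{\theta_t}(a\,|\,s)\,\exp\!\big(\eta\, Q^{\pi_{\theta_t}}_{r + \lambda_t g + \tau \psi_t}(s,a)\big),
\]
since all state-dependent terms cancel in the normalization. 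This matches the closed form derived for RPG-PD, completing the equivalence.

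The only delicate point I anticipate is justifying the ``up to state-dependent shift'' identity for the Fisher pseudoinverse applied to $\nabla_\theta L_\tau$; the $r+\lambda g$ piece is standard, but one must check that the entropy contribution $\tau\mathcal{H}(\pi_\theta)$ also folds in cleanly, yielding the soft advantage $A_\tau$ rather than the ordinary advantage. This is handled by the Bellman identity $Q^\pi_{r+\lambda g + \tau \psi} = Q^\pi_\tau - \tau\log\pi$ noted in Appendix~\ref{app:reduction of RPG-PD}, so the policy gradient of $L_\tau$ can be written in the usual advantage form against $A_\tau^{\pi_\theta}$, and the rest of the argument proceeds as above.
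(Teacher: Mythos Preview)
Your proposal is correct and follows essentially the same approach as the paper: both arguments dispatch the dual update by direct computation, then show that under softmax the NPG direction $(1-\gamma)F_\rho(\theta)^\dagger\nabla_\theta L_\tau$ equals the soft advantage $A_\tau^{\pi_\theta}$ up to a state-dependent shift (the paper's Lemma~\ref{lem:NPG property}), exponentiate and normalize to obtain the multiplicative form $\pi_{t+1}(a|s)\propto\pi_t(a|s)\exp(\eta Q^{\pi_t}_{r+\lambda_t g+\tau\psi_t}(s,a))$, and match it with the closed-form solution of the KL-regularized maximization~\eqref{eq: regularized pi update}. Your explicit KKT derivation for the RPG-PD side is a bit more detailed than the paper's, which simply asserts the closed form, but the substance is identical.
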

\begin{proof}
	Dual update~\eqref{eq:regularized dual update NPG} is straightforward. We next show the equivalence for the primal update by applying the softmax function to both sides of Primal update~\eqref{eq:regularized primal update NPG}. 
	
	We first notice that $F_\rho(\theta_t)^\dagger \cdot\nabla_\theta L_{\tau}(\pi_{\theta_t}, \lambda_t) = F_\rho(\theta_t)^\dagger\cdot \nabla_\theta V_{\tau}^{\pi_\theta}(\rho)$. Thus,
	\[
	\begin{array}{rcl}
	{ {\rm exp}({\theta_{t+1,s,a}}) } & = & \displaystyle
	{ {\rm exp}({\theta_{t,s,a}}) }
	\,{\rm exp}\left({\eta(1-\gamma)(  F_\rho(\theta_t)^\dagger \cdot\nabla_\theta L_{\tau}(\pi_{\theta_t}, \lambda_t)})_{s,a}\right)
	\\
	& = & \displaystyle
	{ {\rm exp}({\theta_{t,s,a}}) }
	\,{\rm exp}\left({\eta}A_\tau^{\pi_{\theta_t}} (s,a) +\eta c(s)\right)
	\end{array}
	\]
	where $c(s)$ is an action-independent constant, and the second equality is the property of natural policy gradient (e.g., Lemma~\ref{lem:NPG property}). Hence, after
	normalization over actions and some re-arrangement, we have
	\[
	\begin{array}{rcl}
	\pi_{\theta_{t+1}}(a\,\vert\,s) & = & \displaystyle
	\pi_{\theta_{t}}(a\,\vert\,s)
	\frac{{\rm exp}\left({\eta}A_\tau^{\pi_{\theta_t}} (s,a) +\eta c(s)\right)}{\sum_{a'} \pi_{\theta_t}(a'\,\vert\,s)
		\,{\rm exp}\left({\eta}A_\tau^{\pi_{\theta_t}} (s,a') +\eta c(s)\right)}
	\\[0.2cm]
	& = & 
	\displaystyle
	\pi_{\theta_{t}}(a\,\vert\,s)
	\frac{{\rm exp}\left({\eta}Q_\tau^{\pi_{\theta_t}} (s,a)-{\eta\tau}\log\pi_{\theta_t}(a\,\vert\,s)\right)}{\sum_{a'} \pi_{\theta_t}(a'\,\vert\,s)
		\,{\rm exp}\left({\eta}Q_\tau^{\pi_{\theta_t}} (s,a') - {\eta\tau} \log \pi_{\theta_t}(a'\,\vert\,s)\right)}
	\\[0.2cm]
	& = & 
	\displaystyle
	\pi_{\theta_{t}}(a\,\vert\,s)
	\frac{{\rm exp}\left({\eta}Q_{r+\lambda_t g+\tau\psi_t}^{\pi_{\theta_t}} (s,a)\right)}{\sum_{a'} \pi_{\theta_t}(a'\,\vert\,s)
		\,{\rm exp}\left({\eta}Q_{r+\lambda_t g+\tau \psi_t}^{\pi_{\theta_t}} (s,a') \right)}
	\end{array}    
	\]
	which is an explicit form of the policy update in~\eqref{eq: regularized primal-dual update}. 
	Since the above derivation holds in both directions, the proof is complete.
\end{proof}

\subsection{Proof of Theorem~\protect\ref{thm: linear convergence of RPG linear function approximation}}
\label{app:linear convergence of RPG linear function approximation}

\begin{proof}
	We utilize the decomposition of the primal-dual gap as in~\eqref{eq:primal_dual_gap_regularized} and analyze the term $\text{(i)}$ and the term $\text{(ii)}$, separately. 
	
	For the term (i), we have 
	\[
	\begin{array}{rcl}
	& & \!\!\!\!  \!\!\!\!  \!\!
	\displaystyle
	L_\tau(\pi_\tau^\star, \lambda_t) - L_\tau(\pi_{\theta_t}, \lambda_t) 
	\\[0.2cm]
	&=& \displaystyle
	V^{\pi_\tau^\star}_{r+\lambda_t g}(\rho) - V^{\pi_{\theta_t}}_{r+\lambda_t g}(\rho) 
	\\[0.2cm]
	&& \displaystyle -\, \frac{\tau}{1-\gamma}\sum_{s,a} d^{\pi_\tau^\star}_\rho(s) \pi_\tau^\star(a\,|\,s)\log\pi_\tau^\star(a\,|\,s) \,+\, \frac{\tau}{1-\gamma}\sum_{s,a} d^{\pi_{\theta_t}}_\rho(s) \pi_{\theta_t}(a\,|\,s)\log\pi_{\theta_t}(a\,|\,s) 
	\\[0.2cm]
	&=& \displaystyle
	V^{\pi_\tau^\star}_{r+\lambda_t g + \tau\psi_t}(\rho) - V^{\pi_{\theta_t}}_{r+\lambda_t g+ \tau\psi_t}(\rho) 
	- \tau V_{\psi_t}^{\pi_\tau^\star}(\rho) + \tau V_{\psi_t}^{\pi_{\theta_t}}(\rho)  
	\\[0.2cm]
	& & \displaystyle
	- \,\frac{\tau}{1-\gamma}\sum_{s,a} d^{\pi_\tau^\star}_\rho(s) \pi_\tau^\star(a\,|\,s)\log\pi_\tau^\star(a\,|\,s) + \frac{\tau}{1-\gamma}\sum_{s,a} d^{\pi_{\theta_t}}_\rho(s) \pi_{\theta_t}(a\,|\,s)\log\pi_{\theta_t}(a\,|\,s)  
	\\[0.2cm]
	%&= \sum_{s,a} d_\rho^{\pi^\star}(s)\left(\pi^\star(a\,|\,s) - \pi_t(a\,|\,s)\right)Q^{\pi_t}_{r+\lambda_t g + \epsilon \phi_t}(s,a)  \\ 
	%&\qquad - \epsilon\sum_{s,a} d_\rho^{\pi^\star}(s)\left(\pi^\star(a\,|\,s) - \pi_t(a\,|\,s)\right)Q^{\pi_t}_{\phi_t}(s,a)  \\ 
	%&\qquad - \epsilon\sum_{s,a} d^{\pi^\star}_\rho(s) \pi^\star(a\,|\,s)\ln\pi^\star(a\,|\,s) + \epsilon\sum_{s,a} d^{\pi_t}_\rho(s) \pi_t(a\,|\,s)\ln\pi_t(a\,|\,s)\\ 
	&= & \displaystyle \frac{1}{1-\gamma}
	\sum_{s,a} d_\rho^{\pi_\tau^\star}(s)\left(\pi_\tau^\star(a\,|\,s) - \pi_{\theta_t}(a\,|\,s)\right)Q^{\pi_{\theta_t}}_{r+\lambda_t g + \tau \psi_t}(s,a)
	\\[0.2cm] 
	& & \displaystyle
	+\, \frac{\tau}{1-\gamma}\sum_{s,a} d_\rho^{\pi_\tau^\star}(s)\pi_\tau^\star(a\,|\,s) \log \pi_{\theta_t}(a\,|\,s) \,-\, \frac{\tau}{1-\gamma} \sum_{s,a} d_\rho^{\pi_{\theta_t}}(s) \pi_{\theta_t}(a\,|\,s)\log \pi_{\theta_t}(a\,|\,s)  
	\\[0.2cm]
	& & \displaystyle
	- \,\frac{\tau}{1-\gamma}\sum_{s,a} d^{\pi_\tau^\star}_\rho(s) \pi_\tau^\star(a\,|\,s)\log\pi_\tau^\star(a\,|\,s) \,+\, \frac{\tau}{1-\gamma}\sum_{s,a} d^{\pi_{\theta_t}}_\rho(s) \pi_{\theta_t}(a\,|\,s)\log\pi_{\theta_t}(a\,|\,s) 
	\\[0.2cm] 
	&=& \displaystyle
	\frac{1}{1-\gamma}\sum_{s,a} d_\rho^{\pi_\tau^\star}(s)\left(\pi_\tau^\star(a\,|\,s) - \pi_{\theta_t}(a\,|\,s)\right)\phi_{s,a}^\top w_t \,-\,\frac{\tau}{1-\gamma} \sum_s d^{\pi_\tau^\star}_\rho(s) \KL_t(s) 
	\\[0.2cm]
	&& \displaystyle +\, \frac{1}{1-\gamma}
	\sum_{s,a} d_\rho^{\pi_\tau^\star}(s)\left(\pi_\tau^\star(a\,|\,s) - \pi_{\theta_t}(a\,|\,s)\right)\left(Q^{\pi_{\theta_t}}_{r+\lambda_t g + \tau \psi_t}(s,a) -  \phi_{s,a}^\top w_t\right)
	\\[0.2cm]
	&\leq& \displaystyle
	\sum_s d_\rho^{\pi_\tau^\star}(s) \left( \frac{\KL_t(s) - \KL_{t+1}(s)}{\eta(1-\gamma)}\right) \,+\, \eta (C_W)^2 \,-\, \frac{\tau}{1-\gamma} \sum_s d^{\pi_\tau^\star}_\rho(s) \KL_t(s) 
	\\[0.2cm]
	&& \displaystyle +\, \frac{1}{1-\gamma}
	\sum_{s,a} d_\rho^{\pi_\tau^\star}(s)\left(\pi_\tau^\star(a\,|\,s) - \pi_{\theta_t}(a\,|\,s)\right)\left(Q^{\pi_{\theta_t}}_{r+\lambda_t g + \tau \psi_t}(s,a) -  \phi_{s,a}^\top w_t\right)  
	\\[0.2cm]
	&=& \displaystyle
	\sum_s d_\rho^{\pi_\tau^\star}(s) \left( \frac{(1-\eta\tau)\KL_t(s) - \KL_{t+1}(s)}{\eta(1-\gamma)}\right) \,+\, \eta (C_W)^2
	\\[0.2cm]
	&& \displaystyle +\, \frac{1}{1-\gamma}
	\sum_{s,a} d_\rho^{\pi_\tau^\star}(s)\left(\pi_\tau^\star(a\,|\,s) - \pi_{\theta_t}(a\,|\,s)\right)\left(Q^{\pi_{\theta_t}}_{r+\lambda_t g + \tau \psi_t}(s,a) -  \phi_{s,a}^\top w_t\right)
	\\[0.2cm]
	&=&\displaystyle
	\frac{(1-\eta\tau)\KL_t(\rho) - \KL_{t+1}(\rho)}{\eta(1-\gamma)} \,+\, \eta (C_W)^2
	\\[0.2cm]
	&& \displaystyle +\, \frac{1}{1-\gamma}
	\sum_{s,a} d_\rho^{\pi_\tau^\star}(s)\left(\pi_\tau^\star(a\,|\,s) - \pi_{\theta_t}(a\,|\,s)\right)\left(Q^{\pi_{\theta_t}}_{r+\lambda_t g + \tau \psi_t}(s,a) -  \phi_{s,a}^\top w_t\right),
	\end{array}
	\]
	where the first two equalities are because of the entropy regularization $\mathcal{H}(\pi)$, we apply performance difference lemma in Lemma~\ref{lem:performance difference lemma} and $\psi_t(s,a)=-\log \pi_{\theta_t}(a\,|\,s)$ to the third equality, the inequality is due to an application of Lemma~\ref{lem:online_mirror_descent} with $x^\star = \pi_\tau^\star(\cdot\,\vert\,s)$, $x = \pi_{\theta_t}(\cdot\,\vert\,s)$, $g = -\phi_{s,a}^\top w_t$, and $\eta \leq 1/C_{W}$, where  $\vert\phi_{s,a}^\top w_t/(1-\gamma)\vert \leq 2W/(1-\gamma) \DefinedAs C_{W}$. Moreover, the cross term has the following decomposition,
	\[
	\begin{array}{rcl}
	& & \!\!\!\!  \!\!\!\!  \!\!
	\displaystyle
	\sum_{s,a} d_\rho^{\pi_\tau^\star}(s)\left(\pi_\tau^\star(a\,|\,s) - \pi_{\theta_t}(a\,|\,s)\right)\left(Q^{\pi_{\theta_t}}_{r+\lambda_t g + \tau \psi_t}(s,a) -  \phi_{s,a}^\top w_t\right)
	\\[0.2cm]
	& = &
	\displaystyle \underbrace{
		\sum_{s,a} d_\rho^{\pi_\tau^\star}(s)\pi_\tau^\star(a\,|\,s) \left(Q^{\pi_{\theta_t}}_{r+\lambda_t g + \tau \psi_t}(s,a) -  \phi_{s,a}^\top w_t^\star\right)}_{\displaystyle \text{(a)}}
	\,+\,
	\underbrace{
		\sum_{s,a} d_\rho^{\pi_\tau^\star}(s) \pi_{\theta_t}(a\,|\,s)\phi_{s,a}^\top \left(w_t - w_t^\star\right)}_{\displaystyle \text{(b)}}
	\\[0.2cm]
	&  &
	\displaystyle +\,
	\underbrace{
		\sum_{s,a} d_\rho^{\pi_\tau^\star}(s)\pi_\tau^\star(a\,|\,s) \phi_{s,a}^\top \left(  w_t^\star - w_t\right)}_{\displaystyle \text{(c)}}
	\,+\,
	\underbrace{
		\sum_{s,a} d_\rho^{\pi_\tau^\star}(s) \pi_{\theta_t}(a\,|\,s)\left(\phi_{s,a}^\top w_t^\star - Q^{\pi_{\theta_t}}_{r+\lambda_t g + \tau \psi_t}(s,a)\right)}_{\displaystyle \text{(d)}}.
	\end{array}
	\]
	We now deal with the four terms $\displaystyle \text{(a)}$, $\displaystyle \text{(b)}$, $\displaystyle \text{(c)}$, and $\displaystyle \text{(d)}$, separately. For the term $\displaystyle \text{(a)}$,
	\[
	\begin{array}{rcl}
	\vert \displaystyle \text{(a)}\vert 
	& \leq & 
	\displaystyle
	\sum_{s,a} d_\rho^{\pi_\tau^\star}(s)\pi_\tau^\star(a\,|\,s) \left\vert Q^{\pi_{\theta_t}}_{r+\lambda_t g + \tau \psi_t}(s,a) -  \phi_{s,a}^\top w_t^\star\right\vert
	\\[0.2cm]
	&  \leq &
	\displaystyle
	\sqrt{
		\sum_{s,a} \frac{(d_\rho^{\pi_\tau^\star}(s)\pi_\tau^\star(a\,|\,s))^2 }{ d_\rho^{\pi_\tau^\star}(s) \text{Unif}_A(a) }
		\sum_{s,a}
		d_\rho^{\pi_\tau^\star}(s) \text{Unif}_A(a) \left( Q^{\pi_{\theta_t}}_{r+\lambda_t g + \tau \psi_t}(s,a) -  \phi_{s,a}^\top w_t^\star\right)^2
	}
	\\[0.2cm]
	&  = &
	\displaystyle
	\sqrt{
		\sum_{s,a} \frac{(d_\rho^{\pi_\tau^\star}(s)\pi_\tau^\star(a\,|\,s))^2 }{ d_\rho^{\pi_\tau^\star}(s) \text{Unif}_A(a) }
		\mathcal{E}_Q(w_t^\star,\theta_t, \nu^\star)
	}
	\\[0.2cm]
	&  \leq &
	\displaystyle
	\sqrt{
		\sum_{s,a} \frac{d_\rho^{\pi_\tau^\star}(s)\pi_\tau^\star(a\,|\,s) }{ \text{Unif}_A(a) }
		\mathcal{E}_Q(w_t^\star,\theta_t, \nu^\star)
	}
	\\[0.2cm]
	&  = &
	\displaystyle
	\sqrt{|A| \mathcal{E}_Q(w_t^\star,\theta_t, \nu^\star)},
	\end{array}
	\]
	where we recall the definition of $\mathcal{E}_Q(w_t^\star,\theta_t, \nu^\star)$. 
	Similarly, we can bound the term $\displaystyle \text{(d)}$ by $\vert \displaystyle \text{(d)} \vert \leq \sqrt{|A| \mathcal{E}_Q(w_t^\star,\theta_t, \nu^\star)}$. 
	For the term $\displaystyle \text{(b)}$, 
	\[
	\begin{array}{rcl}
	\vert \displaystyle \text{(b)}\vert 
	& \leq & 
	\displaystyle
	\sum_{s,a} d_\rho^{\pi_\tau^\star}(s) \pi_{\theta_t}(a\,|\,s) \vert\phi_{s,a}^\top \left(w_t - w_t^\star\right)\vert
	\\[0.2cm]
	& \leq &
	\displaystyle
	\sqrt{
		\sum_{s,a} \frac{(d_\rho^{\pi_\tau^\star}(s) \pi_{\theta_t}(a\,|\,s))^2}{d_\rho^{\pi_\tau^\star}(s) \text{Unif}_A(a) } 
		\sum_{s,a}
		d_\rho^{\pi^\star}(s) \text{Unif}_A(a) 
		\left(\phi_{s,a}^\top \left(w_t - w_t^\star\right)\right)^2
	}
	\\[0.2cm]
	& = &
	\displaystyle
	\sqrt{
		\sum_{s,a} \frac{(d_\rho^{\pi_\tau^\star}(s) \pi_{\theta_t}(a\,|\,s))^2}{d_\rho^{\pi_\tau^\star}(s) \text{Unif}_A(a) } 
		\norm{w_t - w_t^\star}_{\Sigma_{\nu^\star}}^2
	}
	\\[0.2cm]
	& \leq &
	\displaystyle
	\sqrt{
		\sum_{s,a} \frac{d_\rho^{\pi_\tau^\star}(s) \pi_{\theta_t}(a\,|\,s)}{ \text{Unif}_A(a) } 
		\norm{w_t - w_t^\star}_{\Sigma_{\nu^\star}}^2
	}
	\\[0.2cm]
	& \leq &
	\displaystyle
	\sqrt{
		| A |
		\norm{w_t - w_t^\star}_{\Sigma_{\nu^\star}}^2
	}
	\\[0.2cm]
	& \leq &
	\displaystyle
	\sqrt{
		| A | \kappa_\nu
		\norm{w_t - w_t^\star}_{\Sigma_{d_{t,\nu}}}^2
	},
	\end{array}
	\]
	where we recall the definition of $\kappa_\nu$ to obtain the last line. 
	Similarly, we can bound the term $\displaystyle \text{(c)}$ by $\vert \displaystyle \text{(c)} \vert \leq \sqrt{|A| \norm{w_t - w_t^\star}_{\Sigma_{\nu^\star}}^2}$. Moreover, the optimality of $w_t^\star\in
	\argmin_{w\, \in \,\mathbb{R}^d}
	\mathcal{E}_Q(w, \theta_t, d_{t,\nu})$ yields,
	\[
	(w - w_t^\star)^\top \nabla_w \mathcal{E}_Q(w_t^\star, \theta_t, d_{t,\nu}) 
	\; \geq \;
	0,\;\; \text{ for any } \norm{w}\leq W
	\]
	which further implies that for any $\norm{w} \leq W$,
	\[
	\begin{array}{rcl}
	& & \!\!\!\!  \!\!\!\!  \!\!
	\displaystyle
	\mathcal{E}_Q(w, \theta_t, d_{t,\nu}) -\mathcal{E}_Q(w_t^\star, \theta_t, d_{t,\nu})  
	\\[0.2cm]
	& = & 
	\displaystyle
	\mathbb{E}_{(s,a)\,\sim\,d_{t,\nu}}
	\left[\,
	\left(\phi_{s,a}^\top w - \phi_{s,a}^\top w_t^\star +\phi_{s,a}^\top w_t^\star -Q_{r+\lambda_t g+\tau \psi_t}^{\pi_{\theta_t}}(s,a)\right)^2 
	\,\right]
	\,-\, \mathcal{E}_Q(w_t^\star, \theta_t, d_{t,\nu})
	\\[0.2cm]
	& = & 
	\displaystyle
	\mathbb{E}_{(s,a)\,\sim\,d_{t,\nu}}
	\left[\,
	\left(\phi_{s,a}^\top w - \phi_{s,a}^\top w_t^\star \right)^2 
	\,\right]
	\,+\, 2
	(w - w_t^\star)^\top
	\mathbb{E}_{(s,a)\,\sim\,d_{t,\nu}}
	\left[\,\phi_{s,a} 
	\left( \phi_{s,a}^\top w_t^\star -Q_{r+\lambda_t g+\tau \psi_t}^{\pi_{\theta_t}}(s,a)\right) 
	\,\right] 
	\\[0.2cm]
	& = & 
	\displaystyle
	\mathbb{E}_{(s,a)\,\sim\,d_{t,\nu}}
	\left[\,
	\left(\phi_{s,a}^\top w - \phi_{s,a}^\top w_t^\star \right)^2 
	\,\right]
	\,+\, 2
	(w - w_t^\star)^\top \nabla_w \mathcal{E}_Q(w_t^\star, \theta_t, d_{t,\nu})
	\\[0.2cm]
	& \geq & 
	\displaystyle
	\mathbb{E}_{(s,a)\,\sim\,d_{t,\nu}}
	\left[\,
	\left(\phi_{s,a}^\top w - \phi_{s,a}^\top w_t^\star \right)^2 
	\,\right]
	\\[0.2cm]
	& = & 
	\displaystyle
	\norm{w - w_t^\star}_{\Sigma_{d_{t,\nu}}}^2.
	\end{array}
	\]
	Therefore, 
	\[
	\displaystyle
	\vert \displaystyle \text{(b)}\vert 
	\;\; \leq \;\;
	\sqrt{
		| A | \kappa_\nu
		\norm{w_t - w_t^\star}_{\Sigma_{d_{t,\nu}}}^2
	}
	\;\; \leq \;\;
	\sqrt{
		| A | \kappa_\nu
		\left(
		\mathcal{E}_Q(w_t, \theta_t, d_{t,\nu}) -\mathcal{E}_Q(w_t^\star, \theta_t, d_{t,\nu}) 
		\right)
	}.
	\]
	With a similar reasoning, we can bound  the term $\displaystyle \text{(c)}$ by 
	\[
	\vert \displaystyle \text{(c)} \vert 
	\;\; \leq \;\;
	\sqrt{
		| A | \kappa_\nu
		\left(
		\mathcal{E}_Q(w_t, \theta_t, d_{t,\nu}) -\mathcal{E}_Q(w_t^\star, \theta_t, d_{t,\nu}) 
		\right)
	}.
	\]
	By applying the upper bounds above to the cross term, and then taking expectation over the randomness in $w_t$, we have
	\[
	\begin{array}{rcl}
	& & \!\!\!\!  \!\!\!\!  \!\!
	\displaystyle
	\mathbb{E}\left[\,L_\tau(\pi_\tau^\star, \lambda_t) - L_\tau(\pi_{\theta_t}, \lambda_t) \,\right]
	\\[0.2cm]
	& \leq &
	\displaystyle
	\frac{(1-\eta\tau) \mathbb{E}[\KL_t(\rho)] - \mathbb{E}[\KL_{t+1}(\rho)]}{\eta} \,+\, \eta (C_W)^2
	\,+\,
	2\mathbb{E}\left[\,\sqrt{|A| \mathcal{E}_Q(w_t^\star,\theta_t, \nu^\star)}\,\right]
	\\[0.2cm]
	&  & \displaystyle
	+\,
	2\mathbb{E}\left[\,\sqrt{
		| A | \kappa_\nu
		\left(
		\mathcal{E}_Q(w_t, \theta_t, d_{t,\nu}) -\mathcal{E}_Q(w_t^\star, \theta_t, d_{t,\nu}) 
		\right)
	}\,\right]
	\\[0.2cm]
	& \leq &
	\displaystyle
	\frac{(1-\eta\tau) \mathbb{E}[\KL_t(\rho)] - \mathbb{E}[\KL_{t+1}(\rho)]}{\eta} \,+\, \eta (C_W)^2
	\,+\,
	2\sqrt{|A| \mathbb{E}\left[\,\mathcal{E}_Q(w_t^\star,\theta_t, \nu^\star)\,\right]
	}
	\\[0.2cm]
	&  & \displaystyle
	+\,
	2\sqrt{
		| A | \kappa_\nu
		\mathbb{E}\left[\,
		\mathcal{E}_Q(w_t, \theta_t, d_{t,\nu}) -\mathcal{E}_Q(w_t^\star, \theta_t, d_{t,\nu}) 
		\,\right]
	}
	\\[0.2cm]
	& \leq &
	\displaystyle
	\frac{(1-\eta\tau) \mathbb{E}[\KL_t(\rho)] - \mathbb{E}[\KL_{t+1}(\rho)]}{\eta} \,+\, \eta (C_W)^2 
	\,+\,
	2\sqrt{|A| \epsilon_{\normalfont \text{bias} }}
	\,+\,
	2\sqrt{|A|  \kappa_\nu
		\epsilon_{\normalfont \text{stat} }}.
	\end{array}
	\]

	%Rearranging gives
	%\begin{align*}
	%      \KL_{t+1}(\rho) &\leq (1-\eta\epsilon)\KL_t(\rho) + \eta (F(\pi_t,\lambda_t) - F(\pi^\star, \lambda_t)) + O(\eta^2) 
	%&= (1-\eta\epsilon)\KL_t(\rho) - \eta \left( V_{r+\lambda_t g}^{\pi^\star}(\rho) - V_{r+\lambda_t g}^{\pi_t}(\rho) \right) + O(\eta^2)  \tag{by performance difference lemma}
	%\end{align*}
	Similarly, for the term (ii),
	\[
	\begin{array}{rcl}
	& & \!\!\!\!  \!\!\!\!  \!\!
	\displaystyle
	L_\tau(\pi_{\theta_t},\lambda_t) - L_\tau(\pi_{\theta_t}, \lambda_\tau^\star) 
	\\[0.2cm]
	&=& \displaystyle
	V^{\pi_{\theta_t}}_{r+\lambda_t g}(\rho) - V^{\pi_{\theta_t}}_{r+\lambda_\tau^\star g}(\rho) \,+\, \frac{1}{2}\tau (\lambda_t)^2 \,-\, \frac{1}{2}\tau (\lambda_\tau^{\star } )^2
	\\[0.2cm]
	&=& \displaystyle
	(\lambda_t - \lambda_\tau^\star) V^{\pi_{\theta_t}}_g(\rho) \,+\, \frac{1}{2}\tau (\lambda_t)^2 \,-\, \frac{1}{2}\tau (\lambda_\tau^{\star} )^2
	\\[0.2cm]
	&=& \displaystyle
	(\lambda_t - \lambda_\tau^\star) \Big(V^{\pi_{\theta_t}}_g(\rho) + \tau \lambda_t \Big) \,-\, \frac{1}{2}\tau (\lambda_t - \lambda_\tau^\star)^2 
	\\[0.2cm]
	&\leq& \displaystyle
	\frac{(\lambda_\tau^\star - \lambda_t)^2 - (\lambda_\tau^\star - \lambda_{t+1})^2}{2\eta} \,+\, {\frac{1 }{2} \eta (C_{\tau,\xi}')^2}  \,-\, \frac{1}{2}\tau (\lambda_t - \lambda_\tau^\star)^2
	\\[0.2cm]
	&=& \displaystyle
	\frac{(1-\eta\tau)(\lambda_\tau^\star - \lambda_t)^2 - (\lambda_\tau^\star - \lambda_{t+1})^2}{2\eta} \,+\, \frac{1 }{2} \eta (C_{\tau,\xi}')^2
	\end{array}
	\]
	where the inequality is due to the standard descent lemma~\citep{zinkevich2003online} and 
	$V^{\pi_{\theta_t}}_g(\rho) + \tau\lambda_t\leq \frac{1}{1-\gamma}(1+\frac{\tau}{\xi}) \DefinedAs C_{\tau,\xi}'$. By taking expectation over the randomness in $w_t$, 
	\[
	\displaystyle
	\mathbb{E}\left[\, 
	L_\tau(\pi_{\theta_t},\lambda_t) - L_\tau(\pi_{\theta_t}, \lambda_\tau^\star)
	\,\right]
	\;\; \leq \;\;
	\frac{(1-\eta\tau) \mathbb{E}\left[(\lambda_\tau^\star - \lambda_t)^2\right] - \mathbb{E}\left[(\lambda_\tau^\star - \lambda_{t+1})^2\right]}{2\eta} 
	\,+\,\
	\frac{1 }{2} \eta (C_{\tau,\xi}')^2.
	\]
	
	Using the definition $\mathbb{E}\left[\,\Phi_t\,\right] \DefinedAs \mathbb{E}\left[\,\KL_t(\rho)\,\right] + \frac{1}{2}\mathbb{E}\left[(\lambda_\tau^\star - \lambda_t)^2\right]$, we combine the two inequalities above to show that,
	\[
	\begin{array}{rcl}
	\mathbb{E}\left[\,\Phi_{t+1}\,\right]
	& \leq &
	\displaystyle
	(1-\eta\tau) \mathbb{E}\left[\,\Phi_t\,\right] \,-\, \eta \mathbb{E}\left[\,L_\tau(\pi_\tau^\star, \lambda_t) - L_\tau(\pi_{\theta_t}, \lambda_\tau^\star)\,\right] \,+\, \eta^2 \max\left( (C_W)^2, (C_{\tau,\xi}')^2 \right)
	\\[0.2cm]
	&  &
	\displaystyle +\,
	2\eta\sqrt{|A| \epsilon_{\normalfont \text{bias} }}
	\,+\,
	2\eta\sqrt{|A|  \kappa_\nu
		\epsilon_{\normalfont \text{stat} }}
	\\[0.2cm]
	& \leq &
	(1-\eta \tau) \mathbb{E}\left[\, \Phi_t \,\right]
	\, + \,
	\eta^2 \max\left( (C_W)^2, (C_{\tau,\xi}')^2 \right)
	\, + \,
	2\eta\sqrt{|A| \epsilon_{\normalfont \text{bias} }}
	\, + \,
	2\eta\sqrt{|A|  \kappa_\nu
		\epsilon_{\normalfont \text{stat} }}, 
	\end{array}
	\]
	where the second inequality is due to Lemma~\ref{lem: regularized saddle point}. If we expand the inequality above recursively, then, 
	\[
	\begin{array}{rcl}
	\mathbb{E}\left[\,\Phi_{t+1}\,\right] &\leq & 
	\displaystyle
	(1-\eta\tau) \mathbb{E}\left[\,\Phi_{t}\,\right] \,+\, \eta^2 \max\left( (C_W)^2, (C_{\tau,\xi}')^2\right)
	\,+\, 2\eta\sqrt{|A| \epsilon_{\normalfont \text{bias} }}
	\,+\,
	2\eta\sqrt{|A|  \kappa_\nu
		\epsilon_{\normalfont \text{stat} }}
	\\[0.2cm]
	&\leq& \displaystyle
	(1-\eta\tau)^2\mathbb{E}\left[\,\Phi_{t-1}\,\right] \,+\, (\eta^2 + \eta^2(1-\eta\tau)) \max\left((C_W)^2, (C_{\tau,\xi}')^2\right)
	\\[0.2cm]
	&& + \, 2\eta\left(1 + (1-\eta \tau)\right)\left(\sqrt{|A| \epsilon_{\normalfont \text{bias} }}
	+
	2\sqrt{|A|  \kappa_\nu
		\epsilon_{\normalfont \text{stat} }}\right)
	\\[0.2cm]
	&\leq & 
	\cdots
	\\[0.2cm]
	&\leq & \displaystyle
	(1-\eta\tau)^{t} \mathbb{E}\left[\,\Phi_1\,\right] \,+\, \left(\eta^2 \left(1 + (1-\eta\tau) + (1-\eta\tau)^2 + \ldots  \right)\right) \max\left( (C_W)^2, (C_{\tau,\xi}')^2\right)
	\\[0.2cm]
	&& + \, 2\eta\left(
	1 + (1-\eta \tau) + (1-\eta\tau)^2+\ldots
	\right)
	\left(\sqrt{|A| \epsilon_{\normalfont \text{bias} }} + \sqrt{|A|  \kappa_\nu
		\epsilon_{\normalfont \text{stat} }}\right)
	\\[0.2cm]
	&\leq& \displaystyle
	(1-\eta\tau)^{t} \mathbb{E}\left[\,\Phi_1\,\right]
	\, + \,
	\frac{\eta}{\tau}\max\left( (C_W)^2, (C_{\tau,\xi}')^2\right) 
	\, + \,
	\frac{2}{\tau} \left(\sqrt{|A| \epsilon_{\normalfont \text{bias} }} + \sqrt{|A|  \kappa_\nu
		\epsilon_{\normalfont \text{stat} }}\right)
	\\[0.2cm]
	&\leq& \displaystyle
	{\rm e}^{-\eta \tau t} \mathbb{E}\left[\,\Phi_1\,\right] \,+\, \frac{\eta}{\tau}\max\left( (C_W)^2, (C_{\tau,\xi}')^2\right) 
	\,+\,
	\frac{2}{\tau} \left(\sqrt{|A| \epsilon_{\normalfont \text{bias} }} + \sqrt{|A|  \kappa_\nu
		\epsilon_{\normalfont \text{stat} }}\right)
	\end{array}
	\] 
	which completes the proof.
\end{proof}

\subsection{Proof of Corollary~\protect\ref{cor: linear convergence of RPG linear function approximation}}
\label{app: linear convergence of RPG nearly optimal policy linear function approximation}

\begin{proof}
	The proof is similar to  the proof of Corollary~\ref{cor: linear convergence of RPG}, except that we take the expectation over the randomness of computing $w_t$ via a sample-based algorithm. 
	
	According to Theorem~\ref{thm: linear convergence of RPG linear function approximation}
	and 
	$\epsilon_{\normalfont \text{stat}}=O( \epsilon^4)$, $\epsilon_{\normalfont \text{stat}}= O(\epsilon^4)$, if we take $\tau=\Theta(\epsilon)$ and $\eta = \Theta(\epsilon^{2})$, then $\mathbb{E}[\Phi_{t+1}] = O(\epsilon)$ for any $t = \Omega\left( \frac{1}{\epsilon^3} \log \frac{1}{\epsilon}\right)$. We next consider a primal-dual iterate $(\pi_{\theta_t},\lambda_t)$ for some $t = \Omega\left( \frac{1}{\epsilon^3} \log\frac{1}{\epsilon} \right)$. It is straightforward to check that
	% We notice that $d_\rho^{\pi_\tau^\star} (s) \geq (1-\gamma)\rho(s) \geq (1-\gamma )\rho_{\min}$. 
	% Thus,
	\[
	\displaystyle
	\mathbb{E}\left[\,
	\KL_t(\rho)
	\,\right]
	\;\; = \;\;
	O(\epsilon)
	\;\text{ and }\;
	\frac{1}{2}
	\mathbb{E}\left[\,
	(\lambda_\tau^\star - \lambda_t)^2
	\,\right]
	\;\; = \;\;
	O(\epsilon).
	\]
	
	First, we have 
	\begin{equation}\label{eq:nearly_optimality_gap_linear}
	\mathbb{E}\left[\, 
	V_r^{\pi^\star} (\rho)
	- V_r^{\pi_{\theta_t}} (\rho)
	\,\right]
	\;\; = \;\;
	\underbrace{V_r^{\pi^\star} (\rho)
		-
		V_r^{\pi_\tau^\star} (\rho)}_{\displaystyle \text{(i)}}
	\, + \,
	\underbrace{ 
		\mathbb{E}\left[\, V_r^{\pi_\tau^\star} (\rho)
		- V_r^{\pi_{\theta_t}} (\rho)\,\right]
	}_{\displaystyle \text{(ii)}}.
	\end{equation}
	For the term (ii), because of $\KL_t(\rho)
	= 
	O(\epsilon)$, we have
	\[
	\begin{array}{rcl}
	{\displaystyle \text{(ii)}}
	& = & \displaystyle
	\mathbb{E}\left[\, \frac{1}{1-\gamma}
	\sum_{s,a} d_\rho^{\pi_\tau^\star}(s)\left(\pi_\tau^\star(a\,|\,s) - \pi_{\theta_t}(a\,|\,s)\right)Q^{\pi_{\theta_t}}_{r}(s,a)
	\,\right]
	\\[0.2cm]
	& \leq & \displaystyle
	\frac{1}{(1-\gamma)^2} \sum_{s} d_\rho^{\pi_\tau^\star} (s)
	\mathbb{E}\left[\, 
	\norm{\pi_\tau^\star(\cdot\,|\,s) - \pi_{\theta_t}(\cdot\,|\,s)}_1\,\right]
	\\[0.2cm]
	& \leq & \displaystyle
	\frac{1}{(1-\gamma)^2} \sum_{s} d_\rho^{\pi_\tau^\star} (s)
	\sqrt{2\, \mathbb{E}\left[\, \KL_t(s)\,\right] }
	\\[0.2cm]
	& \leq & \displaystyle
	\frac{1}{(1-\gamma)^2} 
	\sqrt{2\sum_{s} d_\rho^{\pi_\tau^\star} (s) \mathbb{E}\left[\,\KL_t(s)\,\right] }
	\\[0.2cm]
	& = & \displaystyle
	\frac{1}{(1-\gamma)^2} 
	\sqrt{2\,\mathbb{E}\left[\,\KL_t(\rho)\,\right] }
	\end{array}
	\]
	where we use Cauchy–Schwarz inequality in the first and third inequalities, and the second inequality is due to Pinsker's inequality and Jensen's inequality, 
	which shows $\mathbb{E}\left[\,V_r^{\pi_\tau^\star} (\rho) - V_r^{\pi_{\theta_t}} (\rho)\,\right] \leq O(\sqrt{\epsilon})$. For the term (i), if we take $\pi = \pi^\star$ in~\eqref{eq:regularized saddle point approximation}, then,
	\[
	V_{r}^{\pi^\star}(\rho) - \tau  \mathcal{H}(\pi_\tau^\star)
	\;\; \leq \;\;
	V_{r}^{\pi_\tau^\star}(\rho)  \,+\,\lambda_\tau^\star \left(V_{g}^{\pi_\tau^\star}(\rho)- V_{g}^{\pi^\star}(\rho)\right).
	\]
	Meanwhile, if we take $\lambda = 0$ in~\eqref{eq:regularized saddle point approximation}, then $\lambda_\tau^\star V_{g}^{\pi_\tau^\star}(\rho) \leq 0$. We notice that the feasibility of $\pi^\star$ yields $V_g^{\pi^\star} (\rho)\geq 0$. Hence,
	\[
	{\displaystyle \text{(i)}}
	\;\; = \;\;
	V_{r}^{\pi^\star}(\rho) - 
	V_{r}^{\pi_\tau^\star}(\rho)
	\;\; \leq \;\;
	\tau  \mathcal{H}(\pi_\tau^\star).
	\]
	We now substitute the upper bounds of (i) and (ii) into \eqref{eq:nearly_optimality_gap_linear} to obtain $\mathbb{E}\left[\,V_r^{\pi^\star}(\rho) - V_r^{\pi_{\theta_t}}(\rho)\,\right] \leq O(\sqrt\epsilon)$, where we take $\tau = \Theta(\epsilon)$.
	
	Second, we have 
	\begin{equation}\label{eq:nearly_constraint_violation_linear}
	\mathbb{E}\left[\,- V_g^{\pi_{\theta_t}} (\rho)\,\right]
	\;\; = \;\;
	\underbrace{
		-\,
		V_g^{\pi_\tau^\star} (\rho)}_{\displaystyle \text{(iii)}}
	\, + \,
	\underbrace{
		\mathbb{E}\left[\,V_g^{\pi_\tau^\star} (\rho)
		- V_g^{\pi_{\theta_t}} (\rho)\,\right]}_{\displaystyle \text{(iv)}}.
	\end{equation}
	Similar to bounding $\mathbb{E}\left[\,V_r^{\pi_\tau^\star} (\rho)
	- V_r^{\pi_{\theta_t}} (\rho)\,\right]$, we can show that $\mathbb{E} \left[{\displaystyle \text{(iv)}} \right]\leq O(\sqrt{\epsilon})$. For the term (iii), we can show that  ${\displaystyle \text{(iii)}}   \leq O(\tau) = O(\epsilon)$ in a similar way as dealing with ${\displaystyle \text{(iii)}} $ in~\eqref{eq:nearly_constraint_violation}.
	We now substitute the upper bounds of (iii) and (iv) above into \eqref{eq:nearly_constraint_violation_linear} to obtain $\mathbb{E}\left[\,- V_g^{\pi_{\theta_t}}(\rho) \,\right] \leq O(\sqrt\epsilon)$.
	
	Finally, we replace the accuracy $\sqrt\epsilon$ by $\epsilon$ and combine all big O notation to conclude the proof.
\end{proof}

\subsection{Zero constraint violation of inexact RPG-PD}
\label{app: inexact RPG zero violation}

\begin{corollary}[Zero constraint violation]
	\label{cor: inexact RPG zero violation}
	Let Assumptions~\ref{as:feasibility}--\ref{as:bounded errors} hold and
	$\epsilon_{\normalfont \text{stat}}$, $\epsilon_{\normalfont \text{bias}}= O(\epsilon^8)$ for small $\epsilon$, $\epsilon_0>0$. 
	For small $\epsilon$, there exists $\delta>0$ such that if we instead use 
	the conservative constraint $V_{g'}^{\pi} (\rho) \geq 0$ for $g' = g - (1-\gamma)\delta$, and 
	take the stepsize $\eta= \Theta(\epsilon^4)$ and $\tau =\Theta(\epsilon^2)$, 
	then the policy iterates of inexact RPG-PD satisfy 
	\[
	\mathbb{E}\left[\,V_r^{\pi^\star}(\rho) - V_r^{\pi_{\theta_t}}(\rho) \,\right]
	\; \leq \; 
	\epsilon
	\; \text{ and } \;
	\mathbb{E}\left[\,- V_g^{\pi_{\theta_t}}(\rho) \,\right]
	\; \leq \; 
	0\;\;
	\text{ for any }
	t = \Omega\left(\,\frac{1}{\epsilon^6} \log^2\frac{1}{\epsilon}\,\right)
	\]
	where $\Omega(\cdot)$ only has some problem-dependent constant.
\end{corollary}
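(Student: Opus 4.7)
\textbf{Proof proposal for Corollary~\ref{cor: inexact RPG zero violation}.} The plan is to mirror the tabular zero-violation argument in Appendix~\ref{app: linear convergence of RPG zero violation}, but invoke Corollary~\ref{cor: linear convergence of RPG linear function approximation} in place of Corollary~\ref{cor: linear convergence of RPG}, and to take expectations throughout so as to accommodate the randomness coming from the sample-based minimizer $w_t$. First, fix a conservatism level $\delta \in (0, \min(\xi,1))$ and define $g' \DefinedAs g - (1-\gamma)\delta$, so that the conservative constraint $V_{g'}^\pi(\rho) \geq 0$ is equivalent to $V_g^\pi(\rho) \geq \delta$. Under Assumption~\ref{as:feasibility} with slack $\xi$, the policy $\bar\pi$ now satisfies $V_{g'}^{\bar\pi}(\rho) \geq \xi - \delta \AsDefined \xi' > 0$, so feasibility is preserved and all quantities that depended on $\xi$ in Theorem~\ref{thm: linear convergence of RPG linear function approximation} and Corollary~\ref{cor: linear convergence of RPG linear function approximation} inherit a natural $\xi'$-dependence.

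Next I would run inexact RPG-PD on the regularized Lagrangian associated with the conservative problem, $L_\tau'(\pi,\lambda) \DefinedAs V_{r+\lambda g'}^{\pi}(\rho) + \tau(\mathcal{H}(\pi) + \tfrac{1}{2}\lambda^2)$, and directly apply Corollary~\ref{cor: linear convergence of RPG linear function approximation}. With the stated choices $\eta = \Theta(\epsilon^4)$, $\tau = \Theta(\epsilon^2)$, $\epsilon_{\text{stat}},\epsilon_{\text{bias}} = O(\epsilon^8)$, this yields, for every $t = \Omega(\tfrac{1}{\epsilon^6}\log^2\tfrac{1}{\epsilon})$, the guarantees
\[
\mathbb{E}\left[\,V_r^{\pi_\delta^\star}(\rho) - V_r^{\pi_{\theta_t}}(\rho)\,\right] \;\leq\; \epsilon\, \Xi_1(\xi'), \qquad \mathbb{E}\left[\,-V_{g'}^{\pi_{\theta_t}}(\rho)\,\right] \;\leq\; \epsilon\, \Xi_2(\xi'),
\]
where $\pi_\delta^\star$ denotes an optimal policy of the $\delta$-perturbed problem $\max_\pi V_r^\pi(\rho)$ subject to $V_g^\pi(\rho) \geq \delta$, and $\Xi_1,\Xi_2$ are positive, problem-dependent constants extracted from the hidden dependencies of $C_W$, $C_{\tau,\xi}'$, $\kappa_\nu$, and $\lambda_{\max}$ on $\xi'$.

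The key maneuver is then to choose $\delta$ suitably. Translating the conservative violation bound back to $g$ yields $\mathbb{E}[-V_g^{\pi_{\theta_t}}(\rho)] = \mathbb{E}[-V_{g'}^{\pi_{\theta_t}}(\rho)] - \delta \leq \epsilon \,\Xi_2(\xi') - \delta$. Fixing $\xi' = \xi/2$ (i.e.\ $\delta \geq \xi/2$ initially) ensures $\Xi_2(\xi')$ is an absolute constant, so for all sufficiently small $\epsilon$ one may pick any $\delta \geq \epsilon\,\Xi_2(\xi')$ and still keep $\delta$ below some threshold $\delta'$ to be determined; then $\mathbb{E}[-V_g^{\pi_{\theta_t}}(\rho)] \leq 0$, which is the zero-violation claim. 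It remains to upgrade $\pi_\delta^\star$ to $\pi^\star$ in the reward gap. For this I would invoke continuity of the optimal value of the LP reformulation of Problem~\eqref{eq:CMDP} with respect to the right-hand-side perturbation $\delta$ (as in~\cite[Theorem~3.1]{terazono2015continuity}): since the occupancy-measure LP is feasible with slack $\xi'>0$ and has a compact convex feasible set, the optimal value is continuous in $\delta$, so one can choose a threshold $\delta'>0$ with $|V_r^{\pi^\star}(\rho) - V_r^{\pi_\delta^\star}(\rho)| \leq \epsilon$ for all $\delta \leq \delta'$. Taking $\delta = \min(\xi/2,\, \delta',\, 1)$ with $\epsilon$ small enough yields $\mathbb{E}[V_r^{\pi^\star}(\rho) - V_r^{\pi_{\theta_t}}(\rho)] \leq O(\epsilon)$, and combining with the previous display completes the proof after rescaling $\epsilon$ by the constant factor.

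The main obstacle, as in the tabular case, is the two-sided tension in selecting $\delta$: it must be large enough relative to $\epsilon\,\Xi_2(\xi')$ to dominate the residual constraint violation that our last-iterate bound allows, yet small enough that the LP continuity estimate gives only an $O(\epsilon)$ perturbation in the optimal reward value, and moreover small enough to preserve the feasibility slack that the constants $\Xi_1, \Xi_2$ depend on. Handling this in the function-approximation setting adds the extra wrinkle that $\Xi_1,\Xi_2$ also absorb factors of $\sqrt{|A|\kappa_\nu}$ and $\sqrt{|A|\epsilon_{\text{bias}}}$, so one must verify that the sample-complexity budget $\epsilon_{\text{stat}},\epsilon_{\text{bias}} = O(\epsilon^8)$ still leaves the $\Xi_i$ bounded as $\delta \to \xi/2$; this is straightforward since the relative condition number $\kappa_\nu$ does not depend on $\delta$ and the $\xi'$-dependent terms enter only through $1/\xi'$, which is controlled once $\xi' = \xi/2$ is fixed.
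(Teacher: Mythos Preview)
Your proposal is correct and follows essentially the same route as the paper: apply the conservative shift $g' = g - (1-\gamma)\delta$, invoke Corollary~\ref{cor: linear convergence of RPG linear function approximation} on the perturbed problem with slack $\xi' = \xi - \delta$, fix $\delta = \xi/2$ so the $\xi'$-dependent constants are controlled, and then use the LP continuity result of \cite{terazono2015continuity} to bound $|V_r^{\pi^\star}(\rho) - V_r^{\pi_\delta^\star}(\rho)|$ before taking $\delta = \min(\xi/2, 1, \delta')$. One tiny slip: your parenthetical ``i.e.\ $\delta \geq \xi/2$'' should read $\delta = \xi/2$, since $\xi' = \xi - \delta$.
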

\begin{proof}
	We apply the conservatism to the translated constraint $V_g^{\pi}(\rho) \geq 0$ in Problem~\eqref{eq:CMDP}.
	Specifically, for any $\delta<\min(\xi,1)$, we let 
	$g' \DefinedAs g - (1-\gamma)\delta$ and define a conservative constraint,
	\[
	V_{g'}^{\pi} 
	\;\; \DefinedAs \;\; 
	V_{g}^{\pi} (\rho) - \delta
	\;\; \geq \;\; 0.
	\]
	It is straightforward to see that
	Assumption~\ref{as:feasibility} ensures that $V_{g'}^{\pi_t}(\rho)\geq 0$ is strictly feasible for a new slack variable $\xi' \DefinedAs \xi - \delta$. We now can apply inexact RPG-PD~\eqref{eq:regularized primal-dual update MD} to a new regularized Lagrangian, 
	\[
	L_\tau'(\pi,\lambda) 
	\;\; \DefinedAs \;\;
	V_{r+\lambda g'}^\pi (\rho) \,+\, \tau \left(\,\mathcal{H}(\pi) +\frac{1}{2}\lambda^2\,\right)
	\]
	and Corollary~\ref{cor: linear convergence of RPG linear function approximation} holds if we replace $g$ in inexact RPG-PD by $g'$ and
	$\epsilon_{\normalfont \text{stat}}$, $\epsilon_{\normalfont \text{bias}}= O(\epsilon^8)$ for small $\epsilon>0$. Thus,
	\[
	\mathbb{E}\left[\,V_r^{\pi_\delta^\star}(\rho) - V_r^{\pi_{\theta_t}}(\rho) \,\right]
	\; \leq \; 
	\epsilon
	\; \text{ and } \;
	\mathbb{E}\left[\,- V_{g'}^{\pi_{\theta_t}}(\rho) \,\right]
	\; \leq \; 
	\epsilon \;\;
	\text{ for any }
	t = \Omega\left(\, \frac{1}{\epsilon^6} \log^2\frac{1}{\epsilon}\,\right)
	\]
	where $\Omega(\cdot)$ has some problem-dependent constant, and $\pi_\delta^\star$ is an optimal policy to the $\delta$-perturbed 
	constrained policy optimization problem,
	\begin{equation}\label{eq:CMDP perturbed inexact}
	% \begin{array}{rl}
	\displaystyle\maximize_{\pi\,\in\,\Pi} \; V_r^\pi(\rho) \;
	\;\;\;\;
	% \\[0.2cm]
	\;\subject \; V_{g}^\pi(\rho) - \delta\; \geq \; 0.
	% \end{array}
	\end{equation}
	We notice that the above $\Omega(\cdot)$ has $\xi'$-dependence and we denote it by $\Xi(\xi')$, where $\Xi$: $\mathbb{R}_+\to\mathbb{R}_+$ is a positive function. Thus, we select $\delta$ such that $\delta \geq \epsilon \Xi(\xi')$, which is always possible for small enough $\epsilon$, for instance, $\delta = \frac{\xi}{2}$ and $\xi' = \frac{\xi}{2}$. Hence, if we take $\delta  = \frac{\xi}{2}$ and such small $\epsilon$, then 
	\[
	\mathbb{E}\left[\,- V_{g'}^{\pi_{\theta_t}}(\rho) \,\right]
	\;\; = \;\;
	\mathbb{E}\left[\,- V_{g}^{\pi_{\theta_t}}(\rho)\,\right] \, + \,  \delta
	\; \;\leq \; \;
	\epsilon \Xi(\xi') \;\;
	\text{ for any }
	t = \Omega\left( \frac{1}{\epsilon^6} \log^2\frac{1}{\epsilon}\right)
	\]
	which shows that $\mathbb{E}\left[V_{g}^{\pi_{\theta_t}}(\rho) \right]\geq 0$ for some large $t$.
	
	The rest is to show that $ \mathbb{E}\left[V_r^{\pi^\star}(\rho) - V_r^{\pi_{\theta_t}}(\rho) \right]\leq O(\epsilon)$. We notice that $\pi^\star$ is an optimal policy to Problem~\eqref{eq:CMDP perturbed inexact} when $\delta = 0$. Let $q^\star$ and $q_\delta^\star$ be associated occupancy measures of policies $\pi^\star$ and $\pi_\delta^\star$. In the occupancy measure space, Problem~\eqref{eq:CMDP perturbed inexact} becomes a linear program and it has a solution $q_\delta^\star$. Thus, we can view $q_\delta^\star$ as a $\delta$-perturbed solution of a convex optimization problem in which all functions are continuous differentiable and the domain is convex and compact. It is known from~\cite[Theorem~3.1]{terazono2015continuity} that the optimal solution $q_\delta^\star$ is continuous in $\delta$, which implies that for any $\epsilon>0$, there exists $\delta'$ such that $|\langle r, q^\star \rangle - \langle r, q_\delta^\star \rangle | \leq O(\epsilon)$ for any $\delta < \delta'$. Thus, $|V_r^{\pi^\star}(\rho) - V_r^{\pi_\delta^\star}(\rho) | \leq O(\epsilon)$ for small enough $\epsilon$. Therefore, 
	\[
	\mathbb{E}\left[\,V_r^{\pi^\star}(\rho) - V_r^{\pi_{\theta_t}}(\rho) \,\right]
	\;\; \leq \;\;
	\mathbb{E}\left[\,V_r^{\pi_\delta^\star}(\rho) - V_r^{\pi_{\theta_t}}(\rho) \,\right]
	\,+\,
	|V_r^{\pi^\star}(\rho) -V_r^{\pi_\delta^\star}(\rho) |
	\;\; \leq \;\; 
	O(\epsilon)
	\]
	for some large $t$. Collecting all conditions on $\delta$ leads to our final choice of $\delta = \min(\frac{\xi}{2}, 1, \delta')$. Finally, we combine all big O notation to complete the proof.
\end{proof}

\subsection{Sample-based inexact RPG-PD algorithm}
\label{app:sample-based RPG-PD}

We generalize the inexact RPG-PD to be a sample-based algorithm that only takes sample-based estimates. We propose a sample-based RPG-PD with linear function approximation as follows,
\begin{subequations}
	\label{eq: regularized primal-dual update linear sample-based}
	\begin{equation}    
	\displaystyle
	\pi_{\theta_{t+1}}(\cdot\,|\,s) 
	\;\; = \;\; \displaystyle
	\argmax_{\pi(\cdot\,|\,s) \,\in\,\hat\Delta(A) } \left\{ 
	\sum_a \pi(a\,|\,s) \phi_{s,a}^\top \hat w_t  \, - \, \frac{1}{\eta} \, \KL(\pi(\cdot\,|\,s), \pi_{\theta_t}(\cdot\,|\,s)) \right\}    
	\label{eq: regularized pi update linear sample-based}
	\end{equation}
	\begin{equation}
	\displaystyle
	\!\!\!\! \!\!\!\! \!\!\!\! \!\!\!\! \!\!\!\! \!\!\!\! \!\!\!\! \!\!\!\! \!\!\!\! \!\!\!\! \!\!\!\! \!\!\!\! \!\!\!\! \!\!
	\lambda_{t+1} 
	\;\; = \;\;
	\argmin_{\lambda \,\in\, \Lambda}
	\left\{ 
	\lambda \Big(\hatV^{\pi_{\theta_t}}_g(\rho) + \tau\lambda_t \Big) 
	\, + \,
	\frac{1}{2\eta} \, (\lambda-\lambda_t)^2 
	\right\},
	\label{eq: regularized lambda update linear sample-based}
	\end{equation}
\end{subequations}
where $\hat w_t$ and $\hat V_g^{\pi_t}(\rho)$ are the sample-based estimates of NPG directions and value functions. It is standard to assume that there exists a policy simulator that generates policy rollouts for any given policies~\citep{agarwal2021theory}. At time $t$, we can estimate $\hat w_t$ by solving the regression problem $\mathcal{E}_Q(w, \theta_t,d_{t,\nu}) 
= \mathbb{E}_{(s,a)\,\sim\,d_{t,\nu}}
[\, 
(\phi_{s,a}^\top w - Q^{\pi_{\theta_t}}(s,a))^2
\,]$ with $Q^{\pi_{\theta_t}}(s,a) = Q_{r+\lambda_t g+\tau\psi_t}^{\pi_{\theta_t}}(s,a)$ via a projected stochastic gradient descent (SGD) method,
\[
w_t^{k+1} 
\;\; = \;\;
\mathcal{P}_{\norm{w}\,\leq\,W} \left(\,
w_t^k - \alpha\, G_t^k
\,\right)
\]
where $k\geq 0$ counts the number of SGD iterations, and $G_t^k$ is a sample-based estimate of the population gradient $\nabla_w \mathcal{E}_Q(w, \theta_t,d_{t,\nu})$,
\[
G_t^k 
\;\;= \;\; 
2\left(\,
\phi_{s,a}^\top w_t^k - \hat Q_{r+\lambda_t g + \tau \psi_t}^{\pi_{\theta_t}}(s,a)
\,\right) \phi_{s,a}.
\]
From the projected SGD result~\citep{lacoste2012simpler}, we use a weighted average $\frac{2}{K(K+1)} \sum_{k\,=\,0}^{K-1} (k+1) w_t^{k}$ as our $\hat w_t$.
We note that $Q_{r+\lambda_t g+\tau \psi_t}^{\pi_{\theta_t}}(s,a)$ is a sum of a soft-$Q$ value function associated with a composite function $r+\lambda_t g-\tau \log\pi_{\theta_t}$, and $-\tau\log\pi_{\theta_t}$. Thus, we can estimate the value function $ \hat Q_{r+\lambda_t g + \tau \psi_t}^{\pi_{\theta_t}}(s,a)$ 
% in form of 
% \[
%     \hat Q_{r+\lambda_t g + \tau \psi_t}^{\pi_{\theta_t}}(s,a)
%     \;\; = \;\;
%     \hat Q_{r+\lambda_t g }^{\pi_{\theta_t}}(s,a)
%     \, + \,
%     \tau \hat Q_{\psi_t}^{\pi_{\theta_t}}(s,a)
% \]
% where $\hat Q_{r+\lambda_t g }^{\pi_{\theta_t}}(s,a)$ can be estimated 
using policy rollouts in Algorithm~\ref{alg.estimate.Q}, which provides an unbiased estimate and it has bounded variance~\citep{ding2021beyond},
% , and $\hat Q_{\psi_t}^{\pi_{\theta_t}}(s,a)$ is an unbiased estimate of the entropy-like term,
% \[
%     Q_{\psi_t}^{\pi_{\theta_t}}(s,a)
%     \;\; \DefinedAs  \;\;
%     (1-\gamma)
%     \mathbb{E} 
%     \left[ \,
%     \sum_{i\,=\,0}^\infty -\gamma^t \log \pi_{\theta_t}(a_i\,\vert\,s_i) 
%     \,\big\vert\,s_0 = s, a_0 = a
%     \,\right]
%     \]
% where several unbiased estimates can be computed by combining several entropy estimates with their biases~\citep{zhang2012entropy} and they have bounded variances. 
\[
\mathbb{E}\left[\, \hat Q_{r+\lambda_t g+\tau\psi_t}^{\pi_{\theta_t}}(s,a)\,\big\vert\, s, a \,\right]
\;\; = \;\;
Q_{r+\lambda_t g+\tau\psi_t}^{\pi_{\theta_t}}(s,a)
\;\text{ and } \;
\mathbb{E}\left[\, G_t^k \,\right]
\;\; = \;\;
\nabla_w \mathcal{E}_Q(w_t^k, \theta_t,d_{t,\nu}) 
\]
where the expectation $\mathbb{E}$ is taken over the randomness of drawing $(s,a) \sim  d_{t,\nu}$. Another value function $\hat V_g^{\pi_{\theta_t}}(\rho)$ can be estimated using policy rollouts in Algorithm~\ref{alg.estimate.V},   $\mathbb{E}\left[\, \hat V^{\pi_{\theta_t}}_g(\rho) \,\right] = V^{\pi_{\theta_t}}_g(\rho)$, which is also unbiased and has bounded variance~\citep{ding2020natural}. Hence, simply replacing all population quantities in inexact RPG-PD by their sample-based estimates leads to a sample-based inexact RPG-PD that is detailed in Algorithm~\ref{alg.sample-based.loglinear}.

We are now ready to establish the sample complexity of Algorithm~\ref{alg.sample-based.loglinear} by exploiting the projected SGD result~\citep{lacoste2012simpler}.

\begin{algorithm}[]
	\caption{ Sample-based inexact RPG-PD algorithm with log-linear policy parametrization }
	\label{alg.sample-based.loglinear}
	\begin{algorithmic}[1]
		\STATE \textbf{Input}: Learning rate $\eta$, number of SGD iterations $K$, SGD learning rate $\alpha$.
		\STATE Initialize $\theta_{0}=0$, $\lambda_{0} =0$,
		\FOR{$t=0,\ldots,T-1$} 
		\STATE Initialize $w_{t}^0=0$.
		\FOR{$k=0,1,\ldots,K-1$ } 
		\STATE Estimate $\hat Q_{r+\lambda_t g+\tau\psi_t}^{\pi_{\theta_t}}(s,a)$ for some $(s,a)\sim d_{t,\nu}$, using Algorithm~\ref{alg.estimate.Q} with policy $\pi_{\theta_{t}}$.
		% \STATE Estimate $\hat Q_{\tau \psi_t}^{\pi_{\theta_t}}(s,a)$ for some $(s,a)\sim d_{t,\nu}$, using one of entropy estimators~\citep{zhang2012entropy}.
		\STATE Perform projected SGD step with $\alpha_k = \frac{2}{\kappa_0(k+2)}$,
		\[
		\begin{array}{rcl}
		w_t^{k+1} & = & \mathcal{P}_{\norm{w} \,\leq\, W} \left( w_t^{k} \, - \, 2\alpha_k \, \big( \phi_{s,a}^\top  w_t^{k} - \hat{Q}_{r+\lambda_t g+\tau\psi_t}^{\pi_{\theta_t}}(s,a)\big) \phi_{s,a} \right).
		\end{array}
		\]
		\ENDFOR
		\STATE Set  $ \hat{w}_t= \frac{2}{K(K+1)} \sum_{k\,=\,0}^{K-1} (k+1) w_t^{k}$.
		\STATE Estimate $\hat{V}_g^{\pi_{\theta_t}}(\rho)$ using Algorithm~\ref{alg.estimate.V} with policy $\pi_{\theta_t}$. 
		\STATE Perform inexact RPG-PD update, 
		\[
		\begin{array}{rcl}
		\pi_{\theta_{t+1}}       (\cdot\,|\,s) 
		& = & \displaystyle
		\argmax_{\pi(\cdot\,|\,s) \,\in\,\hat\Delta(A) } \left\{ 
		\sum_a \pi(a\,|\,s) \phi_{s,a}^\top \hat w_t  \, - \, \frac{1}{\eta} \, \KL(\pi(\cdot\,|\,s), \pi_{\theta_t}(\cdot\,|\,s)) \right\}   
		\\[0.2cm]
		\lambda_{t+1} & = & \mathcal{P}_{\Lambda}\left(\, (1-\eta \tau) \lambda_t- 
		\eta \hat V^{\pi_{\theta_t}}_g(\rho) \,\right).
		\end{array}
		\]
		\ENDFOR
	\end{algorithmic}
\end{algorithm}

\begin{algorithm}[]
	\caption{ Unbiased estimate $Q$}
	\label{alg.estimate.Q}
	\begin{algorithmic}[1]
		\STATE \textbf{Input}: Initial state-action distribution $\nu$, policy $\pi$, dual variable $\lambda$, regularization parameter $\tau$, discount factor $\gamma$. 
		\STATE Sample $(s_0,a_0) \sim \nu$, execute the policy $\pi$ with probability $\gamma$ at each step $h$; otherwise, accept $(s_h,a_h)$ as the sample.
		\STATE Start with $(s_h,a_h)$, execute the policy $\pi$ with the termination probability $1-\sqrt\gamma$. Once terminated, add all composite values $\gamma^{(k-h)/2}(r+\lambda g +\tau\psi)$ from step $k=h+1$ onwards and $-\tau\log\pi(a_h\,\vert\,s_h)$ as $\hat{Q}_{r+\lambda g+\tau\psi}^\pi(s_h,a_h)$.
		\STATE \textbf{Output}: $(s_h,a_h)$ and $\hat{Q}_{r+\lambda g+\tau\psi}^\pi(s_h,a_h)$.
	\end{algorithmic}
\end{algorithm}

\begin{algorithm}[] 
	\caption{Unbiased estimate $V$}
	\label{alg.estimate.V}
	\begin{algorithmic}[1]
		\STATE \textbf{Input}: Initial state distribution $\rho$, policy $\pi$, discount factor $\gamma$. 
		\STATE Sample $s_0 \sim \rho$, execute the policy $\pi$ with the termination probability $1-\gamma$. Once terminated, add all utilities up as $\hat{V}_g^\pi(\rho)$.
		\STATE \textbf{Output}: $\hat{V}_g^\pi(\rho)$.
	\end{algorithmic}
\end{algorithm}

\begin{corollary}[Sample complexity of inexact RPG-PD]
	\label{cor: linear convergence of RPG linear function approximation sample complexity}
	Let Assumptions~\ref{as:feasibility}--\ref{as:bounded errors} hold. Assume $\Sigma_\nu = \mathbb{E}_{(s,a)\,\sim\,\nu}\left[\, \phi_{s,a}\phi_{s,a}^\top\,\right]\geq \kappa_0 I$ for any state-action distribution $\nu$ and some $\kappa_0>0$. 
	If we take the stepsize $\eta\leq 1/C_W$, 
	then  
	the primal-dual iterates of sample-based inexact RPG-PD in  Algorithm~\ref{alg.sample-based.loglinear}
	satisfy   
	\[
	\begin{array}{rcl}
	\mathbb{E}[\,\Phi_{t+1}\,] 
	&\leq& \displaystyle
	{\rm e}^{-\eta \tau t} \mathbb{E}[\,\Phi_1\,] 
	\, + \,
	\frac{\eta}{\tau}\max \left(\,(C_W)^2, (C_{\tau,\xi}')^2 \,\right)
	\, + \,
	\frac{C_{W,\xi,\tau,\epsilon_0}}{\tau} \sqrt{ \frac{|A|  \kappa_\nu}{K+1} }
	\,+\,
	\frac{2}{\tau} \sqrt{|A| \epsilon_{\normalfont \text{bias} }}
	\end{array}
	\] 
	where $C_W \DefinedAs 2W/(1-\gamma)$, $C_{W,\xi,\tau,\epsilon_0} \DefinedAs 8(W+2/(\xi(1-\sqrt{\gamma})^2)+{\tau(2\log|A|+|\log\epsilon_0|)}/((1-\sqrt\gamma)^2\xi))/\sqrt{\kappa_0}$, and $ C_{\tau,\xi}'\DefinedAs(1+{\tau}/{\xi}) /(1-\gamma)$. 
	% Moreover, if we take $\eta= \tau^2$ and $\tau=\epsilon$, then  $\Phi_{t+1}\leq O(\epsilon)$ for any $t\geq \frac{1}{\epsilon^3} \ln\frac{1}{\epsilon}$.
\end{corollary}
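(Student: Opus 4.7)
The plan is to apply Theorem~\ref{thm: linear convergence of RPG linear function approximation} as a black box, specializing the statistical error $\epsilon_{\normalfont \text{stat}}$ to the one incurred by the projected SGD subroutine in Algorithm~\ref{alg.sample-based.loglinear}. First I would observe that the sample-based inexact RPG-PD differs from the (population) inexact RPG-PD in exactly two places: the NPG direction $w_t^\star$ is replaced by the averaged SGD iterate $\hat w_t$, and $V_g^{\pi_{\theta_t}}(\rho)$ in the dual update is replaced by its unbiased estimate $\hat V_g^{\pi_{\theta_t}}(\rho)$. Under the assumption $\Sigma_\nu \succeq \kappa_0 I$, the population regression objective $\mathcal{E}_Q(\cdot,\theta_t,d_{t,\nu})$ is $2\kappa_0$-strongly convex in $w$, so the ingredients for a standard suffix-averaged SGD bound are in place.

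Next I would bound the second moment of the stochastic gradient $G_t^k = 2\bigl(\phi_{s,a}^\top w_t^k - \hat Q_{r+\lambda_t g+\tau\psi_t}^{\pi_{\theta_t}}(s,a)\bigr)\phi_{s,a}$. Using $\|\phi_{s,a}\|\leq 1$, $\|w_t^k\|\leq W$, the per-step bound $|r+\lambda_t g+\tau\psi_t|\leq 1 + 1/((1-\gamma)\xi) + \tau\log(|A|/\epsilon_0)$ on the $\epsilon_0$-restricted simplex, and the fact that Algorithm~\ref{alg.estimate.Q} weights a geometric-tail trajectory of parameter $1-\sqrt\gamma$ by $\gamma^{(k-h)/2}$, the second moment of $\hat Q$ is bounded by $O\!\bigl((1 + 1/((1-\gamma)\xi) + \tau\log(|A|/\epsilon_0))^2/(1-\sqrt\gamma)^2\bigr)$. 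Combining these ingredients yields
\[
\mathbb{E}\bigl[\|G_t^k\|^2 \bigm| w_t^k\bigr] \;\;\leq\;\; B^2 \;\; \DefinedAs \;\; \Bigl(\tfrac{\sqrt{\kappa_0}}{8}\,C_{W,\xi,\tau,\epsilon_0}\Bigr)^2,
\]
which matches the constant in the statement. Then I would invoke the projected-SGD result of~\citep{lacoste2012simpler} for $\kappa_0$-strongly convex objectives with step size $\alpha_k = 2/(\kappa_0(k+2))$ and triangular averaging $\hat w_t = \tfrac{2}{K(K+1)}\sum_{k=0}^{K-1}(k+1)\,w_t^k$, which delivers
\[
\mathbb{E}\bigl[\mathcal{E}_Q(\hat w_t,\theta_t,d_{t,\nu}) - \mathcal{E}_Q(w_t^\star,\theta_t,d_{t,\nu})\bigr] \;\;\leq\;\; \frac{2B^2}{\kappa_0(K+1)}.
\]
Substituting this as $\epsilon_{\normalfont \text{stat}}$ into Theorem~\ref{thm: linear convergence of RPG linear function approximation} collapses the $(2/\tau)\sqrt{|A|\kappa_\nu\,\epsilon_{\normalfont \text{stat}}}$ summand into exactly $(C_{W,\xi,\tau,\epsilon_0}/\tau)\sqrt{|A|\kappa_\nu/(K+1)}$, while the $\epsilon_{\normalfont \text{bias}}$ and geometric $e^{-\eta\tau t}$ terms are unchanged.

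Finally, I would account for the extra randomness coming from using $\hat V_g^{\pi_{\theta_t}}(\rho)$ in the dual update. Conditioning on the natural filtration $\mathcal{F}_t$ that includes $(\pi_{\theta_t},\lambda_t)$ but not the rollout noise at step $t$, Algorithm~\ref{alg.estimate.V} satisfies $\mathbb{E}[\hat V_g^{\pi_{\theta_t}}(\rho)\mid\mathcal{F}_t] = V_g^{\pi_{\theta_t}}(\rho)$ and $\mathbb{E}[(\hat V_g^{\pi_{\theta_t}}(\rho))^2\mid\mathcal{F}_t] = O(1/(1-\gamma)^2)$. Consequently the cross-term $(\lambda_t-\lambda_\tau^\star)(V_g^{\pi_{\theta_t}}(\rho)-\hat V_g^{\pi_{\theta_t}}(\rho))$ vanishes in expectation, and redoing the derivation of term (ii) from the proof of Theorem~\ref{thm: linear convergence of RPG linear function approximation} with $\hat V_g$ in place of $V_g$ reproduces the same recursion in expectation, with $(C_{\tau,\xi}')^2$ absorbing the variance contribution up to a constant.

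The principal obstacle I anticipate is getting the constant $C_{W,\xi,\tau,\epsilon_0}$ tight: specifically, carefully bookkeeping the two nested geometric sampling mechanisms in Algorithm~\ref{alg.estimate.Q} (outer acceptance at rate $1-\gamma$ and inner termination at rate $1-\sqrt\gamma$, with discount factor $\gamma^{(k-h)/2}$ inside) to arrive at the $(1-\sqrt\gamma)^{-2}$ dependence rather than a looser $(1-\gamma)^{-2}$ bound. All other pieces are either a direct invocation of Theorem~\ref{thm: linear convergence of RPG linear function approximation} or textbook strongly-convex projected SGD analysis.
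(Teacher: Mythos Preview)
Your proposal is correct and follows essentially the same route as the paper: both reduce the corollary to Theorem~\ref{thm: linear convergence of RPG linear function approximation} by controlling $\epsilon_{\text{stat}}$ via the projected-SGD guarantee of~\citep{lacoste2012simpler}, after verifying strong convexity (from $\Sigma_\nu\succeq\kappa_0 I$), unbiasedness of $G_t^k$, and a bound on its second moment.

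Two minor points of comparison. First, you are actually more careful than the paper on the dual side: the paper's proof silently inherits the term-(ii) bound from Theorem~\ref{thm: linear convergence of RPG linear function approximation} without discussing the extra randomness from $\hat V_g^{\pi_{\theta_t}}(\rho)$, whereas you correctly argue that the cross-term vanishes in conditional expectation and the variance contributes only an $O(1/(1-\gamma)^2)$ summand absorbed into $(C_{\tau,\xi}')^2$. Second, for the second-moment bound on $\hat Q$, the paper simply invokes \cite[Lemma~3.5]{ding2021beyond} to obtain the $(1-\sqrt\gamma)^{-2}$ dependence, so the nested-geometric bookkeeping you flag as the principal obstacle can be outsourced rather than re-derived.
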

\begin{proof}
	The proof is based on the proof of Theorem~\ref{thm: linear convergence of RPG linear function approximation}. Additionally, we have to take the randomness of sample-based estimates into account and bound the statistical error $\epsilon_{\normalfont \text{stat} }$ using the projected SGD result~\citep{lacoste2012simpler}. We first check all conditions of the projected SGD~\citep{lacoste2012simpler} are indeed satisfied by the SGD step in Algorithm~\ref{alg.sample-based.loglinear}: (i) The domain $\norm{w} \leq W$ is convex and bounded; (ii) The gradient $G_t^k$ is an unbiased estimate of the population gradient; (iii) The minimizer of $\mathcal{E}_Q(w, \theta_t, d_{t,\nu})$ is unique, since $\Sigma_{d_{t,\nu}} \geq \kappa_0 I$ for some $\kappa_0>0$; (iv) The squared norm of the estimated gradient $G_t^k$ is bounded or the gradient has bounded variance. To show (iv), it is sufficient to check that 
	\[
	\mathbb{E}\left[\,
	\norm{\phi_{s,a}^\top \phi_{s,a}  w_t^k}^2
	\,\right]
	\;\; \leq \;\;
	\mathbb{E}\left[\,
	\norm{\phi_{s,a}^\top \phi_{s,a}  }^2 \norm{w_t^k}^2
	\,\right]
	\;\; \leq \;\;
	W^2
	\]
	\[
	\begin{array}{rcl}
	\displaystyle
	\mathbb{E}\left[\,
	\norm{\hat Q_{r+\lambda_t g+\tau\psi_t }^{\pi_{\theta_t}}(s,a)
		\phi_{s,a}}^2
	\,\right]
	& \leq & \displaystyle
	\mathbb{E}\left[\, \left(
	\hat Q_{r+\lambda_t g +\tau\psi_t}^{\pi_{\theta_t}}(s,a)\right)^2
	\norm{\phi_{s,a}}^2
	\,\right]
	\\[0.2cm]
	& \leq &  \displaystyle
	\mathbb{E}\left[\, \left(
	\hat Q_{r+\lambda_t g +\tau\psi_t}^{\pi_{\theta_t}}(s,a)\right)^2
	\,\right]
	\end{array}
	\]
	where we use the boundedness of $\norm{w_t^k} \leq W$ and $\norm{\phi_{s,a}} \leq 1$. We notice that  $|r+\lambda_t g|\leq \frac{2}{(1-\gamma)\xi}$. 
	By~\cite[Lemma~3.5]{ding2021beyond}, we know that 
	\[
	\mathbb{E}\left[\, \left(
	\hat Q_{r+\lambda_t g +\tau\psi_t}^{\pi_{\theta_t}}(s,a)\right)^2
	\,\right]
	\; \leq\;
	4\left(\frac{2/((1-\gamma)\xi)}{1-\sqrt{\gamma}}\right)^2 
	+ 4 \left(\frac{\tau\log|A|}{1-\sqrt{\gamma}}\right)^2
	+ 2 \left(\tau \log \frac{\epsilon_0}{|A|}\right)^2
	\]
	where the first two terms in the upper bound is due to the variance of soft-$Q$ value function, and the last term is due to $|\log \pi_{\theta_t}| \leq |\log \frac{\epsilon_0}{|A|}|$ for $\pi_{\theta_t} \in \hat{\Delta}(A)$. 
	% $\mathbb{E}\big[\, (
	%        \hat Q_{r+\lambda_t g }^{\pi_{\theta_t}}(s,a))^2
	%        \,\big] \leq \frac{16}{(1-\gamma)^4\xi^2}$ and $\mathbb{E}\big[\, (
	%        \hat Q_{\tau \psi_t }^{\pi_{\theta_t}}(s,a))^2
	%        \,\big] \leq \frac{8\tau^2 (\log|A|)^2}{(1-\gamma)^4}$.
	Hence, the estimated gradient $G_t^k$ has bounded second-order moment (or variance), which verifies (iv).
	From the projected SGD result~\citep{lacoste2012simpler}, if the SGD stepsize $\alpha_k = \frac{2}{k+2}$, then
	\[
	\mathbb{E}\left[\, 
	\mathcal{E}_{Q}(\hat w_t, \theta_t, d_{t,\nu}) - 
	\mathcal{E}_{Q}(w_t^\star, \theta_t, d_{t,\nu})  
	\,\right]
	\;\; \leq \;\;
	\frac{16\left(W+\frac{2}{\xi(1-\sqrt{\gamma})^2}+\frac{\tau(2\log|A|+|\log\epsilon_0|) }{(1-\sqrt\gamma)^2\xi}\right)^2}{\kappa_0 (K+1)}
	\]
	which leads to $\sqrt{\epsilon_{\normalfont \text{stat} }} \leq \frac{4(W+2/(\xi(1-\sqrt{\gamma})^2)+\tau(2\log|A|+|\log\epsilon_0|)/((1-\sqrt\gamma)^2\xi))}{\sqrt{\kappa_0(K+1)}}$. Substituting the bound of $\sqrt{\epsilon_{\normalfont \text{stat} }}$ into the proof of Theorem~\ref{thm: linear convergence of RPG linear function approximation} yields our desired result.
\end{proof}

Corollary~\ref{cor: linear convergence of RPG linear function approximation sample complexity} states a similar result as Theorem~\ref{thm: linear convergence of RPG linear function approximation}. The effect of using sample-based estimates to update inexact RPG-PD appears as the number $K$ of SGD steps at each time $t$. Thus, we can interpret the iteration complexity in Corollary~\ref{cor: linear convergence of RPG linear function approximation} in terms of the number of SGD steps by taking 
$\epsilon_{\normalfont \text{stat}} = O(\epsilon^8)$, i.e., $K = \Omega(\frac{1}{\epsilon^8})$. Thus, whenever 
$\epsilon_{\normalfont \text{bias}}= O(\epsilon^8)$ for small $\epsilon>0$, if we take $(\eta,\tau)$ in Corollary~\ref{cor: linear convergence of RPG linear function approximation} and $K = \Omega(\frac{1}{\epsilon^8})$ for Algorithm~\ref{alg.sample-based.loglinear}, then,
\[
\mathbb{E}\left[\,V_r^{\pi^\star}(\rho) - V_r^{\pi_{\theta_t}}(\rho) \,\right]
\; \leq \; 
\epsilon
\; \text{ and } \;
\mathbb{E}\left[\, - V_g^{\pi_{\theta_t}}(\rho) \,\right]
\; \leq \; 
\epsilon \;\;
\text{ for any }
t
= 
\Omega \left(\,\frac{1}{\epsilon^6} \log^2\frac{1}{\epsilon} \,\right)
\]
where $\Omega(\cdot)$ only has some problem-dependent constant. In other words, the total number of policy rollouts or sampled trajectories $tK = \Omega\left( \frac{1}{\epsilon^{14}}\right)$ is required for Algorithm~\ref{alg.sample-based.loglinear} to output an $\epsilon$-optimal constrained policy. Furthermore, the zero constraint violation in Corollary~\ref{cor: inexact RPG zero violation} can be interpreted similarly. When $\epsilon$ is small enough, we can design a conservative constraint such that the policy iterates of Algorithm~\ref{alg.sample-based.loglinear} satisfy $V_r^{\pi^\star}(\rho) - V_r^{\pi_t}(\rho) 
\leq  
\epsilon$ and $V_g^{\pi_{\theta_t}}(\rho) \geq 0$ for some $t$, $K$ that satisfy $tK = \Omega\left( \frac{1}{\epsilon^{14}}\right)$. This appears to be the first sample-based zero constraint violation result for constrained MDPs in the function approximation setting. We leave achieving the optimal sample complexity as future work.

\section{Proofs in Section~\protect\ref{sec:optimistic method}}
\label{app:optimistic method}

In this section, we provide proofs for the claims in Section~\ref{sec:optimistic method}.

\subsection{Preliminary last-iterate  analysis of OPG-PD~\protect\eqref{eq: optimistic method}}
\label{app:optimistic method preliminaries}

% Throughout our analysis, we assume that the optimal state visitation distribution is unique, i.e., $d_\rho^\pi = d_\rho^{\pi^\star}$ for any $\pi \in \Pi^\star$. 
To measure the proximity of the primal-dual iterates of OPG-PD~\eqref{eq: optimistic method} to the optimal pair $(\pi^\star,\lambda^\star)$, we introduce the following two distance metrics $\Theta_t$ and $\zeta_t$ at time $t\geq 1$,
\[
\begin{array}{rcl}
\Theta_t & \DefinedAs &\displaystyle \frac{1}{2(1-\gamma)}
\sum_{s} d_\rho^{\pi^\star}(s)
\norm{{\hat\pi_t(\cdot\,|\,s)}-\mathcal{P}_{\Pi^\star}( \hat\pi_{t}(\cdot\,|\,s))
}^2
\,+\, \frac{1}{2} ( {\hat\lambda_t} - \mathcal{P}_{\Pi^\star}(\hat\lambda_{t}) )^2 
\\[0.2cm]
&  & \displaystyle
+\,\frac{1}{16(1-\gamma)}\sum_{s} d_\rho^{\pi^\star}(s) 
\norm{\hat\pi_t(\cdot\,\vert\,s) - \pi_{t-1}(\cdot\,\vert\,s)}^2
\,+\,\frac{1}{16}
(\hat\lambda_t  - \lambda_{t-1})^2
\end{array}
\]
\[
\begin{array}{rcl}
\zeta_t &\; \DefinedAs \;&
\displaystyle \frac{1}{2(1-\gamma)}
\sum_{s} d_\rho^{\pi^\star}(s)\norm{\hat\pi_{t+1}(\cdot|s) - \pi_{t}(\cdot|s)}^2
\,+\,\frac{1}{2}(\hat\lambda_{t+1} - \lambda_t )^2
\\[0.2cm]
& &
\displaystyle +\, \frac{1}{2(1-\gamma)}\sum_{s} d_\rho^{\pi^\star}(s)\norm{\pi_{t}(\cdot|s)- \hat\pi_t(\cdot|s)}^2
\,+\,\frac{1}{2}(\lambda_{t} - \hat\lambda_t )^2
\end{array}
\]
and a problem-dependent constant $\iota$,
\begin{equation}\label{eq:iota_constant}
\displaystyle
\iota \;\; \DefinedAs \;\;\max\left(\, \frac{2\kappa_\rho^2 |A|}{(1-\gamma)^6},\; \frac{8 \gamma^2 \sqrt{|A|} \kappa_\rho}{(1-\gamma)^6} \left(1+\frac{1}{(1-\gamma)^2\xi^2}\right), \;\frac{4}{(1-\gamma)^3} 
\,\right).
\end{equation}

\begin{lemma}\label{lem:non-increasing}
	Let the optimal state visitation distribution  be unique, i.e., $d_\rho^\pi = d_\rho^{\pi^\star}$ for any $\pi \in \Pi^\star$. If we set the stepsize $\eta\leq {1}/(4\sqrt{\iota})$, then the primal-dual iterates of OPG-PD~\eqref{eq: optimistic method} satisfy     \begin{equation}\label{eq:Theta_zeta}
	\Theta_{t+1} 
	\;\; \leq \; \;
	\Theta_t \,-\, \frac{7}{16} \, \zeta_t
	\;\; \text{ for all } t\geq 1.
	\end{equation}
\end{lemma}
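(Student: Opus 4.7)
The plan is to establish this one-step contraction via a potential-function argument standard in optimistic gradient analyses (e.g.,~\citep{weilinear2020,cai2022tight}), adapted to the non-convex CMDP saddle-point setting. I would first apply the three-point/projection inequality to each of the four OPG-PD updates. For the primal update producing $\hat\pi_{t+1}(\cdot\,|\,s)$ in~\eqref{eq: optimistic pi update}, a per-state Euclidean mirror-descent bound gives, for any $\pi'(\cdot\,|\,s)\in\Delta(A)$,
\[
\langle \pi'(\cdot\,|\,s) - \hat\pi_{t+1}(\cdot\,|\,s),\, -Q^{\pi_t}_{r+\lambda_t g}(s,\cdot)\rangle \;\geq\; \tfrac{1}{2\eta}\bigl(\|\pi'-\hat\pi_{t+1}\|^2 - \|\pi'-\hat\pi_t\|^2 + \|\hat\pi_{t+1}-\hat\pi_t\|^2\bigr),
\]
and an analogous inequality for the prediction step $\pi_t$ with reference $\hat\pi_t$ and gradient evaluated at $(\pi_{t-1},\lambda_{t-1})$. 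Summing these against the weights $d_\rho^{\pi^\star}(s)/(1-\gamma)$ and writing the parallel pair of inequalities for $\lambda_t$ and $\hat\lambda_{t+1}$ produces a telescoping decomposition whose ``distance'' side is exactly $\Theta_t - \Theta_{t+1}$ up to the step-squared terms that $\zeta_t$ tracks.

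Next, I would take $\pi'(\cdot\,|\,s)=\mathcal{P}_{\Pi^\star}(\hat\pi_{t+1}(\cdot\,|\,s))$ (and $\lambda'=\mathcal{P}_{\Lambda^\star}(\hat\lambda_{t+1})$) and invoke the performance difference lemma (Lemma~\ref{lem:performance difference lemma}) to convert the weighted per-state inner products $\sum_s d_\rho^{\pi^\star}(s)\langle\pi'(\cdot\,|\,s)-\pi_t(\cdot\,|\,s),Q^{\pi_t}_{r+\lambda_t g}(s,\cdot)\rangle$ into the Lagrangian gap $V^{\pi^\star}_{r+\lambda_t g}(\rho)-V^{\pi_t}_{r+\lambda_t g}(\rho)$. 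Here is where the assumption $d_\rho^\pi=d_\rho^{\pi^\star}$ for $\pi\in\Pi^\star$ is essential: it ensures that the state-visitation weight produced by Lemma~\ref{lem:performance difference lemma} for the projected reference policy agrees with the $d_\rho^{\pi^\star}$-weighting in $\Theta_t$. The saddle-point property of any $(\pi^\star,\lambda^\star)\in\Pi^\star\times\Lambda^\star$ then renders this gap (combined with its dual counterpart) non-positive, so the only surviving obstruction to monotone decrease consists of the optimistic cross terms involving the gradient discrepancy between $(\pi_{t-1},\lambda_{t-1})$ and $(\pi_t,\lambda_t)$.

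The optimistic structure is then exploited to absorb these cross terms. Writing $Q^{\pi_t}_{r+\lambda_t g}(s,\cdot)-Q^{\pi_{t-1}}_{r+\lambda_{t-1} g}(s,\cdot)$, a simulation-lemma/Lipschitz-type bound gives
\[
\sum_s d_\rho^{\pi^\star}(s)\,\|Q^{\pi_t}_{r+\lambda_t g}(s,\cdot)-Q^{\pi_{t-1}}_{r+\lambda_{t-1} g}(s,\cdot)\|^2 \;\leq\; \iota\Bigl(\sum_s d_\rho^{\pi^\star}(s)\|\pi_t(\cdot\,|\,s)-\pi_{t-1}(\cdot\,|\,s)\|^2 + (\lambda_t-\lambda_{t-1})^2\Bigr),
\]
where $\iota$ in~\eqref{eq:iota_constant} collects the $\kappa_\rho$, $|A|$, $\xi$, and $(1-\gamma)$ factors arising from (i) the $\kappa_\rho$-based mismatch between $d_\rho^{\pi^\star}$ and $d_\rho^{\pi_t}$, (ii) smoothness of the composite reward $r+\lambda g$ in $\lambda$ on $\Lambda$, and (iii) Bellman-operator Lipschitzness. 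Together with AM–GM applied to the cross terms at the prescribed weight and the stepsize condition $\eta\leq 1/(4\sqrt{\iota})$, the quadratic cross terms are majorized by $9/16$ of the $\zeta_t$-type step-squared contributions, leaving the claimed margin of $7/16\cdot\zeta_t$ on the right-hand side.

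The main obstacle I anticipate is step (iii): because $V^{\pi}_{r+\lambda g}(\rho)$ is non-convex in $\pi$ and the gradient $Q^{\pi_t}_{r+\lambda_t g}(s,\cdot)$ depends on $\pi_t$ both directly and through the induced visitation $d^{\pi_t}_\rho$, one cannot invoke bilinear-game smoothness as in~\citep{weilinear2020}. The remedy is a state-weighted Lipschitz estimate for $\pi\mapsto(Q^{\pi}_{r+\lambda g}(s,\cdot))_s$ in the $d_\rho^{\pi^\star}$-weighted norm, controlled by $\kappa_\rho$ and a simulation-lemma bound on $\|d_\rho^{\pi_t}-d_\rho^{\pi_{t-1}}\|_1$; this is exactly why $\Theta_t$ must carry the memory terms $\|\hat\pi_t-\pi_{t-1}\|^2$ and $(\hat\lambda_t-\lambda_{t-1})^2$ with coefficient $1/16$, so that the optimistic telescoping can close. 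Modulo that quantitative Lipschitz estimate (and the requirement $\rho_{\min}>0$ that enters through $\kappa_\rho$), the rest of the argument is bookkeeping.
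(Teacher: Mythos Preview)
Your outline is essentially the paper's argument: three-point inequalities for the four OPG-PD updates, performance difference (Lemma~\ref{lem:performance difference lemma}) to turn the $d_\rho^{\pi^\star}$-weighted inner products into the gap $V_{r+\lambda_t g}^{\pi^\star}(\rho)-V_{r+\lambda^\star g}^{\pi_t}(\rho)\ge 0$, the uniqueness assumption to align the state-visitation weights, and a $Q$-Lipschitz bound (the paper's Lemma~\ref{lem:policy_bound}) plus $\eta^2\iota\le 1/16$ to absorb the optimistic cross terms. One detail is off, however, and it matters for the telescoping.

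You take the comparator $\pi'(\cdot\,|\,s)=\mathcal{P}_{\Pi^\star}(\hat\pi_{t+1}(\cdot\,|\,s))$ and $\lambda'=\mathcal{P}_{\Lambda^\star}(\hat\lambda_{t+1})$. With that choice the three-point inequality for the $\hat\pi_{t+1}$-update leaves on the right the term $\|\pi'-\hat\pi_t\|^2=\|\mathcal{P}_{\Pi^\star}(\hat\pi_{t+1})-\hat\pi_t\|^2$, which in general \emph{exceeds} $\mathrm{dist}(\hat\pi_t,\Pi^\star)^2$, so it cannot be bounded by the distance part of $\Theta_t$ and the telescoping does not close. The paper instead fixes $\pi^\star=\mathcal{P}_{\Pi^\star}(\hat\pi_t(\cdot\,|\,s))$ and $\lambda^\star=\mathcal{P}_{\Lambda^\star}(\hat\lambda_t)$: then $\|\pi^\star-\hat\pi_t\|^2$ is exactly the distance term in $\Theta_t$, while the other term satisfies $\|\pi^\star-\hat\pi_{t+1}\|^2\ge \mathrm{dist}(\hat\pi_{t+1},\Pi^\star)^2$ by the definition of projection, which is the correct direction for $\Theta_{t+1}$. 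With this single fix (project from the \emph{current} iterate, then use non-expansiveness on the next one), the rest of your plan goes through as in the paper. As a minor aside, $\rho_{\min}>0$ is not used in this lemma; it enters only later in Lemma~\ref{lem:duality_gap_sup}.
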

\begin{proof}
	We begin with the standard decomposition of the primal-dual gap at time $t\geq 1$,
	\begin{equation}
	\label{eq:primal_dual_gap}
	V_{r+\lambda_t g}^{\pi^\star} (\rho)  - V_{r+\lambda^\star g}^{\pi_t}(\rho)
	\;\; = \;\;
	\underbrace{V_{r+\lambda_t g}^{\pi^\star} (\rho)  - V_{r+\lambda_t g}^{\pi_t}(\rho)}_{\displaystyle\text{(i)}}
	\, + \,
	\underbrace{V_{r+\lambda_t g}^{\pi_t} (\rho)  - V_{r+\lambda^\star g}^{\pi_t}(\rho)}_{\displaystyle\text{(ii)}}
	\end{equation}
	and we next deal with $\text{(i)}$ and $\text{(ii)}$, separately. 
	
	For the first term $\text{(i)}$,
	\begin{equation}\label{eq:duality_gap_right}
	\begin{array}{rcl}
	&&  \!\!\!\!  \!\!\!\!  \!\!
	V_{r+\lambda_t g}^{\pi^\star} (\rho) - V_{r+\lambda_t g}^{\pi_t} (\rho) 
	\\[0.2cm]
	& = & \displaystyle \frac{1}{1-\gamma}
	\sum_{s,a} d_\rho^{\pi^\star}(s)( \pi^\star(a\,\vert\,s) - \pi_t(a\,\vert\,s)) Q_{r+\lambda_t g}^{\pi_t}(s,a)
	% \\[0.2cm]
	% & = & \displaystyle
	% \sum_{s,a} d_\rho^{\pi^\star}(s)( \pi^\star(a\,\vert\,s) - \hat\pi_{t+1}(a\,\vert\,s)) Q_{r+\lambda_t g}^{\pi_t}(s,a) 
	% \\[0.2cm]
	% &  & \displaystyle +\,
	% \sum_{s,a} d_\rho^{\pi^\star}(s)( \hat\pi_{t+1}(a\,\vert\,s) - \pi_t(a\,\vert\,s) ) Q_{r+\lambda_t g}^{\pi_t}(s,a)
	\\[0.2cm]
	& = & \displaystyle \frac{1}{1-\gamma}
	\sum_{s,a} d_\rho^{\pi^\star}(s)( \pi^\star(a\,\vert\,s) - \hat\pi_{t+1}(a\,\vert\,s)) Q_{r+\lambda_t g}^{\pi_t}(s,a) 
	\\[0.2cm]
	&  & \displaystyle +\,\frac{1}{1-\gamma}
	\sum_{s,a} d_\rho^{\pi^\star}(s)( \hat\pi_{t+1}(a\,\vert\,s) - \pi_t(a\,\vert\,s) ) Q_{r+\lambda_{t-1} g}^{\pi_{t-1}}(s,a)
	\\[0.2cm]
	&  & \displaystyle +\,\frac{1}{1-\gamma}
	\sum_{s,a} d_\rho^{\pi^\star}(s)( \hat\pi_{t+1}(a\,\vert\,s) - \pi_t(a\,\vert\,s) ) \left(Q_{r+\lambda_{t} g}^{\pi_{t}}(s,a)- Q_{r+\lambda_{t-1} g}^{\pi_{t-1}}(s,a)\right)
	\\[0.2cm]
	& \leq & \displaystyle \frac{1}{2\eta(1-\gamma)}
	\sum_{s} d_\rho^{\pi^\star}(s)\left( 
	\norm{\pi^\star(\cdot\,|\,s)- \hat\pi_t(\cdot\,|\,s)}^2
	-
	\norm{\pi^\star(\cdot\,|\,s)- \hat\pi_{t+1}(\cdot\,|\,s)}^2
	-
	\norm{\hat\pi_{t+1}(\cdot\,|\,s)-\hat\pi_t(\cdot\,|\,s)}^2
	\right)
	\\[0.2cm]
	&  & \displaystyle +\, \frac{1}{2\eta(1-\gamma)}
	\sum_{s} d_\rho^{\pi^\star}(s)\left( 
	\norm{\hat\pi_{t+1}(\cdot\,|\,s)- \hat\pi_t(\cdot\,|\,s)}^2
	-
	\norm{\hat\pi_{t+1}(\cdot\,|\,s)- \pi_{t}(\cdot\,|\,s)}^2
	-
	\norm{\pi_{t}(\cdot|s)- \hat\pi_t(\cdot|s)}^2
	\right)
	\\[0.2cm]
	&  & \displaystyle +\,\frac{1}{1-\gamma}
	\sum_{s} d_\rho^{\pi^\star}(s)( \hat\pi_{t+1}(a\,\vert\,s) - \pi_t(a\,\vert\,s) ) \left(Q_{r+\lambda_{t} g}^{\pi_{t}}(s,a)- Q_{r+\lambda_{t-1} g}^{\pi_{t-1}}(s,a)\right)
	\end{array}
	\end{equation}
	where the first equality is due to performance difference lemma in Lemma~\ref{lem:performance difference lemma}, the second equality is because of some re-arrangement, the inequality is from an application of Lemma~\ref{lem:one_step_optimality} to $\hat\pi_{t+1}(\cdot\,\vert\,s)$ and $\pi_t(\cdot\,\vert\,s)$: 
	\[
	\begin{array}{rcl}
	&&  \!\!\!\!  \!\!\!\!  \!\!
	\displaystyle 
	\sum_{a}( \pi^\star(a\,\vert\,s) - \hat\pi_{t+1}(a\,\vert\,s)) Q_{r+\lambda_t g}^{\pi_t}(s,a) 
	\\[0.2cm]
	& \leq & 
	\displaystyle \frac{1}{2\eta}
	\left( 
	\norm{\pi^\star(\cdot|s)- \hat\pi_t(\cdot|s)}^2
	-
	\norm{\pi^\star(\cdot|s)- \hat\pi_{t+1}(\cdot|s)}^2
	-
	\norm{\hat\pi_{t+1}(\cdot|s)-\hat\pi_t(\cdot|s)}^2
	\right)
	\end{array}
	\]
	\[
	\begin{array}{rcl}
	&&  \!\!\!\!  \!\!\!\!  \!\!
	\displaystyle 
	\sum_{a} ( \hat\pi_{t+1}(a\,\vert\,s) - \pi_t(a\,\vert\,s) ) Q_{r+\lambda_{t-1} g}^{\pi_{t-1}}(s,a)
	\\[0.2cm]
	& \leq & 
	\displaystyle \frac{1}{2\eta}
	\left( 
	\norm{\hat\pi_{t+1}(\cdot|s)- \hat\pi_t(\cdot|s)}^2
	-
	\norm{\hat\pi_{t+1}(\cdot|s)- \pi_{t}(\cdot|s)}^2
	-
	\norm{\pi_{t}(\cdot|s)- \hat\pi_t(\cdot|s)}^2
	\right).
	\end{array}
	\]
	Similarly, we deal with the second term (ii) with some re-arrangement, and apply Lemma~\ref{lem:one_step_optimality} to $\hat\lambda_{t+1}$ and $\lambda_t$,
	\begin{equation}\label{eq:duality_gap_left}
	\begin{array}{rcl}
	V_{r+\lambda_t g}^{\pi_t} (\rho) - V_{r+\lambda^\star g}^{\pi_t} (\rho) & = & 
	(\lambda_t - \lambda^\star) V_g^{\pi_t}(\rho)
	\\[0.2cm]
	& = & (\lambda_t - \hat\lambda_{t+1}) V_g^{\pi_{t-1}}(\rho)
	\,+\,
	(\lambda_t - \hat\lambda_{t+1}) \left(V_g^{\pi_{t}}(\rho)-V_g^{\pi_{t-1}}(\rho)\right)
	\\[0.2cm]
	&& +\,
	(\hat\lambda_{t+1} - \lambda^\star) V_g^{\pi_{t}}(\rho)
	\\[0.2cm]
	& \leq & \displaystyle
	\frac{1}{2\eta} \left(
	(\hat\lambda_{t+1} - \hat\lambda_t )^2 
	-(\hat\lambda_{t+1} - \lambda_t )^2
	-(\lambda_{t} - \hat\lambda_t )^2
	\right)
	\\[0.2cm]
	&  & \displaystyle +\,
	(\lambda_t - \hat\lambda_{t+1}) \left(V_g^{\pi_{t}}(\rho)-V_g^{\pi_{t-1}}(\rho)\right)
	\\[0.2cm]
	&  & \displaystyle +\, \frac{1}{2\eta}
	\left((\lambda^\star - \hat\lambda_t)^2-( \lambda^\star - \hat\lambda_{t+1} )^2- (\hat\lambda_{t+1} - \hat\lambda_t)^2\right).
	\end{array}
	\end{equation}
	
	On the other hand, from Lemma~\ref{lem:one_step_shrink}, we have
	\begin{equation}\label{eq:one_step_shrink_V}
	\begin{array}{rcl}
	\norm{\hat\pi_{t+1}(\cdot\,\vert\,s) - \pi_t(\cdot\,\vert\,s)}_1
	& \;\leq\; &\eta
	\norm{Q_{r+\lambda_{t} g}^{\pi_{t}}(s,\cdot)- Q_{r+\lambda_{t-1} g}^{\pi_{t-1}}(s,\cdot)}_\infty
	% \\[0.2cm]
	% & \leq & 
	% \norm{Q_{r+\lambda_{t} g}^{\pi_{t}}(s,\cdot)- Q_{r+\lambda_{t-1} g}^{\pi_{t}}(s,\cdot)}_\infty
	% +
	% \norm{Q_{r+\lambda_{t-1} g}^{\pi_{t}}(s,\cdot)- Q_{r+\lambda_{t-1} g}^{\pi_{t-1}}(s,\cdot)}_\infty
	% \\[0.2cm]
	% & = & 
	% \norm{(\lambda_t-\lambda_{t-1})Q_{g}^{\pi_{t}}(s,\cdot)}_\infty
	% +
	% \norm{Q_{r+\lambda_{t-1} g}^{\pi_{t}}(s,\cdot)- Q_{r+\lambda_{t-1} g}^{\pi_{t-1}}(s,\cdot)}_\infty
	\end{array}
	\end{equation}
	and 
	\begin{equation}\label{eq:one_step_shrink_lambda}
	|\lambda_t - \hat\lambda_{t+1}|
	\;\;\, \leq \;\;\, \eta
	|V_g^{\pi_t}(\rho) - V_g^{\pi_{t-1}}(\rho)|.
	\end{equation}
	Hence,
	\[
	\begin{array}{rcl}
	&& \!\!\!\! \!\!\!\! \!\! 
	\displaystyle
	\sum_{s,a} d_\rho^{\pi^\star}(s)( \hat\pi_{t+1}(a\,\vert\,s) - \pi_t(a\,\vert\,s) ) \left(Q_{r+\lambda_{t} g}^{\pi_{t}}(s,a)- Q_{r+\lambda_{t-1} g}^{\pi_{t-1}}(s,a)\right)
	\\[0.2cm]
	& \leq &  \displaystyle \eta
	\sum_{s} d_\rho^{\pi^\star}(s) 
	\norm{Q_{r+\lambda_{t} g}^{\pi_{t}}(s,\cdot)- Q_{r+\lambda_{t-1} g}^{\pi_{t-1}}(s,\cdot)}_\infty^2
	\\[0.2cm]
	& \leq &  \displaystyle 
	2 \eta \sum_{s} d_\rho^{\pi^\star}(s) 
	\left(
	\norm{ (\lambda_t - \lambda_{t-1})Q_{g}^{\pi_{t}}(s,\cdot)}_\infty^2 
	+
	\norm{Q_{r+\lambda_{t-1} g}^{\pi_{t}}(s,\cdot)- Q_{r+\lambda_{t-1} g}^{\pi_{t-1}}(s,\cdot)}_\infty^2 
	\right)
	\\[0.2cm]
	& \leq &  \displaystyle \frac{2\eta}{(1-\gamma)^2}
	(\lambda_t  - \lambda_{t-1})^2
	\,+\, \frac{4\eta \gamma^2 }{(1-\gamma)^4} 
	\left(1+\frac{1}{(1-\gamma)^2\xi^2}\right)
	\max_s \norm{\pi_t(\cdot\,\vert\,s) - \pi_{t-1}(\cdot\,\vert\,s)}_1^2
	\\[0.2cm]
	& \leq &  \displaystyle \frac{2\eta}{(1-\gamma)^2}
	(\lambda_t  - \lambda_{t-1})^2
	\,+\, \frac{4\eta \gamma^2  \kappa_\rho}{(1-\gamma)^5} \left(1+\frac{1}{(1-\gamma)^2\xi^2}\right)
	\sum_s d_\rho^{\pi^\star}(s) \norm{\pi_t(\cdot\,\vert\,s) - \pi_{t-1}(\cdot\,\vert\,s)}_1^2
	\\[0.2cm]
	& \leq &  \displaystyle \frac{4\eta}{(1-\gamma)^2}\left(
	(\lambda_t  - \hat\lambda_{t})^2
	+ (\hat\lambda_t  - \lambda_{t-1})^2
	\right)
	\\[0.2cm]
	&  &  \displaystyle
	+ \,
	\frac{8\eta \gamma^2 \sqrt{|A|} \kappa_\rho}{(1-\gamma)^5} \left(1+\frac{1}{(1-\gamma)^2\xi^2}\right)
	\sum_{s} d_\rho^{\pi^\star}(s)  
	\left(
	\norm{\pi_t(\cdot\,\vert\,s)- \hat\pi_{t}(\cdot\,\vert\,s)}^2
	+
	\norm{\hat\pi_t(\cdot\,\vert\,s) - \pi_{t-1}(\cdot\,\vert\,s)}^2
	\right)
	\end{array}
	\]
	where the first inequality is due to Cauchy–Schwarz inequality and~\eqref{eq:one_step_shrink_V}, we subtract and add $Q_{r+\lambda_{t-1} g}^{\pi_{t}}$ and apply the inequality $(x+y)^2\leq 2x^2+2y^2$ in the second inequality, the third inequality is due to Lemma~\ref{lem:policy_bound} and the boundedness of value functions and dual variables, the fourth inequality comes from the property of $\kappa_\rho$, and the last inequality is due to the inequality $(x+y)^2\leq 2x^2+2y^2$  and the inequality $\norm{p-p'}_1\leq \sqrt{|A|} \norm{p-p'}_2$ for two probability distributions $p$ and $p'$. 
	Meanwhile, using a similar reasoning, we can derive that 
	\[
	\begin{array}{rcl}
	&& \!\!\!\! \!\!\!\! \!\! 
	(\lambda_t - \hat\lambda_{t+1}) \left(V_g^{\pi_{t}}(\rho)-V_g^{\pi_{t-1}}(\rho)\right)
	\\[0.2cm]
	& \leq & \eta \left( V_g^{\pi_t}(\rho) - V_g^{\pi_{t-1}}(\rho) \right)^2
	\\[0.2cm]
	& \leq & \displaystyle
	\frac{\eta\kappa_\rho^2}{(1-\gamma)^6} \left(\sum_s d_\rho^{\pi^\star}(s) \norm{\pi_t(\cdot\,\vert\,s) - \pi_{t-1}(\cdot\,\vert\,s)}_1\right)^2
	\\[0.2cm]
	& \leq & \displaystyle
	\frac{\eta\kappa_\rho^2}{(1-\gamma)^6} 
	\sum_s  \left( \sqrt{d_\rho^{\pi^\star}(s)} \norm{\pi_t(\cdot\,\vert\,s) - \pi_{t-1}(\cdot\,\vert\,s)}_1\right)^2
	\\[0.2cm]
	& \leq & \displaystyle
	\frac{2\eta\kappa_\rho^2 |A|}{(1-\gamma)^6} 
	\sum_s d_\rho^{\pi^\star}(s) 
	\left(
	\norm{\pi_t(\cdot\,\vert\,s)- \hat\pi_{t}(\cdot\,\vert\,s)}^2
	+
	\norm{\hat\pi_t(\cdot\,\vert\,s) - \pi_{t-1}(\cdot\,\vert\,s)}^2
	\right)
	\end{array}
	\]
	where the first inequality is due to Cauchy–Schwarz inequality and~\eqref{eq:one_step_shrink_lambda}, the second inequality is due to Lemma~\ref{lem:policy_bound}, the third inequality is an application of Cauchy-Schwarz inequality, and the last inequality is due to the inequality $(x+y)^2 \leq 2x^2+2y^2$ and the inequality $\norm{p-p'}_1\leq \sqrt{|A|} \norm{p-p'}_2$ for two probability distributions $p$ and $p'$. 
	
	We set notation,
	\[
	\displaystyle
	\iota \;\; \DefinedAs \;\;\max\left(\, \frac{2\kappa_\rho^2 |A|}{(1-\gamma)^6},\; \frac{8 \gamma^2 \sqrt{|A|} \kappa_\rho}{(1-\gamma)^6} \left(1+\frac{1}{(1-\gamma)^2\xi^2}\right),\; \frac{4}{(1-\gamma)^3} 
	\,\right).
	\]
	After applying the established
	inequalities above to~\eqref{eq:duality_gap_right} and~\eqref{eq:duality_gap_left}, we combine them into~\eqref{eq:primal_dual_gap} as,
	\[
	\begin{array}{rcl}
	&&  \!\!\!\!  \!\!\!\!  \!\!
	V_{r+\lambda_t g}^{\pi^\star} (\rho)  - V_{r+\lambda^\star g}^{\pi_t} (\rho)
	\\[0.2cm]
	& \leq & \displaystyle \frac{1}{2\eta(1-\gamma)}
	\sum_{s} d_\rho^{\pi^\star}(s)\left( 
	\norm{\pi^\star(\cdot\,|\,s)-\hat\pi_t(\cdot\,|\,s)}^2
	-
	\norm{\pi^\star(\cdot\,|\,s)-\hat\pi_{t+1}(\cdot\,|\,s)}^2
	-
	\norm{\hat\pi_{t+1}(\cdot\,|\,s)- \hat\pi_t(\cdot\,|\,s)}^2
	\right)
	\\[0.2cm]
	&  & \displaystyle +\, \frac{1}{2\eta(1-\gamma)}
	\sum_{s} d_\rho^{\pi^\star}(s)\left( 
	\norm{\hat\pi_{t+1}(\cdot\,|\,s)- \hat\pi_t(\cdot\,|\,s)}^2
	-
	\norm{\hat\pi_{t+1}(\cdot\,|\,s)-\pi_{t}(\cdot\,|\,s)}^2
	-
	\norm{\pi_{t}(\cdot\,|\,s)- \hat\pi_t(\cdot\,|\,s)}^2
	\right)
	\\[0.2cm]
	&  & \displaystyle +\,
	\frac{1}{2\eta} \left(
	(\hat\lambda_{t+1} - \hat\lambda_t )^2 
	-(\hat\lambda_{t+1} - \lambda_t )^2
	-(\lambda_{t} - \hat\lambda_t )^2
	\right) 
	\\[0.2cm]
	&  & \displaystyle +\,
	\frac{1}{2\eta}
	\left((\lambda^\star - \hat\lambda_t)^2-( \lambda^\star - \hat\lambda_{t+1} )^2- (\hat\lambda_{t+1} - \hat\lambda_t)^2\right)
	\\[0.2cm]
	& & +\,
	\displaystyle \eta \iota
	(\lambda_t  - \hat\lambda_{t})^2
	+ \eta \iota (\hat\lambda_t  - \lambda_{t-1})^2
	\\[0.2cm]
	& & \displaystyle
	+\,
	\eta \iota
	\sum_{s} d_\rho^{\pi^\star}(s) 
	\left(
	\norm{\pi_t(\cdot\,\vert\,s) - \hat\pi_{t}(\cdot\,\vert\,s)}^2
	+
	\norm{\hat\pi_t(\cdot\,\vert\,s) - \pi_{t-1}(\cdot\,\vert\,s)}^2
	\right).
	\end{array}
	\]
	We notice that $V_{r+\lambda_t g}^{\pi^\star} (\rho)  - V_{r+\lambda^\star g}^{\pi_t} (\rho) \geq 0$ and non-expansiveness of projection operators $\mathcal{P}_{\Pi^\star}$,  $\mathcal{P}_{\Lambda^\star}$, 
	\[
	\begin{array}{rcl}
	\norm{\mathcal{P}_{\Pi^\star}(\hat\pi_{t+1}(\cdot\,\vert\,s))- \hat\pi_{t+1}(\cdot\,|\,s)}
	& \leq & 
	\norm{\pi^\star(\cdot\,|\,s)- \hat\pi_{t+1}(\cdot\,|\,s)}
	\\[0.2cm]
	\norm{\mathcal{P}_{\Lambda^\star}(\hat\lambda_{t+1})-\hat\lambda_{t+1}}
	&\leq&
	\norm{\lambda^\star-\hat\lambda_{t+1}}
	\end{array}
	\]
	for any $\pi^\star \in\Pi^\star$ and $\lambda^\star\in\Lambda^\star$.
	If we take $\pi^\star = \mathcal{P}_{\Pi^\star}(\hat\pi_t(\cdot\,\vert\,s))$ and $\lambda^\star = \mathcal{P}_{\Lambda^\star}(\hat\lambda_t)$, then after some re-arrangement, we have 
	\[
	\begin{array}{rcl}
	&& \!\!\!\! \!\!\!\! \!\!
	\displaystyle
	\frac{1}{2(1-\gamma)}\sum_{s} d_\rho^{\pi^\star}(s)
	\norm{\mathcal{P}_{\Pi^\star}(\hat\pi_{t+1}(\cdot\,\vert\,s))- \hat\pi_{t+1}(\cdot\,|\,s)}^2
	\,+\, \frac{1}{2} ( \mathcal{P}_{\Lambda^\star}(\hat\lambda_{t+1}) - \hat\lambda_{t+1} )^2
	\\[0.2cm]
	& \leq & 
	\displaystyle
	\frac{1}{2(1-\gamma)}\sum_{s} d_\rho^{\pi^\star}(s)
	\norm{\mathcal{P}_{\Pi^\star}(\hat\pi_{t}(\cdot\,\vert\,s))- \hat\pi_{t+1}(\cdot\,|\,s)}^2
	\,+\, \frac{1}{2} ( \mathcal{P}_{\Lambda^\star}(\hat\lambda_{t}) - \hat\lambda_{t+1} )^2
	\\[0.2cm]
	& \leq &
	\displaystyle
	\frac{1}{2(1-\gamma)}\sum_{s} d_\rho^{\pi^\star}(s)
	\norm{\mathcal{P}_{\Pi^\star}(\hat\pi_{t}(\cdot\,\vert\,s))-\hat\pi_{t}(\cdot|s)}^2
	\,+\, \frac{1}{2} ( \mathcal{P}_{\Lambda^\star}(\hat\lambda_{t}) - \hat\lambda_{t} )^2 
	\\[0.2cm]
	&  & 
	\displaystyle -\, \frac{1}{2(1-\gamma)}
	\sum_{s} d_\rho^{\pi^\star}(s)\norm{\hat\pi_{t+1}(\cdot\,|\,s)- \pi_{t}(\cdot\,|\,s)}^2
	\,-\,\frac{1}{2}(\hat\lambda_{t+1} - \lambda_t )^2
	\\[0.2cm]
	&  & 
	\displaystyle -\,\left(\frac{1}{2(1-\gamma)}-\eta^2 \iota\right)\sum_{s} d_\rho^{\pi^\star}(s)\norm{\pi_{t}(\cdot\,|\,s)- \hat\pi_t(\cdot\,|\,s)}^2
	\,-\,\left(\frac{1}{2}-\eta^2\iota\right)(\lambda_{t} - \hat\lambda_t )^2
	\\[0.2cm]
	& & +\,
	\displaystyle 
	\eta^2\iota
	\sum_{s} d_\rho^{\pi^\star}(s) 
	\norm{\hat\pi_t(\cdot\,\vert\,s) - \pi_{t-1}(\cdot\,\vert\,s)}^2
	\,+\,\eta^2\iota
	(\hat\lambda_t  - \lambda_{t-1})^2
	\end{array}
	\]
	where we also use the assumption $d_\rho^{\pi^\star} = d_\rho^\pi$ for any $\pi\in\Pi^\star$.
	By taking $\eta^2 \iota \leq \frac{1}{16}$ and using notation $\Theta_t$ and $\zeta_t$, we have $\Theta_{t+1} \leq  \Theta_t - \frac{7}{16}\zeta_t$.
\end{proof}

To show the convergence, we next relate $\zeta_t$ with $\Theta_{t+1}$. An intermediate step is to show that $\zeta_t$ has the following lower bound. 

\begin{lemma}\label{lem:stability_gap}
	Let $\kappa_{\rho,\gamma} \DefinedAs \max(\frac{\kappa_\rho}{1-\gamma},1)$. 
	If we set the stepsize $\eta \leq \min( \frac{(1-\gamma)^3}{4|A|}, \frac{(1-\gamma)^3}{2\kappa_\rho})$, then the primal-dual iterates of OPG-PD~\eqref{eq: optimistic method} satisfy
	\[
	\begin{array}{rcl}
	& & \!\!\!\!  \!\!\!\!  \!\!
	\displaystyle
	\sum_{s} {d_\rho^{\pi^\star}(s)}
	\left(
	\norm{ \pi_{t}(\cdot\,\vert\,s) - \hat\pi_t(\cdot\,\vert\,s)}^2
	\, + \,
	\norm{ \hat\pi_{t+1}(\cdot\,\vert\,s) - \pi_t(\cdot\,\vert\,s)}^2 
	\right)
	+ 
	\left(
	\vert\lambda_{t} - \hat\lambda_t\vert^2 +
	\vert{\lambda_{t}-\hat\lambda_{t+1}}\vert^2\right)
	\\[0.2cm]
	& \geq &  \displaystyle
	\frac{\eta^2}{9 \kappa_{\rho,\gamma}^2} \frac{\left[ V_{r+\hat\lambda_{t+1} g}^\pi(\rho) - V_{r+\lambda g}^{\hat\pi_{t+1}}(\rho)\right]_+^2}
	{\left(\max_s\norm{ \pi(\cdot\,\vert\,s) - \hat\pi_{t+1}(\cdot\,\vert\,s) } + \vert\lambda - \hat\lambda_{t+1}\vert \right)^2}
	\;\; \text{ for all } (\pi, \lambda) \not = (\hat\pi_{t+1}, \hat\lambda_{t+1}).
	\end{array}  
	\]
\end{lemma}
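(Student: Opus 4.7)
The plan is to turn this statement into an error-bound inequality by exploiting the first-order optimality conditions of the two updates that produce $\hat\pi_{t+1}$ and $\hat\lambda_{t+1}$, combined with simulation-lemma-style Lipschitz estimates of $Q$ and $V$ in $(\pi,\lambda)$ that already appear in the proof of \pref{lem:non-increasing}. Throughout, $(\pi,\lambda)$ is an arbitrary point distinct from $(\hat\pi_{t+1},\hat\lambda_{t+1})$, and the goal is to upper bound the duality gap $V_{r+\hat\lambda_{t+1} g}^{\pi}(\rho) - V_{r+\lambda g}^{\hat\pi_{t+1}}(\rho)$ by a product of $(\max_s\|\pi(\cdot|s)-\hat\pi_{t+1}(\cdot|s)\|+|\lambda-\hat\lambda_{t+1}|)$ and the square root of the quantity on the LHS.

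First, I would decompose the gap around $(\hat\pi_{t+1},\hat\lambda_{t+1})$ as $V_{r+\hat\lambda_{t+1} g}^{\pi}(\rho)-V_{r+\lambda g}^{\hat\pi_{t+1}}(\rho)=\bigl[V_{r+\hat\lambda_{t+1} g}^{\pi}(\rho)-V_{r+\hat\lambda_{t+1} g}^{\hat\pi_{t+1}}(\rho)\bigr]+(\hat\lambda_{t+1}-\lambda)V_{g}^{\hat\pi_{t+1}}(\rho)$, apply \pref{lem:performance difference lemma} to the primal piece, and split through the previous-iterate $Q$-function to obtain $\sum_a(\pi(a|s)-\hat\pi_{t+1}(a|s))Q^{\hat\pi_{t+1}}_{r+\hat\lambda_{t+1}g}(s,a)=(A)+(B)$, where $(A)=\sum_a(\pi(a|s)-\hat\pi_{t+1}(a|s))Q^{\pi_t}_{r+\lambda_t g}(s,a)$ and $(B)$ carries the $Q$-difference $Q^{\hat\pi_{t+1}}_{r+\hat\lambda_{t+1}g}-Q^{\pi_t}_{r+\lambda_t g}$. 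A parallel split on the dual side yields $(C)=(\hat\lambda_{t+1}-\lambda)V_g^{\pi_t}(\rho)$ and $(D)=(\hat\lambda_{t+1}-\lambda)(V_g^{\hat\pi_{t+1}}(\rho)-V_g^{\pi_t}(\rho))$.

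Second, I would bound $(A)$ and $(C)$ via the first-order optimality of the primal and dual proximal updates: the $\hat\pi_{t+1}$-update yields $(A)\le\frac{1}{\eta}\|\hat\pi_{t+1}(\cdot|s)-\hat\pi_t(\cdot|s)\|\,\|\pi(\cdot|s)-\hat\pi_{t+1}(\cdot|s)\|$, and the $\hat\lambda_{t+1}$-update yields $(C)\le\frac{1}{\eta}|\hat\lambda_{t+1}-\hat\lambda_t|\,|\lambda-\hat\lambda_{t+1}|$. For $(B)$ and $(D)$ I would reuse the bounds already derived in the proof of \pref{lem:non-increasing}: namely, $\|Q^{\hat\pi_{t+1}}_{r+\hat\lambda_{t+1}g}(s,\cdot)-Q^{\pi_t}_{r+\lambda_t g}(s,\cdot)\|_\infty$ is controlled by $|\hat\lambda_{t+1}-\lambda_t|/(1-\gamma)$ plus $\frac{\gamma}{(1-\gamma)^2}(1+1/((1-\gamma)\xi))\max_{s'}\|\hat\pi_{t+1}(\cdot|s')-\pi_t(\cdot|s')\|_1$, and an analogous performance-difference expression controls $|V_g^{\hat\pi_{t+1}}(\rho)-V_g^{\pi_t}(\rho)|$; the duality $\|\cdot\|_1\le\sqrt{|A|}\|\cdot\|_2$ then converts everything into $\ell_2$-norms.

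Third, I would collapse the state aggregation using $d_\rho^{\pi}(s)\le\frac{\kappa_\rho}{1-\gamma}d_\rho^{\pi^\star}(s)$ and Cauchy--Schwarz in the state sum, so that each $\sum_s d_\rho^{\pi}(s)\|\hat\pi_{t+1}(\cdot|s)-\hat\pi_t(\cdot|s)\|$ is upper bounded by $\kappa_{\rho,\gamma}\sqrt{\sum_s d_\rho^{\pi^\star}(s)\|\hat\pi_{t+1}(\cdot|s)-\hat\pi_t(\cdot|s)\|^2}$; the triangle inequalities $\|\hat\pi_{t+1}-\hat\pi_t\|\le\|\hat\pi_{t+1}-\pi_t\|+\|\pi_t-\hat\pi_t\|$ and $|\hat\lambda_{t+1}-\hat\lambda_t|\le|\hat\lambda_{t+1}-\lambda_t|+|\lambda_t-\hat\lambda_t|$ then bring in exactly the four quantities on the LHS. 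After factoring out $\max_s\|\pi(\cdot|s)-\hat\pi_{t+1}(\cdot|s)\|+|\lambda-\hat\lambda_{t+1}|$, the stepsize condition $\eta\le\min((1-\gamma)^3/(4|A|),(1-\gamma)^3/(2\kappa_\rho))$ ensures the $(B),(D)$ correction terms (which scale like $\eta$) are dominated by the $1/\eta$ optimality terms, leaving an inequality of the form $V_{r+\hat\lambda_{t+1}g}^{\pi}(\rho)-V_{r+\lambda g}^{\hat\pi_{t+1}}(\rho)\le \frac{3\kappa_{\rho,\gamma}}{\eta}\sqrt{\text{LHS}}\cdot\bigl(\max_s\|\pi(\cdot|s)-\hat\pi_{t+1}(\cdot|s)\|+|\lambda-\hat\lambda_{t+1}|\bigr)$. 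Taking positive parts and squaring both sides produces the claimed bound with constant $\eta^2/(9\kappa_{\rho,\gamma}^2)$.

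The main obstacle will be constant-tracking: the Lipschitz coefficients carry $\gamma/(1-\gamma)^2(1+1/((1-\gamma)\xi))$, $\kappa_\rho/(1-\gamma)$, and $\sqrt{|A|}$ factors, and the two stepsize restrictions have to be tight enough that after Cauchy--Schwarz the $(B),(D)$ pieces combine cleanly with the $1/\eta$ optimality pieces to give the constant $3$ (so that squaring delivers the $1/9$). Extra care is needed because the state-weighted $\ell_1$ norm of $\pi(\cdot|s)-\hat\pi_{t+1}(\cdot|s)$ appearing in $(B)$ must be upper bounded by $\max_s\|\pi(\cdot|s)-\hat\pi_{t+1}(\cdot|s)\|$, losing a factor $\sqrt{|A|}$ that is then absorbed by the stepsize condition involving $|A|$.
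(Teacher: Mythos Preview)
Your proposal is correct and follows essentially the same route as the paper: both start from the first-order optimality conditions of the $\hat\pi_{t+1}$ and $\hat\lambda_{t+1}$ updates, insert and subtract $Q^{\hat\pi_{t+1}}_{r+\hat\lambda_{t+1}g}$ (resp.\ $V_g^{\hat\pi_{t+1}}$) to isolate the duality gap via \pref{lem:performance difference lemma}, control the residual $Q$- and $V$-differences by the Lipschitz bounds of \pref{lem:policy_bound}, pass from $d_\rho^\pi$ to $d_\rho^{\pi^\star}$ via $\kappa_\rho/(1-\gamma)$, and use the stepsize restriction to make the correction coefficients at most $1/2$ before squaring. The only cosmetic difference is ordering---the paper begins with the variational inequality and derives the gap, whereas you begin with the gap and invoke the variational inequality to bound the leading term---but the estimates and constants line up.
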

\begin{proof}
	From the optimality of $\hat{\pi}_{t+1}(\cdot\,\vert\,s)$ in OPG-PD, we know that for any $\pi\in \Pi$, 
	\begin{subequations}
		\begin{equation}\label{eq:optimality_pi1}
		\begin{array}{rcl}
		&& \!\!\!\!  \!\!\!\!  \!\! \langle \hat\pi_{t+1}(\cdot\,\vert\,s) - \hat\pi_t(\cdot\,\vert\,s), \pi(\cdot\,\vert\,s) - \hat\pi_{t+1}(\cdot\,\vert\,s) \rangle 
		\\[0.2cm]
		& \geq & \eta \langle Q_{r+\lambda_{t} g}^{\pi_t}(s,\cdot), \pi(\cdot\,\vert\,s) - \hat\pi_{t+1}(\cdot\,\vert\,s) \rangle
		\\[0.2cm]
		& = & \eta \langle Q_{r+\hat\lambda_{t+1} g}^{\hat\pi_{t+1}}(s,\cdot), \pi(\cdot\,\vert\,s) - \hat\pi_{t+1}(\cdot\,\vert\,s) \rangle
		\,+\, 
		\eta \langle Q_{r+\lambda_{t} g}^{\pi_{t}}(s,\cdot) - Q_{r+\lambda_{t} g}^{\hat\pi_{t+1}}(s,\cdot), \pi(\cdot\,\vert\,s) - \hat\pi_{t+1}(\cdot\,\vert\,s) \rangle
		\\[0.2cm]
		& & 
		+\,
		\eta \langle  Q_{r+\lambda_{t} g}^{\hat\pi_{t+1}}(s,\cdot)-Q_{r+\hat\lambda_{t+1} g}^{\hat\pi_{t+1}}(s,\cdot), \pi(\cdot\,\vert\,s) - \hat\pi_{t+1}(\cdot\,\vert\,s) \rangle.
		\end{array}
		\end{equation}
		Similarly, from the optimality of $\pi_{t+1}(\cdot\,\vert\,s)$ in OPG-PD, we know that for any $\pi\in \Pi$, 
		\begin{equation}\label{eq:optimality_pi2}
		\begin{array}{rcl}
		&& \!\!\!\!  \!\!\!\!  \!\! \langle \pi_{t+1}(\cdot\,\vert\,s) - \hat\pi_{t+1}(\cdot\,\vert\,s), \pi(\cdot\,\vert\,s) - \pi_{t+1}(\cdot\,\vert\,s) \rangle 
		\\[0.2cm]
		& \geq & \eta \langle Q_{r+\lambda_{t} g}^{\pi_t}(s,\cdot), \pi(\cdot\,\vert\,s) - \pi_{t+1}(\cdot\,\vert\,s) \rangle
		\\[0.2cm]
		& = & \eta \langle Q_{r+\lambda_{t+1} g}^{\pi_{t+1}}(s,\cdot), \pi(\cdot\,\vert\,s) - \pi_{t+1}(\cdot\,\vert\,s) \rangle
		\,+\, 
		\eta \langle Q_{r+\lambda_{t} g}^{\pi_{t}}(s,\cdot) - Q_{r+\lambda_{t} g}^{\pi_{t+1}}(s,\cdot), \pi(\cdot\,\vert\,s) - \pi_{t+1}(\cdot\,\vert\,s) \rangle
		\\[0.2cm]
		& & 
		+\,
		\eta \langle  Q_{r+\lambda_{t} g}^{\pi_{t+1}}(s,\cdot)-Q_{r+\lambda_{t+1} g}^{\pi_{t+1}}(s,\cdot), \pi(\cdot\,\vert\,s) - \pi_{t+1}(\cdot\,\vert\,s) \rangle.
		\end{array}
		\end{equation}
	\end{subequations}
	
	On the other hand, from the optimality of $\hat\lambda_{t+1}$ in OPG-PD, we know that for any $\lambda \in \Lambda$,
	\begin{subequations}
		\begin{equation}\label{eq:optimality_dual1}
		\begin{array}{rcl}
		(\hat\lambda_{t+1} - \hat\lambda_t) (\lambda - \hat\lambda_{t+1}) & \geq &
		\eta V_{g}^{\pi_t}(\rho) (\hat\lambda_{t+1} - \lambda)
		\\
		& = & \eta V_{g}^{\hat\pi_{t+1}}(\rho) (\hat\lambda_{t+1} - \lambda) \,+\, \eta (V_{g}^{\pi_t}(\rho)-V_{g}^{\hat\pi_{t+1}}(\rho) )(\hat\lambda_{t+1} - \lambda)
		\end{array}
		\end{equation}
		and the optimality of $\lambda_{t+1}$ in OPG-PD yields,
		\begin{equation}\label{eq:optimality_dual2}
		\begin{array}{rcl}
		(\lambda_{t+1} - \hat\lambda_t) (\lambda - \lambda_{t+1}) & \geq &
		\eta V_{g}^{\pi_t}(\rho) (\lambda_{t+1} - \lambda)
		\\
		& = & \eta V_{g}^{\pi_{t+1}}(\rho) (\lambda_{t+1} - \lambda) \,+\, \eta (V_{g}^{\pi_t}(\rho)-V_{g}^{\pi_{t+1}}(\rho) )(\lambda_{t+1} - \lambda).
		\end{array}
		\end{equation}
	\end{subequations}
	
	First, we take the expectation of~\eqref{eq:optimality_pi1} over some state distribution $d_\rho^{\pi}$ on both sides and add it to~\eqref{eq:optimality_dual1}, 
	\begin{equation}\label{eq: expected_optimality_conditions}
	\begin{array}{rcl}
	&& \!\!\!\!  \!\!\!\!  \!\!
	\displaystyle \frac{1}{1-\gamma}\sum_{s} d_\rho^{\pi}(s) \langle \hat\pi_{t+1}(\cdot\,\vert\,s) - \hat\pi_t(\cdot\,\vert\,s), \pi(\cdot\,\vert\,s) - \hat\pi_{t+1}(\cdot\,\vert\,s) \rangle  
	\,+\,
	(\hat\lambda_{t+1} - \hat\lambda_t) (\lambda - \hat\lambda_{t+1})
	\\[0.2cm]
	& \geq & \displaystyle
	\frac{\eta}{1-\gamma} \sum_{s} d_\rho^{\pi}(s) \langle Q_{r+\hat\lambda_{t+1} g}^{\hat\pi_{t+1}}(s,\cdot), \pi(\cdot\,\vert\,s) - \hat\pi_{t+1}(\cdot\,\vert\,s) \rangle
	\\[0.2cm]
	& & \displaystyle + \,
	\frac{\eta}{1-\gamma} \sum_{s} d_\rho^{\pi}(s) \langle Q_{r+\lambda_{t} g}^{\pi_{t}}(s,\cdot) - Q_{r+\lambda_{t} g}^{\hat\pi_{t+1}}(s,\cdot), \pi(\cdot\,\vert\,s) - \hat\pi_{t+1}(\cdot\,\vert\,s) \rangle
	\\[0.2cm]
	& & \displaystyle
	+\,
	\frac{\eta}{1-\gamma}  \sum_{s} d_\rho^{\pi}(s) \langle  Q_{r+\lambda_{t} g}^{\hat\pi_{t+1}}(s,\cdot)-Q_{r+\hat\lambda_{t+1} g}^{\hat\pi_{t+1}}(s,\cdot), \pi(\cdot\,\vert\,s) - \hat\pi_{t+1}(\cdot\,\vert\,s) \rangle
	\\[0.2cm]
	& & \displaystyle
	+\,
	\eta V_{g}^{\hat\pi_{t+1}}(\rho) (\hat\lambda_{t+1} - \lambda) \,+\, \eta (V_{g}^{\pi_t}(\rho)-V_{g}^{\hat\pi_{t+1}}(\rho) )(\hat\lambda_{t+1} - \lambda)
	\\[0.2cm]
	& \geq & \displaystyle
	\eta \left( V_{r+\hat\lambda_{t+1} g}^\pi(\rho) - V_{r+\hat\lambda_{t+1} g}^{\hat\pi_{t+1}}(\rho)\right)
	\\[0.2cm]
	& & \displaystyle - \,
	\frac{\eta}{1-\gamma} \sum_{s} d_\rho^{\pi}(s) \norm{ Q_{r+\lambda_{t} g}^{\pi_{t}}(s,\cdot) - Q_{r+\lambda_{t} g}^{\hat\pi_{t+1}}(s,\cdot)}_\infty \norm{ \pi(\cdot\,\vert\,s) - \hat\pi_{t+1}(\cdot\,\vert\,s) }_1
	\\[0.2cm]
	& & \displaystyle
	-\,
	\frac{\eta}{1-\gamma}  \sum_{s} d_\rho^{\pi}(s) \norm{ Q_{r+\lambda_{t} g}^{\hat\pi_{t+1}}(s,\cdot)-Q_{r+\hat\lambda_{t+1} g}^{\hat\pi_{t+1}}(s,\cdot)}
	\norm{\pi(\cdot\,\vert\,s) - \hat\pi_{t+1}(\cdot\,\vert\,s) }
	\\[0.2cm]
	& & \displaystyle
	+\,
	\eta V_{g}^{\hat\pi_{t+1}}(\rho) (\hat\lambda_{t+1} - \lambda) \,-\, \eta 
	\vert V_{g}^{\pi_t}(\rho)-V_{g}^{\hat\pi_{t+1}}(\rho) \vert \vert \hat\lambda_{t+1} - \lambda\vert
	\\[0.2cm]
	& \geq & \displaystyle
	\eta \left( V_{r+\hat\lambda_{t+1} g}^\pi(\rho) - V_{r+\hat\lambda_{t+1} g}^{\hat\pi_{t+1}}(\rho)\right) \,+\, \eta V_{g}^{\hat\pi_{t+1}}(\rho) (\hat\lambda_{t+1} - \lambda) 
	\\[0.2cm]
	&  & \displaystyle
	-\,\frac{\eta \gamma {|A|}}{(1-\gamma)^3} 
	\max_s\norm{ \pi_{t}(\cdot\,\vert\,s) - \hat\pi_{t+1}(\cdot\,\vert\,s)} 
	\sum_s d_\rho^\pi(s) \norm{\pi(\cdot\,\vert\,s) - \hat\pi_{t+1}(\cdot\,\vert\,s)}
	\\[0.2cm]
	&  & \displaystyle
	-\,\frac{\eta}{(1-\gamma)^2} \sum_s d_\rho^\pi(s) \vert{\lambda_{t}-\hat\lambda_{t+1}}\vert \norm{\pi(\cdot\,\vert\,s) - \hat\pi_{t+1}(\cdot\,\vert\,s)}
	\\[0.2cm]
	&  & \displaystyle - \, \frac{\eta \kappa_\rho \sqrt{|A|}}{(1-\gamma)^3} 
	\sum_{s} d_\rho^{\pi^\star}(s) \norm{\pi_t(\cdot\,\vert\,s) - \hat\pi_{t+1}(\cdot\,\vert\,s)}
	\left\vert\hat\lambda_{t+1} - \lambda \right\vert
	\end{array}
	\end{equation}
	where the second inequality is due to performance difference lemma in Lemma~\ref{lem:performance difference lemma} and Cauchy-Schwarz inequality, and the last inequality results from Lemma~\ref{lem:policy_bound} and the inequality $\norm{p-p'}_1\leq \sqrt{|A|} \norm{p-p'}_2$ for two probability distributions $p$ and $p'$. 
	By using the inequality $ac+bd\leq (a+b)(c+d)$ for $a \geq 0$, $b \geq 0$, $c \geq 0$, and $d \geq 0$, with some  re-arrangement, we have
	\[
	\begin{array}{rcl}
	&& \!\!\!\!  \!\!\!\!  \!\!
	\displaystyle
	\left(\max_s\norm{ \pi(\cdot\,\vert\,s) - \hat\pi_{t+1}(\cdot\,\vert\,s) } + \vert\lambda - \hat\lambda_{t+1}\vert \right) \times 
	\\[0.2cm]
	&  & \displaystyle
	\Bigg[
	\frac{\kappa_\rho}{1-\gamma}
	\sum_{s} {d_\rho^{\pi^\star}(s)}
	\left( \frac{1}{1-\gamma}
	\norm{ \hat\pi_{t+1}(\cdot\,\vert\,s) - \hat\pi_t(\cdot\,\vert\,s)}
	+ \frac{2\eta |A|}{(1-\gamma)^3}
	\norm{ \hat\pi_{t+1}(\cdot\,\vert\,s) - \pi_t(\cdot\,\vert\,s)} 
	\right)
	\\[0.2cm]
	&  & \displaystyle
	\;\;\;\; \;\;\;\; 
	\;\;\;\; \;\;\;\;
	\;\;\;\; \;\;\;\;
	\;\;\;\; \;\;\;\;
	+\,
	\vert\hat\lambda_{t+1} - \hat\lambda_t\vert \,+\,
	\frac{\eta \kappa_\rho}{(1-\gamma)^3}  \vert{\lambda_{t}-\hat\lambda_{t+1}}\vert
	\Bigg]
	\\[0.2cm]
	&\geq & 
	\displaystyle
	\max_s\norm{ \pi(\cdot\,\vert\,s) - \hat\pi_{t+1}(\cdot\,\vert\,s) }
	\frac{\kappa_\rho}{1-\gamma}
	\sum_{s} d_\rho^{\pi^\star}(s)\times
	\\[0.2cm]
	& & 
	\displaystyle
	\left(\frac{1}{1-\gamma}
	\norm{  \hat\pi_{t+1}(\cdot\,\vert\,s) - \hat\pi_t(\cdot\,\vert\,s)}
	+ \frac{\eta \gamma |A|}{(1-\gamma)^3}
	\norm{ \hat\pi_{t+1}(\cdot\,\vert\,s) - \pi_t(\cdot\,\vert\,s)} 
	+
	\frac{\eta}{(1-\gamma)^2}  \vert{\lambda_{t}-\hat\lambda_{t+1}}\vert
	\right)
	\\[0.2cm]
	&  & \displaystyle
	+\, \vert\lambda - \hat\lambda_{t+1}\vert \left(
	\vert\hat\lambda_{t+1} - \hat\lambda_t\vert +  \frac{\eta\kappa_\rho \sqrt{|A|}}{(1-\gamma)^3}\sum_{s} d_\rho^{\pi^\star}(s) \norm{ \hat\pi_{t+1}(\cdot\,\vert\,s) - \pi_t(\cdot\,\vert\,s)} \right)
	\\[0.2cm]
	&\geq & 
	\displaystyle
	\max_s\norm{ \pi(\cdot\,\vert\,s) - \hat\pi_{t+1}(\cdot\,\vert\,s) }
	\sum_{s} d_\rho^{\pi}(s)\times
	\\[0.2cm]
	& & 
	\displaystyle
	\left( \frac{1}{1-\gamma}
	\norm{ \hat\pi_{t+1}(\cdot\,\vert\,s) - \hat\pi_t(\cdot\,\vert\,s)}
	+ \frac{\eta \gamma |A|}{(1-\gamma)^3}
	\max_s\norm{ \hat\pi_{t+1}(\cdot\,\vert\,s) - \pi_t(\cdot\,\vert\,s)} 
	+
	\frac{\eta}{(1-\gamma)^2}  \vert{\lambda_{t}-\hat\lambda_{t+1}}\vert
	\right)
	\\[0.2cm]
	&  & \displaystyle
	+\, \vert\lambda - \hat\lambda_{t+1}\vert \left(
	\vert\hat\lambda_{t+1} - \hat\lambda_t\vert +  \frac{\eta\kappa_\rho \sqrt{|A|}}{(1-\gamma)^3}\sum_{s} d_\rho^{\pi^\star}(s) \norm{ \hat\pi_{t+1}(\cdot\,\vert\,s) - \pi_t(\cdot\,\vert\,s)} \right)
	\\[0.2cm]
	& \geq & \displaystyle
	\eta \left( V_{r+\hat\lambda_{t+1} g}^\pi(\rho) - V_{r+\lambda g}^{\hat\pi_{t+1}}(\rho)\right) 
	\end{array}
	\]
	where the second inequality comes from the property of $\kappa_\rho$ and the last inequality is straightforward from~\eqref{eq: expected_optimality_conditions}. 
	We take $\eta>0$ such that $\max\left(\frac{2\eta |A|}{(1-\gamma)^3}, \frac{\eta \kappa_\rho}{(1-\gamma)^3}\right) \leq \frac{1}{2}$ and denote $\kappa_{\rho,\gamma} \DefinedAs \max \left(\frac{\kappa_\rho}{1-\gamma},1\right)$. If we take the square of both sides of the inequality above, then the second product argument has the following upper bound,
	\[
	\begin{array}{rcl}
	&& 
	\!\!\!\! \!\!\!\!
	\!\!\!\! 
	\displaystyle
	\left(
	\sum_{s} {d_\rho^{\pi^\star}(s)}
	\left(
	\norm{ \hat\pi_{t+1}(\cdot\,\vert\,s) - \hat\pi_t(\cdot\,\vert\,s)}
	+ \frac{2\eta |A|}{(1-\gamma)^3}
	\norm{ \hat\pi_{t+1}(\cdot\,\vert\,s) - \pi_t(\cdot\,\vert\,s)} 
	\right)
	\,
	+\,
	\vert\hat\lambda_{t+1} - \hat\lambda_t\vert \,+\,
	\frac{\eta \kappa_\rho}{(1-\gamma)^3}  \vert{\lambda_{t}-\hat\lambda_{t+1}}\vert \right)^2
	\\[0.2cm]
	& & 
	\!\!\!\! \!\!\!\!
	\!\!\!\! 
	\leq  \; 
	\displaystyle
	\left(
	\sum_{s} {d_\rho^{\pi^\star}(s)}
	\left(
	\norm{ \hat\pi_{t+1}(\cdot\,\vert\,s) - \hat\pi_t(\cdot\,\vert\,s)}
	+ \frac{1}{2}
	\norm{ \hat\pi_{t+1}(\cdot\,\vert\,s) - \pi_t(\cdot\,\vert\,s)} 
	\right)
	+
	\vert\hat\lambda_{t+1} - \hat\lambda_t\vert +
	\frac{1}{2}  \vert{\lambda_{t}-\hat\lambda_{t+1}}\vert
	\right)^2
	\\[0.2cm]
	& & 
	\!\!\!\! \!\!\!\!
	\!\!\!\! 
	\leq  \;
	\displaystyle
	\left(
	\sum_{s} {d_\rho^{\pi^\star}(s)}
	\left(
	\norm{ \pi_{t}(\cdot\,\vert\,s) - \hat\pi_t(\cdot\,\vert\,s)}
	+ \frac{3}{2}
	\norm{ \hat\pi_{t+1}(\cdot\,\vert\,s) - \pi_t(\cdot\,\vert\,s)} 
	\right)
	+
	\vert\lambda_{t} - \hat\lambda_t\vert +
	\frac{3}{2}  \vert{\lambda_{t}-\hat\lambda_{t+1}}\vert
	\right)^2
	\\[0.2cm]
	& & 
	\!\!\!\! \!\!\!\!
	\!\!\!\! 
	\leq  \; 
	\displaystyle
	\left( \frac{3}{2}
	\sum_{s} {d_\rho^{\pi^\star}(s)}
	\left(
	\norm{ \pi_{t}(\cdot\,\vert\,s) - \hat\pi_t(\cdot\,\vert\,s)}
	+ 
	\norm{ \hat\pi_{t+1}(\cdot\,\vert\,s) - \pi_t(\cdot\,\vert\,s)} 
	\right)
	+ 
	\frac{3}{2}\left(
	\vert\lambda_{t} - \hat\lambda_t\vert +
	\vert{\lambda_{t}-\hat\lambda_{t+1}}\vert\right)
	\right)^2
	\\[0.2cm]
	& & 
	\!\!\!\! \!\!\!\!
	\!\!\!\! 
	\leq  \; 
	\displaystyle
	9
	\sum_{s} {d_\rho^{\pi^\star}(s)}
	\left(
	\norm{ \pi_{t}(\cdot\,\vert\,s) - \hat\pi_t(\cdot\,\vert\,s)}^2
	+ 
	\norm{ \hat\pi_{t+1}(\cdot\,\vert\,s) - \pi_t(\cdot\,\vert\,s)}^2 
	\right)
	+ 
	9\left(
	\vert\lambda_{t} - \hat\lambda_t\vert^2 +
	\vert{\lambda_{t}-\hat\lambda_{t+1}}\vert^2\right)
	\end{array}
	\] 
	where we use the inequality $(x+y)^2 \leq 2x^2 + 2y^2$ and Jensen's inequality.
\end{proof}

Recall the definition of $\kappa_\rho$ and $\kappa_\rho \leq {1}/{\rho_{\min}}$, where $\rho_{\min} \DefinedAs \min_s \rho(s)$. 

\begin{lemma}\label{lem:duality_gap_sup}
	Assume $\rho_{\min}>0$. 
	For any $t\geq 1$, the primal-dual iterates of OPG-PD~\eqref{eq: optimistic method} satisfy 
	\[
	\begin{array}{rcl}
	&& \!\!\!\!  \!\!\!\!  \!\!
	\displaystyle \sup_{\pi\,\in\,\Pi,\, \lambda\,\in\,\Lambda}
	\;
	\frac{ V_{r+\hat\lambda_{t+1} g}^\pi(\rho) - V_{r+\lambda g}^{\hat\pi_{t+1}}(\rho)}
	{\max_s\norm{ \pi(\cdot\,\vert\,s) - \hat\pi_{t+1}(\cdot\,\vert\,s) } + \vert\lambda - \hat\lambda_{t+1}\vert}
	\\[0.2cm]
	& \geq & \displaystyle
	C_{\rho,\xi} \left(
	\sum_{s} d_\rho^{\pi^\star}(s)\norm{\hat\pi_{t+1}(\cdot\,|\,s) - \mathcal{P}_{\Pi^\star}(\hat\pi_{t+1}(\cdot\,|\,s))}
	+
	|\hat\lambda_{t+1} - \mathcal{P}_{\Lambda^\star}(\hat\lambda_{t+1})| \right)
	\end{array}
	\]
	where ${C}_{\rho,\xi} \DefinedAs  {c\rho_{\min}}/(2\sqrt{|S||A|}) / (1+ {1}/((1-\gamma)\xi))$ in which $c>0$ is described in Lemma~\ref{lem:bilinear_game_SP_MS}.
\end{lemma}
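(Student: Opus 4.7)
\medskip

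\noindent\textbf{Proof proposal.} The plan is to reduce the claim to the metric subregularity of a bilinear saddle-point problem in occupancy-measure space, and then to transport the resulting bound back to policy space using $\rho_{\min}>0$. Recall that $V_{r+\lambda g}^\pi(\rho)=\langle r+\lambda g, q^\pi\rangle$ with $q^\pi\in\mathcal{Q}$ the occupancy measure defined in \eqref{eq:occupancy}, so the numerator $V_{r+\hat\lambda_{t+1}g}^\pi(\rho)-V_{r+\lambda g}^{\hat\pi_{t+1}}(\rho)$ equals the bilinear quantity $\langle r+\hat\lambda_{t+1}g,q^\pi\rangle-\langle r+\lambda g,q^{\hat\pi_{t+1}}\rangle$. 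Thus, if I parametrize the feasible set by $(q,\lambda)\in\mathcal{Q}\times\Lambda$, Problem~\eqref{eq:saddle_point} becomes the bilinear game already treated in Appendix~\ref{app:CMDP_occupancy_measure}, to which Lemma~\ref{lem:bilinear_game_SP_MS} (the bilinear-game metric subregularity condition, giving the constant $c$) applies at the point $(q^{\hat\pi_{t+1}},\hat\lambda_{t+1})$.

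First, I will apply Lemma~\ref{lem:bilinear_game_SP_MS} to obtain
\[
\sup_{q\,\in\,\mathcal{Q},\,\lambda\,\in\,\Lambda}\frac{\langle r+\hat\lambda_{t+1}g,q\rangle-\langle r+\lambda g,q^{\hat\pi_{t+1}}\rangle}{\|q-q^{\hat\pi_{t+1}}\|+|\lambda-\hat\lambda_{t+1}|}\;\geq\; c\left(\|q^{\hat\pi_{t+1}}-\mathcal{P}_{Q^\star}(q^{\hat\pi_{t+1}})\|+|\hat\lambda_{t+1}-\mathcal{P}_{\Lambda^\star}(\hat\lambda_{t+1})|\right),
\]
where I use that the assumption $d_\rho^\pi=d_\rho^{\pi^\star}$ for all $\pi\in\Pi^\star$ makes the optimal occupancy measure essentially unique in its state marginal, so that $\mathcal{P}_{Q^\star}(q^{\hat\pi_{t+1}})$ corresponds to $\mathcal{P}_{\Pi^\star}(\hat\pi_{t+1})$ under the policy-to-occupancy map.

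Second, I will bound the numerator denominator from above. Since $q^\pi(s,a)=d_\rho^\pi(s)\pi(a|s)/(1-\gamma)$ with $d_\rho^\pi(s)\leq 1$, a one-step simulation-lemma argument gives $\|q^\pi-q^{\hat\pi_{t+1}}\|\leq \tfrac{C}{1-\gamma}\max_s\|\pi(\cdot|s)-\hat\pi_{t+1}(\cdot|s)\|$ for some constant absorbing $\sqrt{|S||A|}$ and the $1+1/((1-\gamma)\xi)$ factor from bounded $r+\hat\lambda_{t+1}g$. For the right-hand side, the reverse direction $\|q-q'\|\geq (1-\gamma)\rho_{\min}\|\pi-\pi'\|\cdot$(something) lets me lower-bound occupancy distance by policy distance; concretely $q^{\hat\pi_{t+1}}(s,a)-q^{\pi^\star}(s,a)\geq \tfrac{\rho_{\min}}{1-\gamma}(\hat\pi_{t+1}(a|s)-\pi^\star(a|s))$ on the state component shared by $d_\rho^{\pi^\star}$ up to a relative error controlled by the difference of state marginals; carefully splitting and using uniqueness of $d_\rho^{\pi^\star}$ yields
\[
\|q^{\hat\pi_{t+1}}-\mathcal{P}_{Q^\star}(q^{\hat\pi_{t+1}})\|\;\geq\;\frac{\rho_{\min}}{(1-\gamma)\sqrt{|S||A|}}\sum_s d_\rho^{\pi^\star}(s)\,\|\hat\pi_{t+1}(\cdot|s)-\mathcal{P}_{\Pi^\star}(\hat\pi_{t+1}(\cdot|s))\|.
\]

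Combining these two conversions with the bilinear-game bound and absorbing the $1+1/((1-\gamma)\xi)$ prefactor into $C_{\rho,\xi}$ yields the desired inequality. The main obstacle is the second conversion step: the occupancy map is nonlinear in $\pi$, so comparing the projection $\mathcal{P}_{Q^\star}(q^{\hat\pi_{t+1}})$ (Euclidean in $q$-space) with the projection $\mathcal{P}_{\Pi^\star}(\hat\pi_{t+1})$ (per-state Euclidean in policy space) requires the uniqueness of $d_\rho^{\pi^\star}$ on $\Pi^\star$ and a careful use of $d_\rho^\pi(s)\geq(1-\gamma)\rho_{\min}>0$ to prevent the per-state policy gap from being washed out by small state visitation. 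The bound on the dual side is immediate since $|\hat\lambda_{t+1}-\mathcal{P}_{\Lambda^\star}(\hat\lambda_{t+1})|$ appears identically on both sides of the bilinear bound.
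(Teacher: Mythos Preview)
Your high-level plan—invoke the bilinear-game subregularity of Lemma~\ref{lem:bilinear_game_SP_MS} in occupancy space and then transport to policy space—matches the paper's, and your right-hand-side conversion (occupancy distance $\geq$ policy distance via $\rho_{\min}$) is essentially correct. But your treatment of the \emph{denominator} has a genuine gap.

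First, Lemma~\ref{lem:bilinear_game_SP_MS} does not deliver the ratio form you wrote down. It gives two separate inequalities of the type $\max_{q}\langle q,r+\hat\lambda g\rangle-V^\star\ge c\,|\hat\lambda-\mathcal P_{\Lambda^\star}(\hat\lambda)|$ and $V^\star-\min_\lambda\langle \hat q,r+\lambda g\rangle\ge c\,\|\hat q-\mathcal P_{Q^\star}(\hat q)\|$, with no denominator present. Summing them bounds the \emph{supremum of the numerator}, not the supremum of a ratio.

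Second, and more seriously, the denominator bound you propose goes the wrong way. You argue $\|q^\pi-q^{\hat\pi_{t+1}}\|\le C\max_s\|\pi(\cdot|s)-\hat\pi_{t+1}(\cdot|s)\|$, i.e., $\text{denom}_q\le C\,\text{denom}_\pi$. But that only yields $\sup_\pi \tfrac{N}{\text{denom}_\pi}\le C\sup_q\tfrac{N}{\text{denom}_q}$, an \emph{upper} bound on the quantity you need to lower-bound. To push a ratio lower bound from $q$-space to $\pi$-space you would need the reverse inequality $\text{denom}_\pi\le C'\,\text{denom}_q$, which is not what a simulation lemma gives.

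The paper sidesteps both issues with one stroke: it bounds the policy-space denominator uniformly by the diameter $D_{\max}=1+1/((1-\gamma)\xi)$, so that
\[
\sup_{\pi,\lambda}\frac{V_{r+\hat\lambda_{t+1}g}^\pi(\rho)-V_{r+\lambda g}^{\hat\pi_{t+1}}(\rho)}{\max_s\|\pi-\hat\pi_{t+1}\|+|\lambda-\hat\lambda_{t+1}|}\;\ge\;\frac{1}{D_{\max}}\Big(\max_\pi V_{r+\hat\lambda_{t+1}g}^\pi(\rho)-V^\star\Big)+\frac{1}{D_{\max}}\Big(V^\star-\min_\lambda V_{r+\lambda g}^{\hat\pi_{t+1}}(\rho)\Big),
\]
and \emph{then} applies the two inequalities of Lemma~\ref{lem:bilinear_game_SP_MS} to the two numerator pieces separately. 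This is exactly where the factor $1+1/((1-\gamma)\xi)$ in $C_{\rho,\xi}$ comes from. Your proposal identifies that factor but attributes it to a Lipschitz conversion of the denominator rather than to a crude diameter bound; that attribution is what breaks the argument. Replace your step~2 by the diameter bound and your proof goes through along the paper's lines.
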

\begin{proof}
	Denote $V^\star \DefinedAs V_{r+\lambda^\star g}^{\pi^\star}(\rho)$ and $D_{\max} \DefinedAs \max_{\pi,\pi' \,\in\,\Pi,\lambda,\lambda'\,\in\,\Lambda} (\max_s\norm{\pi(\cdot\,\vert\,s) - \pi'(\cdot\,\vert\,s)} + |\lambda-\lambda'|)$. We observe that if we can prove that there exist constants $c_1$, $c_2>0$ such that  
	\begin{subequations}\label{eq:max_min_policy_dual}
		\begin{equation}\label{eq:max_policy}
		\max_{\pi\,\in\,\Pi} \, V_{r+\hat\lambda_{t+1}g}^\pi(\rho) - V^\star 
		\;\; \geq \;\;
		c_1 |\hat\lambda_{t+1} - \mathcal{P}_{\Lambda^\star}(\hat\lambda_{t+1}) |
		\end{equation}
		\begin{equation}\label{eq:min_dual}
		V^\star - \min_{\lambda\,\in\,\Lambda} \,V_{r+\lambda g}^{\hat\pi_{t+1}}(\rho)  
		\;\; \geq \;\;
		c_2 \sum_{s} d_\rho^{\pi^\star}(s)\norm{\hat\pi_{t+1}(\cdot\,|\,s) - \mathcal{P}_{\Pi^\star}(\hat\pi_{t+1}(\cdot\,|\,s))},
		\end{equation}
	\end{subequations}
	then,
	\[
	\begin{array}{rcl}
	&& \!\!\!\!  \!\!\!\!  \!\!
	\displaystyle \sup_{\pi\,\in\,\Pi,\, \lambda\,\in\,\Lambda}
	\;
	\frac{ V_{r+\hat\lambda_{t+1} g}^\pi(\rho) - V_{r+\lambda g}^{\hat\pi_{t+1}}(\rho)}
	{\max_s\norm{ \pi(\cdot\,\vert\,s) - \hat\pi_{t+1}(\cdot\,\vert\,s) } + \vert\lambda - \hat\lambda_{t+1}\vert}
	\\[0.2cm]
	& \geq & \displaystyle
	\frac{1}{D_{\max}}
	\sup_{\pi\,\in\,\Pi,\, \lambda\,\in\,\Lambda}\;  V_{r+\hat\lambda_{t+1} g}^\pi(\rho) - V_{r+\lambda g}^{\hat\pi_{t+1}}(\rho)
	\\[0.2cm]
	& = & \displaystyle
	\frac{1}{D_{\max}}
	\left(
	\max_{\pi\,\in\,\Pi}  V_{r+\hat\lambda_{t+1} g}^\pi(\rho) - V^\star
	\right)
	\,+\,
	\frac{1}{D_{\max}}
	\left(V^\star-\min_{ \lambda\,\in\,\Lambda}  V_{r+\lambda g}^{\hat\pi_{t+1}}(\rho)
	\right)
	\\[0.2cm]
	& \geq & \displaystyle
	\frac{\min(c_1,c_2)}{D_{\max}} \left(
	\sum_{s} d_\rho^{\pi^\star}(s)\norm{\hat\pi_{t+1}(\cdot\,|\,s) - \mathcal{P}_{\Pi^\star}(\hat\pi_{t+1}(\cdot\,|\,s))}
	+
	|\hat\lambda_{t+1} - \mathcal{P}_{\Lambda^\star}(\hat\lambda_{t+1}) | \right)
	\end{array}
	\]
	which proves the lemma by taking $C_{\rho,\xi} \DefinedAs \frac{\min(c_1,c_2)}{D_{\max}}$.
	
	We next prove~\eqref{eq:max_min_policy_dual} using the bilinear game result in~Lemma~\ref{lem:bilinear_game_SP_MS}. By the linear program formulation of constrained MDP,  we can express the value function in terms of the occupancy measure $q^\pi$, e.g., $V_{r+\lambda g}^\pi(\rho) = \langle q^\pi, r+\lambda g\rangle$, where $q^\pi$ is the occupancy measure that lives in a polytope $\mathcal{Q}$ that is given by~\eqref{eq:occupancy_set}. Hence,    
	the constrained saddle-point problem~\eqref{eq:saddle_point} reduces to,
	\[
	\maximize_{q^\pi \,\in\, \mathcal{Q}} \; \minimize_{\lambda \,\in\, \Lambda} \;
	\langle q^\pi, r+\lambda g\rangle
	\;\; = \;\;
	\minimize_{\lambda \,\in\, \Lambda} \; 
	\maximize_{q^\pi \,\in\, \mathcal{Q}} \;
	\langle q^\pi, r+\lambda g\rangle.
	\]
	We notice that the game value keeps the same as $V^\star$ that is achieved at $(q^{\pi^\star},\lambda^\star)$, where $q^{\pi^\star}$ is the occupancy measure under the policy $\pi^\star$. Let the set of occupancy measures associated with $\Pi^\star$ be $\mathcal{Q}^\star$.
	According to Lemma~\ref{lem:bilinear_game_SP_MS}, we know that
	there exists a constant $c>0$ such that 
	
	\begin{subequations}
		\begin{equation}\label{eq:max_policy_oc}
		\max_{q^\pi\,\in \, \mathcal{Q}} \langle q^\pi, r+\lambda g\rangle - V^\star \;\; \geq \;\; c \vert{\lambda - \mathcal{P}_{\Lambda^\star}(\lambda)}\vert
		\end{equation}
		\begin{equation}\label{eq:min_dual_oc}
		V^\star  - \min_{\lambda\,\in \, \Lambda} \langle q^\pi, r+\lambda g\rangle \;\; \geq \;\; c \norm{q^\pi - \mathcal{P}_{\mathcal{Q}^\star}(q^\pi)}.
		\end{equation}
	\end{subequations}
	
	It is more straightforward to see \eqref{eq:max_policy} from \eqref{eq:max_policy_oc} if we take $\lambda = \hat\lambda_{t+1}$ and $c_1=c$. We next show~\eqref{eq:min_dual} using~\eqref{eq:min_dual_oc} by taking $\pi^\star(\cdot\,\vert\,s)$ to be the policy associated with $\mathcal{P}_{\mathcal Q^\star}(q^\pi)$,
	\[
	\begin{array}{rcl}
	&& \!\!\!\!  \!\!\!\!  \!\!
	\displaystyle
	\sum_{s} d_\rho^{\pi^\star}(s)\norm{\pi(\cdot\,|\,s) - \pi^\star(\cdot\,|\,s)} 
	\\[0.2cm]
	& = & 
	\displaystyle
	\sum_{s} d_\rho^{\pi^\star}(s)
	\norm{\frac{q^\pi(s,\cdot)}{q^\pi(s)} - \frac{q^{\pi^\star}(s,\cdot)}{q^{\pi^\star}(s)}} 
	\\[0.2cm]
	& \leq &
	\displaystyle
	\sum_{s} d_\rho^{\pi^\star}(s)
	\frac{\norm{q^\pi(s,\cdot) - q^{\pi^\star}(s,\cdot)}q^{\pi^\star}(s)}{q^\pi(s)q^{\pi^\star}(s)} \, + \,\sum_{s} d_\rho^{\pi^\star}(s)\,\frac{\norm{q^{\pi^\star}(s,\cdot)} |q^\pi(s)  - q^{\pi^\star}(s)|}{q^\pi(s)q^{\pi^\star}(s)}
	\\[0.2cm]
	& \leq &
	\displaystyle
	\sum_{s} 
	\frac{\norm{q^\pi(s,\cdot) - q^{\pi^\star}(s,\cdot)}}{q^\pi(s)} 
	\,+\, \sum_{s} \,\frac{|q^\pi(s)  - q^{\pi^\star}(s)|}{q^\pi(s)}
	\\[0.2cm]
	& \leq &
	\displaystyle \frac{1}{\rho_{\min}}
	\sum_{s} \left(
	{\norm{q^\pi(s,\cdot) - q^{\pi^\star}(s,\cdot)}}
	+ {|q^\pi(s)  - q^{\pi^\star}(s)|}
	\right)
	\\[0.2cm]
	& \leq &
	\displaystyle \frac{1}{\rho_{\min}} \left(
	\sqrt{|S|}\sqrt{ \sum_s\norm{q^\pi(s,\cdot) - q^{\pi^\star}(s,\cdot)}^2 }
	+\sqrt{|S||A|}\sqrt{ \sum_{s,a} |q^\pi(s,a) - q^{\pi^\star}(s,a)|^2 }\right)
	\\[0.2cm]
	& = &
	\displaystyle \frac{2\sqrt{|S||A|}}{\rho_{\min}} \norm{q^\pi - q^{\pi^\star}}
	\\[0.2cm]
	& = &
	\displaystyle \frac{2\sqrt{|S||A|}}{\rho_{\min}} \norm{q^\pi - \mathcal{P}_{\mathcal Q^\star}(q^\pi)}
	\end{array}
	\]
	where the first inequality is due to triangle inequality, we use the fact: $(1-\gamma)q^{\pi^\star}(s) = d_\rho^{\pi^\star}(s)$, $d_\rho^{\pi^\star}(s)\leq 1$, and $\norm{q^{\pi^\star}(s,\cdot)} \leq 1$ in the second inequality, the third inequality is due to that $q^\pi(s) \geq \rho_{\min}$, and we apply Cauchy-Schwarz inequality in the last inequality. Hence, we can further lower bound~\eqref{eq:min_dual_oc},
	\[
	\begin{array}{rcl}
	\displaystyle
	V^\star  - \min_{\lambda\,\in \, \Lambda} \langle q^\pi, r+\lambda g\rangle 
	& \geq & 
	\displaystyle
	c \norm{q^\pi - \mathcal{P}_{\mathcal Q^\star}(q^\pi)}
	\\[0.2cm]
	& \geq &
	\displaystyle
	\frac{c\rho_{\min}}{2\sqrt{|S||A|}}
	\sum_{s} d_\rho^{\pi^\star}(s)\norm{\pi(\cdot\,|\,s) - \pi^\star(\cdot\,|\,s)}
	\\[0.2cm]
	& \geq &
	\displaystyle
	\frac{c\rho_{\min}}{2\sqrt{|S||A|}}
	\sum_{s} d_\rho^{\pi^\star}(s)\norm{\pi(\cdot\,|\,s) - \mathcal{P}_{\Pi^\star}(\pi(\cdot\,|\,s))} 
	\end{array}
	\]
	which yields~\eqref{eq:min_dual} using $c_2 = \frac{c\rho_{\min}}{2\sqrt{|S||A|}}$.
	
	Finally, we combine all selected constants and take $D_{\max} = 1+ \frac{1}{(1-\gamma)\xi}$ to conclude the proof.
\end{proof}

\begin{lemma}\label{lem:policy_bound}
	For any policies $\pi$ and $\pi'$, we have 
	\[
	\norm{ Q_r^\pi(\cdot,\cdot) - Q_r^{\pi'}(\cdot,\cdot)}_\infty
	\;\;\leq\; \;
	\frac{\gamma}{(1-\gamma)^2}
	\max_s
	\norm{ \pi(\cdot\,\vert\,s) - \pi'(\cdot\,\vert\,s)}_1
	\]
	\[
	|V_g^\pi(\rho) - V_g^{\pi'}(\rho)|
	\;\;\leq\;\;
	\frac{\kappa_\rho}{(1-\gamma)^3}
	\sum_s d_\rho^{\pi^\star}(s) \norm{ \pi(\cdot\,\vert\,s) - \pi'(\cdot\,\vert\,s)}_1.
	\]
\end{lemma}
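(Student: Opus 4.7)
\textbf{Proposal for Lemma~\ref{lem:policy_bound}.} Both bounds are obtained by routine value-difference manipulations; I would prove them separately.

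For the first inequality, the plan is to bound $\norm{V_r^\pi - V_r^{\pi'}}_\infty$ first and then convert to a $Q$-function bound through the Bellman equation. Starting from
\[
V_r^\pi(s) - V_r^{\pi'}(s) \;=\; \sum_a (\pi(a\,|\,s) - \pi'(a\,|\,s))\, Q_r^\pi(s,a) \,+\, \sum_a \pi'(a\,|\,s)\big(Q_r^\pi(s,a) - Q_r^{\pi'}(s,a)\big),
\]
I would apply the Bellman equation $Q_r^\pi(s,a) - Q_r^{\pi'}(s,a) = \gamma \mathbb{E}_{s'\sim P(\cdot|s,a)}[V_r^\pi(s') - V_r^{\pi'}(s')]$ to the second sum, use $|Q_r^\pi(s,a)| \leq 1/(1-\gamma)$ on the first, and take a supremum over $s$ to get the recursion
\[
\norm{V_r^\pi - V_r^{\pi'}}_\infty \;\leq\; \frac{1}{1-\gamma}\,\max_s \norm{\pi(\cdot\,\vert\,s)-\pi'(\cdot\,\vert\,s)}_1 \,+\, \gamma \norm{V_r^\pi - V_r^{\pi'}}_\infty,
\]
which solves to $\norm{V_r^\pi - V_r^{\pi'}}_\infty \leq \frac{1}{(1-\gamma)^2}\max_s \norm{\pi(\cdot\,\vert\,s)-\pi'(\cdot\,\vert\,s)}_1$. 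Applying the Bellman equation one more time yields $\norm{Q_r^\pi - Q_r^{\pi'}}_\infty \leq \gamma \norm{V_r^\pi - V_r^{\pi'}}_\infty$, which delivers the first claim.

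For the second inequality, I would invoke the performance difference lemma (Lemma~\ref{lem:performance difference lemma}) to write
\[
V_g^\pi(\rho) - V_g^{\pi'}(\rho) \;=\; \frac{1}{1-\gamma}\sum_{s,a} d_\rho^\pi(s)\big(\pi(a\,|\,s) - \pi'(a\,|\,s)\big) Q_g^{\pi'}(s,a),
\]
then apply H\"older's inequality per state together with $\norm{Q_g^{\pi'}(s,\cdot)}_\infty \leq 1/(1-\gamma)$ to get
\[
\big|V_g^\pi(\rho) - V_g^{\pi'}(\rho)\big| \;\leq\; \frac{1}{(1-\gamma)^2}\sum_s d_\rho^\pi(s)\norm{\pi(\cdot\,\vert\,s) - \pi'(\cdot\,\vert\,s)}_1.
\]
The last step is a change-of-measure from $d_\rho^\pi$ to $d_\rho^{\pi^\star}$: using the definition $\kappa_\rho = \sup_{\pi\in\Pi}\norm{d_\rho^\pi/\rho}_\infty$ together with $d_\rho^{\pi^\star}(s) \geq (1-\gamma)\rho(s)$, one obtains $d_\rho^\pi(s)/d_\rho^{\pi^\star}(s) \leq \kappa_\rho/(1-\gamma)$ pointwise, which gives the claimed factor $\kappa_\rho/(1-\gamma)^3$.

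Neither step is genuinely difficult; the only mild subtlety is the change-of-measure, where one must recall that $\kappa_\rho$ is defined relative to $\rho$ rather than to $d_\rho^{\pi^\star}$, so the extra $1/(1-\gamma)$ factor (which is already visible in the text's remark $\norm{d_\rho^\pi/d_\rho^{\pi^\star}}_\infty \leq \kappa_\rho/(1-\gamma)$) must be accounted for. I expect the whole proof to take only a few lines.
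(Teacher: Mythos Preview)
Your proposal is correct and follows essentially the same route as the paper: the second inequality is handled identically (performance difference lemma, H\"older, then the change of measure $d_\rho^\pi(s)\le \tfrac{\kappa_\rho}{1-\gamma}d_\rho^{\pi^\star}(s)$), and for the first inequality the paper runs the same Bellman-equation recursion directly on $\|Q_r^\pi-Q_r^{\pi'}\|_\infty$ rather than on $\|V_r^\pi-V_r^{\pi'}\|_\infty$ with a final conversion---a cosmetic difference that yields the same constant $\gamma/(1-\gamma)^2$.
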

\begin{proof}
	By the Bellman equations, for each pair $(s,a)$,
	\[
	\begin{array}{rcl}
	Q_r^\pi(s,a) 
	& = & \displaystyle
	r(s,a) \, + \, \gamma \sum_{s', a'} P(s'\,\vert\,s,a) \pi(a'\,\vert\,s') Q_r^{\pi}(s',a')
	\\[0.2cm]
	Q_r^{\pi'}(s,a) 
	& = & \displaystyle
	r(s,a) \, + \, \gamma \sum_{s', a'} P(s'\,\vert\,s,a) \pi'(a'\,\vert\,s') Q_r^{\pi'}(s',a').
	\end{array}
	\]
	Hence, for each pair $(s,a)$,
	\[
	\begin{array}{rcl}
	\vert{Q_r^\pi(s,a) - Q_r^{\pi'}(s,a) }\vert
	& \leq & \displaystyle 
	\gamma \sum_{s', a'} P(s'\,\vert\,s,a) \left\vert
	\pi(a'\,\vert\,s') Q_r^{\pi}(s',a')
	-
	\pi'(a'\,\vert\,s') Q_r^{\pi'}(s',a')
	\right\vert
	\\[0.2cm]
	& \leq & \displaystyle 
	\gamma \sum_{s', a'} P(s'\,\vert\,s,a) \left\vert
	\pi(a'\,\vert\,s')
	-
	\pi'(a'\,\vert\,s') 
	\right\vert Q_r^{\pi'}(s',a')
	\\[0.2cm]
	&  & \displaystyle 
	+\, 
	\gamma \sum_{s', a'} P(s'\,\vert\,s,a) \pi'(a'\,\vert\,s') 
	\left\vert
	Q_r^{\pi}(s',a')
	-
	Q_r^{\pi'}(s',a')
	\right\vert
	\\[0.2cm]
	& \leq & \displaystyle 
	\frac{\gamma}{1-\gamma} \max_s\norm{
		\pi(\cdot\,\vert\,s)
		-
		\pi'(\cdot\,\vert\,s) 
	}_1
	\,+\,
	\gamma \norm{ Q_r^\pi(\cdot,\cdot) - Q_r^{\pi'}(\cdot,\cdot)}_\infty
	\end{array}
	\]
	which yields the first inequality.
	
	To show the second inequality, we employ the performance difference lemma to show that,
	\[
	\begin{array}{rcl}
	|V_g^\pi(\rho) - V_g^{\pi'}(\rho)|
	& \leq & 
	\displaystyle \frac{1}{1-\gamma}
	\sum_{s,a} d_\rho^{\pi}(s) \vert \pi(a\,\vert\,s) - \pi'(a\,\vert\,s)\vert  \vert Q_{g}^{\pi'}(s,a)\vert
	\\[0.2cm]
	& \leq & 
	\displaystyle
	\frac{1}{(1-\gamma)^2}
	\sum_{s} d_\rho^{\pi}(s)
	\norm{ \pi(\cdot\,\vert\,s) - \pi'(\cdot\,\vert\,s) }_1
	\\[0.2cm]
	& \leq & 
	\displaystyle
	\frac{1}{(1-\gamma)^2}
	\sum_{s} d_\rho^{\pi}(s)
	\norm{ \pi(\cdot\,\vert\,s) - \pi'(\cdot\,\vert\,s) }_1
	\end{array}
	\]
	where we replace $d_\rho^\pi$ by $d_\rho^{\pi^\star}$ using the inequality  $\norm{{d_\rho^\pi}/{d_\rho^{\pi^\star}}}_\infty\leq {\kappa_\rho}/{(1-\gamma)}$ for any policy $\pi\in \Pi$. 
\end{proof}

\subsection{Proof of Theorem~\protect\ref{thm:linear convergence OPG-PD}}
\label{app:linear convergence OPG-PD}

\begin{proof}
	From the non-increasing relation~\eqref{eq:Theta_zeta} in Lemma~\ref{lem:non-increasing}, we have
	\[
	\displaystyle \frac{1}{2(1-\gamma)}
	\sum_{s} d_\rho^{\pi^\star}(s)\norm{\hat\pi_{t+1}(\cdot|s) - \pi_{t}(\cdot|s)}^2
	\,+\,\frac{1}{2}(\hat\lambda_{t+1} - \lambda_t )^2
	\;\; \leq \;\; \zeta_t 
	\;\;\leq \;\; \frac{16}{7}\Theta_t
	\;\; \leq\;\; \frac{16}{7} \Theta_1.
	\]
	Meanwhile, from Lemma~\ref{lem:stability_gap}, we obtain that 
	\[
	\begin{array}{rcl}
	\zeta_t &\; \geq \;&
	\displaystyle \frac{1}{4}
	\sum_{s} d_\rho^{\pi^\star}(s)\norm{\hat\pi_{t+1}(\cdot\,|\,s) - \pi_{t}(\cdot\,|\,s)}^2
	\,+\,\frac{1}{4}(\hat\lambda_{t+1} - \lambda_t )^2
	\\[0.2cm]
	& &
	\displaystyle +\, \frac{1}{4}\sum_{s} d_\rho^{\pi^\star}(s)\left(
	\norm{\hat\pi_{t+1}(\cdot\,|\,s) - \pi_{t}(\cdot\,|\,s)}^2+
	\norm{\pi_{t}(\cdot\,|\,s)- \hat\pi_t(\cdot\,|\,s)}^2\right)
	\\[0.2cm]
	& &
	\displaystyle+\,\frac{1}{4} \left(
	(\hat\lambda_{t+1} - \lambda_t )^2 +
	(\lambda_{t} - \hat\lambda_t )^2\right)
	\\[0.2cm]
	&  \geq &\displaystyle \frac{1}{4}
	\sum_{s} d_\rho^{\pi^\star}(s)\norm{\hat\pi_{t+1}(\cdot\,|\,s) - \pi_{t}(\cdot\,|\,s)}^2
	\,+\,\frac{1}{4}(\hat\lambda_{t+1} - \lambda_t )^2
	\\[0.2cm]
	&   &\displaystyle +\, 
	\frac{\eta^2}{36 \kappa_{\rho,\gamma}^2} {\sup_{\pi\,\in\,\Pi, \lambda\,\in\,\Lambda}}
	\frac{   \left[ V_{r+\hat\lambda_{t+1} g}^\pi(\rho) - V_{r+\lambda g}^{\hat\pi_{t+1}}(\rho)\right]_+^2}
	{\left(\max_s\norm{ \pi(\cdot\,\vert\,s) - \hat\pi_{t+1}(\cdot\,\vert\,s) } + \vert\lambda - \hat\lambda_{t+1}\vert \right)^2}
	\\[0.2cm]
	&  \geq &\displaystyle \frac{1}{4}
	\sum_{s} d_\rho^{\pi^\star}(s)\norm{\hat\pi_{t+1}(\cdot\,|\,s) - \pi_{t}(\cdot\,|\,s)}^2
	\,+\,\frac{1}{4}(\hat\lambda_{t+1} - \lambda_t )^2
	\\[0.2cm]
	&   &
	\displaystyle +\, \frac{\eta^2 C_{\rho,\xi}'}{36 \kappa_{\rho,\gamma}^2}
	\left( 
	\sum_{s} d_\rho^{\pi^\star}(s)\norm{\hat\pi_{t+1}(\cdot\,|\,s) - \mathcal{P}_{\Pi^\star}(\hat\pi_{t+1}(\cdot\,|\,s))}^2
	+(\hat\lambda_{t+1} - \mathcal{P}_{\Lambda^\star}(\hat\lambda_{t+1}) )^2
	\right)
	\\[0.2cm]
	&  \geq &\displaystyle (1-\gamma)\min \left(2, \frac{4\eta^2 C_{\rho,\xi}'}{9\kappa_{\rho,\gamma}^2}\right) \Theta_{t+1}
	\end{array}
	\]
	where the third inequality is due to Lemma~\ref{lem:duality_gap_sup}, the inequalities: $(x+y)^2\geq x^2+y^2$ for $x$, $y\geq 0$, and $d_\rho^{\pi^\star}(s)\geq (1-\gamma)\rho_{\min}$, and $C_{\rho,\xi}' \DefinedAs (1-\gamma)C_{\rho,\xi}^2 \rho_{\min}$. 
	Hence, the relation~\eqref{eq:Theta_zeta} reduces into,
	\[
	\Theta_{t+1} 
	\; \; \leq \; \;
	\Theta_t \,-\, (1-\gamma)\min \left(\frac{7}{8}, \frac{7\eta^2 C_{\rho,\xi}'}{36\kappa_{\rho,\gamma}^2}\right)\Theta_{t+1}
	\]
	which implies that $\Theta_t$ is decreasing to zero at an exponential rate.
\end{proof}

\subsection{Proof of Corollary~\protect\ref{cor: linear convergence of OPG}}
\label{app: linear convergence of OPG}

% \[
%     \frac{1}{2}
%     \sum_{s} d_\rho^{\pi^\star}(s)
%     \norm{\mathcal{P}_{\Pi^\star}(\hat\pi_{t}(\cdot\,|\,s))- \hat\pi_{t}(\cdot\,|\,s)}^2
%     + \frac{1}{2} ( \mathcal{P}_{\Lambda^\star}(\hat\lambda_{t}) - \hat\lambda_{t} )^2 
%     \; \leq \;
%     \left( \frac{1}{1+C}\right)^t
%     \]

\begin{proof}
	According to Theorem~\ref{thm:linear convergence OPG-PD}, if we take the same stepsize $\eta$, then for any $t = \Omega( \log\frac{1}{\epsilon}  )$,
	\[
	\displaystyle
	\frac{1}{2(1-\gamma)}
	\sum_{s} d_\rho^{\pi^\star}(s)
	\norm{\mathcal{P}_{\Pi^\star}(\hat\pi_{t}(\cdot\,|\,s))- \hat\pi_{t}(\cdot\,|\,s)}^2
	\;\; = \;\; O(\epsilon)
	\; \text{ and } \;
	\frac{1}{2} ( \mathcal{P}_{\Lambda^\star}(\hat\lambda_{t}) - \hat\lambda_{t} )^2 
	\;\;  = \;\; O(\epsilon).
	\]
	Let $\hat\pi_{t}^\star(\cdot\,|\,s)\DefinedAs\mathcal{P}_{\Pi^\star}(\hat\pi_{t}(\cdot\,|\,s))$ and $\hat\lambda_{t}^\star\DefinedAs\mathcal{P}_{\Lambda^\star}(\hat\lambda_{t})$. Because of the
	interchangeability of saddle points from Lemma~\ref{lem:saddle_point_interchangeability}, $(\hat\pi_{t}^\star, \hat\lambda_{t}^\star)$ is a saddle point in the set $ \Pi^\star\times\Lambda^\star$ for any $t\geq 0$.
	
	% We next consider a primal-dual iterate $(\hat\pi_t,\hat\lambda_t)$ for some $t\geq O( \log\frac{1}{\epsilon})$. It is straightforward to check that
	% We notice that $d_\rho^{\pi_\tau^\star} (s) \geq (1-\gamma)\rho(s) \geq (1-\gamma )\rho_{\min}$. 
	% Thus,
	% \[
	%     \displaystyle
	%      \KL_t(\rho)
	%     \; = \;
	%     O(\epsilon)
	%     \;\text{ and }\;
	%     \frac{1}{2}(\lambda_\tau^\star - \lambda_t)^2
	%     \; = \;
	%     O(\epsilon).
	% \]
	
	First, we have
	% \begin{equation}\label{eq:nearly_optimality_gap}
	%     V_r^{\pi^\star} (\rho)
	%     - V_r^{\pi_t} (\rho)
	%     \; = \;
	%     \underbrace{V_r^{\pi^\star} (\rho)
	%     -
	%      V_r^{\pi_\tau^\star} (\rho)}_{\displaystyle \text{(i)}}
	%      \, + \,
	%      \underbrace{V_r^{\pi_\tau^\star} (\rho)
	%     - V_r^{\pi_t} (\rho)}_{\displaystyle \text{(ii)}}.
	% \end{equation}
	% For the term (ii), because of $\KL_t(\rho)
	%  = 
	% O(\epsilon)$, 
	% we have
	\[
	\begin{array}{rcl}
	\displaystyle 
	V_r^{\hat\pi_t^\star} (\rho)
	- V_r^{\hat\pi_t} (\rho)
	& = & \displaystyle \frac{1}{1-\gamma}
	\sum_{s,a} d_\rho^{\hat\pi_t^\star}(s)\left(\hat\pi_t^\star(a\,|\,s) - \hat\pi_t(a\,|\,s)\right)Q^{\pi_t}_{r}(s,a)
	\\[0.2cm]
	& \leq & \displaystyle
	\frac{1}{(1-\gamma)^2} \sum_{s} d_\rho^{\hat\pi_t^\star} (s)
	\norm{\hat\pi_t^\star(\cdot\,|\,s) - \hat\pi_t(\cdot\,|\,s)}_1
	\\[0.2cm]
	& \leq & \displaystyle
	\frac{\sqrt{|A|}}{(1-\gamma)^2} \sum_{s} d_\rho^{\hat\pi_t^\star} (s)
	\norm{\hat\pi_t^\star(\cdot\,|\,s) - \hat\pi_t(\cdot\,|\,s)}
	\\[0.2cm]
	& \leq & \displaystyle
	\frac{\sqrt{|A|}}{(1-\gamma)^2} 
	\sqrt{\sum_{s} d_\rho^{\hat\pi_t^\star} (s) \norm{\hat\pi_t^\star(\cdot\,|\,s) - \hat\pi_t(\cdot\,|\,s)}^2 }
	% \\[0.2cm]
	% & = & \displaystyle
	% \frac{1}{1-\gamma} 
	% \sqrt{2\,\KL_t(\rho) }
	\end{array}
	\]
	where we use Cauchy–Schwarz inequality in the first and third inequalities, and the second inequality is due to $\norm{x}_1\leq \sqrt{d}\norm{x}_2$ for $x\in\mathbb{R}^d$, 
	which shows $V_r^{\hat\pi_t^\star} (\rho) - V_r^{\hat\pi_t} (\rho) \leq O(\sqrt{\epsilon})$. By the saddle-point property of $(\hat\pi_t^\star, \hat\lambda_t^\star)$, $\hat\pi_t^\star$ is also an optimal constrained policy, i.e.,  $V_r^{\hat\pi_t^\star}(\rho)  = V_r^{\pi^\star}(\rho)$. Therefore, $V_r^{\pi^\star} (\rho) - V_r^{\hat\pi_t} (\rho) \leq O(\sqrt{\epsilon})$.
	
	Second, we have 
	\[
	- V_g^{\hat\pi_t} (\rho)
	\;\; = \;\;
	\underbrace{
		-
		V_g^{\hat\pi_t^\star} (\rho)}_{\displaystyle \text{(i)}}
	\, + \,
	\underbrace{V_g^{\hat\pi_t^\star} (\rho)
		- V_g^{\hat\pi_t} (\rho)}_{\displaystyle \text{(ii)}}.
	\]
	Similar to bounding $V_r^{\hat\pi_t^\star} (\rho)
	- V_r^{\hat\pi_t} (\rho)$, we can show that ${\displaystyle \text{(ii)}} \leq O(\sqrt{\epsilon})$. By the saddle-point property of $(\hat\pi_t^\star, \hat\lambda_t^\star)$,  $V_g^{\hat\pi_t^\star} (\rho)\geq0$. Therefore, $ - V_g^{\hat\pi_t} (\rho) \leq O(\sqrt{\epsilon})$.
	
	Finally, we replace the accuracy $\sqrt\epsilon$ by $\epsilon$ and combine all big O notation to conclude the proof.
\end{proof}

\subsection{Zero constraint violation of OPG-PD~\protect\eqref{eq: optimistic method}}
\label{app: OPG zero violation}

\begin{corollary}[Zero constraint violation]
	\label{cor: OPG zero violation}
	Let Assumption~\ref{as:feasibility} hold and the optimal state visitation distribution  be unique, i.e., $d_\rho^\pi = d_\rho^{\pi^\star}$ for any $\pi \in \Pi^\star$, and $\rho_{\min}>0$.
	For small $\epsilon$, there exists $\delta>0$ such that if we instead use 
	the conservative constraint $V_{g'}^{\pi} (\rho) \geq 0$ for $g' = g - (1-\gamma)\delta$, and 
	take the stepsize $\eta$ from Theorem~\ref{thm:linear convergence OPG-PD}, 
	then the policy iterates of OPG-PD satisfy 
	\[
	V_r^{\pi^\star}(\rho) - V_r^{\hat\pi_{t}}(\rho) 
	\; \leq \; 
	\epsilon
	\; \text{ and } \;
	- V_g^{\hat\pi_{t}}(\rho) 
	\; \leq \; 
	0\;\;
	\text{ for any }
	t = \Omega\left( \log^2\frac{1}{\epsilon}\right)
	\]
	where $\Omega(\cdot)$ only omits some problem-dependent constant.
\end{corollary}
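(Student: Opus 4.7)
\begin{extraproof}{Corollary~\ref{cor: OPG zero violation} (sketch)}
The plan is to mimic the conservative-constraint reduction used for RPG-PD in Appendix~\ref{app: linear convergence of RPG zero violation}, now applied to OPG-PD. For any $\delta \in (0, \min(\xi,1))$, define the tightened utility $g' \DefinedAs g - (1-\gamma)\delta$, and consider the $\delta$-perturbed problem
\[
\maximize_{\pi\,\in\,\Pi}\;\; V_r^\pi(\rho)
\;\;\;\;\subject\;\; V_{g'}^\pi(\rho) \,=\, V_g^\pi(\rho) - \delta \,\geq\, 0.
\]
By Assumption~\ref{as:feasibility}, $\bar\pi$ still satisfies $V_{g'}^{\bar\pi}(\rho) \geq \xi - \delta \DefinedAs \xi'$, so the perturbed problem is strictly feasible with slack $\xi'$. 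I would then run OPG-PD~\eqref{eq: optimistic method} on the Lagrangian $V_{r+\lambda g'}^\pi(\rho)$, noting that the uniqueness of the optimal state visitation distribution and the positivity of $\rho_{\min}$ required by Theorem~\ref{thm:linear convergence OPG-PD} are structural assumptions on $(P,\rho)$ that are preserved under the reward/utility perturbation; and apply Corollary~\ref{cor: linear convergence of OPG} to conclude that for any $t = \Omega(\log^2(1/\epsilon))$,
\[
V_r^{\pi_\delta^\star}(\rho) - V_r^{\hat\pi_t}(\rho) \,\leq\, \epsilon
\;\;\text{ and }\;\;
-V_{g'}^{\hat\pi_t}(\rho) \,\leq\, \epsilon\,\Xi(\xi'),
\]
where $\pi_\delta^\star$ solves the perturbed problem and $\Xi(\xi')>0$ collects the problem-dependent factors ($\kappa_\rho$, $\kappa_{\rho,\gamma}$, $C_{\rho,\xi'}$, $\rho_{\min}$, $\iota$) appearing in the linear rate.

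Next, I would choose $\delta$ large enough to dominate the $\epsilon\Xi(\xi')$ slack but small enough to keep the LP value close to the unperturbed one. Concretely, fix $\delta = \xi/2$ so that $\xi' = \xi/2$ and $\Xi(\xi')$ becomes an absolute problem-dependent constant; then for all small enough $\epsilon$ we have $\delta \geq \epsilon\,\Xi(\xi')$, and therefore
\[
-V_g^{\hat\pi_t}(\rho) \,=\, -V_{g'}^{\hat\pi_t}(\rho) - \delta
\,\leq\, \epsilon\,\Xi(\xi') - \delta \,\leq\, 0,
\]
which gives zero constraint violation for the last policy iterate.

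It remains to certify the optimality gap against the original $\pi^\star$. I would lift the perturbed problem to occupancy-measure space, where it becomes a linear program in $q$ whose feasible set depends continuously on $\delta$; since the objective is linear and the domain is a compact polyhedron, the optimal value $\langle r, q_\delta^\star\rangle$ is a continuous function of $\delta$ by~\cite[Theorem~3.1]{terazono2015continuity}. Hence for any $\epsilon>0$ there exists $\delta'>0$ such that $|V_r^{\pi^\star}(\rho) - V_r^{\pi_\delta^\star}(\rho)|\leq \epsilon$ whenever $\delta < \delta'$. Taking $\delta = \min(\xi/2,1,\delta')$ and combining with the previous bound yields $V_r^{\pi^\star}(\rho) - V_r^{\hat\pi_t}(\rho) \leq O(\epsilon)$, after which rescaling $\epsilon$ by a constant gives the claim.

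The main obstacle I anticipate is verifying that the problem-dependent constants in Theorem~\ref{thm:linear convergence OPG-PD} remain uniformly bounded as we perturb to $g'$; in particular one must check that the bilinear-game constant $c$ from Lemma~\ref{lem:bilinear_game_SP_MS} underlying $C_{\rho,\xi'}$ does not blow up when $\delta \to 0$, so that $\Xi(\xi')$ is $O(1)$ under the choice $\delta=\xi/2$. Once that is in hand, the rest is a direct transcription of the RPG-PD zero-violation argument.
\end{extraproof}
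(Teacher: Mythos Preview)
Your proposal is correct and follows essentially the same route as the paper's proof: the same conservative shift $g' = g - (1-\gamma)\delta$, the same application of Corollary~\ref{cor: linear convergence of OPG} to the perturbed problem, the same choice $\delta = \xi/2$ to absorb the $\epsilon$-slack into zero constraint violation, and the same continuity argument via~\cite[Theorem~3.1]{terazono2015continuity} in occupancy-measure space to control $|V_r^{\pi^\star}(\rho) - V_r^{\pi_\delta^\star}(\rho)|$, with the final $\delta = \min(\xi/2,1,\delta')$. The obstacle you flag (uniform control of the problem-dependent constants, in particular the bilinear-game constant $c$, under the perturbation) is not addressed in the paper either; the paper simply writes ``the above $\Omega(\cdot)$ has some problem-dependent constant $\Upsilon>0$'' and proceeds, so your sketch is at the same level of rigor.
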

\begin{proof}
	We apply the conservatism to the translated constraint $V_g^{\pi}(\rho) \geq 0$ in Problem~\eqref{eq:CMDP}.
	Specifically, for any $\delta<\min(\xi,1)$, we let 
	$g' \DefinedAs g - (1-\gamma)\delta$ and define a conservative constraint,
	\[
	V_{g'}^{\pi} 
	\;\; \DefinedAs \;\; 
	V_{g}^{\pi} (\rho) - \delta
	\; \geq \; 0.
	\]
	It is straightforward to see that
	Assumption~\ref{as:feasibility} ensures that $V_{g'}^{\pi_t}(\rho)\geq 0$ is strictly feasible for a new slack variable $\xi' \DefinedAs \xi - \delta$. We now can apply OPG-PD~\eqref{eq: optimistic method} to a new Lagrangian, 
	\[
	L'(\pi,\lambda) 
	\; \DefinedAs \;
	V_{r+\lambda g'}^\pi (\rho) 
	\]
	and Corollary~\ref{cor: linear convergence of OPG} holds if we replace $g$ in OPG-PD by $g'$. Thus,
	\[
	V_r^{\pi_\delta^\star}(\rho) - V_r^{\hat\pi_{t}}(\rho) 
	\;\; \leq \;\; 
	O(\epsilon)
	\; \text{ and } \;
	- V_{g'}^{\hat\pi_{t}}(\rho) 
	\; \;\leq \; \;
	O(\epsilon) \;\;
	\text{ for any }
	t =  \Omega\left(\,\log^2\frac{1}{\epsilon}\,\right)
	\]
	where $\Omega(\cdot)$ hides some problem-dependent constant, and $\pi_\delta^\star$ is an optimal policy to the $\delta$-perturbed 
	constrained policy optimization problem,
	\begin{equation}\label{eq:CMDP perturbed OPG}
	% \begin{array}{rl}
	\displaystyle\maximize_{\pi\,\in\,\Pi} \; V_r^\pi(\rho) \;
	\;\;\;\;
	% \\[0.2cm]
	\;\subject \; V_{g}^\pi(\rho) - \delta\; \geq \; 0.
	% \end{array}
	\end{equation}
	We notice that the above $\Omega(\cdot)$ has some problem-dependent constant $\Upsilon>0$. Thus, we select $\delta$ such that $\delta \geq {\epsilon \Upsilon}$, which is always possible for small enough $\epsilon$, for instance $\delta = \frac{\xi}{2}$ and $\xi' = \frac{\xi}{2}$. Hence, if we take $\delta  = \frac{\xi}{2}$ and such small $\epsilon$, then,
	\[
	- V_{g'}^{\hat\pi_{t}}(\rho) 
	\;\; = \;\;
	- \, V_{g}^{\hat\pi_{t}}(\rho) \,+\,  \delta
	\;\; \leq \;\; 
	\epsilon \Upsilon \;\;
	\text{ for any }
	t = \Omega\left(\, \log^2\frac{1}{\epsilon}\, \right)
	\]
	which shows that $V_{g}^{\hat\pi_{t}}(\rho) \geq 0$ for some large $t$.
	
	The rest is to show that $ V_r^{\pi^\star}(\rho) - V_r^{\hat\pi_{t}}(\rho) \leq O(\epsilon)$. We notice that $\pi^\star$ is an optimal policy to Problem~\eqref{eq:CMDP perturbed OPG} when $\delta = 0$. Let $q^\star$ and $q_\delta^\star$ be associated occupancy measures of policies $\pi^\star$ and $\pi_\delta^\star$. In the occupancy measure space, Problem~\eqref{eq:CMDP perturbed OPG} becomes a linear program and it has a solution $q_\delta^\star$. Thus, we can view $q_\delta^\star$ as a $\delta$-perturbed solution of a convex optimization problem in which all functions are continuous differentiable and the domain is convex and compact. It is known from~\cite[Theorem~3.1]{terazono2015continuity} that the optimal solution $q_\delta^\star$ is continuous in $\delta$, which implies that for any $\epsilon>0$, there exists $\delta'$ such that $|\langle r, q^\star \rangle - \langle r, q_\delta^\star \rangle | \leq O(\epsilon)$ for any $\delta < \delta'$. Thus, $|V_r^{\pi^\star}(\rho) - V_r^{\pi_\delta^\star}(\rho) | \leq O(\epsilon)$ for small enough $\epsilon$. Therefore, 
	\[
	V_r^{\pi^\star}(\rho) - V_r^{\hat\pi_{t}}(\rho) 
	\;\; \leq \;\;
	V_r^{\pi_\delta^\star}(\rho) - V_r^{\hat\pi_{t}}(\rho) 
	\,+\, |V_r^{\pi^\star}(\rho) -V_r^{\pi_\delta^\star}(\rho) |
	\;\; \leq \;\; 
	O(\epsilon)
	\]
	for some large $t$.
	Collecting all conditions on $\delta$ leads to our final choice of $\delta = \min(\frac{\xi}{2},1,\delta')$. 
	Finally, we combine all big O notation to complete the proof.
\end{proof}

\section{Other Computational Experiments}
\label{app:computational experiments}

In this section, we report details of our experimental setup and additional experimental results that verify merits and effectiveness of our methods: RPG-PD~\eqref{eq: regularized primal-dual update} and OPG-PD~\eqref{eq: optimistic method}. We implement RPG-PD in the form of NPG~\citep{agarwal2021theory} and the restricted probability simplex $\hat\Delta(A)$ by restraining policy parameter to be bounded. 

To properly assess the convergence performance, our experiment is a tabular constrained MDP with a randomly generated  transition kernel, a discount factor $\gamma =0.9$, uniform rewards $r\in[0,1]$ and utilities $g\in[-1,1]$, and a uniform initial state distribution $\rho$. The constraint is $V_g^{\pi}(\rho)\geq 0$.
To check feasibility, we employ the standard policy iteration procedure to solve a standard MDP problem with respect to $V_g^\pi(\rho)$. If feasible, we then solve a linear program in occupancy-measure space to find the optimal reward value $V_r^{\pi^\star}(\rho)$ at an optimal policy $\pi^\star$ induced by the optimal occupancy measure. Throughout all experiments, the random seed is fixed and $V_r^{\pi^\star}(\rho)$ takes the value $8.16$ at an optimal policy $\pi^\star$.

We compare our methods RPG-PD and OPG-PD with three typical learning algorithms in the constrained MDP literature: (i) primal-dual methods~\citep{ding2020natural,stooke2020responsive}; (ii) dual methods~\citep{li2021faster,liu2021policy,ying2022dual}; and (iii) primal method~\citep{xu2021crpo}. 
% 
% 
% \todo{dependence on the initial state distribution}
% 
% \todo{dependence on the ergodicity of MDPs}
% 
% Our first experiment is a tabular constrained MDP with a random transition, a discount factor $\gamma =0.9$, uniform rewards $r\in[0,1]$ and utilities $g\in[-1,1]$, and an uniform initial state distribution $\rho$. The constraint is $V_g^{\pi}(\rho)\geq 0$. We compare RPG-PD and OPG-PD with three typical learning algorithms in the constrained MDP literature: (i) primal-dual methods~\citep{ding2020natural,stooke2020responsive}; (ii) dual methods~\citep{li2021faster,liu2021policy,ying2022dual}; and (iii) primal approach~\citep{xu2021crpo}. 
% 
We have reported our comparison for primal-dual methods in Section~\ref{sec:experiments} and Figure~\ref{fig:convergence performance of primal-dual methods}. We now report our comparison experiments on other two baseline methods: (ii) dual methods~\citep{li2021faster,liu2021policy,ying2022dual} and (iii) primal method~\citep{xu2021crpo}, in Section~\ref{app:last-iterate convergence comparison}. In addition, we showcase the last-iterate zero constraint violation performance of our methods in Section~\ref{app:last-iterate zero constraint violation comparison}, and conduct sensitivity analysis of our methods to regularization and stepsize in Section~\ref{app:sensitivity of RPG-PD regularization} and Section~\ref{app:sensitivity of OPG-PD stepsize}, together with a variant of OPG-PD and policy-based ReLOAD~\citep{moskovitz2023reload}.   
All the experiments were conducted on an Apple MacBook Pro 2017 laptop equipped with a 2.3 GHz Dual-Core Intel Core i5 in Jupyter Notebook. 

\subsection{Last-iterate convergence comparison with other baselines}
\label{app:last-iterate convergence comparison}

We report our comparison for dual methods in Figure~\ref{fig:convergence performance of dual methods}. We notice that dual methods~\citep{li2021faster,liu2021policy,ying2022dual} work in a double-loop fashion, where a (regularized) NPG subroutine is executed to perform the dual update. To make a fair comparison, the number of dual updates and the number of NPG steps are set to be $53$ and $20$ to ensure that dual methods take the same number of policy gradient updates: $1060$, as RPG-PD and OPG-PD, and we evaluate all policy iterates inside the NPG subroutines of dual methods. 

In Figure~\ref{fig:convergence performance of dual methods}, RPG-PD ($\small\textbf{\color{blue}-- --}$) and OPG-PD ($\small\textbf{\color{red}---}$) outperform 
PMD-PD~\citep{liu2021policy} ($\small\textbf{\color{cyan}-- --}$), AR-CPO~\citep{li2021faster} ($\small\textbf{\color{magenta}--$\cdot$--}$), and Accelerated Dual~\citep{ying2022dual} ($\small\textbf{$\cdot$$\cdot$$\cdot$$\cdot$}$) in several aspects. 
First, we see that the known oscillation behavior in primal-dual methods also shows up in these dual methods, which could result from that we can't exactly evaluate the search direction of the dual update via a NPG subroutine. Thus, dual methods can be viewed an instantiation of two-time-scale methods in which the primal update performs faster than the dual update.  Regarding this, RPG-PD and OPG-PD show outstanding performance in suppressing oscillation behavior. Second, since RPG-PD and OPG-PD are single-time-scale primal-dual methods, it is easy to tune algorithmic parameters compared with double-loop dual methods. For instance, in Accelerated Dual, it is relatively difficult to find a set of algorithmic parameters that avoid the sub-optimal performance in Figure~\ref{fig:convergence performance of dual methods}. Third, OPG-PD reaches the maximum reward value 8.16 and RPG-PD converges to a slightly smaller reward value because of regularization, while both methods enjoy the utility constraint satisfaction in the last-iterate fashion, which seems to be difficult for dual methods because of oscillating utility values.   
Hence, using the same number of policy gradient updates, we have verified that OPG-PD and RPG-PD also can outperform several dual methods by yielding an optimal constrained policy in the last-iterate fashion. 

\begin{figure}[ht]
	\begin{center}
		\begin{tabular}{cccc}
			% & (a) & & (b)
			% \\[-0.2cm]
			{\rotatebox{90}{\;\;\;\; \;\;\;\; \;\, Reward value}} 
			\!\!\!\! \!\!\!\! \!\!
			& {\includegraphics[scale=0.42]{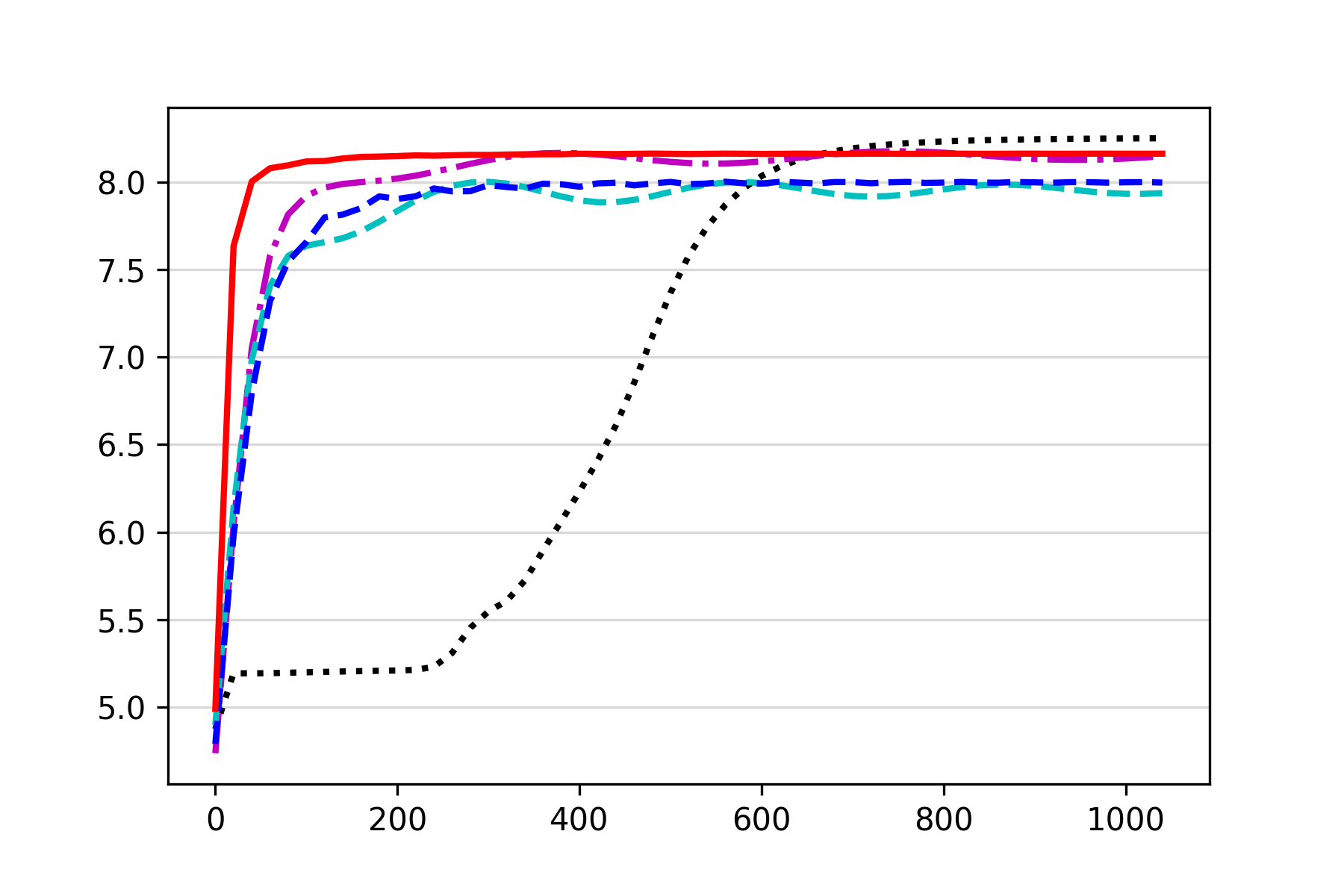}}
			&
			{\rotatebox{90}{ \;\;\;\; \;\;\;\; \;\; Utility value}} 
			\!\!\!\! \!\!\!\! \!\!
			& {\includegraphics[scale=0.42]{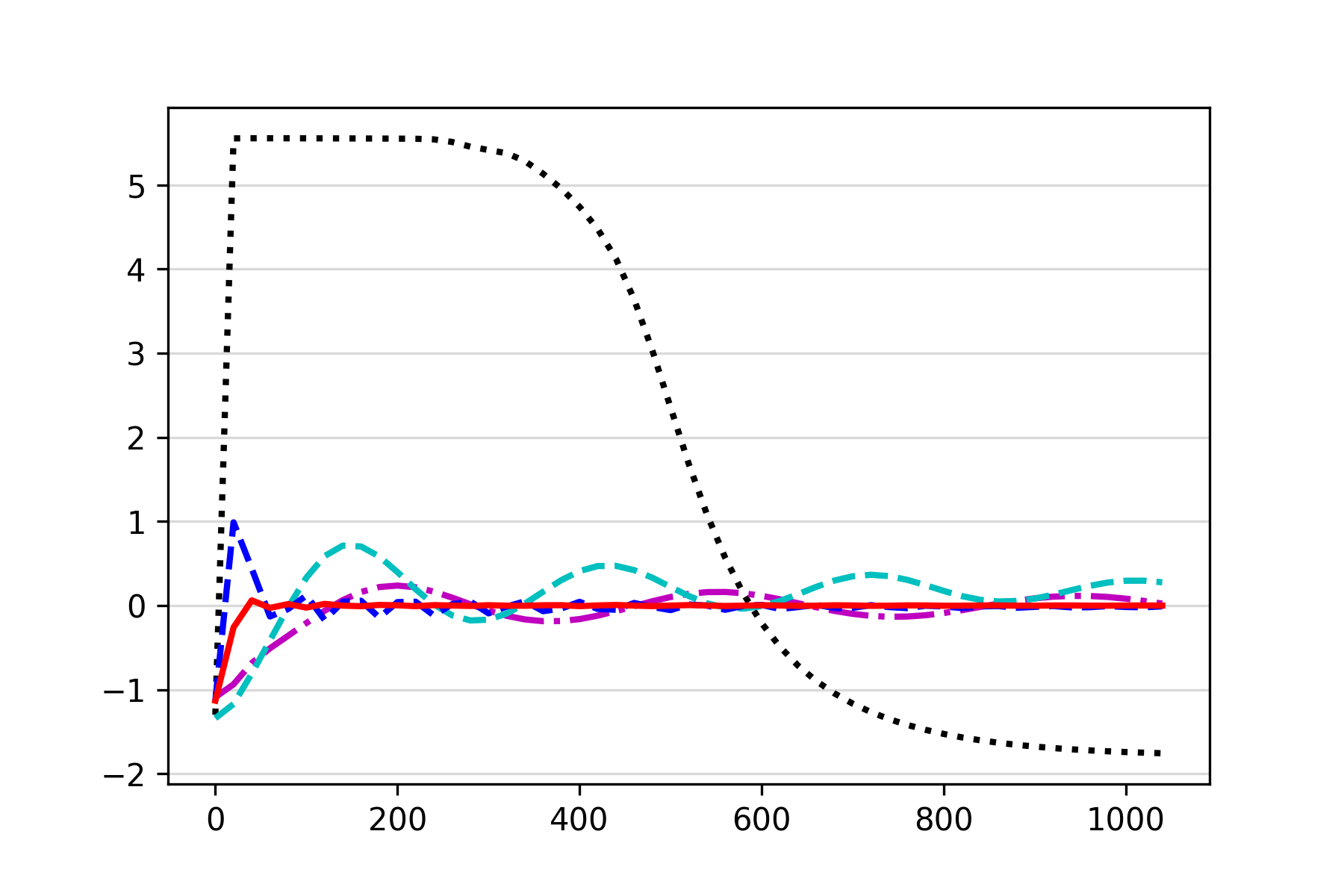}}
			\\[-0.5cm]
			& {\centering \;\;\; Iteration} & & {\centering \;\;\;\; Iteration}
		\end{tabular}
	\end{center}
	\caption{ Convergence performance of RPG-PD, OPG-PD, and dual methods. Learning curves of our RPG-PD ($\small\textbf{\color{blue}-- --}$) and OPG-PD ($\small\textbf{\color{red}---}$), and PMD-PD~\protect\citep{liu2021policy} ($\small\textbf{\color{cyan}-- --}$), AR-CPO~\protect\citep{li2021faster} ($\small\textbf{\color{magenta}--$\cdot$--}$), and Accelerated Dual~\protect\citep{ying2022dual} ($\small\textbf{$\cdot$$\cdot$$\cdot$$\cdot$}$) methods. The horizontal axes represent the policy iterations $\{\pi_t\}_{t\,\geq\,0}$ that are generated by each method and the vertical axes mean the value functions of the policy iterates $\{\pi_t\}_{t\,\geq\,0}$: reward value $V_r^{\pi_t}(\rho)$ (Left) and utility value $V_g^{\pi_t}(\rho)$ (Right). In this experiment, for RPG-PD and OPG-PD, we use the stepsize $\eta = 0.1$  and the regularization parameter $\tau = 0.08$ for RPG-PD, and the initial distribution $\rho$ is uniform. For PMD-PD, AR-CPO, and Accelerated Dual, we use the stepsize $\eta = 0.1$ for the dual update, the regularized NPG stepsize $\alpha = 1$, and the regularization parameter $\tau = 0.08$, and the uniform initial distribution $\rho$.}
	\label{fig:convergence performance of dual methods}
	\end{figure}
	
	We report our comparison for a primal method in Figure~\ref{fig:convergence performance of primal methods}. In Figure~\ref{fig:convergence performance of primal methods}, RPG-PD ($\small\textbf{\color{blue}-- --}$) and OPG-PD ($\small\textbf{\color{red}---}$) outperform CRPO~\citep{xu2021crpo} ($\small\textbf{\color{black}--$\cdot$--}$). We notice that although CRPO~\citep{xu2021crpo} works in a single-time-scale fashion as RPG-PD and OPG-PD, the policy has to be updated by alternatively using the gradient directions of reward/utility value functions. To ensure constraint satisfaction, CRPO often switches between the gradient directions of reward/utility value functions depending on the amount of constraint violation. As a result, we see that CRPO reaches a slightly lower reward value than OPG-PD's, and the constraint satisfaction is relatively conservative and has mild oscillation behavior. In contrast, OPG-PD achieves the maximum reward value 8.16 and RPG-PD converges to a slightly smaller reward value because of regularization, while both methods enjoy the utility constraint satisfaction in the last-iterate fashion. Last but not least, we have supported RPG-PD and OPG-PD with a policy last-iterate convergence theory, while such theory is unknown for CRPO as far as we know. 
	
	\begin{figure}[ht]
		\begin{center}
			\begin{tabular}{cccc}
				% & (a) & & (b)
				% \\[-0.2cm]
				{\rotatebox{90}{\;\;\;\; \;\;\;\; \;\, Reward value}} 
				\!\!\!\! \!\!\!\! \!\!
				& {\includegraphics[scale=0.42]{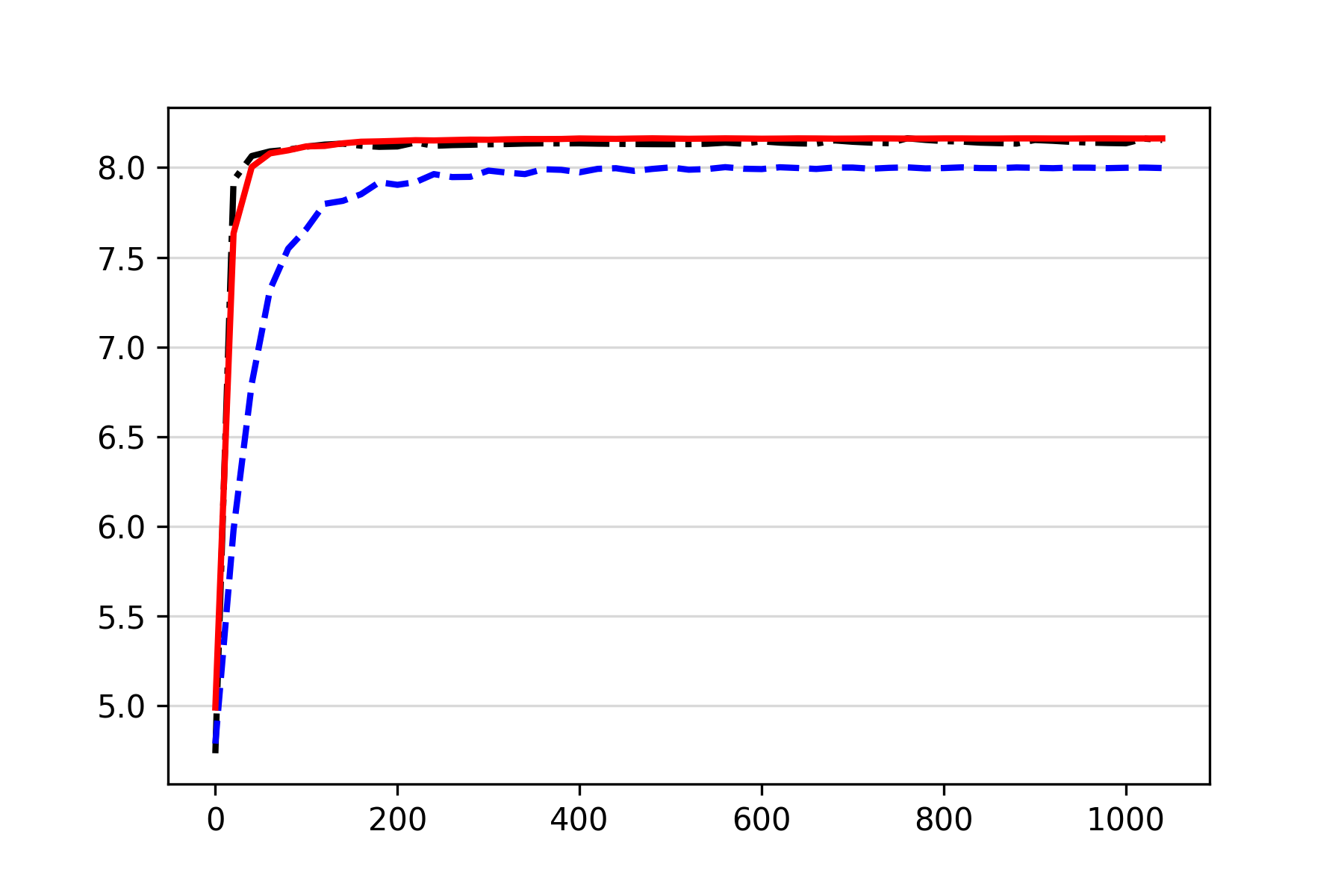}}
				&
				{\rotatebox{90}{ \;\;\;\; \;\;\;\; \;\; Utility value}} 
				\!\!\!\! \!\!\!\! \!\!
				& {\includegraphics[scale=0.42]{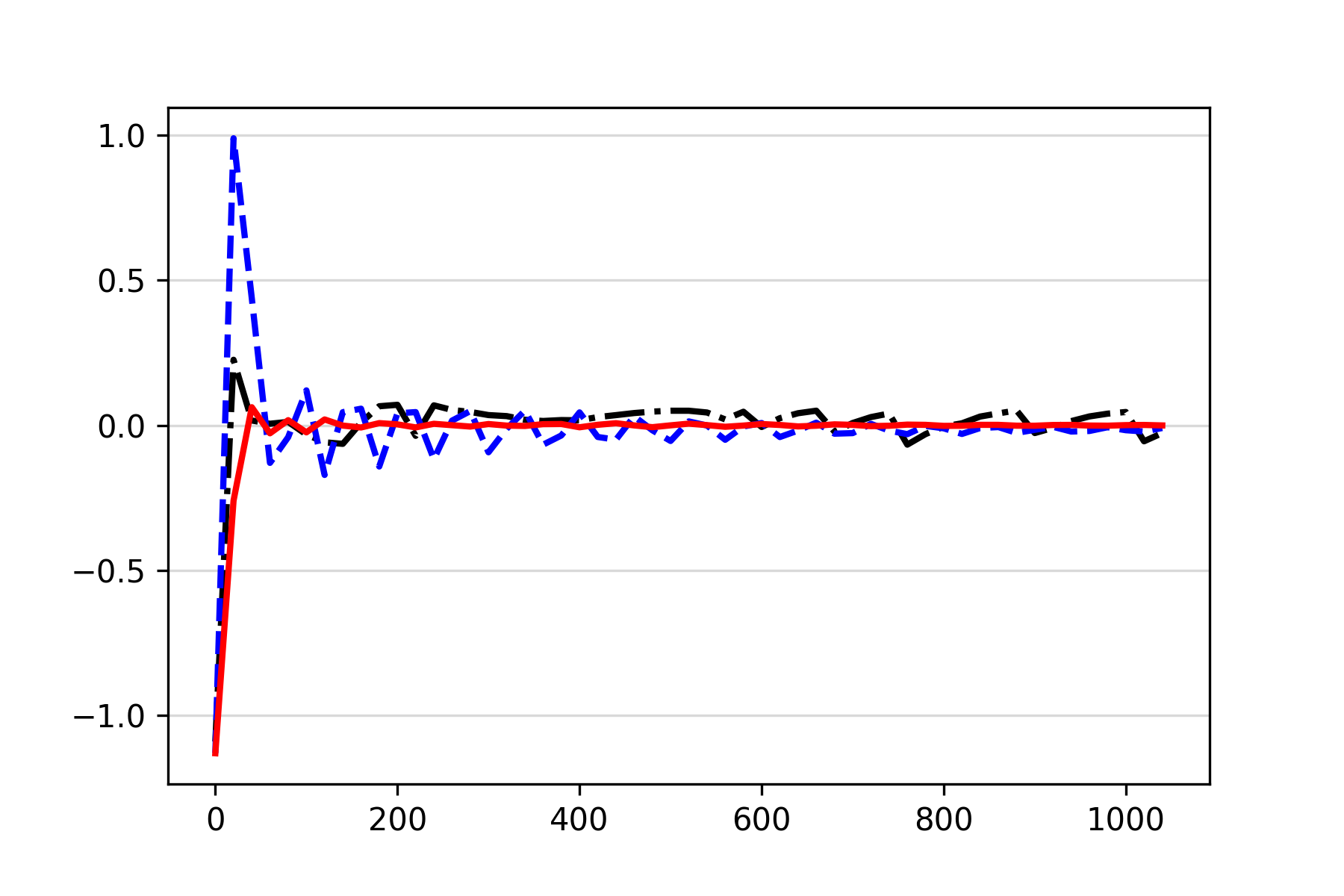}}
				\\[-0.5cm]
				& {\centering \;\;\; Iteration} & & {\centering \;\;\;\; Iteration}
			\end{tabular}
		\end{center}
		\caption{ Convergence performance of RPG-PD, OPG-PD, and primal methods. Learning curves of our RPG-PD ($\small\textbf{\color{blue}-- --}$) and OPG-PD ($\small\textbf{\color{red}---}$), and CRPO~\protect\citep{xu2021crpo} ($\small\textbf{\color{black}--$\cdot$--}$) methods. The horizontal axes represent the policy iterations $\{\pi_t\}_{t\,\geq\,0}$ that are generated by each method and the vertical axes mean the value functions of the policy iterates $\{\pi_t\}_{t\,\geq\,0}$: reward value $V_r^{\pi_t}(\rho)$ (Left) and utility value $V_g^{\pi_t}(\rho)$ (Right). In this experiment, for RPG-PD and OPG-PD, we use the stepsize $\eta = 0.1$  and the regularization parameter $\tau = 0.08$ for RPG-PD, and the initial distribution $\rho$ is uniform. For CRPO, we update the policy via the NPG step with stepsize $\eta = 0.1$ and the uniform initial distribution $\rho$.}
		\label{fig:convergence performance of primal methods}
	\end{figure}
	
	\subsection{Last-iterate zero constraint violation comparison}
	\label{app:last-iterate zero constraint violation comparison}
	
	In this experiment, we continue our previous tabular constrained MDP with a random transition, a discount factor $\gamma =0.9$, uniform rewards $r\in[0,1]$ and utilities $g\in[-1,1]$, and an uniform initial state distribution $\rho$. Instead of the nominal constraint $V_g^{\pi}(\rho)\geq 0$, we use a conservative constraint $V_{g'}^{\pi}(\rho)\geq 0$ in RPG-PD and OPG-PD, where $g'\DefinedAs g - (1-\gamma)\delta$ is a conservative utility function and $\delta $ is the conservative parameter.
	To get zero constraint violation regarding the nominal constraint, we apply RPG-PD and OPG-PD to the conservative constraint $V_g^{\pi}(\rho)\geq \delta$ where we take the conservative parameter $\delta = 0.1$.
	As above, we compare conservative RPG-PD and OPG-PD with three typical learning algorithms in the constrained MDP literature: (i) primal-dual methods~\citep{ding2020natural,stooke2020responsive}; (ii) dual methods~\citep{li2021faster,liu2021policy,ying2022dual}; and (iii) primal approach~\citep{xu2021crpo}. We report our comparison for primal-dual methods in Figure~\ref{fig:convergence performance of primal-dual methods conserv}, for dual methods~\citep{li2021faster,liu2021policy,ying2022dual} in Figure~\ref{fig:convergence performance of dual methods conserv}, and for primal approach~\citep{xu2021crpo} in Figure~\ref{fig:convergence performance of primal methods conserv}. We observe that conservative RPG-PD and OPG-PD achieve similar performance regarding the oscillation suppression and the optimality of reward values as shown in Figure~\ref{fig:convergence performance of primal-dual methods}, Figure~\ref{fig:convergence performance of dual methods}, and Figure~\ref{fig:convergence performance of primal methods}. Interestingly, in Figure~\ref{fig:convergence performance of primal-dual methods conserv}, Figure~\ref{fig:convergence performance of dual methods conserv}, and Figure~\ref{fig:convergence performance of primal methods conserv}, RPG-PD and OPG-PD converge to a utility value that is strictly above zero, i.e., $V_g^{\pi_t}(\rho)>0$ for large $t$, which is not guaranteed in many of other methods. To sum up, we have confirmed that RPG-PD and OPG-PD can ensure zero constraint violation of instantaneous policy iterates in a finite number of training time.

	\begin{figure}[ht]
		\begin{center}
			\begin{tabular}{cccc}
				% & (a) & & (b)
				% \\[-0.2cm]
				{\rotatebox{90}{\;\;\;\; \;\;\;\; \;\, Reward value}} 
				\!\!\!\! \!\!\!\! \!\!
				& {\includegraphics[scale=0.42]{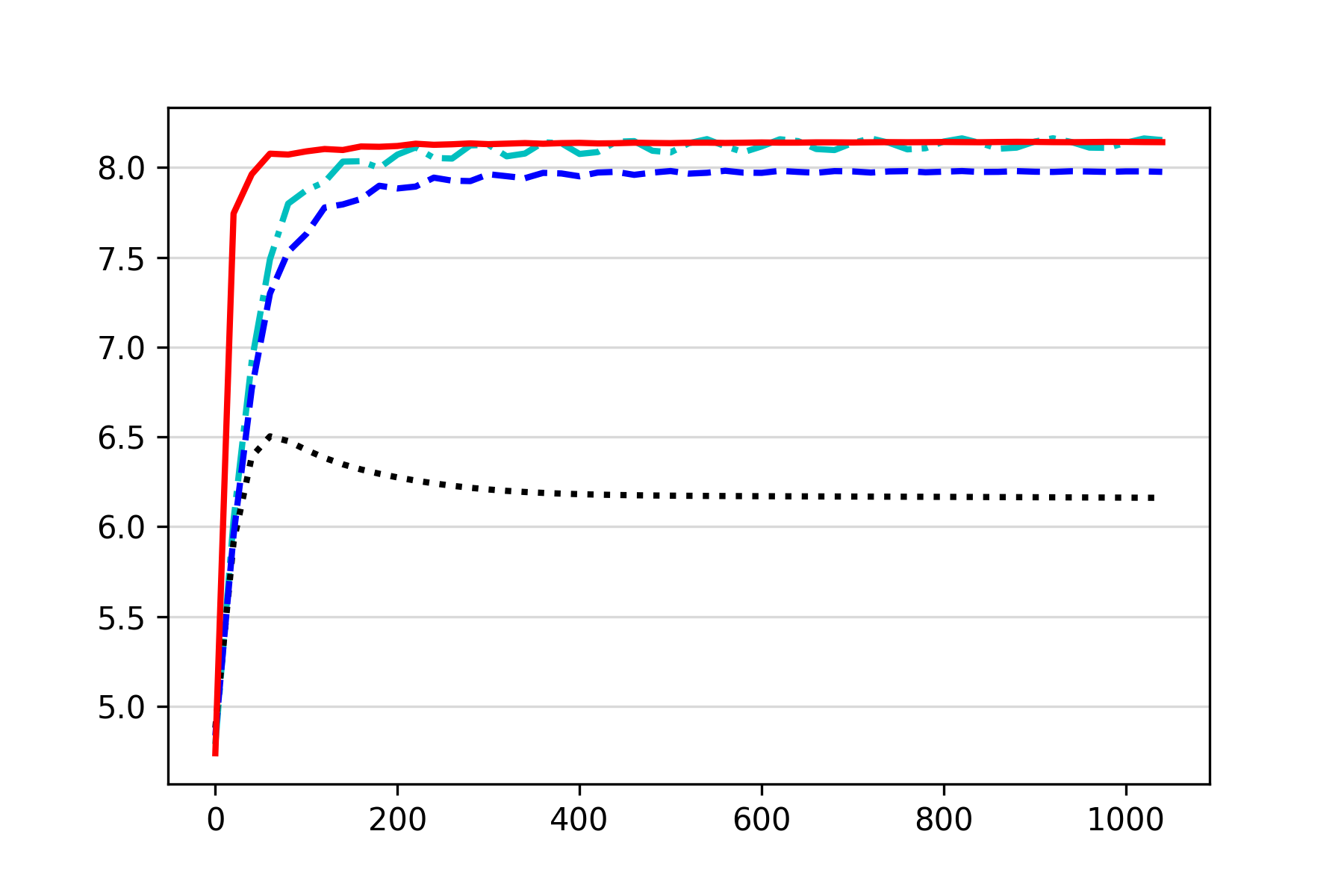}}
				&
				{\rotatebox{90}{ \;\;\;\; \;\;\;\; \;\; Utility value}} 
				\!\!\!\! \!\!\!\! \!\!
				& {\includegraphics[scale=0.42]{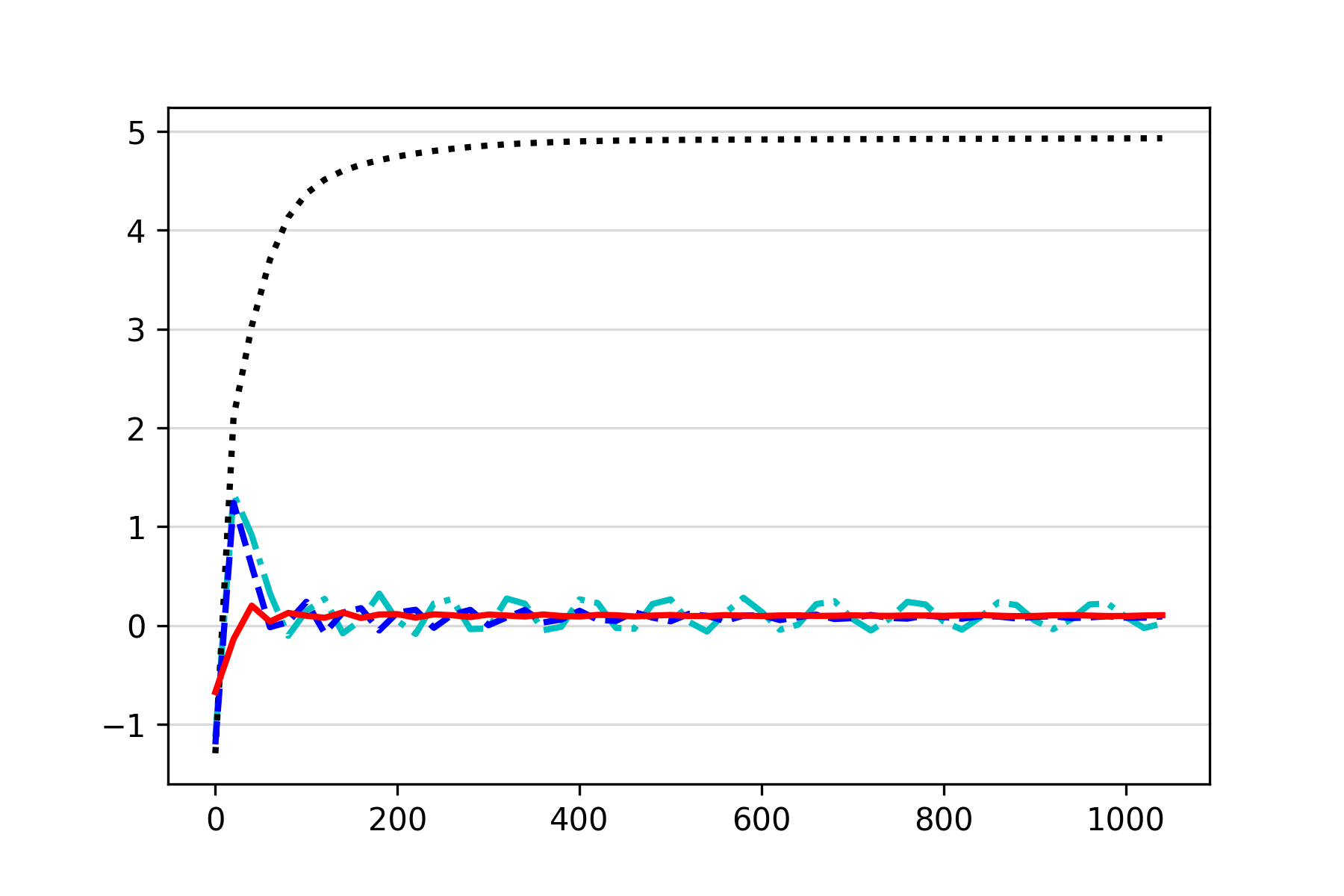}}
				\\[-0.5cm]
				& {\centering \;\;\; Iteration} & & {\centering \;\;\;\; Iteration}
			\end{tabular}
		\end{center}
		\caption{ Convergence performance of RPG-PD, OPG-PD, and primal-dual methods. Learning curves of our RPG-PD ($\small\textbf{\color{blue}-- --}$) and OPG-PD ($\small\textbf{\color{red}---}$), and NPG-PD~\protect\citep{ding2020natural} ($\small\textbf{\color{cyan}--$\cdot$--}$) and PID
			Lagrangian~\protect\citep{stooke2020responsive} ($\small\textbf{$\cdot$$\cdot$$\cdot$$\cdot$}$) methods. The horizontal axes represent the policy iterations $\{\pi_t\}_{t\,\geq\,0}$ that are generated by each method and the vertical axes mean the value functions of the policy iterates $\{\pi_t\}_{t\,\geq\,0}$: reward value $V_r^{\pi_t}(\rho)$ (Left) and utility value $V_g^{\pi_t}(\rho)$ (Right). In this experiment, we apply RPG-PD and OPG-PD to a conservative constraint $V_g^{\pi}(\rho)\geq \delta$, and we take the conservative parameter $\delta = 0.1$, the same stepsize $\eta = 0.1$ for all methods, the regularization parameter $\tau = 0.08$ for RPG-PD, and the uniform initial distribution $\rho$.}
		\label{fig:convergence performance of primal-dual methods conserv}
		\end{figure}

		\begin{figure}[ht]
			\begin{center}
				\begin{tabular}{cccc}
					% & (a) & & (b)
					% \\[-0.2cm]
					{\rotatebox{90}{\;\;\;\; \;\;\;\; \;\, Reward value}} 
					\!\!\!\! \!\!\!\! \!\!
					& {\includegraphics[scale=0.42]{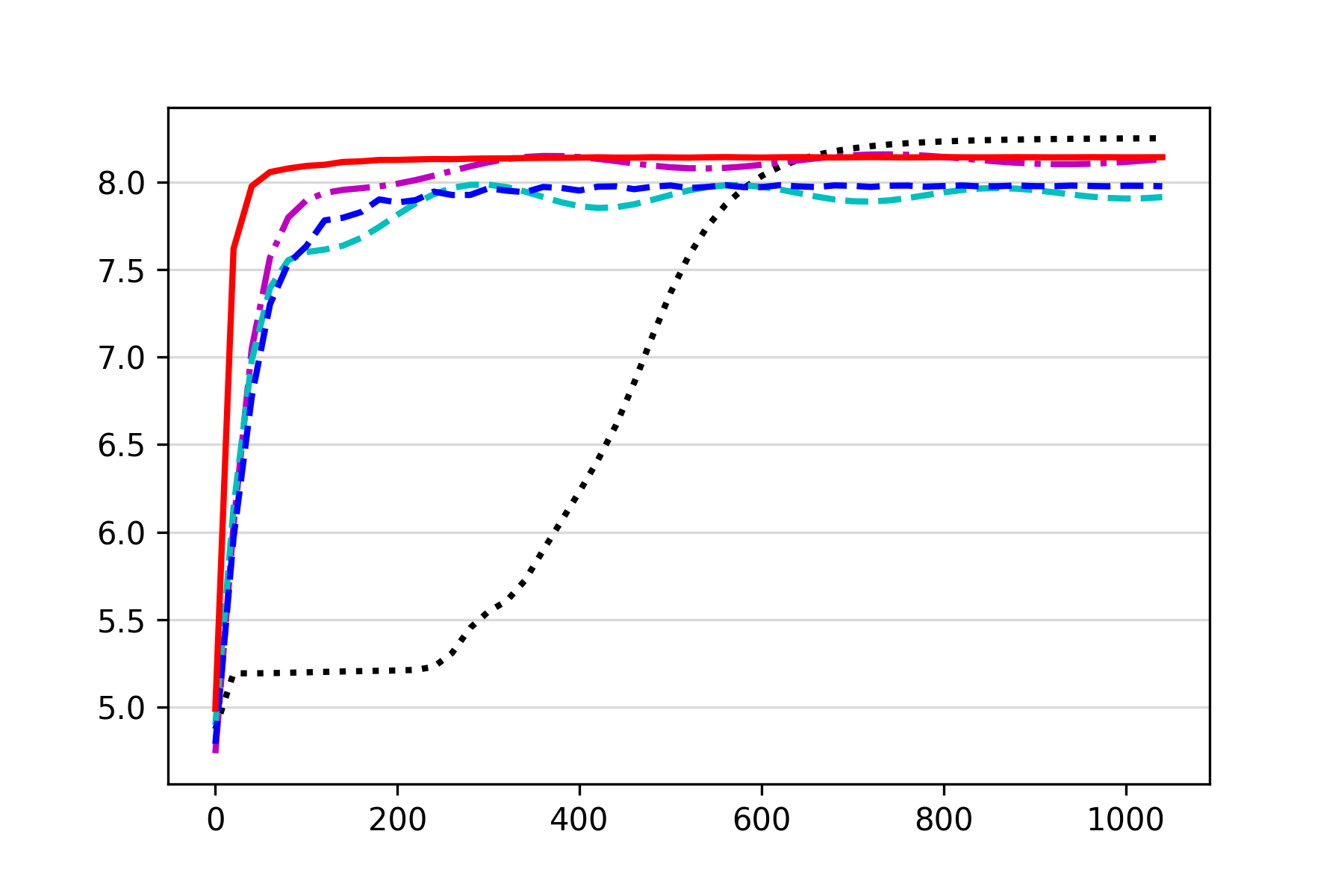}}
					&
					{\rotatebox{90}{ \;\;\;\; \;\;\;\; \;\; Utility value}} 
					\!\!\!\! \!\!\!\! \!\!
					& {\includegraphics[scale=0.42]{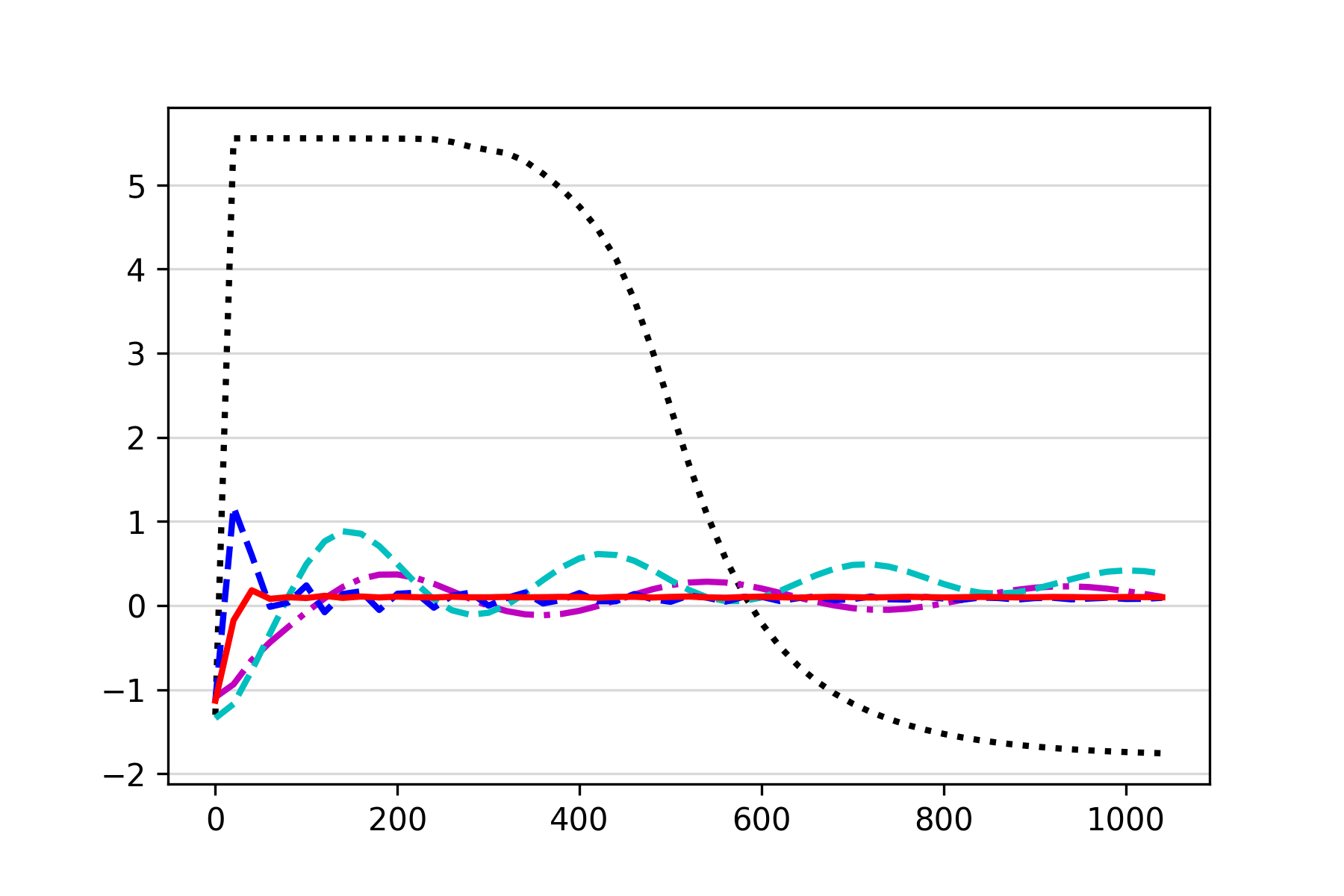}}
					\\[-0.5cm]
					& {\centering \;\;\; Iteration} & & {\centering \;\;\;\; Iteration}
				\end{tabular}
			\end{center}
			\caption{ Convergence performance of RPG-PD, OPG-PD, and dual methods. Learning curves of our RPG-PD ($\small\textbf{\color{blue}-- --}$) and OPG-PD ($\small\textbf{\color{red}---}$), and PMD-PD~\protect\citep{liu2021policy} ($\small\textbf{\color{cyan}-- --}$), AR-CPO~\protect\citep{li2021faster} ($\small\textbf{\color{magenta}--$\cdot$--}$), and Accelerated Dual~\protect\citep{ying2022dual} ($\small\textbf{$\cdot$$\cdot$$\cdot$$\cdot$}$) methods. The horizontal axes represent the policy iterations $\{\pi_t\}_{t\,\geq\,0}$ that are generated by each method and the vertical axes mean the value functions of the policy iterates $\{\pi_t\}_{t\,\geq\,0}$: reward value $V_r^{\pi_t}(\rho)$ (Left) and utility value $V_g^{\pi_t}(\rho)$ (Right). In this experiment, we apply RPG-PD and OPG-PD to a conservative constraint $V_g^{\pi}(\rho)\geq \delta$, and we take the stepsize $\eta = 0.1$, the conservative parameter $\delta = 0.1$, the regularization parameter $\tau = 0.08$ for RPG-PD, and the initial distribution $\rho$ is uniform. For PMD-PD, AR-CPO, and Accelerated Dual, we use the stepsize $\eta = 0.1$ for the dual update, the regularized NPG stepsize $\alpha = 1$, and the regularization parameter $\tau = 0.08$, and the uniform initial distribution $\rho$.}
			\label{fig:convergence performance of dual methods conserv}
			\end{figure}
			
			\begin{figure}[ht]
				\begin{center}
					\begin{tabular}{cccc}
						% & (a) & & (b)
						% \\[-0.2cm]
						{\rotatebox{90}{\;\;\;\; \;\;\;\; \;\, Reward value}} 
						\!\!\!\! \!\!\!\! \!\!
						& {\includegraphics[scale=0.42]{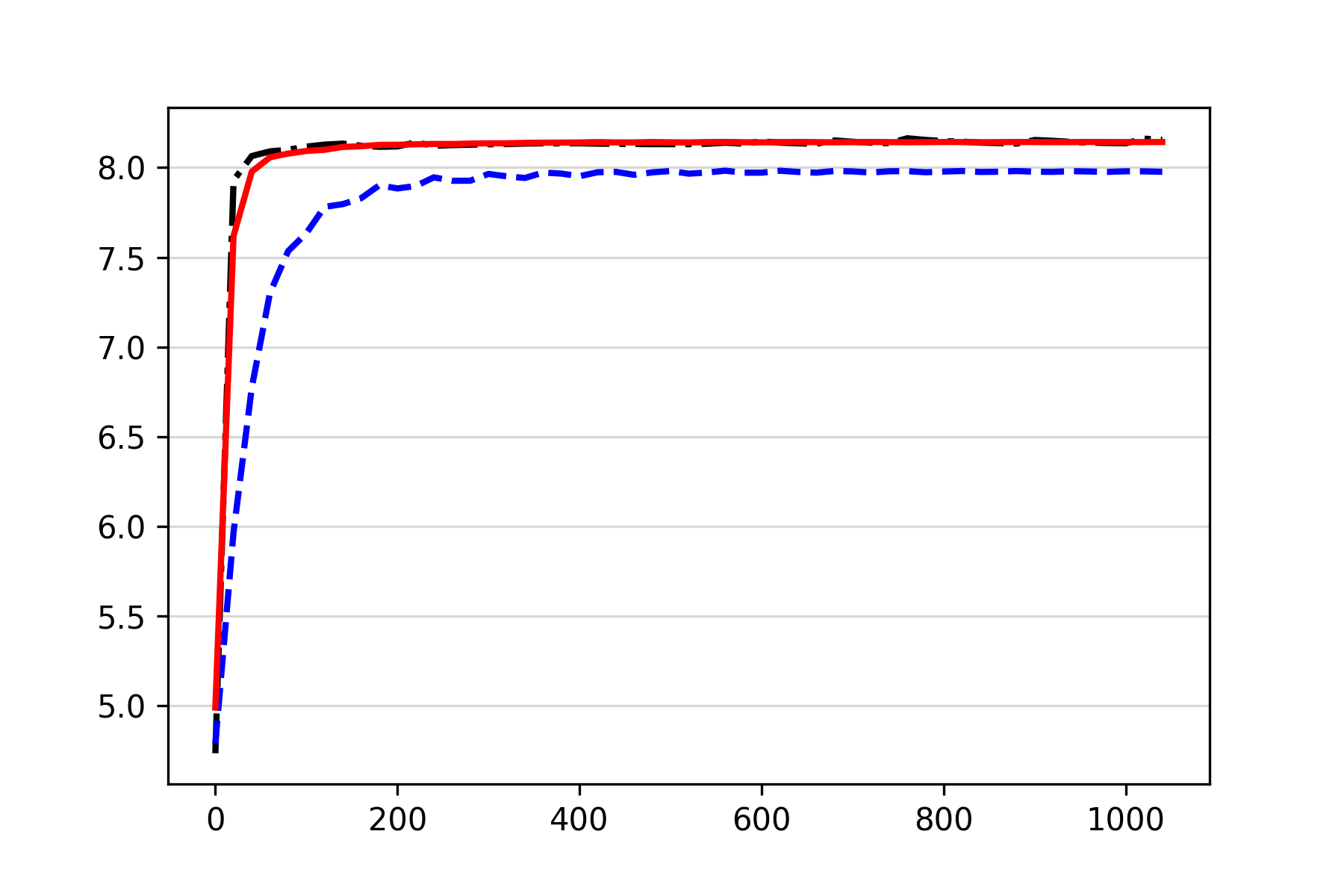}}
						&
						{\rotatebox{90}{ \;\;\;\; \;\;\;\; \;\; Utility value}} 
						\!\!\!\! \!\!\!\! \!\!
						& {\includegraphics[scale=0.42]{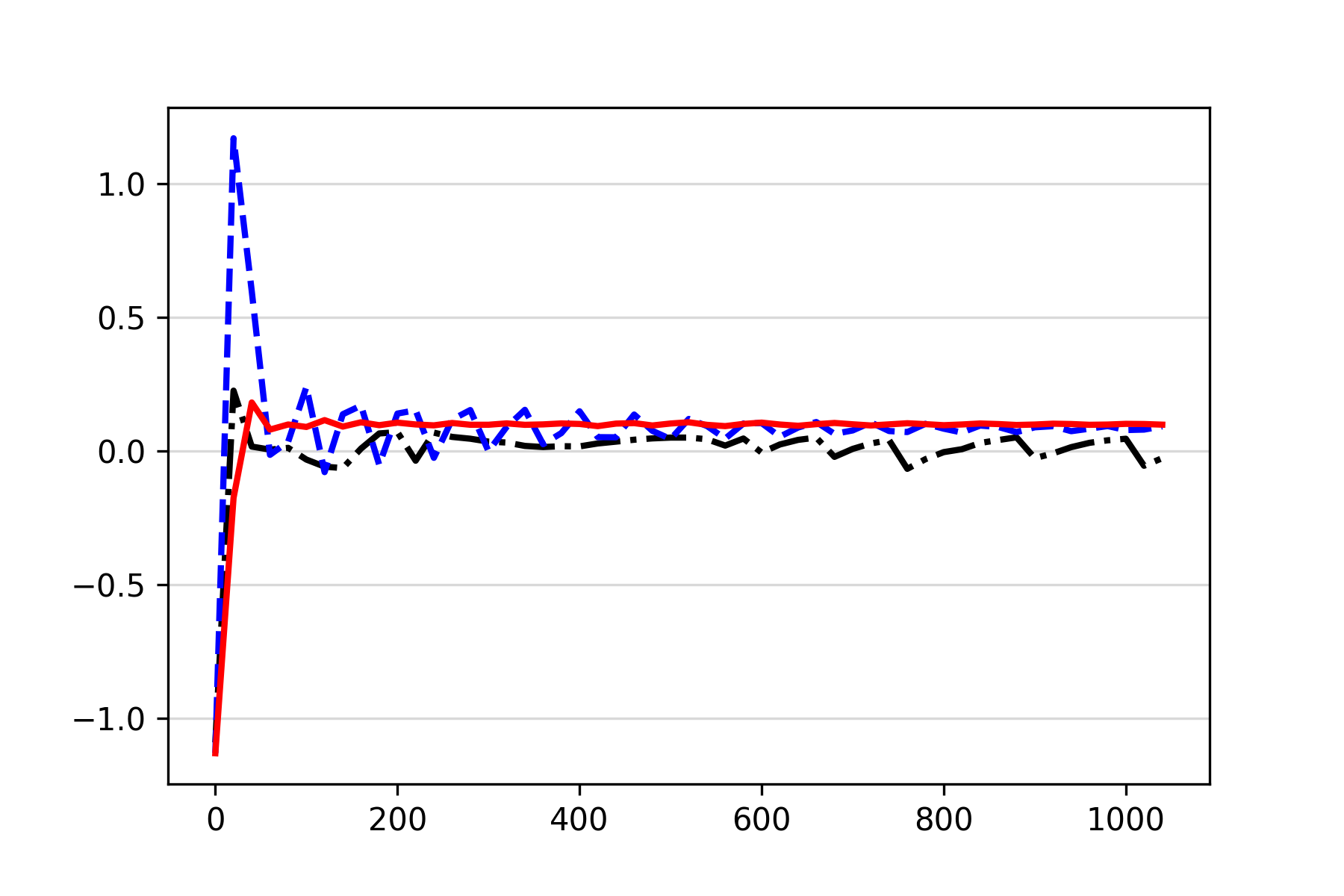}}
						\\[-0.5cm]
						& {\centering \;\;\; Iteration} & & {\centering \;\;\;\; Iteration}
					\end{tabular}
				\end{center}
				\caption{ Convergence performance of RPG-PD, OPG-PD, and primal methods. Learning curves of our RPG-PD ($\small\textbf{\color{blue}-- --}$) and OPG-PD ($\small\textbf{\color{red}---}$), and CRPO~\protect\citep{xu2021crpo} ($\small\textbf{\color{black}--$\cdot$--}$) methods. The horizontal axes represent the policy iterations $\{\pi_t\}_{t\,\geq\,0}$ that are generated by each method and the vertical axes mean the value functions of the policy iterates $\{\pi_t\}_{t\,\geq\,0}$: reward value $V_r^{\pi_t}(\rho)$ (Left) and utility value $V_g^{\pi_t}(\rho)$ (Right). In this experiment, 
					we apply RPG-PD and OPG-PD to a conservative constraint $V_g^{\pi}(\rho)\geq \delta$, and we take the conservative parameter $\delta = 0.1$, 
					the stepsize $\eta = 0.1$, the regularization parameter $\tau = 0.08$ for RPG-PD, and the initial distribution $\rho$ is uniform. For CRPO, we update the policy via the NPG step with stepsize $\eta = 0.1$ and the uniform initial distribution $\rho$.}
				\label{fig:convergence performance of primal methods conserv}
			\end{figure}
			
			\subsection{Sensitivity of RPG-PD~\eqref{eq: regularized primal-dual update} to regularization and stepsize}
			\label{app:sensitivity of RPG-PD regularization}
			
			In this experiment, we use our previous tabular constrained MDP with a random transition, a discount factor $\gamma =0.9$, uniform rewards $r\in[0,1]$ and utilities $g\in[-1,1]$, and an uniform initial state distribution $\rho$. The constraint is $V_g^{\pi}(\rho)\geq 0$.
			We recall that when we set the regularization parameter $\tau=0$, RPG-PD becomes a policy-based primal-dual method~\citep{ding2020natural} that often suffers the oscillation issue as shown in Figure~\ref{fig:convergence performance of primal-dual methods}. However, larger regularization usually bias regularized methods more to sub-optimal solutions. 
			Hence, it is important to reveal how the last-iterate convergence of RPG-PD depends on the regularization parameter $\tau$ and the stepsize $\eta$.
			
			We first repeat executing RPG-PD with a fixed stepsize $\eta = 0.1$, but varying three different regularization parameters $\tau \in \{ 0.1, 0.05, 0.01\}$. In Figure~\ref{fig:sensitivity performance of primal-dual methods RPG-PD tau}, RPG-PD damps initial oscillations successfully for $\tau = 0.1$ and $0.05$, and oscillates for $\tau = 0.01$. When we decrease $\tau$ from $0.1$ to $0.05$, the reward value RPG-PD converges to becomes higher, and the utility value's oscillation is slightly amplified initially, but damped eventually. When $\tau$ is further reduced to $0.01$, although the reward value reaches a value around the optimal reward value $8.16$, the utility value behaves oscillating. A reason for this is that RPG-PD with a relatively small regularization parameter (compared to the stepsize) works as usual un-regularized single-time-scale primal-dual methods. 
			% Therefore, it is crucial to tune the regularization parameter and the stepsize together for stable last-iterate convergence performance. 
			Hence, increasing the regularization parameter, $\tau=0.1$ can accelerate the convergence and attenuate the oscillation more effectively, although it makes the reward value more sub-optimal, which is also clearly shown in Figure~\ref{fig:sensitivity performance of primal-dual methods RPG-PD tau distance}.

			To demonstrate the convergence of RPG-PD's policy iterates, we measure the policy optimality gap via the squared norm distance of policy iterates to an optimal policy $\pi^\star$ that is obtained from an occupancy-measure-based linear program. From the optimality gap in Figure~\ref{fig:sensitivity performance of primal-dual methods RPG-PD tau distance}, we see that three policy optimality gaps decrease sublinearly to some constants in the logarithmic scale plot, which verifies the sublinear last-iterate convergence of RPG-PD's policy iterates in Theorem~\ref{thm: linear convergence of RPG}. Hence, we conjecture that it is impossible to improve the order of RPG-PD's sublinear rate, without introducing new algorithmic design.  
			
			To reduce the oscillation behavior, we repeat this experiment with a smaller stepsize $\eta = 0.01$ as suggested by 
			Corollary~\ref{cor: linear convergence of RPG}. We report our result in Figure~\ref{fig:sensitivity performance of primal-dual methods RPG-PD tau smaller} and Figure~\ref{fig:sensitivity performance of primal-dual methods RPG-PD tau distance small stepsize}. The oscillation in the utility value previously happened for $\tau = 0.01$ becomes less frequent, and the best reward value is achieved in this case. A noticeable loss is that the convergence has been slowed down, apparently in Figure~\ref{fig:sensitivity performance of primal-dual methods RPG-PD tau distance small stepsize}. 
			Hence, we have shown that balancing the stepsize and the regularization parameter (e.g., Corollary~\ref{cor: linear convergence of RPG}) is important for RPG-PD to achieve better oscillation attenuation and last-iterate convergence in practice.
			
			\begin{figure}[ht]
				\begin{center}
					\begin{tabular}{cccc}
						% & (a) & & (b)
						% \\[-0.2cm]
						{\rotatebox{90}{\;\;\;\; \;\;\;\; \;\, Reward value}} 
						\!\!\!\! \!\!\!\! \!\!
						& {\includegraphics[scale=0.42]{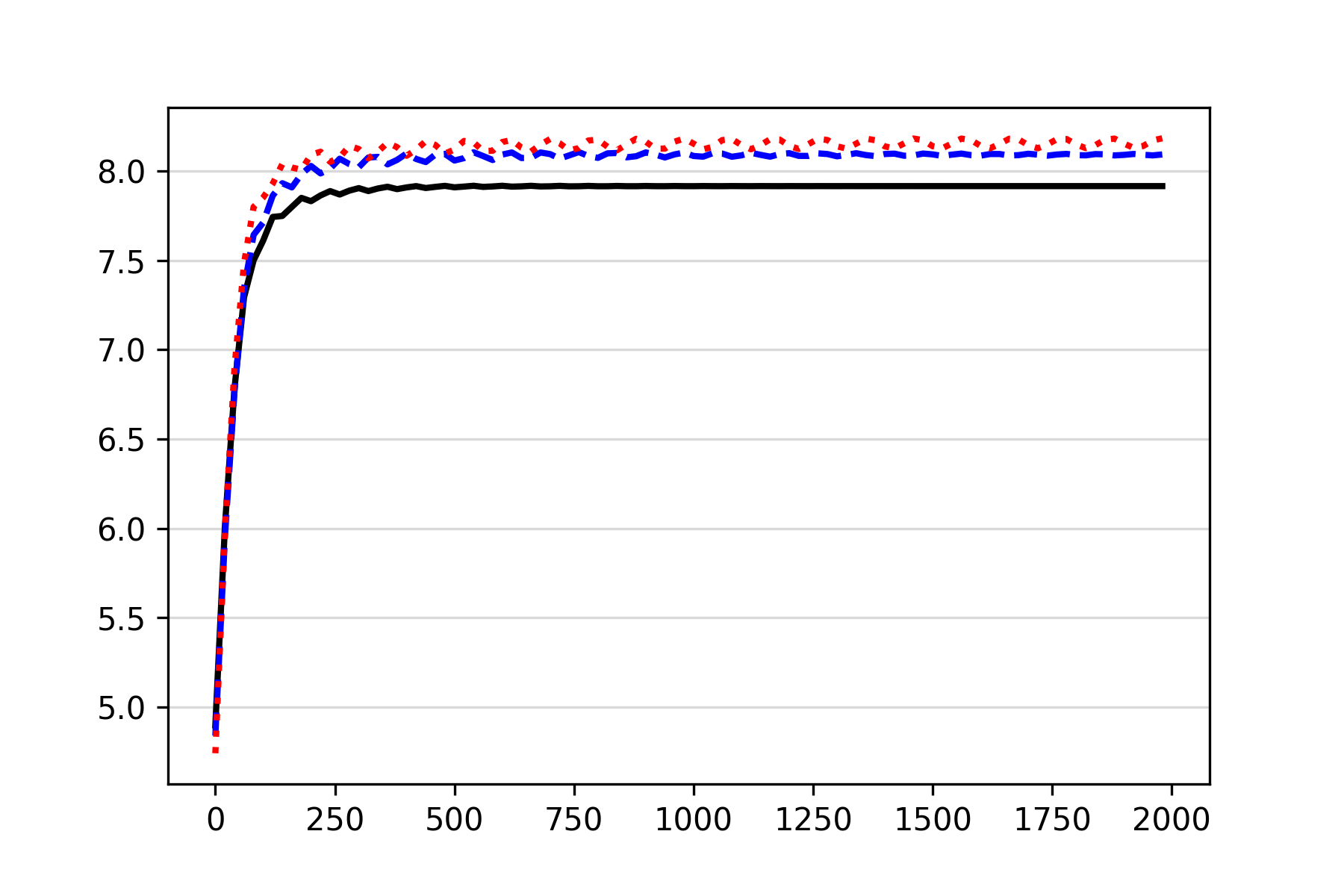}}
						&
						{\rotatebox{90}{ \;\;\;\; \;\;\;\; \;\; Utility value}} 
						\!\!\!\! \!\!\!\! \!\!
						& {\includegraphics[scale=0.42]{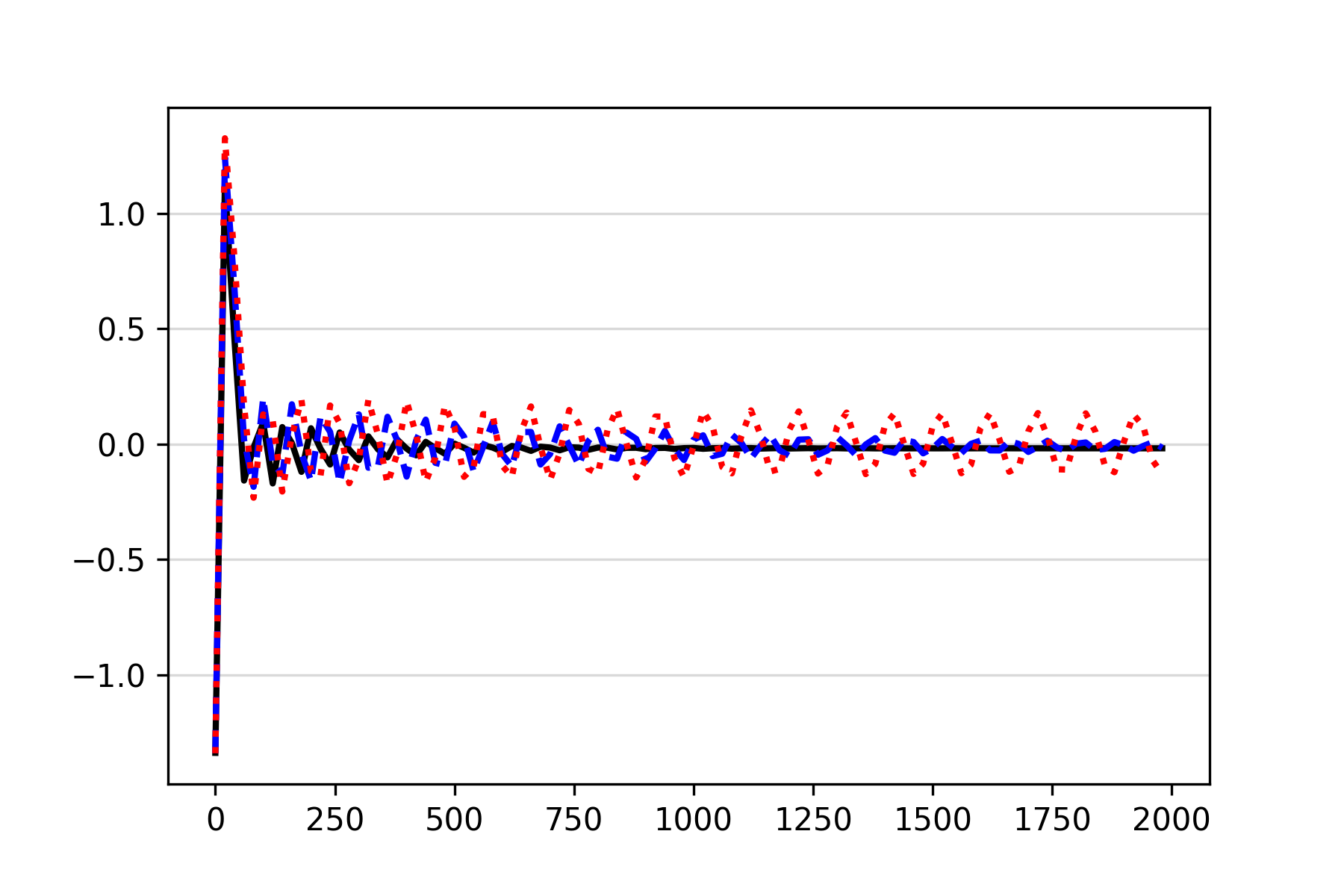}}
						\\[-0.5cm]
						& {\centering \;\;\; Iteration} & & {\centering \;\;\;\; Iteration}
					\end{tabular}
				\end{center}
				\caption{ Convergence performance of RPG-PD with regularization parameter $\tau$: ($\tau=0.1$, $\small\textbf{\color{black}---}$), ($\tau=0.05$, $\small\textbf{\color{blue}-- --}$), ($\tau=0.01$, $\small\textbf{\color{red}$\cdot$$\cdot$$\cdot$$\cdot$}$). The horizontal axes represent the policy iterations $\{\pi_t\}_{t\,\geq\,0}$ that are generated by RPG-PD and the vertical axes mean the value functions of the policy iterates $\{\pi_t\}_{t\,\geq\,0}$: reward value $V_r^{\pi_t}(\rho)$ (Left) and utility value $V_g^{\pi_t}(\rho)$ (Right). In this experiment, we fix the same stepsize $\eta = 0.1$ and take the regularization parameter $\tau$ among $0.1$, $0.05$, and $0.01$, and the uniform initial distribution $\rho$.}
				\label{fig:sensitivity performance of primal-dual methods RPG-PD tau}
				\end{figure}
				
				\begin{figure}[ht]
					\begin{center}
						\begin{tabular}{cc}
							% & (a) & & (b)
							% \\[-0.2cm]
							{\rotatebox{90}{\;\;\;\; \;\;\;\; \;\;\;\; \; Policy optimality gap}} 
							\!\!\!\! \!\!\!\! \!\!
							& {\includegraphics[scale=0.6]{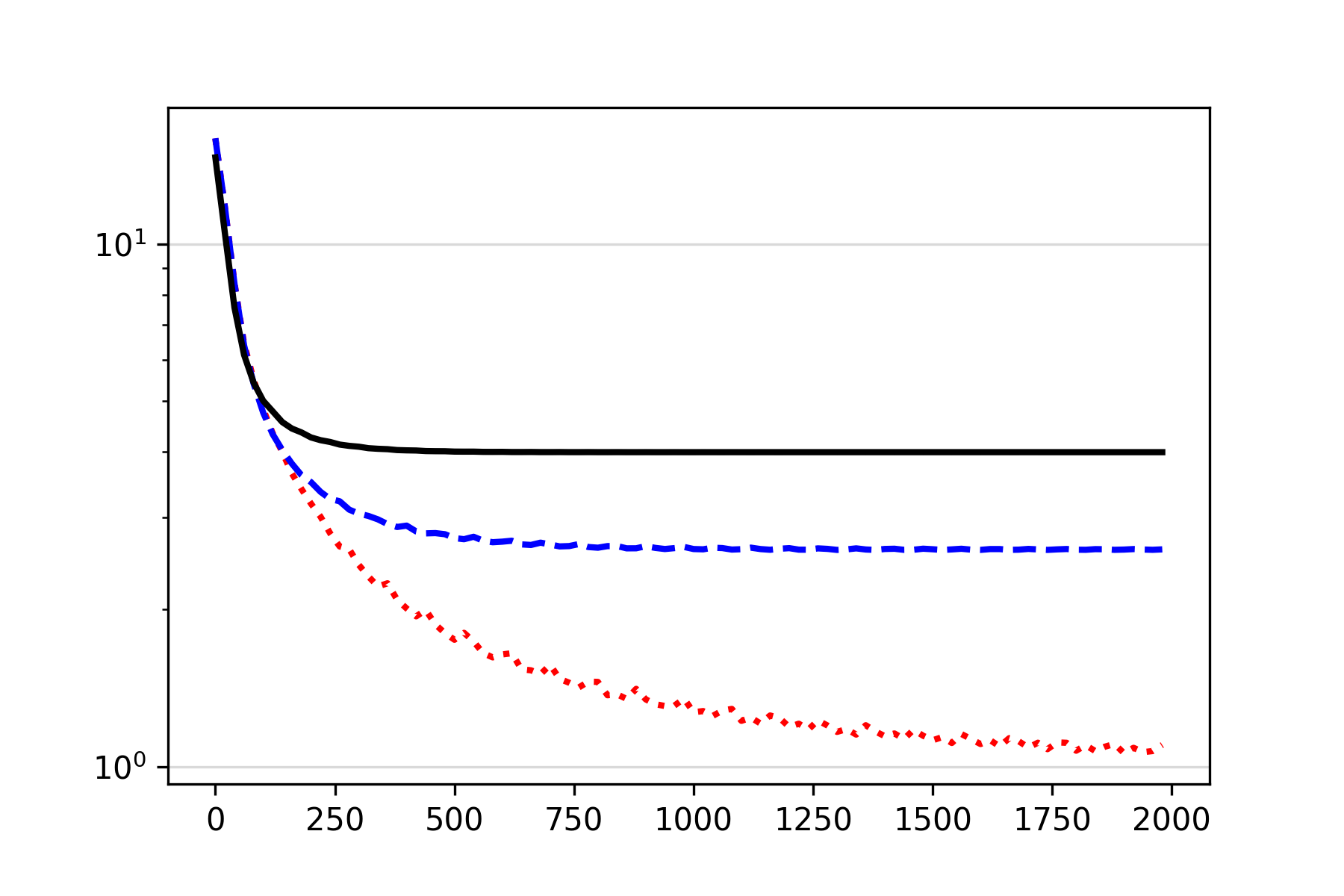}}
							\\[-0.5cm]
							& {\centering \;\;\; Iteration} 
						\end{tabular}
					\end{center}
					\caption{ Convergence performance of RPG-PD with regularization parameter $\tau$: ($\tau=0.1$, $\small\textbf{\color{black}---}$), ($\tau=0.05$, $\small\textbf{\color{blue}-- --}$), ($\tau=0.01$, $\small\textbf{\color{red}$\cdot$$\cdot$$\cdot$$\cdot$}$). The horizontal axis represents the policy iterations $\{\pi_t\}_{t\,\geq\,0}$ that are generated by RPG-PD and the vertical axis means the policy optimality gap that measures the distance of the policy iterates $\{\pi_t\}_{t\,\geq\,0}$ to an optimal policy $\pi^\star$: $\sum_s \norm{\pi_t(\cdot\,\vert\,s) - \pi^\star(\cdot\,\vert\,s)}^2$. In this experiment, we fix the stepsize $\eta=0.1$ and take the regularization parameter among $0.1$, $0.05$, and $0.01$, and the uniform initial distribution $\rho$.}
					\label{fig:sensitivity performance of primal-dual methods RPG-PD tau distance}
					\end{figure}
					
					\begin{figure}[ht]
						\begin{center}
							\begin{tabular}{cccc}
								% & (a) & & (b)
								% \\[-0.2cm]
								{\rotatebox{90}{\;\;\;\; \;\;\;\; \;\, Reward value}} 
								\!\!\!\! \!\!\!\! \!\!
								& {\includegraphics[scale=0.42]{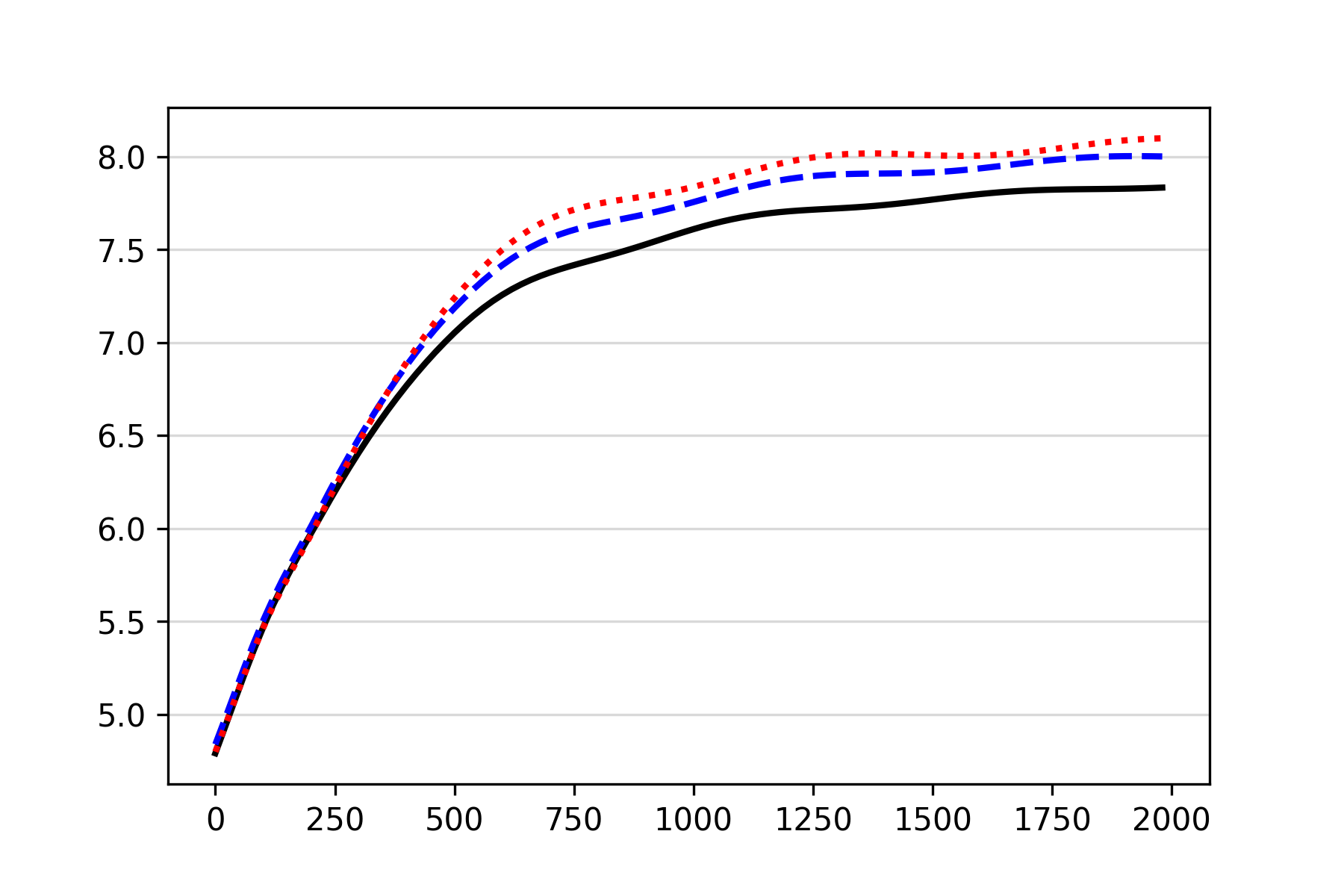}}
								&
								{\rotatebox{90}{ \;\;\;\; \;\;\;\; \;\; Utility value}} 
								\!\!\!\! \!\!\!\! \!\!
								& {\includegraphics[scale=0.42]{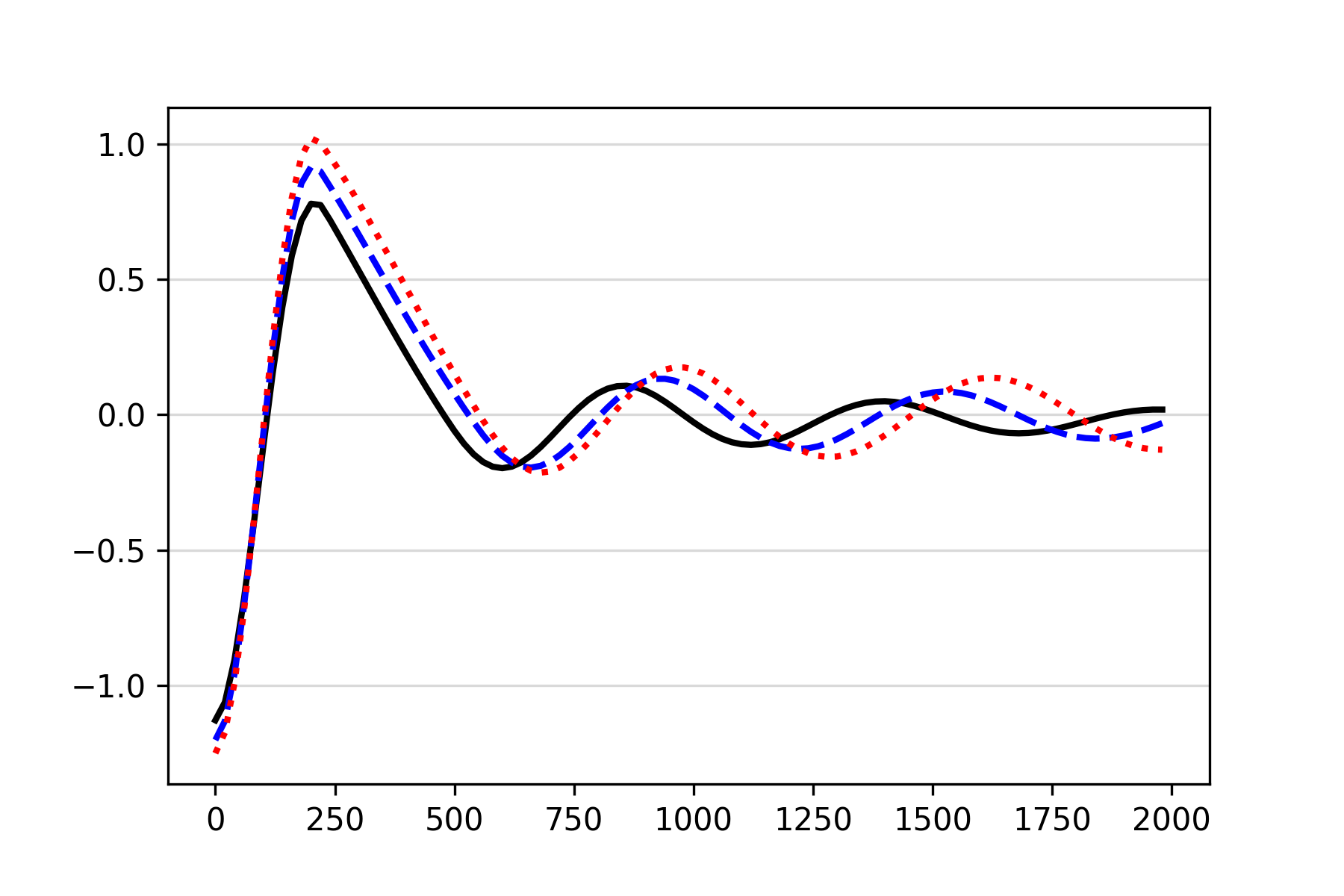}}
								\\[-0.5cm]
								& {\centering \;\;\; Iteration} & & {\centering \;\;\;\; Iteration}
							\end{tabular}
						\end{center}
						\caption{ Convergence performance of RPG-PD with regularization parameter $\tau$: ($\tau=0.1$, $\small\textbf{\color{black}---}$), ($\tau=0.05$, $\small\textbf{\color{blue}-- --}$), ($\tau=0.01$, $\small\textbf{\color{red}$\cdot$$\cdot$$\cdot$$\cdot$}$). The horizontal axes represent the policy iterations $\{\pi_t\}_{t\,\geq\,0}$ that are generated by RPG-PD and the vertical axes mean the value functions of the policy iterates $\{\pi_t\}_{t\,\geq\,0}$: reward value $V_r^{\pi_t}(\rho)$ (Left) and utility value $V_g^{\pi_t}(\rho)$ (Right). In this experiment, we fix the same stepsize $\eta = 0.01$ and take the regularization parameter $\tau$ among $0.1$, $0.05$, and $0.01$, and the uniform initial distribution $\rho$.}
						\label{fig:sensitivity performance of primal-dual methods RPG-PD tau smaller}
						\end{figure}
						
						\begin{figure}[ht]
							\begin{center}
								\begin{tabular}{cc}
									% & (a) & & (b)
									% \\[-0.2cm]
									{\rotatebox{90}{\;\;\;\; \;\;\;\; \;\;\;\; \; Policy optimality gap}} 
									\!\!\!\! \!\!\!\! \!\!
									& {\includegraphics[scale=0.6]{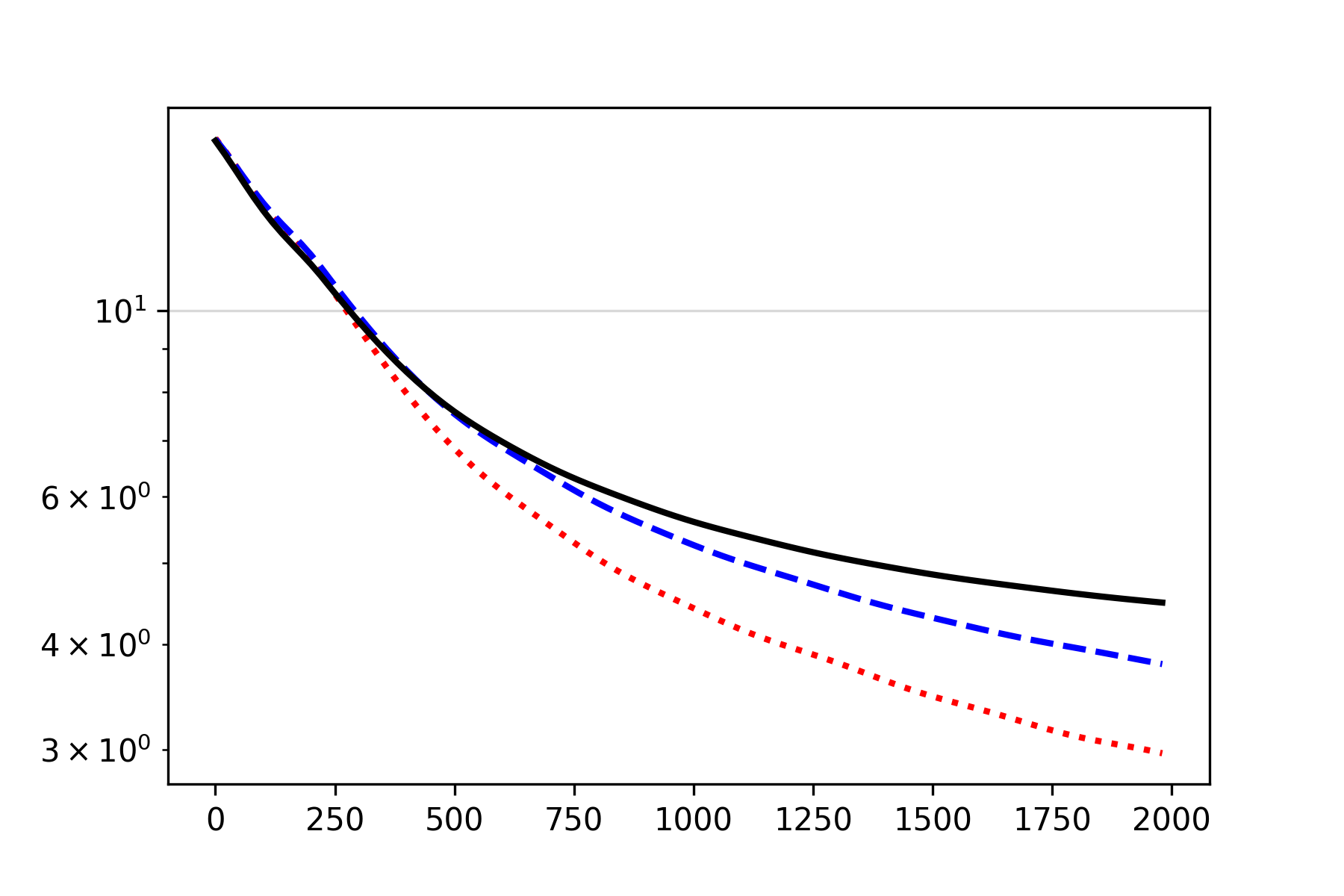}}
									\\[-0.5cm]
									& {\centering \;\;\; Iteration} 
								\end{tabular}
							\end{center}
							\caption{ Convergence performance of RPG-PD with regularization parameter $\tau$: ($\tau=0.1$, $\small\textbf{\color{black}---}$), ($\tau=0.05$, $\small\textbf{\color{blue}-- --}$), ($\tau=0.01$, $\small\textbf{\color{red}$\cdot$$\cdot$$\cdot$$\cdot$}$). The horizontal axis represents the policy iterations $\{\pi_t\}_{t\,\geq\,0}$ that are generated by RPG-PD and the vertical axis means the policy optimality gap that measures the distance of the policy iterates $\{\pi_t\}_{t\,\geq\,0}$ to an optimal policy $\pi^\star$: $\sum_s \norm{\pi_t(\cdot\,\vert\,s) - \pi^\star(\cdot\,\vert\,s)}^2$. In this experiment, we fix the stepsize $\eta=0.01$ and take the regularization parameter among $0.1$, $0.05$, and $0.01$, and the uniform initial distribution $\rho$.}
							\label{fig:sensitivity performance of primal-dual methods RPG-PD tau distance small stepsize}
							\end{figure}
							
							\subsection{Sensitivity of OPG-PD~\protect\eqref{eq: optimistic method} to stepsize}
							\label{app:sensitivity of OPG-PD stepsize}

							In this experiment, we use our previous tabular constrained MDP with a random transition, a discount factor $\gamma =0.9$, uniform rewards $r\in[0,1]$ and utilities $g\in[-1,1]$, and an uniform initial state distribution $\rho$. The constraint is $V_g^{\pi}(\rho)\geq 0$. We repeat executing OPG-PD by varying three different stepsizes $\eta \in \{ 0.05, 0.1, 0.2\}$. To demonstrate the optimality of OPG-PD's policy iterates, we measure the policy optimality gap via the squared norm distance of policy iterates to an optimal policy $\pi^\star$ that is obtained from an occupancy-measure-based linear program. To demonstrate the optimality of OPG-PD's policy iterates, we measure the policy optimality gap via the squared norm distance of policy iterates to an optimal policy $\pi^\star$ that is obtained from an occupancy-measure-based linear program. From the policy optimality gap in Figure~\ref{fig:sensitivity performance of primal-dual methods OPG-PD distance}, we see that three policy optimality gaps decrease linearly in the logarithmic scale plot, which verifies the linear last-iterate convergence of OPG-PD's policy iterates in Theorem~\ref{thm:linear convergence OPG-PD}. Furthermore, in Figure~\ref{fig:sensitivity performance of primal-dual methods OPG-PD stepsize} we show the optimality of OPG-PD's policy iterates by plotting the optimality gap $\vert V_r^{\pi^\star}(\rho)-V_r^{\pi_t}(\rho)\vert$ and the constraint violation $\vert V_g^{\pi_t}(\rho)\vert$, where $V_r^{\pi^\star}(\rho) = 8.16$. We see that the optimality gap and the constraint violation both decay asymptotically in linear rates in spite of some oscillations, and larger stepsize enjoys faster convergence. It is worth mentioning that, these descending oscillations actually show that value functions are approaching the optimal ones. Hence, we have verified the linear last-iterate convergence of OPG-PD's policy iterates in Theorem~\ref{thm:linear convergence OPG-PD} using a range of constant stepsizes.
							
							\begin{figure}[ht]
								\begin{center}
									\begin{tabular}{cccc}
										% & (a) & & (b)
										% \\[-0.2cm]
										{\rotatebox{90}{\;\;\;\; \;\;\;\; \, Optimality gap}} 
										\!\!\!\! \!\!\!\! \!\!
										& {\includegraphics[scale=0.42]{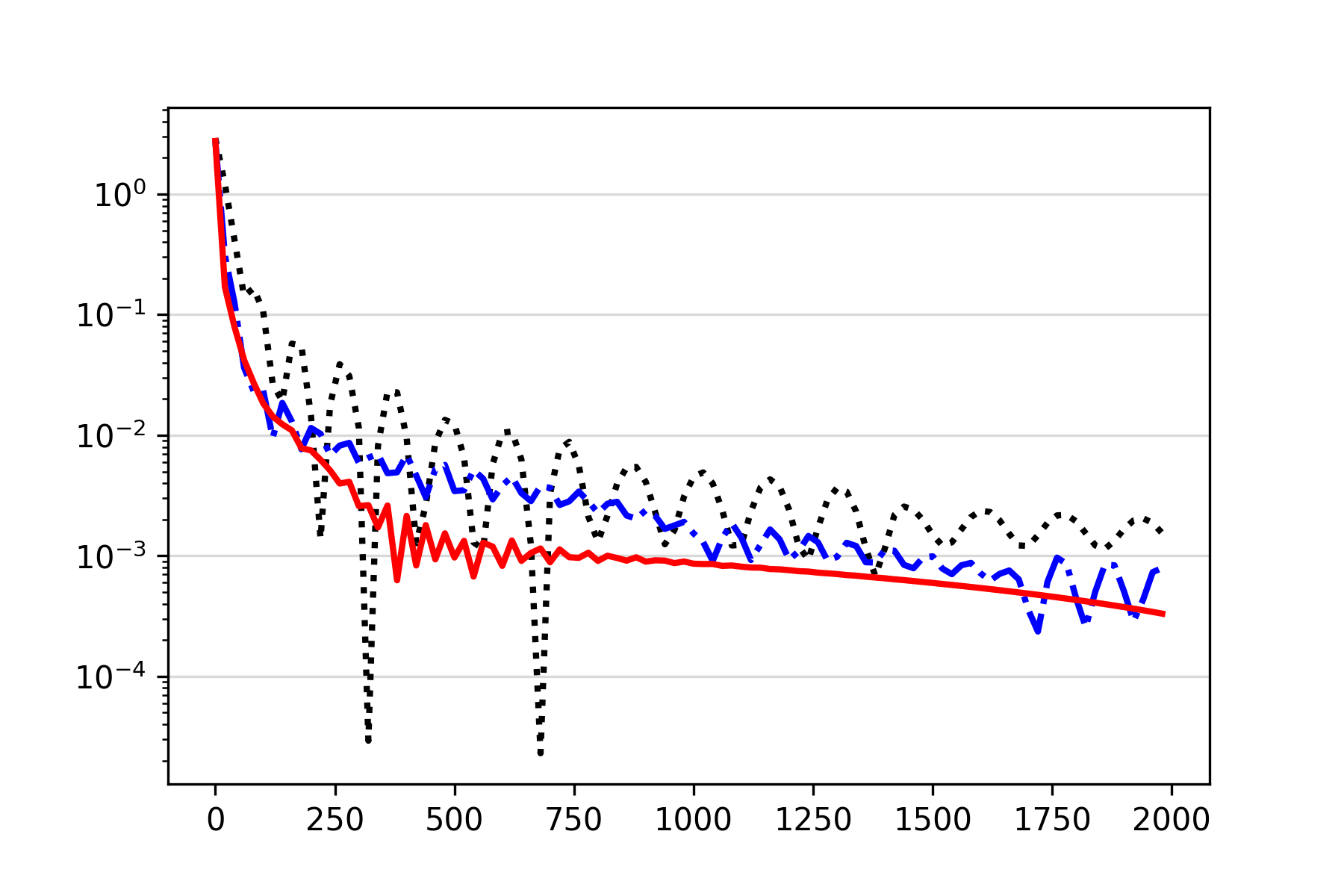}}
										&
										{\rotatebox{90}{ \;\;\;\; \, Constraint violation}} 
										\!\!\!\! \!\!\!\! \!\!
										& {\includegraphics[scale=0.42]{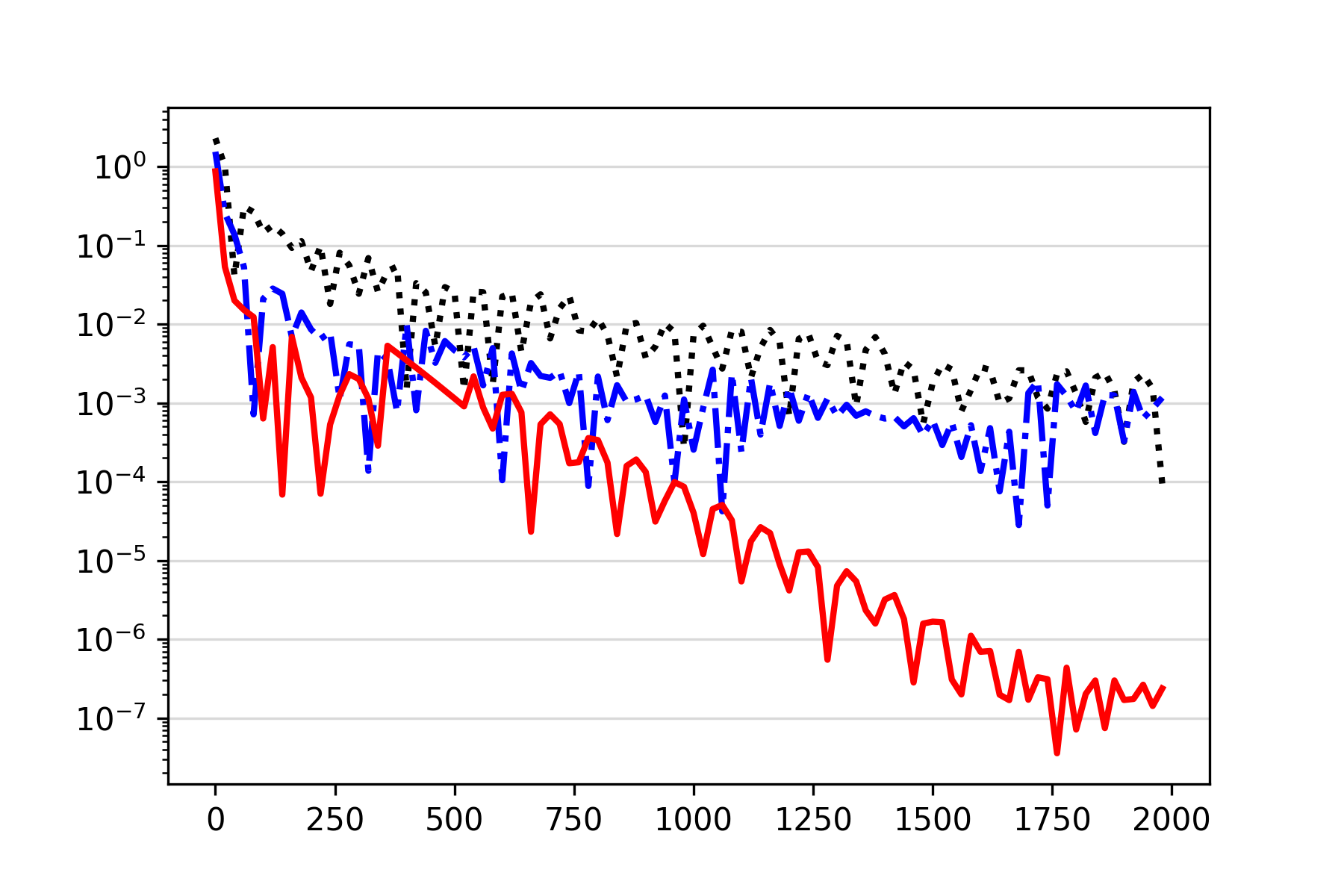}}
										\\[-0.5cm]
										& {\centering \;\;\; Iteration} & & {\centering \;\;\;\; Iteration}
									\end{tabular}
								\end{center}
								\caption{ Convergence performance of OPG-PD with stepsize $\eta$: ($\eta=0.05$, $\small\textbf{$\cdot$$\cdot$$\cdot$$\cdot$}$), ($\eta=0.1$, $\small\textbf{\color{blue}--$\cdot$--}$), ($\eta=0.2$, $\small\textbf{\color{red}---}$). The horizontal axes represent the policy iterations $\{\pi_t\}_{t\,\geq\,0}$ that are generated by OPG-PD and the vertical axes mean the optimality of the policy iterates $\{\pi_t\}_{t\,\geq\,0}$: optimality gap of reward value $\vert V_r^{\pi^\star}(\rho)-V_r^{\pi_t}(\rho)\vert$ (Left) and constraint violation of utility value $\vert V_g^{\pi_t}(\rho)\vert$ (Right). In this experiment, we take the stepsize $\eta$ among $0.05$, $0.1$, and $0.2$, and the uniform initial distribution $\rho$.}
								\label{fig:sensitivity performance of primal-dual methods OPG-PD stepsize}
								\end{figure}
								
								Last but not least, we extend this experiment for a variant of OPG-PD that is based on the multiplicative weights update (MWU). We call this variant as an optimistic multiplicative weights update primal-dual (OMWU-PD) method which maintains two sequences for policy and dual variables each: $\{\pi_t\}_{t\,\geq\,1}$ and $\{\hat\pi_t\}_{t\,\geq\,1}$ for the policy-player, and $\{\lambda_t\}_{t\, \geq\,1}$ and $\{\hat\lambda_t\}_{t\, \geq\,1}$ for the dual-player,
								\begin{subequations}
									\label{eq: optimistic method OMWU}
									\begin{equation} 
									\label{eq: optimistic pi update OMWU}
									\begin{array}{rcl}  
									\pi_{t}(\cdot\,|\,s) 
									&  =\;\; \!\!\!\! & 
									\displaystyle
									\argmax_{\pi(\cdot\,|\,s) \,\in\,\Delta(A) }\left\{ 
									\sum_a \pi(a\,|\,s) Q_{ r + \lambda_{t-1} g  }^{\pi_{t-1}}(s,a) 
									\,-\, 
									\frac{1}{\eta}\, \KL(\pi(\cdot\,|\,s), \hat\pi_t(\cdot\,|\,s))
									\right\}    
									\\[0.2cm]
									\hat{\pi}_{t+1}(\cdot\,|\,s) 
									&  =\;\; \!\!\!\! & 
									\displaystyle
									\argmax_{\pi(\cdot\,|\,s) \,\in\,\Delta(A) }\left\{ 
									\sum_a \pi(a\,|\,s) Q_{ r + \lambda_{t} g  }^{\pi_{t}}(s,a) \,-\, 
									\frac{1}{\eta}\, \KL(\pi(\cdot\,|\,s), \hat\pi_t(\cdot\,|\,s)) \right\} 
									\end{array}
									\end{equation}
									\begin{equation} 
									\label{eq: optimistic lambda update OMWU}
									\begin{array}{rcl}
									\lambda_{t} 
									& = & 
									\displaystyle
									\argmin_{\lambda \,\in\, \Lambda}\left\{ 
									\lambda\, V^{\pi_{t-1}}_g(\rho)  \,+\, 
									\frac{1}{2\eta} \, (\lambda-\hat\lambda_t)^2 \right\}   
									\\[0.2cm]
									\hat\lambda_{t+1} 
									& = & 
									\displaystyle
									\argmin_{\lambda \,\in\, \Lambda}\left\{ 
									\lambda\, V^{\pi_{t}}_g(\rho) \,+\, 
									\frac{1}{2\eta} \, (\lambda-\hat\lambda_t)^2 \right\}
									\end{array}
									\end{equation}
								\end{subequations}
								where $\eta$ is the stepsize and $(\hat\pi_0, \hat\lambda_0) = (\pi_0, \lambda_0) \in \Pi \times \Lambda$ is the initial point. 
								OMWU-PD concurrently works with two primal iterates and two dual iterates, which is similar to OPG-PD. The only difference is that Primal update~\eqref{eq: optimistic pi update OMWU} works as the NPG or MWU updates~\citep{ding2020natural,ding2022policy}, instead of the projected $Q$-ascent~\citep{bhandari2021linear,xiao2022convergence}, which is also different from the one-step optimistic MWU in policy-based ReLOAD~\citep{moskovitz2023reload}. 
								
								To further investigate the applicability of optimistic gradient methods, we execute OMWU-PD in the same setting and report our result in Figure~\ref{fig:sensitivity performance of primal-dual methods OMWU-PD distance} and Figure~\ref{fig:sensitivity performance of primal-dual methods OMWU-PD stepsize}. We see that the policy optimality gaps decay sublinearly in Figure~\ref{fig:sensitivity performance of primal-dual methods OMWU-PD distance}, and the optimality gap and the constraint violation asymptotically decay in sulinear rates in Figure~\ref{fig:sensitivity performance of primal-dual methods OMWU-PD stepsize}. 
								% Hence, it is possible that the policy iterates of OMWU-PD just converge to some optimal policies, but they are very different from our optimal policy benchmark. 
								As a comparison, we repeat the same experiment for policy-based ReLOAD~\citep{moskovitz2023reload}, which is different from our OMWU-PD in using one-step optimistic gradient updates. In Figure~\ref{fig:sensitivity performance of primal-dual methods ReLOAD distance} and Figure~\ref{fig:sensitivity performance of primal-dual methods ReLOAD stepsize}, we observe sublinear decay of policy optimality gap, optimality gap and constraint violation, where are similar as shown in Figure~\ref{fig:sensitivity performance of primal-dual methods OMWU-PD distance} and Figure~\ref{fig:sensitivity performance of primal-dual methods OMWU-PD stepsize}. 
								% Hence, it seems that the policy iterates of ReLOAD also converge to some optimal policies that are different from our optimal policy benchmark.
								
								The empirical results from two MWU-based optimistic primal-dual algorithms are suggestive. First, MWU-based optimistic algorithms can also have policy last-iterate convergence to an optimal policy. Second, convergence rates of MWU-based optimistic algorithms look slower than OPG-PD's under the same constant stepsize, which we leave as an immediate future work to establish sublinear convergence rates for MWU-based optimistic primal-dual algorithms. 
								
								\begin{figure}[ht]
									\begin{center}
										\begin{tabular}{cc}
											% & (a) & & (b)
											% \\[-0.2cm]
											{\rotatebox{90}{\;\;\;\; \;\;\;\; \;\;\;\; \; Policy optimality gap}} 
											\!\!\!\! \!\!\!\! \!\!
											& {\includegraphics[scale=0.6]{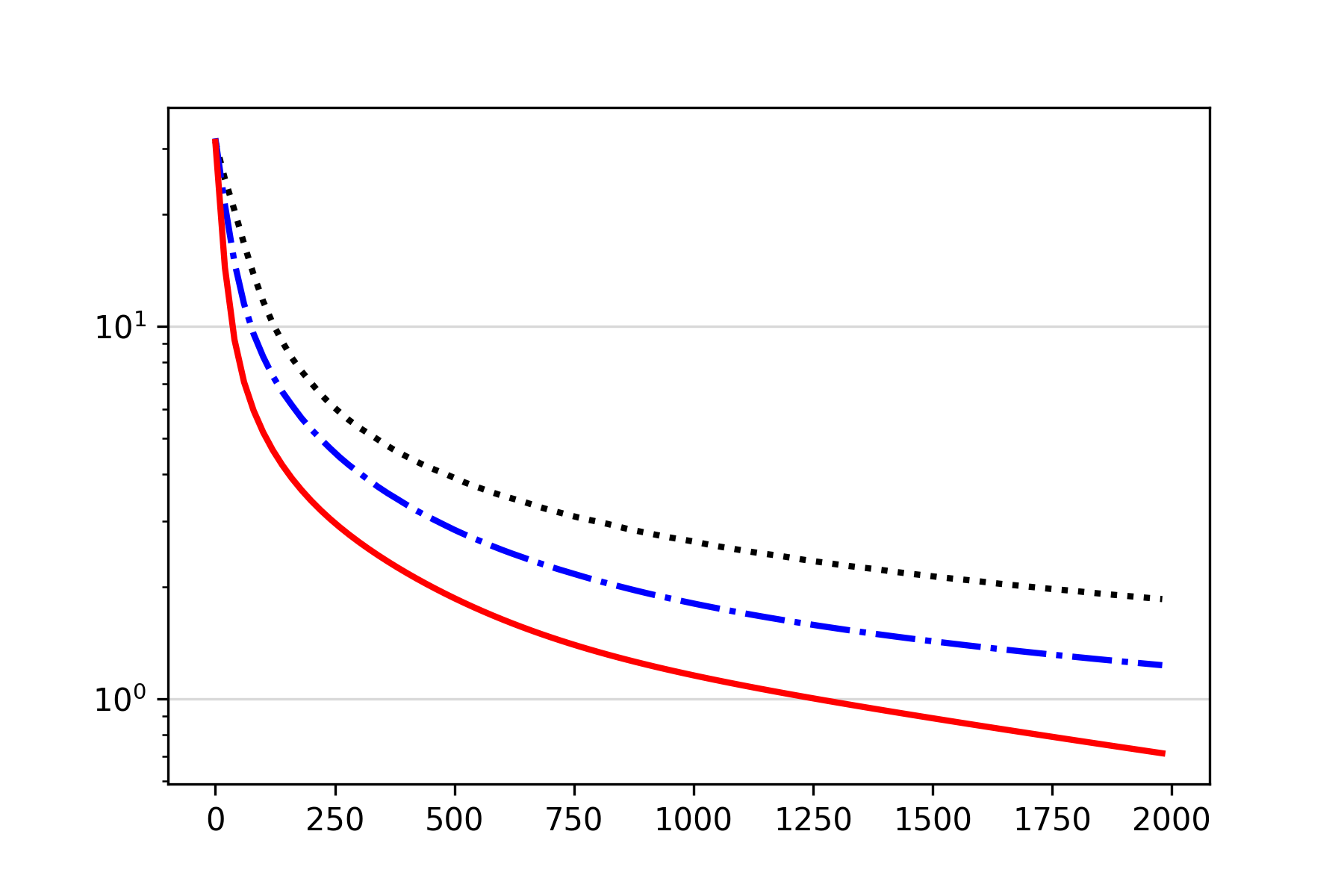}}
											\\[-0.5cm]
											& {\centering \;\;\; Iteration} 
										\end{tabular}
									\end{center}
									\caption{ Convergence performance of OMWU-PD with stepsize $\eta$: ($\eta=0.05$, $\small\textbf{$\cdot$$\cdot$$\cdot$$\cdot$}$), ($\eta=0.1$, $\small\textbf{\color{blue}--$\cdot$--}$), ($\eta=0.2$, $\small\textbf{\color{red}---}$). The horizontal axis represents the policy iterations $\{\pi_t\}_{t\,\geq\,0}$ that are generated by OMWU-PD and the vertical axis means the policy optimality gap that measures the distance of the policy iterates $\{\pi_t\}_{t\,\geq\,0}$ to an optimal policy $\pi^\star$: $\sum_s \KL(\pi^\star(\cdot\,\vert\,s), \pi_t(\cdot\,\vert\,s))$. In this experiment, we take the stepsize $\eta$ among $0.05$, $0.1$, and $0.2$, and the uniform initial distribution $\rho$.}
									\label{fig:sensitivity performance of primal-dual methods OMWU-PD distance}
									\end{figure}
									
									\begin{figure}[ht]
										\begin{center}
											\begin{tabular}{cccc}
												% & (a) & & (b)
												% \\[-0.2cm]
												{\rotatebox{90}{\;\;\;\; \;\;\;\; \, Optimality gap }} 
												\!\!\!\! \!\!\!\! \!\!
												& {\includegraphics[scale=0.42]{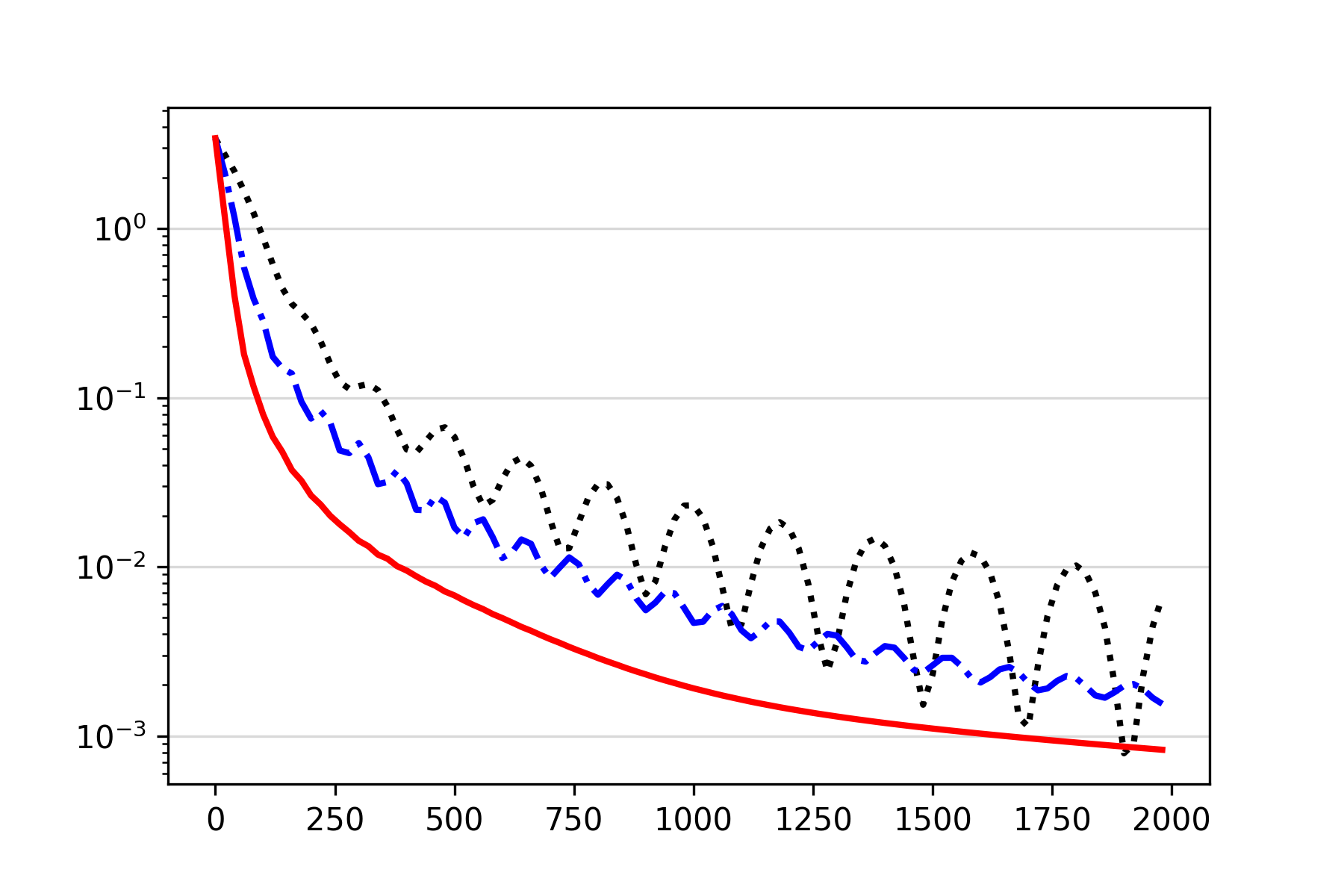}}
												&
												{\rotatebox{90}{ \;\;\;\; \, Constraint violation}} 
												\!\!\!\! \!\!\!\! \!\!
												& {\includegraphics[scale=0.42]{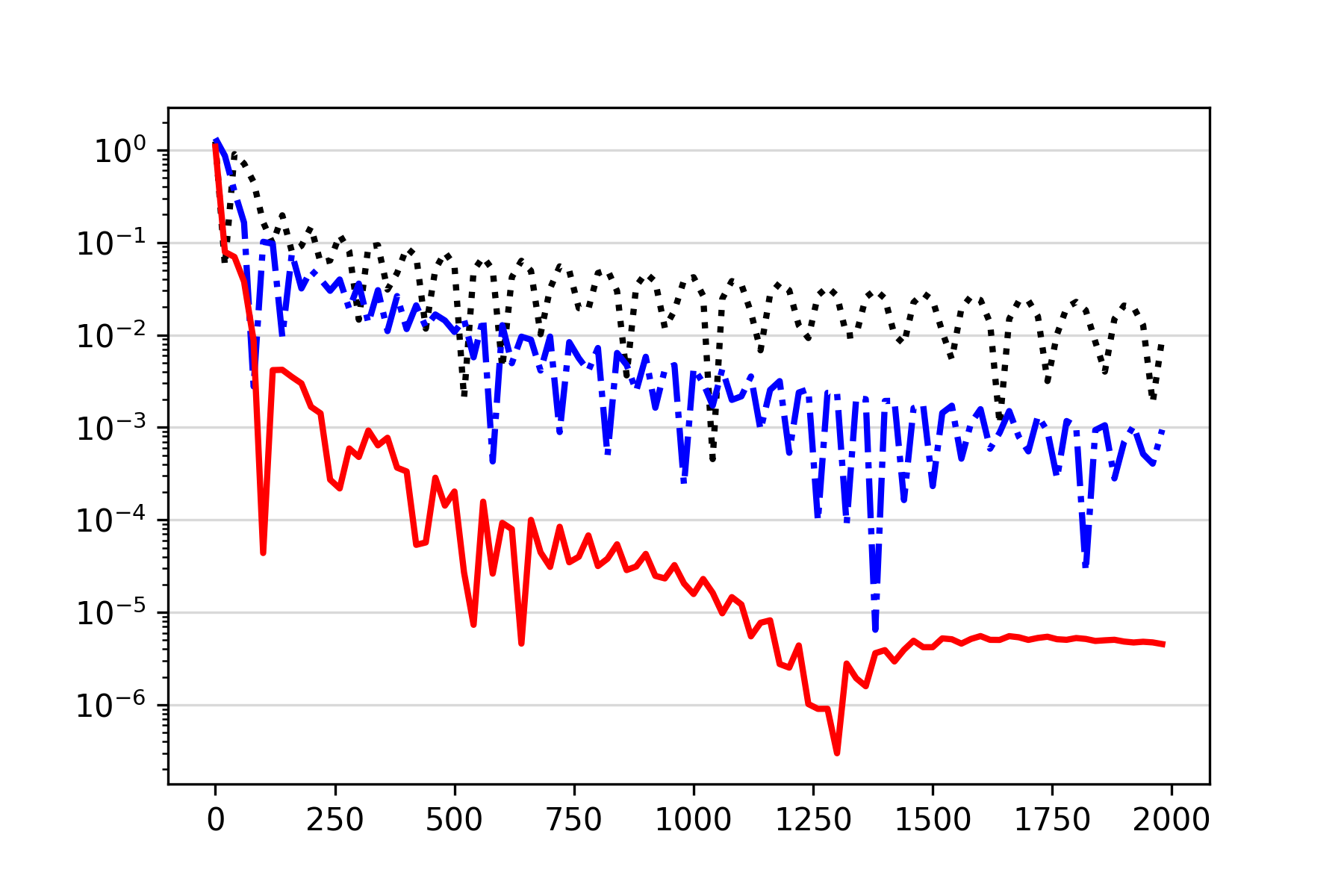}}
												\\[-0.5cm]
												& {\centering \;\;\; Iteration} & & {\centering \;\;\;\; Iteration}
											\end{tabular}
										\end{center}
										\caption{ Convergence performance of OMWU-PD with stepsize $\eta$: ($\eta=0.05$, $\small\textbf{$\cdot$$\cdot$$\cdot$$\cdot$}$), ($\eta=0.1$, $\small\textbf{\color{blue}--$\cdot$--}$), ($\eta=0.2$, $\small\textbf{\color{red}---}$). The horizontal axes represent the policy iterations $\{\pi_t\}_{t\,\geq\,0}$ that are generated by OMWU-PD and the vertical axes mean the optimality of the policy iterates $\{\pi_t\}_{t\,\geq\,0}$: optimality gap of reward value $\vert V_r^{\pi^\star}(\rho)-V_r^{\pi_t}(\rho)\vert$ (Left) and constraint violation of utility value $\vert V_g^{\pi_t}(\rho)\vert$ (Right).  In this experiment, we take the stepsize $\eta$ among $0.05$, $0.1$, and $0.2$, and the uniform initial distribution $\rho$.}
										\label{fig:sensitivity performance of primal-dual methods OMWU-PD stepsize}
										\end{figure}

										\begin{figure}[ht]
											\begin{center}
												\begin{tabular}{cc}
													% & (a) & & (b)
													% \\[-0.2cm]
													{\rotatebox{90}{\;\;\;\; \;\;\;\; \;\;\;\; \; Policy optimality gap}} 
													\!\!\!\! \!\!\!\! \!\!
													& {\includegraphics[scale=0.6]{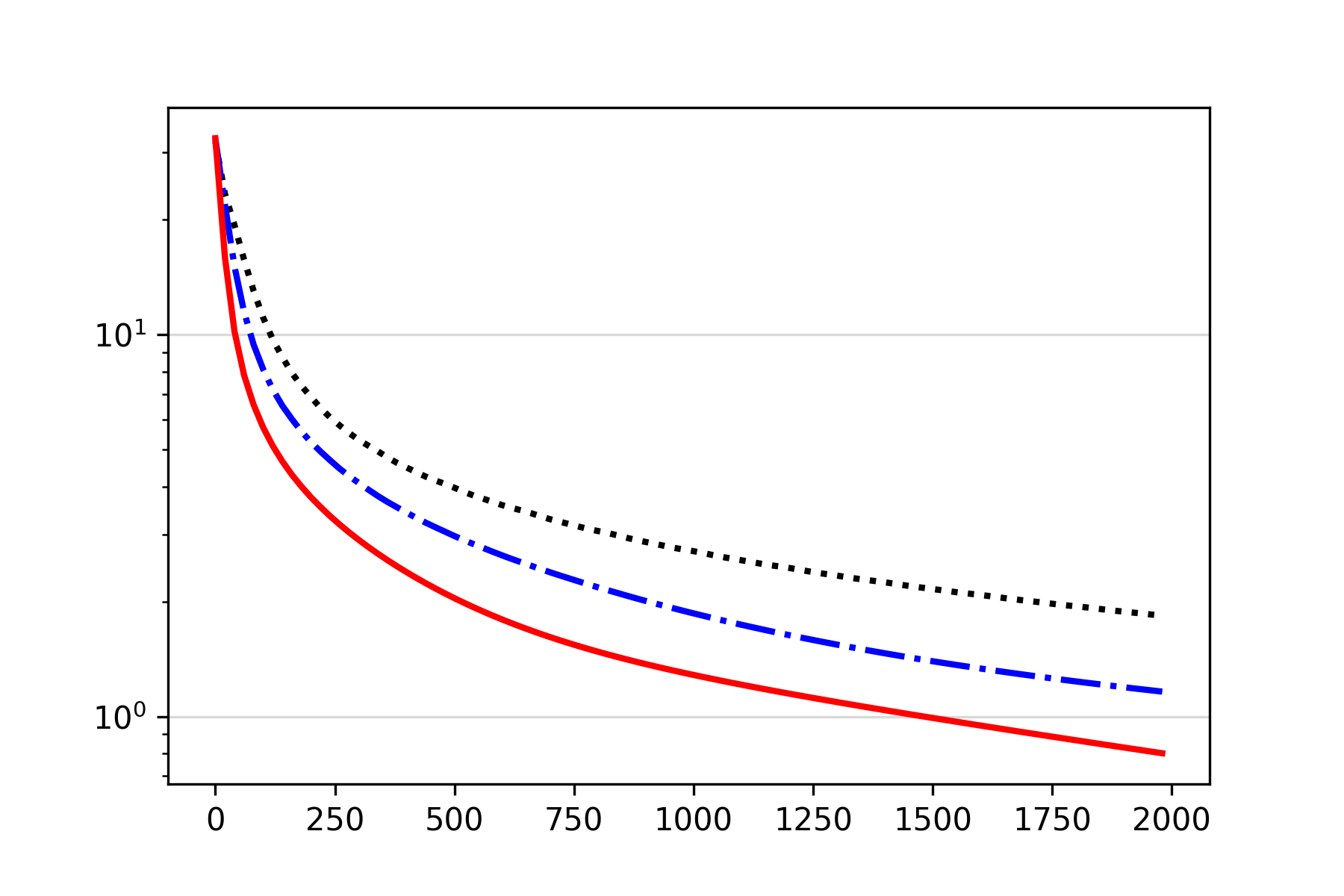}}
													\\[-0.5cm]
													& {\centering \;\;\; Iteration} 
												\end{tabular}
											\end{center}
											\caption{ Convergence performance of policy-based ReLOAD~\protect\citep{moskovitz2023reload} with stepsize $\eta$: ($\eta=0.05$, $\small\textbf{$\cdot$$\cdot$$\cdot$$\cdot$}$), ($\eta=0.1$, $\small\textbf{\color{blue}--$\cdot$--}$), ($\eta=0.2$, $\small\textbf{\color{red}---}$). The horizontal axis represents the policy iterations $\{\pi_t\}_{t\,\geq\,0}$ that are generated by ReLOAD and the vertical axis means the policy optimality gap that measures the distance of the policy iterates $\{\pi_t\}_{t\,\geq\,0}$ to an optimal policy $\pi^\star$: $\sum_s \KL(\pi^\star(\cdot\,\vert\,s),  \pi_t(\cdot\,\vert\,s))$. In this experiment, we take the stepsize $\eta$ among $0.05$, $0.1$, and $0.2$, and the uniform initial distribution $\rho$.}
											\label{fig:sensitivity performance of primal-dual methods ReLOAD distance}
											\end{figure}
											
											\begin{figure}[ht]
												\begin{center}
													\begin{tabular}{cccc}
														% & (a) & & (b)
														% \\[-0.2cm]
														{\rotatebox{90}{\;\;\;\; \;\;\;\; \, Optimality gap }} 
														\!\!\!\! \!\!\!\! \!\!
														& {\includegraphics[scale=0.42]{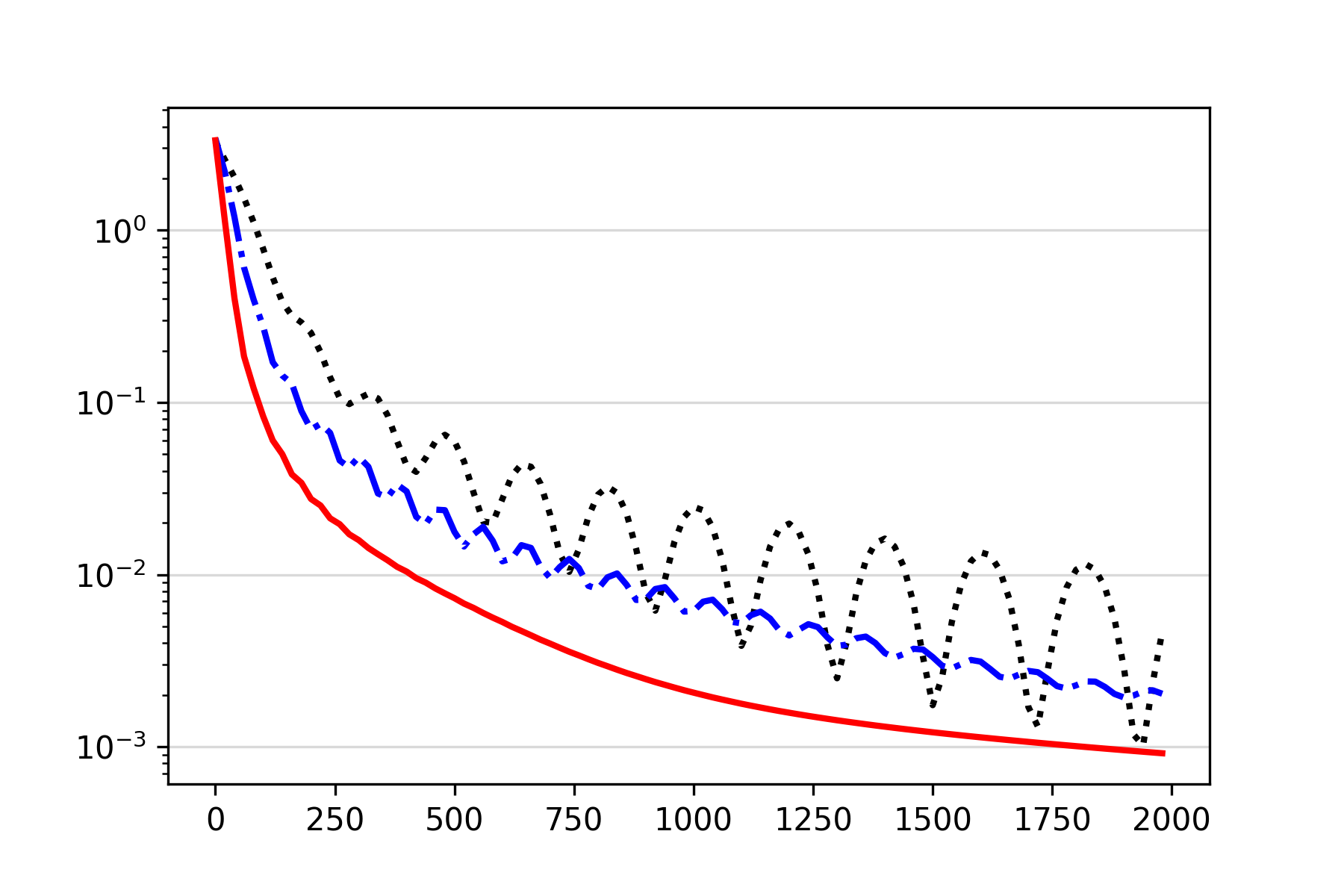}}
														&
														{\rotatebox{90}{ \;\;\;\; \, Constraint violation}} 
														\!\!\!\! \!\!\!\! \!\!
														& {\includegraphics[scale=0.42]{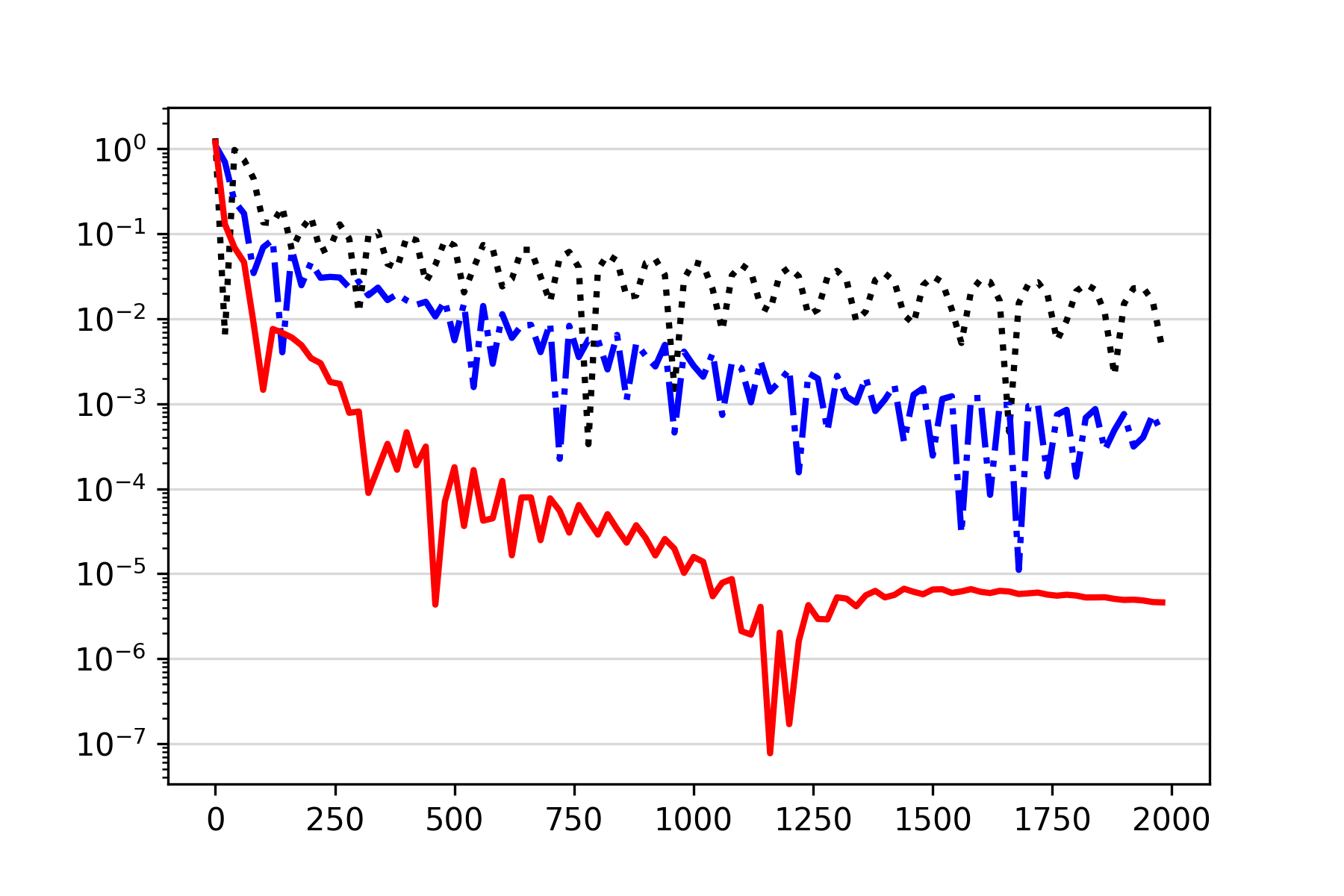}}
														\\[-0.5cm]
														& {\centering \;\;\; Iteration} & & {\centering \;\;\;\; Iteration}
													\end{tabular}
												\end{center}
												\caption{ Convergence performance of policy-based ReLOAD~\protect\citep{moskovitz2023reload} with stepsize $\eta$: ($\eta=0.05$, $\small\textbf{$\cdot$$\cdot$$\cdot$$\cdot$}$), ($\eta=0.1$, $\small\textbf{\color{blue}--$\cdot$--}$), ($\eta=0.2$, $\small\textbf{\color{red}---}$). The horizontal axes represent the policy iterations $\{\pi_t\}_{t\,\geq\,0}$ that are generated by ReLOAD and the vertical axes mean the optimality of the policy iterates $\{\pi_t\}_{t\,\geq\,0}$: optimality gap of reward value $\vert V_r^{\pi^\star}(\rho)-V_r^{\pi_t}(\rho)\vert$ (Left) and constraint violation of utility value $\vert V_g^{\pi_t}(\rho)\vert$ (Right).  In this experiment, we take the stepsize $\eta$ among $0.05$, $0.1$, and $0.2$, and the uniform initial distribution $\rho$.}
												\label{fig:sensitivity performance of primal-dual methods ReLOAD stepsize}
												\end{figure}

\section{Supporting Lemmas}

In this section, we collect some lemmas that are helpful to our analysis.

% \todo{state performance difference lemma}

% \todo{add names of properties}

\subsection{Lemmas in optimization}

For any convex differentiable function $\psi$: $X\to \mathbb{R}$, the Bregman divergence of $x$, $x'\in X$ is given by $D_\psi(x',x) \DefinedAs \psi(x') - \psi(x) - \langle \nabla \psi(x), x'-x \rangle$. When $\psi$ is $\sigma$-strongly convex, $D_\psi(x',x) \geq \frac{\sigma}{2}\norm{x'-x}^2$ for any $x'$, $x\in X$. Specifically, when $\psi(x) = \frac{1}{2}\norm{x}^2$, $D_\psi(x',x) = \frac{1}{2}\norm{x' - x}^2$. 

\begin{lemma}\label{lem:one_step_optimality}
	Let $X$ be a convex set. If $x' = \argmin_{\bar x\,\in\, X} \langle \bar x, g\rangle + D_\psi(\bar x, x)$, then for any $x^\star\in X$,
	\[
	\langle x' - x^\star, g\rangle 
	\; \leq \;
	D_\psi(x^\star, x) -  D_\psi(x^\star, x') - D_\psi(x', x).
	\]
\end{lemma}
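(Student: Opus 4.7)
The plan is to derive the inequality from the first-order optimality condition of $x'$ combined with the well-known three-point identity for Bregman divergences. Since $x'$ minimizes the strongly convex function $\bar x \mapsto \langle \bar x, g\rangle + D_\psi(\bar x, x)$ over the convex set $X$, the first-order optimality condition states that for every $x^\star \in X$,
\[
\langle g + \nabla\psi(x') - \nabla\psi(x), \, x^\star - x' \rangle \;\geq\; 0,
\]
which after rearrangement gives $\langle x' - x^\star, g\rangle \leq \langle \nabla\psi(x') - \nabla\psi(x), \, x^\star - x'\rangle$.

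The second step is to identify the right-hand side with a combination of Bregman divergences. I would verify directly from the definition $D_\psi(u,v) = \psi(u) - \psi(v) - \langle \nabla\psi(v), u-v\rangle$ the three-point identity
\[
D_\psi(x^\star, x) - D_\psi(x^\star, x') - D_\psi(x', x) \;=\; \langle \nabla\psi(x') - \nabla\psi(x), \, x^\star - x'\rangle,
\]
which is a routine algebraic expansion: the $\psi(x^\star)$ terms cancel, the $\psi(x')$ and $\psi(x)$ terms cancel, and the remaining inner-product terms collect into the stated form. Chaining this identity with the optimality inequality yields the lemma.

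There is no real obstacle here; the only subtlety is being careful with signs when rearranging the optimality condition (since the gradient of $D_\psi(\bar x, x)$ in $\bar x$ is $\nabla\psi(\bar x) - \nabla\psi(x)$, not the reverse) and with the asymmetric convention for the Bregman divergence (first argument is the point, second argument is the base). Both are standard and the entire argument is a few lines.
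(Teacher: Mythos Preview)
Your argument is correct and complete: the first-order optimality condition together with the three-point identity for Bregman divergences is exactly the standard route to this inequality. The paper itself does not give a proof but simply cites \cite[Lemma~10]{weilinear2020}; your write-up supplies precisely the short argument that underlies that reference.
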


\begin{proof}
	See~\cite[Lemma 10]{weilinear2020}. 
\end{proof}

\begin{lemma}\label{lem:one_step_shrink}
	Assume that $D_\psi(x,x') \geq \frac{1}{2}\norm{x-x'}_p^2$~for some $\psi$ and $p\geq 1$. If 
	\[
	x_1 \; = \; \argmin_{\bar x\,\in\, X} \; \langle \bar x, g_1\rangle + D_\psi(\bar x, x)
	\; \text{ and  } \;
	x_2 \; = \; \argmin_{\bar x\,\in\, X} \; \langle \bar x, g_2\rangle + D_\psi(\bar x, x)
	\]
	then 
	\[
	\norm{x_1 - x_2}_p \; \leq \; \norm{g_1-g_2}_q 
	\]
	where $\frac{1}{p}+\frac{1}{q} = 1$.
\end{lemma}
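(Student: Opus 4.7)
The plan is to apply the standard three-point/optimality inequality for Bregman proximal steps (available to us as the preceding Lemma~\ref{lem:one_step_optimality}) to both $x_1$ and $x_2$, then add the two inequalities so that the Bregman terms involving the anchor $x$ cancel, and finally close with Hölder's inequality.

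Concretely, I would first apply Lemma~\ref{lem:one_step_optimality} to the update defining $x_1$ with test point $x^\star = x_2 \in X$, obtaining
\[
\langle x_1 - x_2,\, g_1\rangle \;\leq\; D_\psi(x_2, x) - D_\psi(x_2, x_1) - D_\psi(x_1, x),
\]
and analogously to the update defining $x_2$ with test point $x^\star = x_1 \in X$:
\[
\langle x_2 - x_1,\, g_2\rangle \;\leq\; D_\psi(x_1, x) - D_\psi(x_1, x_2) - D_\psi(x_2, x).
\]
Adding the two inequalities makes the $D_\psi(\,\cdot\,,x)$ terms cancel, leaving
\[
\langle x_1 - x_2,\, g_1 - g_2\rangle \;\leq\; -\,D_\psi(x_2, x_1) - D_\psi(x_1, x_2).
\]

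Next I would invoke the hypothesis $D_\psi(y,y') \geq \tfrac12 \|y-y'\|_p^2$ on both Bregman terms on the right to get
\[
\langle x_2 - x_1,\, g_1 - g_2\rangle \;\geq\; \|x_1 - x_2\|_p^2,
\]
and then bound the left-hand side from above by Hölder's inequality with conjugate exponents $(p,q)$:
\[
\langle x_2 - x_1,\, g_1 - g_2\rangle \;\leq\; \|x_1 - x_2\|_p\,\|g_1 - g_2\|_q.
\]
Dividing by $\|x_1-x_2\|_p$ (the conclusion is trivial if this quantity is zero) yields the claimed inequality $\|x_1 - x_2\|_p \leq \|g_1 - g_2\|_q$.

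There is no real obstacle: the only subtlety is ensuring that both optimality inequalities can be applied with the other iterate as the test point, which holds because $x_1,x_2 \in X$ by definition, and that the strong-convexity-type lower bound $D_\psi(y,y') \geq \tfrac12\|y-y'\|_p^2$ is assumed symmetrically so that both $D_\psi(x_1,x_2)$ and $D_\psi(x_2,x_1)$ contribute, giving the factor that cancels the $\tfrac12$'s after summation.
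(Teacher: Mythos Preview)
Your argument is correct and is exactly the standard proof of this stability lemma for Bregman proximal steps. The paper itself does not give a self-contained proof, but simply cites \cite[Lemma~11]{weilinear2020}; your derivation via the three-point inequality (Lemma~\ref{lem:one_step_optimality}), cancellation of the anchor terms, the assumed lower bound on $D_\psi$, and H\"older's inequality matches the argument in that reference.
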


\begin{proof}
	See~\cite[Lemma 11]{weilinear2020}. 
\end{proof}

\begin{lemma}\label{lem:bilinear_game_SP_MS}
	Let $X\subset\mathbb{R}^m$ and $Y\subset \mathbb{R}^n$ be polytopes and $M\in\mathbb{R}^{m\times n}$ be a matrix. Then, there exists a problem-dependent constant $c>0$ such that 
	\[
	\max_{y'\,\in \, Y} x^\top M y' - V^\star \; \geq \; c \norm{x - \mathcal{P}_{X^\star}(x)}
	\]
	\[
	V^\star  - \min_{x'\,\in \, X} (x')^\top M y \; \geq \; c \norm{y - \mathcal{P}_{Y^\star}(y)}
	\]
	where $V^\star$ is the game value,
	\[
	V^\star \; \DefinedAs \;\minimize_{x\,\in \, X} \; \maximize_{y\,\in \, Y} \;  x^\top M y \;=\; \maximize_{y\,\in \, Y} \; \minimize_{x\,\in \, X} \; x^\top M y
	\]
	and $(X^\star,Y^\star)$ is the set of minimax optimal strategies. 
\end{lemma}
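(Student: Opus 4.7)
}
The plan is to identify $X^\star$ as a polyhedron carved out by finitely many linear inequalities, and then invoke Hoffman's error bound for polyhedra to translate the suboptimality gap on the left-hand side into a distance bound on the right-hand side. By symmetry, it suffices to prove the first inequality for $x\in X$ (which is the regime used in the application, Lemma~\ref{lem:duality_gap_sup}); the second follows by exchanging the roles of the two players and replacing $M$ by $-M^\top$.

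First I would record two elementary observations. Since $V^\star=\min_{x\in X}\max_{y'\in Y}x^\top M y'$, every $x\in X$ satisfies $\max_{y'\in Y}x^\top M y'\ge V^\star$, with equality iff $x\in X^\star$. Next, because $Y$ is a polytope, $y'\mapsto x^\top M y'$ attains its maximum over $Y$ at some vertex, so letting $\{y^{(1)},\dots,y^{(k)}\}$ denote the (finitely many) vertices of $Y$, we can rewrite
\[
X^\star \;=\; \bigl\{x\in X\,:\, x^\top M y^{(j)}\le V^\star,\ j=1,\dots,k\bigr\}.
\]
Together with the finite system of linear inequalities defining the polytope $X$, this exhibits $X^\star$ as the solution set of a single finite system $Ax\le b$, so $X^\star$ is a (nonempty) polyhedron.

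The core step is then Hoffman's lemma: for this fixed system $Ax\le b$ there is a constant $H>0$, depending only on $A$ (hence only on $X$, $M$, $Y$, and $V^\star$), such that every point $x\in\mathbb R^m$ obeys $\operatorname{dist}(x,X^\star)\le H\,\bigl\|[Ax-b]_+\bigr\|$. Restricting to $x\in X$, the inequalities coming from the description of $X$ are already satisfied, so only the $k$ inequalities $x^\top M y^{(j)}\le V^\star$ can be violated. Bounding the Euclidean norm of their positive parts by $\sqrt{k}$ times the maximum, and using the vertex characterization together with the first observation to identify
\[
\max_{1\le j\le k}\bigl[x^\top M y^{(j)}-V^\star\bigr]_+ \;=\; \max_{y'\in Y}x^\top M y' - V^\star,
\]
we obtain $\|x-\mathcal P_{X^\star}(x)\|\le H\sqrt{k}\,\bigl(\max_{y'\in Y}x^\top M y' - V^\star\bigr)$, which is exactly the first inequality with $c=1/(H\sqrt{k})$. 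The second inequality follows by the analogous reasoning applied to $Y^\star=\{y\in Y:\min_{x'\in X}(x')^\top M y\ge V^\star\}$, which is a polyhedron for the same reason (minimum attained at a vertex of $X$), after taking the minimum of the two resulting constants.

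The only subtle point, and the main obstacle, is invoking Hoffman's lemma correctly: the constant $H$ depends on the coefficient matrix of the linear system, and this matrix involves the vertices of $Y$, which is an implicit but problem-dependent quantity. This is why the resulting $c$ is labelled ``problem-dependent'' in the statement. Once this is accepted, the rest of the argument is bookkeeping: verifying polyhedrality of $X^\star$ and $Y^\star$, identifying the violated inequalities, and converting the vector norm of positive parts into the scalar suboptimality gap via the vertex-attainment property of linear functions on polytopes.
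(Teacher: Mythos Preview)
Your argument via Hoffman's error bound is correct: the characterization of $X^\star$ as a polyhedron (intersecting $X$ with the half-spaces $x^\top M y^{(j)}\le V^\star$ indexed by the vertices of $Y$), followed by Hoffman's lemma and the observation that for $x\in X$ only these $k$ inequalities can be violated, yields exactly the desired bound. The paper itself does not give a proof here; it simply cites \cite[Theorem~5]{weilinear2020}, and the argument in that reference proceeds along the same Hoffman-based route you outline.
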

\begin{proof}
	See~\cite[Theorem~5]{weilinear2020}. 
\end{proof}

\begin{lemma}\label{lem:online_mirror_descent}
	Let $X\subseteq \Delta(A)$ be a convex set and $g$ be a bounded vector in $\mathbb{R}^{|A|}$. If $x' = \argmin_{\bar x\,\in\, X} \langle \bar x, g\rangle + \frac{1}{\eta}\KL(\bar x, x)$, then for any $x^\star\in X$,
	\[
	\langle x - x^\star, g\rangle 
	\; \leq \;
	\frac{ \KL(x^\star, x) - \KL(x^\star,x')}{\eta} + \eta \sum_{a\,\in\, A} x_a (g_a)^2
	\]
	where $\eta$ satisfies $\eta g_a \geq -1$ for $a\in A$.
\end{lemma}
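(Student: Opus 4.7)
The plan is to reduce this to the standard analysis of an exponentiated (Hedge-type) update by introducing an auxiliary unconstrained iterate and then exploiting the generalized Pythagorean property of the KL projection. First I would define $\tilde{x} \in \Delta(A)$ by $\tilde{x}_a = x_a \exp(-\eta g_a)/Z$ with $Z = \sum_a x_a \exp(-\eta g_a)$, and verify by direct algebra that for any $\bar{x} \in \Delta(A)$,
\[
\langle \bar{x}, g\rangle \,+\, \tfrac{1}{\eta}\,\KL(\bar{x}, x) \;\; = \;\; \tfrac{1}{\eta}\big[\,\KL(\bar{x}, \tilde{x}) \,-\, \log Z\,\big],
\]
using $\sum_a \bar{x}_a = 1$. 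Since $\log Z$ is independent of $\bar{x}$, this identifies the definition of $x'$ in the lemma as the KL projection $x' = \argmin_{\bar{x}\,\in\,X} \KL(\bar{x}, \tilde{x})$.

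Next I would compute the one-step KL change explicitly. Expanding $\KL(x^\star, \tilde{x})$ gives
\[
\KL(x^\star, x) \,-\, \KL(x^\star, \tilde{x}) \;\; = \;\; -\,\eta\,\langle x^\star, g\rangle \,-\, \log Z.
\]
To turn $-\log Z$ into a useful lower bound I would apply the scalar inequality $e^{-y}\leq 1 - y + y^2$ valid for $y \geq -1$ pointwise to $y = \eta g_a$ (which is exactly the stated hypothesis $\eta g_a \geq -1$), giving $Z \leq 1 - \eta\langle x, g\rangle + \eta^2\sum_a x_a g_a^2$. Combined with $\log(1+u)\leq u$, this yields $-\log Z \geq \eta \langle x, g\rangle - \eta^2\sum_a x_a g_a^2$, so
\[
\KL(x^\star, x) \,-\, \KL(x^\star, \tilde{x}) \;\; \geq \;\; \eta\,\langle x - x^\star, g\rangle \,-\, \eta^2 \sum_{a\,\in\,A} x_a\,(g_a)^2.
\]

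Finally I would replace $\tilde{x}$ by $x'$ using the three-point/generalized Pythagorean inequality for Bregman projections: since $X$ is convex and $x' = \argmin_{\bar x\,\in\,X}\KL(\bar x,\tilde x)$, one has $\KL(x^\star, \tilde{x}) \geq \KL(x^\star, x') + \KL(x', \tilde{x}) \geq \KL(x^\star, x')$ for every $x^\star \in X$. Substituting and dividing by $\eta$ gives the claimed bound. The only real obstacle is making sure both scalar inequalities are applied under the stated range $\eta g_a \geq -1$ and that $\tilde{x}$ stays in the relative interior of $\Delta(A)$ so the KL divergences are well defined; the latter is automatic because $x \in X \subseteq \Delta(A)$ and $\exp(-\eta g_a)>0$.
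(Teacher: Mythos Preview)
Your argument is correct and is exactly the standard Hedge/exponentiated-gradient derivation: introduce the unconstrained multiplicative update $\tilde{x}$, use the identity that turns the mirror-descent objective into $\KL(\cdot,\tilde{x})$ up to $\log Z$, bound $\log Z$ via $e^{-y}\le 1-y+y^2$ for $y\ge -1$, and finish with the generalized Pythagorean inequality for the KL projection onto $X$.

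There is nothing to compare against in the paper itself: the paper does not prove this lemma but simply cites an external source (Theorem~2 of the reference ``haipeng2020''). Your write-up is precisely the classical proof one would expect to find behind that citation, so it is entirely appropriate here. The only cosmetic point is that the well-definedness caveat you raise (that $\tilde{x}$ lies in the relative interior) tacitly assumes $x$ has full support; in the paper's applications $x=\pi_t(\cdot\,|\,s)\in\hat{\Delta}(A)$ is bounded away from the boundary, so this is automatically satisfied.
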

\begin{proof}
	See \cite[Theorem~2]{haipeng2020}.
\end{proof}

\subsection{Properties of policy gradient}

\begin{lemma}[Performance difference lemma]
	\label{lem:performance difference lemma}
	For any two policies $\pi$ and $\pi'$, and any state $s_0$,
	\[
	V^\pi (s_0) \, - \, V^{\pi'} (s_0) 
	\;\; = \;\; 
	\frac{1}{1-\gamma}
	\mathbb{E}_{s\,\sim\,d_{s_0}^{\pi}}\mathbb{E}_{a\,\sim\,\pi(\cdot\,\vert\,s)}
	\left[
	\, A^{\pi'}(s,a)
	\,
	\right]
	\]
\end{lemma}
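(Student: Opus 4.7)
The plan is to use the classical telescoping argument of Kakade and Langford. Starting from $V^\pi(s_0) - V^{\pi'}(s_0)$, I would express $V^\pi(s_0)$ as an expectation over trajectories $\tau = \{(s_t,a_t)\}_{t\geq 0}$ drawn from the distribution induced by $\pi$ starting at $s_0$, while noting that the deterministic constant $V^{\pi'}(s_0)$ may also be written as such an expectation. Then I would insert the zero-sum telescoping identity
\[
V^{\pi'}(s_0) \;=\; \mathbb{E}_{\tau\sim\pi}\!\left[\sum_{t=0}^\infty \gamma^t V^{\pi'}(s_t) - \sum_{t=0}^\infty \gamma^{t+1} V^{\pi'}(s_{t+1})\right],
\]
which is valid because consecutive terms cancel, leaving only $V^{\pi'}(s_0)$ (the tail vanishes by boundedness of $V^{\pi'}$ and $\gamma<1$).

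Combining these two expressions gives
\[
V^\pi(s_0) - V^{\pi'}(s_0) \;=\; \mathbb{E}_{\tau\sim\pi}\!\left[\sum_{t=0}^\infty \gamma^t\bigl(r(s_t,a_t) + \gamma V^{\pi'}(s_{t+1}) - V^{\pi'}(s_t)\bigr)\right].
\]
Next I would condition on $(s_t,a_t)$ and use the Bellman-type identity $\mathbb{E}[r(s_t,a_t) + \gamma V^{\pi'}(s_{t+1})\,|\,s_t,a_t] = Q_r^{\pi'}(s_t,a_t)$, which follows from the definition of $Q_r^{\pi'}$ together with the Markov property (the next-state distribution depends only on $(s_t,a_t)$, not on the policy $\pi$ rolling out the trajectory). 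This converts each summand into $A_r^{\pi'}(s_t,a_t) = Q_r^{\pi'}(s_t,a_t) - V_r^{\pi'}(s_t)$.

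Finally I would rewrite
\[
\mathbb{E}_{\tau\sim\pi}\!\left[\sum_{t=0}^\infty \gamma^t A^{\pi'}(s_t,a_t)\right] \;=\; \sum_{t=0}^\infty \gamma^t \sum_{s,a} \Pr(s_t=s\,|\,\pi,s_0)\,\pi(a\,|\,s)\, A^{\pi'}(s,a),
\]
and recognize the outer sum via the definition $d_{s_0}^\pi(s) = (1-\gamma)\sum_{t\geq 0}\gamma^t\Pr(s_t=s\,|\,\pi,s_0)$, which yields the factor $1/(1-\gamma)$ and the claimed expectation. The proof is a standard exchange-of-summation argument and presents no real obstacle; the only step requiring mild care is justifying the tail-vanishing/Fubini manipulation in the telescoping identity, which follows from the uniform boundedness of rewards on $[0,1]$ and the geometric weights $\gamma^t$.
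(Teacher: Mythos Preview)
Your proposal is correct and is precisely the classical Kakade--Langford telescoping proof; the paper itself does not give a proof but simply cites \cite[Lemma~3.2]{agarwal2021theory}, which contains essentially the same argument you outline.
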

\begin{proof}
	See~\cite[Lemma~3.2]{agarwal2021theory}.
\end{proof}

% \begin{lemma}[NPG as linear regression]
%     \label{lem:NPG=linear regression}
%     Let a parametrized policy be $\pi_\theta$ for some parameter $\theta \in\mathbb{R}^d$. Assume that the policy $\pi_\theta$ is differentiable and $\sum_{a}\pi_\theta(a\,\vert\,s) = 1$. The following two parameter updates are equivalent,
%     \[
%     \begin{array}{rcl}
%          \theta_{t+1} 
%          &  =  &
%          \theta_t 
%          \, + \,
%          \eta\, (1-\gamma) F_\rho (\theta_t)^\dagger \cdot \nabla_\theta V^{\pi_{\theta_t}}(\rho)
%          \\
%          \theta_{t+1} 
%          &  =  & 
%          \theta_t  
%          \, + \,
%          \eta \, w_t^\star
%     \end{array}
%     \]
%     where $w_t^\star = \argmin_{w\,\in\,\mathbb{R}^d}E_Q(w, \theta_t, \nu_t)$ is a solution of mean least-square under $\nu_t(s,a) \DefinedAs d_\rho^{\pi_{\theta_t}}(s) \pi_{\theta_t}(a\,\vert\,s)$.
% \end{lemma}
% \begin{proof}
%     See~\cite[Lemma~1]{yuan2022linear}.
% \end{proof}

\begin{lemma}[Regularized PG and NPG under softmax parametrization]
	\label{lem:NPG property}
	Let an entropy-regularized value function be $V_\tau^\pi(\rho) \DefinedAs V_{r}^\pi(\rho ) +\tau\mathcal{H}(\pi)$, and define $Q_\tau^\pi$: $S\times A\to\mathbb{R}$ and $V_\tau^\pi$: $S\to\mathbb{R}$ via Bellman equations,
	\[
	\begin{array}{rcl}
	Q_\tau^\pi (s,a) & = &  \displaystyle
	r(s,a)+\lambda g(s,a) + \gamma 
	\mathbb{E}_{s'\,\sim\,P(\cdot\,\vert\,s,a)} \left[ \,V_\tau^\pi(s') \,\right]
	\\[0.2cm]
	V_\tau^\pi(s) & = & \displaystyle\mathbb{E}_{a\,\sim\,\pi(\cdot,\vert\,s)} \left[ \, -\tau\log \pi(a\,\vert\,s) +Q_\tau^\pi(s,a) \,\right].
	\end{array}
	\]
	Let a parametrized policy be $\pi_\theta$ for some parameter $\theta \in\mathbb{R}^d$. If the policy $\pi_\theta$ is differentiable and $\sum_{a}\pi_\theta(a\,\vert\,s) = 1$, then,
	\[
	\begin{array}{rcl}
	\displaystyle
	\frac{ \partial V_\tau^{\pi_\theta}(\rho) }{ \partial \theta_{s,a}}
	& = & \displaystyle
	\frac{1}{1-\gamma} d_\rho^{\pi_\theta}(s) \cdot \pi_\theta(a\,\vert\,s) \cdot A_\tau^{\pi_\theta}(s,a)
	\\[0.2cm]
	& = & \displaystyle
	\frac{1}{1-\gamma} d_\rho^{\pi_\theta}(s) \cdot \pi_\theta(a\,\vert\,s) \cdot \left( Q_\tau^{\pi_\theta} (s,a) - \tau \log\pi_\theta(a\,\vert\,s) \right)
	\end{array}
	\]
	for all $(s,a)\in S\times A$, where $A_\tau^{\pi_\theta} (s,a) \DefinedAs Q_\tau^{\pi_\theta} (s,a) -\tau \log \pi_\theta(a\,\vert\,s)
	-V_\tau^{\pi_\theta}(s)$. Moreover, if the policy $\pi_\theta$ is
	in form of the softmax function $
	\pi_\theta(a\,\vert\,s) 
	= 
	\frac{ {\rm exp}({\theta_{s,a}}) }{ \sum_{a'}{\rm exp}({\theta_{s,a'})}}
	$ for all $(s,a) \in S\times A$, then
	\[
	\left[
	\left( F_\rho(\theta)\right)^\dagger \cdot
	\nabla_\theta V_\tau^{\pi_\theta}(\rho)
	\right]_{s,a}
	\; = \;
	\displaystyle
	\frac{1}{1-\gamma} A_\tau^{\pi_\theta}(s,a) + c(s)
	\]
	where $c(s)$ is an action-independent constant.
\end{lemma}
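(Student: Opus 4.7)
The plan is to prove the two identities in sequence; the first is a soft (entropy-regularized) policy gradient theorem for any differentiable policy class, and the second specializes it to softmax parametrization through a direct computation of the Fisher information pseudoinverse.

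For the gradient formula, I would start from the Bellman equation
\[
V_\tau^{\pi_\theta}(s) \; = \; \sum_a \pi_\theta(a\,|\,s)\bigl(-\tau\log\pi_\theta(a\,|\,s) + Q_\tau^{\pi_\theta}(s,a)\bigr),
\]
differentiate both sides with respect to $\theta_{s',a'}$, and unroll the $Q_\tau^{\pi_\theta}$ recursion. The key observation is $\sum_a \nabla_\theta \pi_\theta(a\,|\,s) = 0$, which lets me subtract the baseline $V_\tau^{\pi_\theta}(s)$ inside the sum without changing the derivative, converting $Q_\tau^{\pi_\theta}(s,a)-\tau\log\pi_\theta(a\,|\,s)$ into the advantage $A_\tau^{\pi_\theta}(s,a)$. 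Unrolling the recursion through the transition kernel produces the discounted visitation weights $d_{s_0}^{\pi_\theta}$, and averaging over $s_0\sim\rho$ yields the $\tfrac{1}{1-\gamma}d_\rho^{\pi_\theta}(s)$ prefactor. A slightly cleaner route is to absorb entropy into a soft reward $\tilde r_\theta(s,a) \DefinedAs r(s,a)-\tau\log\pi_\theta(a\,|\,s)$ and apply the standard policy gradient theorem; the $\theta$-dependence of the reward contributes an extra term that collapses via $\sum_a \nabla_\theta \pi_\theta(a\,|\,s) = 0$. Specializing to the softmax, where $\nabla_{\theta_{s',a'}} \pi_\theta(a\,|\,s) = \mathbbm{1}\{s'=s\}\pi_\theta(a\,|\,s)(\mathbbm{1}\{a'=a\}-\pi_\theta(a'\,|\,s))$, then gives the advertised componentwise formula.

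For the NPG identity, I would use the fact that under softmax parametrization $\nabla_\theta \log \pi_\theta(a\,|\,s)$ is supported on the $s$-block of $\theta$ and equals $e_a - \pi_\theta(\cdot\,|\,s)$ within that block. Consequently $F_\rho(\theta)$ is block-diagonal across states, with the $s$-block equal to $d_\rho^{\pi_\theta}(s)\bigl(\mathrm{diag}(\pi_\theta(\cdot\,|\,s))-\pi_\theta(\cdot\,|\,s)\pi_\theta(\cdot\,|\,s)^\top\bigr)$. Using the gradient formula just derived, the $s$-block of $\nabla_\theta V_\tau^{\pi_\theta}(\rho)$ is $\tfrac{1}{1-\gamma}d_\rho^{\pi_\theta}(s)\,\pi_\theta(\cdot\,|\,s)\odot A_\tau^{\pi_\theta}(s,\cdot)$. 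I would then verify that the candidate $w_{s,a} \DefinedAs \tfrac{1}{1-\gamma}A_\tau^{\pi_\theta}(s,a)+c(s)$ solves $F_\rho^s(\theta)\,w_s = \nabla_{\theta_s} V_\tau^{\pi_\theta}(\rho)$ for every $s$ by a direct multiplication: the diagonal piece gives $d_\rho^{\pi_\theta}(s)\pi_\theta(a\,|\,s)w_{s,a}$, the rank-one subtraction produces $-d_\rho^{\pi_\theta}(s)\pi_\theta(a\,|\,s)\langle \pi_\theta(\cdot\,|\,s), w_s\rangle$, and the constant $c(s)$ lies in the one-dimensional null space $\mathrm{span}\{\mathbf{1}\}$ of the $s$-block, so it is absorbed by the pseudoinverse.

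The main obstacle is the bookkeeping in the first part: the entropy term $-\tau\log\pi_\theta(a\,|\,s)$ depends on $\theta$ both through the outer weighting $\pi_\theta(a\,|\,s)$ and through the logarithm itself, producing a stray $-\tau\nabla_\theta\pi_\theta(a\,|\,s)$ contribution after differentiation. Showing this stray term vanishes (or, equivalently, that baseline subtraction is legitimate) requires carefully invoking $\sum_a \nabla_\theta \pi_\theta(a\,|\,s)=0$ at each unrolling step. Once that cancellation is clean, the rest is a standard discounted-visitation unrolling and a one-line linear-algebra check for the softmax Fisher matrix.
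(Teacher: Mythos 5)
Your proposal is correct: differentiating the entropy-regularized Bellman recursion (with the stray $-\tau\nabla_\theta\pi_\theta$ term killed by $\sum_a\nabla_\theta\pi_\theta(a\,\vert\,s)=0$, and the rank-one softmax term killed by the identity $\sum_a\pi_\theta(a\,\vert\,s)A_\tau^{\pi_\theta}(s,a)=0$, which follows from the definition of $V_\tau^{\pi_\theta}$), followed by the blockwise check that $\tfrac{1}{1-\gamma}A_\tau^{\pi_\theta}(s,\cdot)$ solves the per-state Fisher system up to the null space $\mathrm{span}\{\mathbf{1}\}$, is exactly the standard argument. The paper itself provides no proof of this lemma and simply cites Lemma~6 of \cite{cen2022fast}; your derivation is a sound reconstruction of that cited result.
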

\begin{proof}
	See~\cite[Lemma~6]{cen2022fast}. 
\end{proof}

\end{document}